\begin{document}


\newtheorem{thm}{Theorem}[section]
\newtheorem{lem}[thm]{Lemma}
\newtheorem{cor}[thm]{Corollary}
\newtheorem{pro}[thm]{Proposition}
\theoremstyle{definition}
\newtheorem{defi}[thm]{Definition}
\newtheorem{ex}[thm]{Example}
\newtheorem{rmk}[thm]{Remark}
\newtheorem{pdef}[thm]{Proposition-Definition}
\newtheorem{condition}[thm]{Condition}

\renewcommand{\labelenumi}{{\rm(\alph{enumi})}}
\renewcommand{\theenumi}{\alph{enumi}}

\newcommand {\emptycomment}[1]{} 

\newcommand{\nc}{\newcommand}
\newcommand{\delete}[1]{}

\nc{\todo}[1]{\tred{To do:} #1}

\nc{\tred}[1]{\textcolor{red}{#1}}
\nc{\tblue}[1]{\textcolor{blue}{#1}}
\nc{\tgreen}[1]{\textcolor{green}{#1}}
\nc{\tpurple}[1]{\textcolor{purple}{#1}}
\nc{\tgray}[1]{\textcolor{gray}{#1}}
\nc{\torg}[1]{\textcolor{orange}{#1}}
\nc{\tmag}[1]{\textcolor{magenta}}
\nc{\btred}[1]{\textcolor{red}{\bf #1}}
\nc{\btblue}[1]{\textcolor{blue}{\bf #1}}
\nc{\btgreen}[1]{\textcolor{green}{\bf #1}}
\nc{\btpurple}[1]{\textcolor{purple}{\bf #1}}

    \nc{\mlabel}[1]{\label{#1}}  
    \nc{\mcite}[1]{\cite{#1}}  
    \nc{\mref}[1]{\ref{#1}}  
    \nc{\meqref}[1]{\eqref{#1}}  
    \nc{\mbibitem}[1]{\bibitem{#1}} 

\delete{
    \nc{\mcite}[1]{\cite{#1}{\small{\tt{{\ }(#1)}}}}  
    \nc{\mref}[1]{\ref{#1}{\small{\tred{\tt{{\ }(#1)}}}}}  
    \nc{\meqref}[1]{\eqref{#1}{{\tt{{\ }(#1)}}}}  
    \nc{\mbibitem}[1]{\bibitem[\bf #1]{#1}} 
}


\nc{\cm}[1]{\textcolor{red}{Chengming:#1}}
\nc{\yy}[1]{\textcolor{blue}{Yanyong: #1}}
\nc{\li}[1]{\textcolor{purple}{#1}}
\nc{\lir}[1]{\textcolor{purple}{Li:#1}}


\nc{\tforall}{\ \ \text{for all }}
\nc{\hatot}{\,\widehat{\otimes} \,}
\nc{\complete}{completed\xspace}
\nc{\wdhat}[1]{\widehat{#1}}

\nc{\ts}{\mathfrak{p}}
\nc{\mts}{c_{(i)}\ot d_{(j)}}

\nc{\NA}{{\bf NA}}
\nc{\LA}{{\bf Lie}}
\nc{\CLA}{{\bf CLA}}

\nc{\cybe}{CYBE\xspace}
\nc{\nybe}{NYBE\xspace}
\nc{\ccybe}{CCYBE\xspace}
\nc{\enybe}{ENYBE\xspace}
\nc{\genybe}{GENYBE\xspace}

\nc{\ndend}{pre-Novikov\xspace}
\nc{\calb}{\mathcal{B}}
\nc{\rk}{\mathrm{r}}
\newcommand{\g}{\mathfrak g}
\newcommand{\h}{\mathfrak h}
\newcommand{\pf}{\noindent{$Proof$.}\ }
\newcommand{\frkg}{\mathfrak g}
\newcommand{\frkh}{\mathfrak h}
\newcommand{\Id}{\rm{Id}}
\newcommand{\gl}{\mathfrak {gl}}
\newcommand{\ad}{\mathrm{ad}}
\newcommand{\add}{\frka\frkd}
\newcommand{\frka}{\mathfrak a}
\newcommand{\frkb}{\mathfrak b}
\newcommand{\frkc}{\mathfrak c}
\newcommand{\frkd}{\mathfrak d}
\newcommand {\comment}[1]{{\marginpar{*}\scriptsize\textbf{Comments:} #1}}


\nc{\zhushi}[1]{}

\nc{\hr}{\hat{r}}
\nc{\hs}{\hat{s}}
\nc{\haa}{\hat{\alpha}}
\nc{\hbb}{\hat{\beta}}
\nc{\hA}{\hat{A}}

\nc{\otherbeta}{\bm{\beta}}

\nc{\bbvar}{P_{\beta}}
\nc{\Tvar}{P_{T}}

\nc{\tA}{\mathcal{A}}     
\nc{\taa}{\mathcal{P}}
\nc{\tbb}{\mathcal{Q}}  
\nc{\tga}{\gamma^{\mathcal{A}}}   
\nc{\tT}{\tilde{T}}

\nc{\caa}{\check{\alpha}}   \nc{\cbb}{\check{\beta}}
\nc{\chr}{\check{r}} \nc{\crp}{\check{r_+}}  \nc{\crm}{\check{r_-}}
\nc{\chcalp}{\check{\mathcal{P}}}

\nc{\qsan}{\circ_\Delta}

\nc{\postn}{\text{Post-Novikov algebra}}
\nc{\postns}{\text{Post-Novikov algebras}}
\nc{\aax}{\alpha(x)}   
\nc{\aay}{\alpha(y)}   
\nc{\aau}{\alpha(u)}
\nc{\aav}{\alpha(v)}


\nc{\vspa}{\vspace{-.1cm}}
\nc{\vspb}{\vspace{-.2cm}}
\nc{\vspc}{\vspace{-.3cm}}
\nc{\vspd}{\vspace{-.4cm}}
\nc{\vspe}{\vspace{-.5cm}}


\nc{\disp}[1]{\displaystyle{#1}}
\nc{\bin}[2]{ (_{\stackrel{\scs{#1}}{\scs{#2}}})}  
\nc{\binc}[2]{ \left (\!\! \begin{array}{c} \scs{#1}\\
    \scs{#2} \end{array}\!\! \right )}  
\nc{\bincc}[2]{  \left ( {\scs{#1} \atop
    \vspace{-.5cm}\scs{#2}} \right )}  
\nc{\ot}{\otimes}
\nc{\sot}{{\scriptstyle{\ot}}}
\nc{\otm}{\overline{\ot}}
\nc{\ola}[1]{\stackrel{#1}{\la}}

\nc{\scs}[1]{\scriptstyle{#1}} \nc{\mrm}[1]{{\rm #1}}

\nc{\dirlim}{\displaystyle{\lim_{\longrightarrow}}\,}
\nc{\invlim}{\displaystyle{\lim_{\longleftarrow}}\,}

\nc{\bfk}{{\bf k}} \nc{\bfone}{{\bf 1}}
\nc{\rpr}{\circ}
\nc{\dpr}{{\tiny\diamond}}
\nc{\rprpm}{{\rpr}}

\nc{\mmbox}[1]{\mbox{\ #1\ }} \nc{\ann}{\mrm{ann}}
\nc{\Aut}{\mrm{Aut}} \nc{\can}{\mrm{can}}
\nc{\twoalg}{{two-sided algebra}\xspace}
\nc{\colim}{\mrm{colim}}
\nc{\Cont}{\mrm{Cont}} \nc{\rchar}{\mrm{char}}
\nc{\cok}{\mrm{coker}} \nc{\dtf}{{R-{\rm tf}}} \nc{\dtor}{{R-{\rm
tor}}}
\renewcommand{\det}{\mrm{det}}
\nc{\depth}{{\mrm d}}
\nc{\End}{\mrm{End}} \nc{\Ext}{\mrm{Ext}}
\nc{\Fil}{\mrm{Fil}} \nc{\Frob}{\mrm{Frob}} \nc{\Gal}{\mrm{Gal}}
\nc{\GL}{\mrm{GL}} \nc{\Hom}{\mrm{Hom}} \nc{\hsr}{\mrm{H}}
\nc{\hpol}{\mrm{HP}}  \nc{\id}{\mrm{id}} \nc{\im}{\mrm{im}}

\nc{\incl}{\mrm{incl}} \nc{\length}{\mrm{length}}
\nc{\LR}{\mrm{LR}} \nc{\mchar}{\rm char} \nc{\NC}{\mrm{NC}}
\nc{\mpart}{\mrm{part}} \nc{\pl}{\mrm{PL}}
\nc{\ql}{{\QQ_\ell}} \nc{\qp}{{\QQ_p}}
\nc{\rank}{\mrm{rank}} \nc{\rba}{\rm{RBA }} \nc{\rbas}{\rm{RBAs }}
\nc{\rbpl}{\mrm{RBPL}}
\nc{\rbw}{\rm{RBW }} \nc{\rbws}{\rm{RBWs }} \nc{\rcot}{\mrm{cot}}
\nc{\rest}{\rm{controlled}\xspace}
\nc{\rdef}{\mrm{def}} \nc{\rdiv}{{\rm div}} \nc{\rtf}{{\rm tf}}
\nc{\rtor}{{\rm tor}} \nc{\res}{\mrm{res}} \nc{\SL}{\mrm{SL}}
\nc{\Spec}{\mrm{Spec}} \nc{\tor}{\mrm{tor}} \nc{\Tr}{\mrm{Tr}}
\nc{\mtr}{\mrm{sk}}

\nc{\ab}{\mathbf{Ab}} \nc{\Alg}{\mathbf{Alg}}

\nc{\BA}{{\mathbb A}} \nc{\CC}{{\mathbb C}} \nc{\DD}{{\mathbb D}}
\nc{\EE}{{\mathbb E}} \nc{\FF}{{\mathbb F}} \nc{\GG}{{\mathbb G}}
\nc{\HH}{{\mathbb H}} \nc{\KK}{{\mathbb K}} \nc{\LL}{{\mathbb L}}
\nc{\NN}{{\mathbb N}}
\nc{\QQ}{{\mathbb Q}} \nc{\RR}{{\mathbb R}} \nc{\BS}{{\mathbb{S}}} \nc{\TT}{{\mathbb T}}
\nc{\VV}{{\mathbb V}} \nc{\ZZ}{{\mathbb Z}}


\nc{\calao}{{\mathcal A}} \nc{\cala}{{\mathcal A}}
\nc{\calc}{{\mathcal C}} \nc{\cald}{{\mathcal D}}
\nc{\cale}{{\mathcal E}} \nc{\calf}{{\mathcal F}}
\nc{\calfr}{{{\mathcal F}^{\,r}}} \nc{\calfo}{{\mathcal F}^0}
\nc{\calfro}{{\mathcal F}^{\,r,0}} \nc{\oF}{\overline{F}}
\nc{\calg}{{\mathcal G}} \nc{\calh}{{\mathcal H}}
\nc{\cali}{{\mathcal I}} \nc{\calj}{{\mathcal J}}
\nc{\call}{{\mathcal L}} \nc{\calm}{{\mathcal M}}
\nc{\caln}{{\mathcal N}} \nc{\calo}{{\mathcal O}}
 \nc{\calr}{{\mathcal R}}
\nc{\calt}{{\mathcal T}} \nc{\caltr}{{\mathcal T}^{\,r}}
\nc{\calu}{{\mathcal U}} \nc{\calv}{{\mathcal V}}
\nc{\calw}{{\mathcal W}} \nc{\calx}{{\mathcal X}}
\nc{\CA}{\mathcal{A}}



\nc{\fraka}{{\mathfrak a}} \nc{\frakB}{{\mathfrak B}}
\nc{\frakb}{{\mathfrak b}} \nc{\frakd}{{\mathfrak d}}
\nc{\oD}{\overline{D}}
\nc{\frakF}{{\mathfrak F}} \nc{\frakg}{{\mathfrak g}}
\nc{\frakm}{{\mathfrak m}} \nc{\frakM}{{\mathfrak M}}
\nc{\frakMo}{{\mathfrak M}^0} \nc{\frakp}{{\mathfrak p}}
\nc{\frakS}{{\mathfrak S}} \nc{\frakSo}{{\mathfrak S}^0}
\nc{\fraks}{{\mathfrak s}} \nc{\os}{\overline{\fraks}}
\nc{\frakT}{{\mathfrak T}}
\nc{\oT}{\overline{T}}
\nc{\frakX}{{\mathfrak X}} \nc{\frakXo}{{\mathfrak X}^0}
\nc{\frakx}{{\mathbf x}}
\nc{\frakTx}{\frakT}      
\nc{\frakTa}{\frakT^a}        
\nc{\frakTxo}{\frakTx^0}   
\nc{\caltao}{\calt^{a,0}}   
\nc{\ox}{\overline{\frakx}} \nc{\fraky}{{\mathfrak y}}
\nc{\frakz}{{\mathfrak z}} \nc{\oX}{\overline{X}}

\font\cyr=wncyr10


\title[Extended $\mathcal{O}$-operators]{Extended $\mathcal{O}$-operators, Novikov Yang-Baxter equations and post-Novikov algebras}

\author{Jianfeng Yu}
\address{School of Mathematics, Hangzhou Normal University,
Hangzhou 311121, PR China}
\email{yujianfeng1411@163.com}

\author{Yanyong Hong}
\address{School of Mathematics, Hangzhou Normal University,
Hangzhou 311121, PR China}
\email{yyhong@hznu.edu.cn}

\subjclass[2010]{17A30, 
17B38, 
17A60, 
17D25  
}

\keywords{Novikov algebra, Novikov bialgebra, Novikov Yang-Baxter equation, $\calo$-operator}

\begin{abstract}
In this paper, we introduce the definition of extended $\mathcal{O}$-operators on a Novikov algebra $(A,\circ)$ associated to an $A$-bimodule Novikov algebra which is a generalization of the definition of $\mathcal{O}$-operators and show that there are new Novikov algebra structures on the $A$-bimodule Novikov algebra obtained from extended $\mathcal{O}$-operators. We also introduce the definition of post-Novikov algebras and show that there is a close relationship between post-Novikov algebras and $\mathcal{O}$-operators of weight $\lambda$. The tensor form of extended $\mathcal{O}$-operators is also investigated which leads to the definition of extended Novikov Yang-Baxter equations, which is a generalization of the notion of Novikov Yang-Baxter equations. The relationships between  extended $\mathcal{O}$-operators, Novikov Yang-Baxter equations, extended Novikov Yang-Baxter equations and generalized Novikov Yang-Baxter equations are established.
\end{abstract}

\maketitle

\vspace{-1.2cm}


\allowdisplaybreaks

\section{Introduction}
Novikov algebras appeared in the study of Hamiltonian
operators in the formal variational calculus \mcite{GD1, GD2} and Poisson brackets of hydrodynamic type \mcite{BN}. Moreover, by \cite{X1}, Novikov algebras correspond to a class of Lie conformal algebras which describe the singular part of operator product expansion of chiral fields in conformal field
theory \mcite{K1}.  Novikov algebras are also an important subclass of pre-Lie algebras
which have close relationships with many fields in mathematics and physics such as convex homogeneous cones \mcite{V}, affine manifolds and affine
structures on Lie groups \mcite{Ko}, deformation of
associative algebras \mcite{Ger}, vertex algebras \mcite{BLP, BK} and so on.

The definition of Novikov bialgebras  was introduced in \cite{HBG}, whose affinization can produce infinite-dimensional completed Lie bialgebras. Similar to the classical Lie bialgebra theory \cite{Dr, CP}, the notion of Novikov Yang-Baxter equations was introduced in \cite{HBG}, and the skew-symmetric solutions of the Novikov Yang-Baxter equation (NYBE) can produce Novikov bialgebras.
Moreover, the notion of $\mathcal{O}$-operators on Novikov algebras was introduced in \cite{HBG} to provide skew-symmetric solutions of the NYBE. Therefore, NYBE and $\mathcal{O}$-operators on Novikov algebras are important in the theory of Novikov bialgebras. These connections can be summarized in the following diagram.
\begin{equation*}
    \begin{split}
        \xymatrix{
        \txt{  $\mathcal{O}$-operators on\\ Novikov algebras}\ar[r] &
            \text{skew-symmetric solutions of the NYBE} \ar[r]  & \text{Novikov bialgebras}
        \vspace{-.1cm}}
    \end{split}
    \mlabel{eq:Lieconfdiag}
\end{equation*}

Motivated by the results of \cite{HBG}, it is natural to consider the general (may not skew-symmetric) solutions of the NYBE and the relationship between these solutions and their operator forms (may some generalizations of $\mathcal{O}$-operators on Novikov algebras). It should be pointed out that such problems for Lie algebras and associative algebras have been investigated in \cite{BGNlaxpair} and \cite{BGNASS} in a broad content respectively. For the case of Lie algebras,  the authors in \cite{BGNlaxpair} introduced the notion of extended $\mathcal{O}$-operators on Lie algebras in order to study double Lie algebra structures and nonabelian generalized Lax pairs. Note that  extended $\mathcal{O}$-operators on Lie algebras are generalizations of Rota-Baxter operators and $\mathcal{O}$-operators on Lie algebras \cite{STS, Ku1, Bai}. Moreover, the relationship between extended $\mathcal{O}$-operators and extended classical Yang-Baxter equations which are generalizations of classical Yang-Baxter equations was presented. By these results, it is easy to see that a general solution of the classical Yang-Baxter equation is equivalent to some extended $\mathcal{O}$-operator (see \cite{BGNlaxpair, KM}). Moreover, it was shown in \cite{BGNGCYBE} that extended $\mathcal{O}$-operators are also related to generalized classical Yang-Baxter equations which also naturally appeared in the study of Lie bialgebras. Therefore, extended $\mathcal{O}$-operators on Lie algebras are important in the theory of Lie algebras and Lie bialgebras. Similar results for associative algebras were presented in \cite{BGNASS}. Based on these results, in this paper, we plan to introduce the definitions of extended $\mathcal{O}$-operators on Novikov algebras, extended Novikov Yang-Baxter equations and generalized Novikov Yang-Baxter equations, and investigate the relationships between them. We also hope that by studying extended $\mathcal{O}$-operators on Novikov algebras, we can also obtain some new results for the NYBE as  in \cite{BGNlaxpair} for the case of Lie algebras.

Let $(A,\circ)$ be a Novikov algebra. In this paper, we first introduce the definitions of $A$-bimodule Novikov algebras, $\mathcal{O}$-operators of weight $\lambda$ on $(A,\circ)$ associated to an $A$-bimodule Novikov algebra and post-Novikov algebras. The relationships between them are investigated. We show that a post-Novikov algebra can naturally provide a bimodule Novikov algebra on a new Novikov algebra and an $\mathcal{O}$-operator of weight $\lambda$ on $(A,\circ)$ associated to an $A$-bimodule Novikov algebra can produce a post-Novikov algebra structure on the vector space of the $A$-bimodule Novikov algebra. Then we introduce the definition of extended $\mathcal{O}$-operators on $(A,\circ)$ associated to an $A$-bimodule Novikov algebra, which is a generalization of the definition of $\mathcal{O}$-operators of weight $\lambda$. It is shown that   extended $\mathcal{O}$-operators can produce new Novikov algebra structures on  the vector space of the $A$-bimodule Novikov algebra  and are also related to $A$-bimodule Novikov algebras (see Theorems \ref{delta +-} and \ref{thm:r+-}). Then we investigate the tensor forms of extended $\mathcal{O}$-operators. We introduce the definition of extended Novikov Yang-Baxter equations and investigate the relationships between extended $\mathcal{O}$-operators and extended Novikov Yang-Baxter equations in several cases (see Theorem \ref{thm:r-ENYBE}, Proposition \ref{dual exo} and Theorem \ref{thm:taa exo}). In particular, we show that  a general solution of the NYBE is equivalent to some extended $\mathcal{O}$-operator (see Corollaries \ref{cor:o-ENYBE}, \ref{cor-qN} and \ref{cor-gN}). Moreover, we introduce the definition of generalized Novikov Yang-Baxter equations which naturally appear in the study of Novikov bialgebras in \cite{HBG} and the relationship between  extended $\mathcal{O}$-operators and generalized Novikov Yang-Baxter equations is presented (see Theorem \ref{Goper con}). These results show that the study of extended $\mathcal{O}$-operators on Novikov algebras is meaningful  in the theory of
Novikov algebras and Novikov bialgebras.

This paper is  organized as follows. In Section 2, the definitions of $A$-bimodule Novikov algebras, post-Novikov algebras and extended $\mathcal{O}$-operators are introduced. The relationship between post-Novikov algebras and $\mathcal{O}$-operators of weight $\lambda$ is established. Moreover, we show that there are new Novikov algebra structures derived from extended $\mathcal{O}$-operators. Section 3 establishes the relationships between extended $\mathcal{O}$-operators, Novikov Yang-Baxter equations and extended Novikov Yang-Baxter equations. In Section 4, we introduce the definition of generalized Novikov Yang-Baxter equations and investigate the relationship between extended $\mathcal{O}$-operators and generalized Novikov Yang-Baxter equations.

Throughout this paper, let  $\bf k$ be a field of characteristic zero. All tensors over ${\bf k}$ are denoted by $\otimes$. We assume that all vector spaces or algebras in this paper are finite-dimensional, although many results hold in the infinite-dimensional case. Denote the identity map by $\id$.  For a vector space $A$, let
\begin{equation*}
\tau:A\otimes A\rightarrow A\otimes A,\qquad a\otimes b\mapsto b\otimes a,\;\;\;\; a,b\in A,
\end{equation*}
be the flip operator.

\section{Post-Novikov algebras, extended $\calo$-operators and new Novikov algebra structures}\mlabel{sec:o}
In this section, we introduce the definitions of $A$-bimodule Novikov algebras, post-Novikov algebras and extended $\calo$-operators, where $(A,\circ)$ is a Novikov algebra. The relationship between post-Novikov algebras and $\mathcal{O}$-operators of weight $\lambda$ is presented. Furthermore, we show that extended $\calo$-operators can produce new Novikov algebra structures.

\subsection{$A$-bimodule Novikov algebras}\mlabel{ss:bimod-nov}
We start with the classical notions of Novikov algebras.
\vspace{-.1cm}
\begin{defi}
\cite{GD1} A {\bf Novikov algebra} is a vector space $A$ with a binary
    operation $\circ$ satisfying
\begin{eqnarray}
                                                          \mlabel{lef} \notag
 (a\circ b)\circ c-a\circ (b\circ c)&=&(b\circ a)\circ c-b\circ (a\circ c),\\
                                                            \mlabel{Nov} \notag
 (a\circ b)\circ c&=&(a\circ c)\circ b ,  \;\;\;a, b,c\in A.
\vspace{-.1cm}
\end{eqnarray}

A Novikov algebra $(A, \circ)$ is called {\bf trivial} if for all $a$, $b\in A$, $a\circ b=0$.
\end{defi}
Let $(A,\circ)$ be a Novikov algebra. Define another binary operation $\star$ on $A$ by
\begin{eqnarray} \notag
    a\star b:=a\circ b+b\circ a,\quad \;\;\; ~~a,~~b\in A.
\end{eqnarray}
Let $L_A$, $R_A:
A\rightarrow {\rm End}_{\bf k}(A)$ be the linear maps defined by
\begin{eqnarray*}
    L_A(a)(b)\coloneqq a\circ b,\quad R_A(a)(b)\coloneqq b\circ a,\quad \;\;\;  a,~~b\in A.
\end{eqnarray*}
Define $L_{A,\star}: A\rightarrow {\rm End}_{\bf k}(A)$ by $L_{A,\star}=L_A+R_A$.

\begin{defi}
\cite{O3} A {\bf bimodule} of a Novikov algebra $(A,\circ)$ is a triple $(V, l_A,r_A)$, where $V$ is a vector space and   $l_A$, $r_A: A\rightarrow {\rm End}_{\bf k}(V)$ are linear maps satisfying
\begin{eqnarray}
                                                           \mlabel{lef-mod1}
&l_A(a\circ b-b\circ a)v=l_A(a)l_A(b)v-l_A(b)l_A(a)v,&\\
                                                           \mlabel{lef-mod2}
&l_A(a)r_A(b)v-r_A(b)l_A(a)v=r_A(a\circ b)v-r_A(b)r_A(a)v,&\\
                                                           \mlabel{Nov-mod5}
&l_A(a\circ b)v=r_A(b)l_A(a)v,
&\\
                                                           \mlabel{Nov-mod6}
&r_A(a)r_A(b)v=r_A(b)r_A(a)v, &
\hspace{-2.7cm} \quad  \;\;\;  a, b\in A, v\in V.
\vspc
\end{eqnarray}
\end{defi}

Note that $(A, L_A, R_A)$ is a bimodule of $(A,\circ)$.

\begin{defi}                              \label{bimod}
Let $(A,\circ)$ be a Novikov algebra and  $(  M,\cdot )$ be another Novikov algebra. Let $l_A$, $r_A$ : $A \rightarrow \text{End}_{\bf k}(M)$ be two linear maps. We call $M$
(or the quadruple $(M,\cdot,l_A,r_A) $) an {\bf $A$-bimodule Novikov algebra } if $(M,l_A,r_A)$ is a bimodule of $(A,\circ)$ and the following equalities hold.
\begin{eqnarray}
                                                           \label{lef-mod3}
&(l_A(a)v)\cdot w-l_A(a)(v\cdot w)=(r_A(a)v)\cdot w-v\cdot (l_A(a)w),&\\
                                                           \label{lef-mod4}
&r_A(a)(v \cdot w)-v\cdot (r_A(a)w)=r_A(a)(w\cdot v)-w\cdot (r_A(a)v),&\\
                                                           \label{Nov-mod7}
&(l_A(a)v)\cdot w=(l_A(a)w)\cdot v, &\\
                                                           \label{Nov-mod8}
&r_A(a)(v\cdot w)=(r_A(a)v)\cdot w, &
\hspace{-2.7cm} \quad \;\;\;  a\in A, v, w\in M.
\end{eqnarray}

\end{defi}
\begin{rmk}\label{rmk-bi-m}
Let $(A,\circ)$ be a Novikov algebra. Then $(A,\circ,L_A,R_A)$ is an $A$-bimodule Novikov algebra. Moreover, a bimodule $(V, l_A,r_A)$ of $(A, \circ)$ can also be seen as an $A$-bimodule Novikov algebra if we regard the vector space $V$ as the trivial Novikov algebra.  \delete{We still denote such  $A$-bimodule Novikov algebra by $(V, l_A,r_A)$.} For convenience, in the sequel, we say that $(V, l_A,r_A)$ is an $A$-bimodule Novikov algebra, which means that $(V, l_A,r_A)$ is a bimodule of $(A, \circ)$ and the Novikov algebra structure on $V$ is trivial.
\end{rmk}
\begin{pro}                 \mlabel{pro:semi}
Let $(A,\circ)$ be a Novikov algebra, $M$ be a vector space with a binary operation $\cdot$
and $l_A$, $r_A: A\to {\rm End}_{\bf k}(M)$ be linear maps.
Define a binary operation $\bullet$ on $A\oplus M$ by
\vspa
    $$(a+u)\bullet(b+v)\coloneqq a\circ b+l_A(a)v+r_A(b)u +u\cdot v,\quad\;\;\;  a,b\in A,u,v\in M.$$
Then $(M,\cdot, l_A,r_A)$ is an $A$-bimodule Novikov algebra if and only
if $(A\oplus M,\bullet)$ is a Novikov algebra.
\end{pro}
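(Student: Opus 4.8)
The plan is to verify directly that the two Novikov axioms hold for $(A\oplus M,\bullet)$ exactly when $(M,\cdot,l_A,r_A)$ is an $A$-bimodule Novikov algebra. Write a typical element of $A\oplus M$ as $a+u$ with $a\in A$, $u\in M$, and take three such elements $x=a+u$, $y=b+v$, $z=c+w$. Since the $A$-component of $(a+u)\bullet(b+v)$ is $a\circ b$ and its $M$-component is $l_A(a)v+r_A(b)u+u\cdot v$, each Novikov axiom for $\bullet$ splits, under the direct-sum decomposition $A\oplus M$, into an identity in $A$ and an identity in $M$. First I would record this projection, observing that the $A$-valued parts of both axioms are precisely the left-symmetry and right-commutativity identities of $(A,\circ)$, which hold by hypothesis and carry no information about $l_A$, $r_A$, $\cdot$. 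Thus the whole content lies in the two $M$-valued identities.

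Next I would expand the $M$-components and sort the resulting terms by their multilinear type, i.e. by how many of the arguments $u,v,w$ they involve. A term built from $l_A,r_A$ applied to a single $M$-argument is linear in exactly one of $u,v,w$; a term involving one factor of $\cdot$ is bilinear in two of them; and a term with two nested products $\cdot$ is trilinear. Because the whole expression is multilinear in the six arguments $a,b,c,u,v,w$, terms of different multilinear type cannot cancel one another, so the $M$-identity holds if and only if it holds separately in each type. This is the key mechanism that turns one global identity into the list of conditions in Definition \ref{bimod}: setting all but the relevant arguments to zero isolates each block, giving the ``only if'' direction, while reassembling the blocks gives ``if''.

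Carrying out the expansion, the right-commutativity axiom $(x\bullet y)\bullet z=(x\bullet z)\bullet y$ produces, in the $M$-component, the single-argument relations \eqref{Nov-mod5} and \eqref{Nov-mod6} (asserting that $(M,l_A,r_A)$ satisfies the corresponding bimodule conditions), the bilinear relations \eqref{Nov-mod7} and \eqref{Nov-mod8}, and finally the right-commutativity $(u\cdot v)\cdot w=(u\cdot w)\cdot v$ of $(M,\cdot)$ from the trilinear terms. Symmetrically, the left-symmetry axiom yields the remaining bimodule conditions \eqref{lef-mod1} and \eqref{lef-mod2}, the bilinear relations \eqref{lef-mod3} and \eqref{lef-mod4}, and the left-symmetry identity of $(M,\cdot)$. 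Together these say exactly that $(M,l_A,r_A)$ is a bimodule of $(A,\circ)$, that $(M,\cdot)$ is a Novikov algebra, and that the four compatibility equations of Definition \ref{bimod} hold, i.e. precisely that $(M,\cdot,l_A,r_A)$ is an $A$-bimodule Novikov algebra.

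The only real obstacle is the bookkeeping: keeping the many terms organized when expanding $(x\bullet y)\bullet z$, $x\bullet(y\bullet z)$ and their $x\leftrightarrow y$ or $y\leftrightarrow z$ swaps, and correctly matching each multilinear block to its labelled condition. No genuinely new idea is needed beyond the observation that multilinearity lets each block be treated independently; once the terms are grouped by type, the identification with the defining equations is immediate.
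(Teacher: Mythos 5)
Your proof is correct, and the mechanism you identify — splitting each Novikov axiom for $\bullet$ into an $A$-component and an $M$-component, then using multilinearity in $(a,b,c,u,v,w)$ to separate the $M$-identity into independent blocks that can be isolated by setting complementary arguments to zero — is exactly the right way to make the equivalence precise in both directions. Checking the details confirms your matching: the right-commutativity of $\bullet$ is equivalent to Eqs.~(\ref{Nov-mod5}), (\ref{Nov-mod6}), (\ref{Nov-mod7}), (\ref{Nov-mod8}) together with right-commutativity of $\cdot$, and the left-symmetry of $\bullet$ is equivalent to Eqs.~(\ref{lef-mod1}), (\ref{lef-mod2}), (\ref{lef-mod3}), (\ref{lef-mod4}) together with left-symmetry of $\cdot$.

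However, your route differs from the paper's: the paper gives no computation at all and simply invokes \cite[Theorem 3.2]{Hong}, a result on extending structures for left-symmetric/Novikov algebras, of which this proposition is a special case (the ``unified product'' with trivial cocycle components). What the citation buys is brevity and a placement of the statement inside a more general classification framework; what your argument buys is a self-contained, elementary proof that makes visible precisely which axiom of $\bullet$ forces which defining equation of an $A$-bimodule Novikov algebra — information that the citation hides and that is genuinely useful when reading Definition~\ref{bimod}. Either is acceptable; yours is the more informative of the two.
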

\begin{proof}
It follows directly from \cite[Theorem 3.2]{Hong}.
\end{proof}

\begin{rmk}
By Proposition \ref{pro:semi}, if the binary operation $\cdot$ on $M$ is trivial, $(M, l_A, r_A)$ is a bimodule of $(A, \circ)$ if and only if  $(A\oplus M, \bullet)$ is a Novikov algebra. In this case, we call $(A\oplus M, \bullet)$ the
{\bf semi-direct product} of $A$ by $(M, l_A, r_A)$ and denoted by
$A\ltimes_{l_A,r_A} M$.
\end{rmk}

Let $(A,\circ)$ be a Novikov algebra and $V$ be a vector space.
For a linear map $\varphi:A\rightarrow\mathrm{End}_{
\bf k}(V)$, define a linear map
$\varphi^{*}:A\rightarrow\mathrm{End}_{\bf k}(V^{*})$ by
\vspa
$$\langle\varphi^{*}(a)f,v\rangle=-\langle f,\varphi(a)v\rangle, \;\;\; a\in A, f\in V^{*},v\in V,$$
where $\langle\cdot, \cdot\rangle$ is the usual pairing between $V$ and $V^*$.

\begin{pro}\cite[Proposition 3.3]{HBG}                        \mlabel{pp:dualrep}
Let $(A,\circ)$ be a Novikov algebra and $(V, l_A, r_A)$ be a
bimodule of $(A,\circ)$. Then $(V^\ast, l_A^\ast+r_A^\ast, -r_A^\ast)$ is
a bimodule of $(A,\circ)$.
\end{pro}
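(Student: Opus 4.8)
The plan is to verify directly that the pair $(l_A^\ast + r_A^\ast,\, -r_A^\ast)$ satisfies the four bimodule axioms \eqref{lef-mod1}--\eqref{Nov-mod6} on $V^\ast$. The only tools needed are the defining relation $\langle \varphi^\ast(a)f, v\rangle = -\langle f, \varphi(a)v\rangle$ and its iterate $\langle \varphi^\ast(a)\psi^\ast(b)f, v\rangle = \langle f, \psi(b)\varphi(a)v\rangle$, which records that composing two dual operators reverses their order (and restores a plus sign). Pairing each proposed axiom against an arbitrary $v\in V$ and transferring every operator back onto $v$ by these rules turns each identity into a bilinear identity among $l_A$ and $r_A$ on $V$, which I then deduce from the original axioms. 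To keep the bookkeeping clean I set $s_A := l_A + r_A = L_{A,\star}$, so that by linearity of dualization the candidate left action is $s_A^\ast$ and the candidate right action is $-r_A^\ast$.

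First I would dispose of the two nearly automatic axioms. Transferring \eqref{Nov-mod6} for $-r_A^\ast$ returns exactly $r_A(b)r_A(a)=r_A(a)r_A(b)$, i.e. \eqref{Nov-mod6} itself. Transferring \eqref{Nov-mod5} reduces, with a compensating minus sign on each side, to the identity $s_A(a\circ b)=s_A(a)r_A(b)$ on $V$; expanding $s_A=l_A+r_A$, this follows by rewriting $l_A(a\circ b)$ via \eqref{Nov-mod5}, rewriting $r_A(a\circ b)$ via \eqref{lef-mod2}, and symmetrizing $r_A(a)r_A(b)$ via \eqref{Nov-mod6}. Transferring \eqref{lef-mod2} for the dual reduces to $s_A(a)r_A(b)-r_A(b)s_A(a)=r_A(a\circ b)-r_A(a)r_A(b)$; expanding $s_A$, the outer $r_Ar_A$ terms cancel, and after one further use of \eqref{Nov-mod6} the identity collapses to \eqref{lef-mod2}.

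The main obstacle is axiom \eqref{lef-mod1} for the dual. Transferring it shows it to be equivalent to the left relation $s_A(a\circ b-b\circ a)=s_A(a)s_A(b)-s_A(b)s_A(a)$ \emph{for the sum} $s_A=l_A+r_A$; in other words, one must show that $l_A+r_A$ again satisfies \eqref{lef-mod1}. This is the one step that is not a near-tautological rewrite. I would prove it by first deriving the auxiliary formula
\[
r_A(a\circ b-b\circ a)=l_A(a)r_A(b)-r_A(b)l_A(a)-l_A(b)r_A(a)+r_A(a)l_A(b),
\]
obtained by subtracting the two copies of \eqref{lef-mod2} with $a,b$ interchanged and using \eqref{Nov-mod6} to cancel the $r_Ar_A$ terms. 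Expanding $[s_A(a),s_A(b)]=[l_A(a)+r_A(a),\,l_A(b)+r_A(b)]$, the bracket $[r_A(a),r_A(b)]$ vanishes by \eqref{Nov-mod6}, the bracket $[l_A(a),l_A(b)]$ equals $l_A(a\circ b-b\circ a)$ by \eqref{lef-mod1}, and the two mixed brackets reproduce exactly the right-hand side of the auxiliary formula; summing yields $s_A(a\circ b-b\circ a)$, as required.

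Beyond this single structural point, the remaining difficulty is purely clerical: keeping track of the signs introduced by single versus double dualizations, which the iterate rule above controls uniformly. I therefore expect the whole verification to go through as four reductions to \eqref{lef-mod1}--\eqref{Nov-mod6}, with the symmetry of $s_A=l_A+r_A$ and the commutativity \eqref{Nov-mod6} doing all the real work.
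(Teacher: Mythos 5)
Your proof is correct. The paper itself gives no argument for this proposition — it simply cites \cite[Proposition 3.3]{HBG} (an earlier draft comment dismissed the verification as straightforward) — so there is nothing to compare beyond noting that your direct check is exactly the verification being deferred to the literature. You correctly isolate the only genuinely non-clerical point, namely that $s_A=l_A+r_A$ again satisfies \eqref{lef-mod1}: your auxiliary identity
\[
r_A(a\circ b-b\circ a)=l_A(a)r_A(b)-r_A(b)l_A(a)-l_A(b)r_A(a)+r_A(a)l_A(b),
\]
obtained from the two copies of \eqref{lef-mod2} and the commutativity \eqref{Nov-mod6}, combines with \eqref{lef-mod1} for $l_A$ to give $[s_A(a),s_A(b)]=s_A(a\circ b-b\circ a)$, and the remaining three dual axioms reduce, via the order-reversing rule for composed dual operators, to \eqref{lef-mod2}, \eqref{Nov-mod5}, \eqref{Nov-mod6} exactly as you describe. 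All four reductions and sign computations check out.
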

\delete{\begin{proof}
It is straightforward.
\end{proof}
}
\begin{rmk}                                                \mlabel{ex:dualrep}
By Proposition \ref{pp:dualrep}, the bimodule $(A, L_A, R_A)$ of a Novikov algebra $(A,\circ)$ gives the bimodule
$(A^\ast, L_{A,\star}^\ast, -R_A^\ast)$. By Remark \ref{rmk-bi-m}, $(A^\ast, L_{A,\star}^\ast, -R_A^\ast)$  is also an $A$-bimodule Novikov algebra.
\end{rmk}

\subsection{$\calo$-operators of weight $\lambda$ and post-Novikov algebras}
\mlabel{ss:PostNov}
We first introduce the notion of $\calo$-operators of weight $\lambda$ on Novikov algebras.
\begin{defi}                                             \label{def-o}
Let $(A,\circ)$ be a Novikov algebra, $(M,\cdot,l_A,r_A)$ be an $A$-bimodule Novikov algebra and $\lambda \in \bf{k}$. A linear map $\alpha: M \rightarrow A$ is called an {\bf $\calo$-operator of weight $\lambda$} on $(A, \circ)$ associated to $(M,\cdot,l_A,r_A)$ if $\alpha$ satisfies
\begin{eqnarray}                                       \label{operation}
\alpha(u)\circ\alpha(v)=\alpha \Big( l_A(\alpha(u))v+r_A(\alpha(v)) u +\lambda u\cdot v \Big), \quad  \;\;\; u,v\in M.
\end{eqnarray}

An $\calo$-operator $T:A\rightarrow A$ of weight $\lambda$ on $(A,\circ)$ associated to  $(A,\circ,L_A,R_A)$ is called a {\bf Rota-Baxter operator of weight $\lambda$}, i.e.,
\begin{equation}  \label{T:Rota-Baxter}
T(x)\circ T(y)=T \Big( T(x)\circ y+ x\circ T(y)+\lambda x \circ y \Big)
, \quad x,y\in A.
\end{equation}
\end{defi}
\begin{rmk}
If $(M, \cdot)$ is trivial, then an $\calo$-operator of weight $\lambda$ on $(A,\circ)$ associated to $(M,\cdot,l_A,r_A)$ is just an $\calo$-operator on $(A, \circ)$ associated to the bimodule $(M, l_A, r_A)$ defined in \cite{HBG}.
\delete{when $M=V$ becomes a vector space, i.e., considering the special $A$-bimodule ($V,l,r$) with the zero multiplication, a linear map $\alpha:V \rightarrow A$ is an $\calo$-operator (of any weight $\lambda$) if $$\alpha(u)\circ \alpha(v)=\alpha \Big( l(\alpha(u))v+r(\alpha(v)) u \Big), \quad \;\;\;\; u,v \in V.$$}
\end{rmk}
\begin{rmk}
When $\lambda \neq 0$, $\alpha$ is an $\calo$-operator of weight $\lambda$ on $(A, \circ)$ associated to $(M,\cdot,l_A,r_A)$ if and only if $\alpha/\lambda$ is an $\calo$-operator of weight $1$ on $(A, \circ)$ associated to $(M,\cdot,l_A,r_A)$ if and only if $\alpha$ is an $\calo$-operator of weight $1$ on $(A, \circ)$ associated to $(M,\cdot_\lambda,l_A,r_A)$, where $\cdot_\lambda$ is defined by $u\cdot_\lambda v =\lambda u \cdot v$ for all $u$, $v \in M$.
\end{rmk}

Then  we introduce the definition of post-Novikov algebras.

\begin{defi}
A {\bf post-Novikov algebra} is a quadruple $(A,\circ,\lhd,\rhd)$, where $(A, \circ)$ is a Novikov algebra, and $\lhd$ and $\rhd$ are binary operations such that the following compatibility conditions hold.
\begin{eqnarray}
&&(a\rhd c) \lhd b =(a\rhd b +a\lhd b +a\circ b)\rhd c,     \label{ND1}
\\
&&a\rhd (c\lhd b)-(a\rhd c)\lhd b+(c\lhd a)\lhd b =c\lhd (a\lhd b+a\rhd b +a\circ b) ,                                                  \label{ND2}
\\
&&(a\rhd b +a\lhd b + a\circ b)\rhd c -(b\rhd a +b\lhd a + b\circ a)\rhd c=
a\rhd(b\rhd c)-b\rhd(a\rhd c),                              \label{ND3}
\\
&& (a\lhd b)\lhd c=(a\lhd c)\lhd b  ,                       \label{ND4}
\\
&&(a\rhd b)\circ c-a\rhd (b\circ c)=(b\lhd a)\circ c- b\circ (a\rhd c),
                                                            \label{post7}
\\
&&(b\circ c)\lhd a -b\circ(c\lhd a)=(c \circ b)\lhd a -c\circ (b\lhd a),
                                                            \label{post8}
\\
&&(a\rhd b) \circ c =(a\rhd c)\circ b,                      \label{post9}
\\
&&(a\circ b)\lhd c=(a\lhd c)\circ b,                     \label{post10}
\vspd \;\;\;a, b,c\in A.
\end{eqnarray}
\end{defi}

\begin{rmk}    \mlabel{post-2nov}
\begin{enumerate}
\item [(\romannumeral1)] If the binary operation $\circ$ in the definition of post-Novikov algebras is trivial, then a post-Novikov algebra is a pre-Novikov algebra defined in \cite{HBG}.
\item [(\romannumeral2)] \delete{Eqs~(\mref{lef}) and (\mref{Nov}) mean that $A$ is a Novikov algebra with operation $\circ$, and we denote by $(\calh(A),\circ)$. } Let $(A, \circ, \lhd, \rhd)$ be a post-Novikov algebra. It is straightforward to verify that $(A,  \circledcirc) $ is also a Novikov algebra with  the following binary operation:
\begin{eqnarray}                    \mlabel{postqquan}
a \circledcirc b := a \rhd b +a\lhd b +a\circ b,~ a,b\in A.
\end{eqnarray}
We say that $(A,\circledcirc )$ {\bf has a compatible post-Novikov algebra structure $(A,\circ,\lhd,\rhd)$} and $(A,\circledcirc )$ is {\bf the associated Novikov algebra of $(A,\circ,\lhd,\rhd)$}.
\end{enumerate}
\delete{\item [(\romannumeral3)] A {\bf homomorphism between two Post-Novikov algebras} is defined as a linear map between the two post-Novikov algebras that preserves the corresponding operations.}
\end{rmk}

Recall \cite{Lod1} that a {\bf commutative dendriform trialgebra} $(A,\cdot, \circ)$ is a
vector space $A$ with two binary operations $\cdot$ and $\circ$, where $(A,\cdot)$ is a commutative associative algebra and they satisfy the following compatibility conditions:
\begin{eqnarray}
&&(a \circ b)\circ c = a\circ(b\circ c + c\circ b+  b\cdot c),\\
&&(a \circ b)\circ c = (a \circ c)\circ b,\\
&&(a\circ b)\cdot c= a\cdot (c  \circ b),\\
&&(a\cdot b)\circ c= (a\circ c)\cdot b,
\;\;a,b,c\in A.
\end{eqnarray}

Next, we present a construction of post-Novikov algebras from  commutative dendriform trialgebras with a derivation.
\begin{pro}
Let $(A,\cdot,\circ)$ be a commutative dendriform trialgebra with a derivation $D$. Define binary operations $\ast$, $\lhd$ and $\rhd: A\times
A\rightarrow A$ by
$$a\ast b =a\cdot D(b),\;\; a\lhd b\coloneqq a\circ D(b) ,\;\;
a\rhd b\coloneqq  D(b)\circ a\;\;\; a,b\in A.$$
Then $(A,\ast,\lhd,\rhd)$ is a post-Novikov algebra.
\end{pro}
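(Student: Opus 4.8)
The plan is to check that $(A,\ast)$ satisfies the two Novikov axioms and then to verify the eight compatibility conditions \eqref{ND1}--\eqref{post10}, with the role of the base Novikov product $\circ$ there now played by $\ast$. The Novikov axioms for $\ast$ are the classical Gelfand--Dorfman computation: since $a\ast b=a\cdot D(b)$ is built from the commutative associative algebra $(A,\cdot)$ and its derivation $D$, associativity, commutativity and the Leibniz rule give $(a\ast b)\ast c-a\ast(b\ast c)=-\,a\cdot b\cdot D^2(c)$, which is symmetric in $a,b$, and $(a\ast b)\ast c=a\cdot D(b)\cdot D(c)$, which is symmetric in $b,c$; these are precisely left-symmetry and right-commutativity.

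For the compatibility conditions I would introduce the shorthand $x\diamond y:=x\circ y+y\circ x+x\cdot y$ for the total product of the trialgebra, so that the first trialgebra axiom reads $(x\circ y)\circ z=x\circ(y\diamond z)$, the associated Novikov product becomes $a\circledcirc b=a\rhd b+a\lhd b+a\ast b=a\diamond D(b)$, and $D$ is a derivation of $\diamond$. With this notation four of the eight conditions are, after substituting the definitions of $\lhd,\rhd,\ast$, \emph{verbatim} the four trialgebra axioms with $D$ inserted, and in particular use no Leibniz rule: \eqref{ND1} is the first axiom in the form $(D(c)\circ a)\circ D(b)=D(c)\circ(a\diamond D(b))$; \eqref{ND4} is right-commutativity $(a\circ D(b))\circ D(c)=(a\circ D(c))\circ D(b)$ of $\circ$; \eqref{post9} is the third axiom combined with commutativity of $\cdot$; and \eqref{post10} is the fourth axiom $(a\cdot D(b))\circ D(c)=(a\circ D(c))\cdot D(b)$.

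The remaining conditions \eqref{ND2}, \eqref{ND3}, \eqref{post7} and \eqref{post8} each contain a nested operation in which $D$ lands on a product, so here I would expand by the Leibniz rule and then collapse the result with the trialgebra axioms. For example, in \eqref{ND3} the right-hand side $a\rhd(b\rhd c)-b\rhd(a\rhd c)$, after differentiating $D(D(c)\circ b)$ and $D(D(c)\circ a)$, becomes a four-term expression whose $D^2(c)$-terms cancel by right-commutativity, leaving $(D(c)\circ D(b))\circ a-(D(c)\circ D(a))\circ b$; the left-hand side $D(c)\circ(a\diamond D(b))-D(c)\circ(b\diamond D(a))$ reduces to the same expression by the first axiom followed by right-commutativity. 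Conditions \eqref{ND2}, \eqref{post7}, \eqref{post8} follow by the same mechanism: one expands the derivation of the relevant composite, $D(a\circledcirc b)=D(a)\diamond D(b)+a\diamond D^2(b)$ in \eqref{ND2} and $D(b\cdot D(c))$, $D(D(c)\circ a)$ and so on in the others, and then pairs off terms using the third and fourth axioms together with commutativity of $\cdot$.

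I expect the main obstacle to be the bookkeeping in \eqref{ND2} and \eqref{ND3}, where one must differentiate the composite operation $a\circledcirc b=a\diamond D(b)$ correctly and track exactly which applications of the first axiom and of right-commutativity produce the cancellations. The two observations that organize everything are the identity $a\circledcirc b=a\diamond D(b)$ and the compact form $(x\circ y)\circ z=x\circ(y\diamond z)$ of the first axiom; once these are in place, each of the eight verifications is a short and essentially mechanical reduction.
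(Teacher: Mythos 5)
Your proposal is correct and is exactly the direct verification that the paper leaves as ``It is straightforward'': the Novikov axioms for $\ast$ are the classical Gelfand--Dorfman computation, conditions \eqref{ND1}, \eqref{ND4}, \eqref{post9}, \eqref{post10} are the trialgebra axioms verbatim after substitution, and your Leibniz-expansion-and-collapse treatment of \eqref{ND2}, \eqref{ND3}, \eqref{post7}, \eqref{post8} checks out (e.g.\ for \eqref{ND3} both sides indeed reduce to $(D(c)\circ D(b))\circ a-(D(c)\circ D(a))\circ b$). One minor slip in your last grouping: \eqref{ND2} is closed by the first axiom in the compact form $(x\circ y)\circ z=x\circ(y\diamond z)$ together with right-commutativity of $\circ$, while the third and fourth axioms (with commutativity of $\cdot$) are what close \eqref{post7} and \eqref{post8}; this misattribution does not affect the validity of the argument, since you already identified the compact first axiom as the organizing identity.
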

\begin{proof}
\delete{Take Eq.~(\mref{ND2}) as an example : $a\rhd (c\lhd b)-(a\rhd c)\lhd b+(c\lhd a)\lhd b =c\lhd (a\lhd b+a\rhd b +a\ast b) $.
\begin{eqnarray*}
&&a\rhd (c\lhd b)=a\rhd(c\circ D(b))=D(c\circ D(b))\circ a=(D(c)\circ D(b))\circ a + (c\circ D^2(b))\circ a.\\
&&-(a\rhd c)\lhd b=- (D(c)\circ a)\lhd b=-(D(c)\circ a)\circ D(b),\\
&&(c\lhd a)\lhd b=(c\circ D(a))\circ D(b),\\
\\
&&c\lhd (a\lhd b)=c\lhd(a\circ D(b))=c\circ D(a\circ D(b))=c\circ (D(a)\circ D(b))+c\circ(a\circ D^2(b))\\
&&c\lhd (a\rhd b)=c\lhd(D(b)\circ a)=c\circ D(D(b)\circ a)
=c\circ(D^2(b)\circ a)+c\circ(D(b)\circ D(a)),\\
&&c\lhd(a\ast b)=c\lhd (a\cdot D(b))=c\circ D(a\cdot D(b))=c\circ(D(a)\cdot D(b))+c\circ(a\cdot D^2(b)).\\
\\
&&(c\circ D^2(b))\circ a= c\circ(D^2(b)\circ a) +c\circ(a\circ D^2(b))+c\circ(a\cdot D^2(b)),(by~Eq.~(70))\\
&&(c\circ D(a))\circ D(b)=c\circ (D(a)\circ D(b))+c\circ(D(b)\circ D(a))+c\circ(D(a)\cdot D(b)) (by~Eq.~(70))
\end{eqnarray*}}
It is straightforward.

\end{proof}

\begin{lem}  \mlabel{post-abimod}
Let $(A,\circ,\lhd,\rhd)$ be a post-Novikov algebra. Define two linear maps $L_\rhd$, $R_\lhd: A\rightarrow {\rm End}_{\bf k}(A)$ by $L_\rhd(a)b:=a\rhd b$ and $R_\lhd(b)a:=a\lhd b$ for all $a$, $b\in A$. Then $(A,\circ,L_\rhd,R_\lhd)$ is a $(A,\circledcirc)$-bimodule Novikov algebra.
\end{lem}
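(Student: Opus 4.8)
The plan is to verify directly that $(A,\circ,L_\rhd,R_\lhd)$ satisfies all eight defining conditions of an $(A,\circledcirc)$-bimodule Novikov algebra from Definition~\ref{bimod}: here the base Novikov algebra is the associated Novikov algebra $(A,\circledcirc)$ of Remark~\ref{post-2nov}, the module Novikov algebra is $(A,\circ)$ itself, and the two actions are $l_A=L_\rhd$, $r_A=R_\lhd$. The whole argument rests on a single observation: once one substitutes $L_\rhd(a)b=a\rhd b$, $R_\lhd(a)b=b\lhd a$, and $a\circledcirc b=a\rhd b+a\lhd b+a\circ b$ into each required identity, it collapses verbatim into one of the eight post-Novikov axioms \eqref{ND1}--\eqref{ND4} and \eqref{post7}--\eqref{post10}. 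So there is essentially nothing to compute beyond bookkeeping.

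First I would dispose of the four bimodule axioms \eqref{lef-mod1}--\eqref{Nov-mod6}, in which the base action uses $\circledcirc$. Substituting the actions, \eqref{Nov-mod5} becomes $(a\circledcirc b)\rhd c=(a\rhd c)\lhd b$, which is exactly \eqref{ND1}; \eqref{lef-mod2} rearranges to $a\rhd(c\lhd b)-(a\rhd c)\lhd b+(c\lhd a)\lhd b=c\lhd(a\circledcirc b)$, which is \eqref{ND2}; \eqref{lef-mod1}, after using linearity of $L_\rhd$, becomes $(a\circledcirc b-b\circledcirc a)\rhd c=a\rhd(b\rhd c)-b\rhd(a\rhd c)$, which is \eqref{ND3}; and \eqref{Nov-mod6} becomes $(c\lhd b)\lhd a=(c\lhd a)\lhd b$, which is \eqref{ND4}. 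Thus $(A,L_\rhd,R_\lhd)$ is a bimodule of $(A,\circledcirc)$.

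Next I would treat the four remaining identities \eqref{lef-mod3}, \eqref{lef-mod4}, \eqref{Nov-mod7}, \eqref{Nov-mod8}, which involve the module product $\circ$. Substituting the actions and $\cdot=\circ$, these become respectively $(a\rhd b)\circ c-a\rhd(b\circ c)=(b\lhd a)\circ c-b\circ(a\rhd c)$, which is \eqref{post7}; $(b\circ c)\lhd a-b\circ(c\lhd a)=(c\circ b)\lhd a-c\circ(b\lhd a)$, which is \eqref{post8}; $(a\rhd b)\circ c=(a\rhd c)\circ b$, which is \eqref{post9}; and $(b\circ c)\lhd a=(b\lhd a)\circ c$, which is \eqref{post10}. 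This exhausts Definition~\ref{bimod} and completes the verification.

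The only real obstacle is careful notation management rather than any mathematical difficulty. One must consistently remember that the base-algebra multiplication entering the bimodule axioms is $\circledcirc$ while the module multiplication is $\circ$, and must track the variable swap built into $R_\lhd(a)v=v\lhd a$, which is what produces the relabellings needed to match \eqref{Nov-mod6} with \eqref{ND4} and \eqref{Nov-mod8} with \eqref{post10}. Since each target identity coincides with a post-Novikov axiom under these substitutions, the lemma follows at once.
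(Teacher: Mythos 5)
Your verification is correct: each of the eight conditions in Definition~\ref{bimod} (with base algebra $(A,\circledcirc)$, module product $\circ$, and actions $L_\rhd$, $R_\lhd$) does reduce, after the substitutions and relabellings you indicate, exactly to one of the axioms \eqref{ND1}--\eqref{ND4} and \eqref{post7}--\eqref{post10}. This is precisely the argument the paper has in mind when it declares the proof straightforward (Eqs.~\eqref{ND1}--\eqref{ND4} giving the bimodule structure and Eqs.~\eqref{post7}--\eqref{post10} the remaining compatibilities), so you have simply written out the same approach in full detail.
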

\begin{proof}
It is straightforward.
\delete{The proof follows from a direct checking. Eqs.~(\mref{ND1})-(\mref{ND4}) show that $(\calh(A),l, r)$ is a representation of $(\calf(A),\circledcirc )$ and further Eqs.~(\mref{post7})-(\mref{post10}) mean that $(\calh(A),\circ,l,r)$ is a $(\calf(A),\circledcirc )$-bimodule Novikov algebra.}
\end{proof}

\begin{thm}                        \label{yibanpost}
Let $(A,\circ)$ be a Novikov algebra and $(M,\cdot,l_A,r_A ) $ be an $A$-bimodule Novikov algebra. Let $\alpha: M\rightarrow A$ be an $\calo$-operator of weight
$\lambda $ on $(A, \circ)$ associated to $(M,\cdot,l_A,r_A)$. Then the following conclusions hold. \delete{ i.e., it satisfies
$$\alpha(x)\circ{?}\alpha(y)=\alpha \Big( l(\alpha(x))y+r(\alpha(y))x +\lambda x\cdot{?} y \Big), \quad  \;\;\; x,y\in M.$$ }
\begin{enumerate}
\item [(\romannumeral1)] There is a post-Novikov algebra structure on $M$, where the three binary operations are defined by
\begin{eqnarray}
 x \circ y: =\lambda x \cdot y, \;\;\ x\rhd y:=l_A(\alpha(x))y, \;\;\ x\lhd y:=r_A(\alpha(y))x, \;\;\; x,y\in M.
\end{eqnarray}
Denote this post-Novikov algebra by $(M,\circ,\lhd,\rhd)$.
\item [(\romannumeral2)] $\alpha$ is a Novikov algebra homomorphism from $(M, \circledcirc)$ to $(A,\circ)$, where $\circledcirc$ is defined by Eq.~(\mref{postqquan}).
\item [(\romannumeral3)] If \text{Ker}($\alpha$) is an ideal of $(M, \cdot)$, then there exists an induced post-Novikov algebra structure on $\alpha(M)$ given by
{\small \begin{align}   \mlabel{impost}
~~~\alpha(x)\circ_\alpha \alpha(y):=\lambda \alpha(x\cdot y),~
\alpha(x)\rhd_\alpha \alpha(y):=\alpha( l_A(\alpha(x))y),~
\alpha(x)\lhd_\alpha \alpha(y):=\alpha( r_A(\alpha(y))x),~x,y\in M.
\end{align}
}
Furthermore, $\alpha$ is a homomorphism of post-Novikov algebras from $(M, \circ, \lhd, \rhd)$ to
$(\alpha(M), \circ_\alpha,$ $ \lhd_\alpha, \rhd_\alpha)$.
\end{enumerate}
\end{thm}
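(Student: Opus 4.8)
The plan is to extract from Eq.~\eqref{operation} the single identity that drives all three parts. Writing $x\circledcirc y = x\rhd y + x\lhd y + x\circ y = l_A(\alpha(x))y + r_A(\alpha(y))x + \lambda\, x\cdot y$ for the operations introduced in (\romannumeral1), Eq.~\eqref{operation} says precisely that
\[
\alpha(x\circledcirc y) = \alpha(x)\circ\alpha(y), \qquad x,y\in M.
\]
I would record this at the outset, since it converts every occurrence of $\alpha(x\circledcirc y)$ into a product in $(A,\circ)$, which is exactly what allows the bimodule identities to be applied.

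For (\romannumeral1), I would verify the eight identities \eqref{ND1}--\eqref{ND4} and \eqref{post7}--\eqref{post10} by direct substitution, organized by which structural axiom each reduces to. The four identities \eqref{post7}--\eqref{post10} do not involve $\circledcirc$: after factoring out $\lambda$ (using $x\circ y=\lambda\,x\cdot y$), the identities \eqref{post9}, \eqref{post10}, \eqref{post8}, \eqref{post7} become, respectively, the $A$-bimodule Novikov identities \eqref{Nov-mod7}, \eqref{Nov-mod8}, \eqref{lef-mod4}, \eqref{lef-mod3} evaluated at the relevant $\alpha(\cdot)$. Identity \eqref{ND4} is immediate from \eqref{Nov-mod6}. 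The three remaining identities \eqref{ND1}--\eqref{ND3} are where the displayed identity enters: expanding the $\circledcirc$ occurring inside a $\rhd$ or $\lhd$ and replacing $\alpha(x\circledcirc y)$ by $\alpha(x)\circ\alpha(y)$, they reduce exactly to the bimodule axioms \eqref{Nov-mod5}, \eqref{lef-mod2}, \eqref{lef-mod1}. This establishes the post-Novikov structure, and then (\romannumeral2) is immediate: by (\romannumeral1) and Remark~\ref{post-2nov}(\romannumeral2), $(M,\circledcirc)$ is a Novikov algebra, and the displayed identity is precisely the assertion that $\alpha\colon(M,\circledcirc)\to(A,\circ)$ is a homomorphism.

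The real content is (\romannumeral3), namely well-definedness of $\circ_\alpha$, $\rhd_\alpha$, $\lhd_\alpha$ on $\alpha(M)$; once these descend through $\alpha$, surjectivity of $\alpha\colon M\to\alpha(M)$ transports the eight identities from $M$, and $\alpha$ becomes a post-Novikov homomorphism by construction. Since each operation is defined so that $\alpha$ intertwines it with the corresponding operation of (\romannumeral1) (that is, $\alpha(x)\circ_\alpha\alpha(y)=\lambda\alpha(x\cdot y)=\alpha(x\circ y)$, and likewise for $\rhd_\alpha,\lhd_\alpha$), the only issue is independence of the representatives: replacing $x$ or $y$ by $x+k$ or $y+k$ with $k\in\mathrm{Ker}(\alpha)$ must not change the value. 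This is where both hypotheses are used together. For $\circ_\alpha$ I need $\alpha(x\cdot k)=\alpha(k\cdot y)=0$, which holds because $\mathrm{Ker}(\alpha)$ is an ideal of $(M,\cdot)$. For $\rhd_\alpha$ and $\lhd_\alpha$ I need $\alpha(l_A(\alpha(x))k)=0$ and $\alpha(r_A(\alpha(y))k)=0$; here I would feed $v=k$ (resp.\ $u=k$) into Eq.~\eqref{operation}: since $\alpha(k)=0$, the left-hand product vanishes and the $r_A(\alpha(k))$ (resp.\ $l_A(\alpha(k))$) term drops out, so \eqref{operation} collapses to $0=\alpha(l_A(\alpha(x))k)+\lambda\alpha(x\cdot k)$ (resp.\ $0=\alpha(r_A(\alpha(y))k)+\lambda\alpha(k\cdot y)$), and the ideal hypothesis kills the remaining $\lambda\alpha(\cdot)$ term. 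Thus $l_A(\alpha(x))k,\,r_A(\alpha(y))k\in\mathrm{Ker}(\alpha)$ and all three operations descend to $\alpha(M)$.

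The main obstacle is precisely this interplay in (\romannumeral3): neither the ideal condition nor Eq.~\eqref{operation} alone suffices for the well-definedness of $\rhd_\alpha$ and $\lhd_\alpha$—they must be combined as above. Everything else, including all of (\romannumeral1) and (\romannumeral2), is bookkeeping once the identity $\alpha(x\circledcirc y)=\alpha(x)\circ\alpha(y)$ is in hand.
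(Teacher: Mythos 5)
Your proof is correct and takes essentially the same approach as the paper: the paper records the proof only as ``straightforward,'' and its own sketch (left commented out in the source) performs exactly the same direct verification---reducing the eight compatibility identities to the bimodule-Novikov axioms for (\romannumeral1), reading (\romannumeral2) off from Eq.~(\ref{operation}), and proving well-definedness in (\romannumeral3) by substituting kernel elements into Eq.~(\ref{operation}) and invoking the ideal hypothesis. Your collapse $0=\alpha\big(l_A(\alpha(x))k\big)+\lambda\alpha(x\cdot k)$ for $k\in\mathrm{Ker}(\alpha)$ is precisely the mechanism used in the paper's sketch, so no gap remains.
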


\begin{proof}
It is straightforward.
\delete{(\romannumeral1) It is a straightforward check and thus we just take Eq.~(\mref{ND2}) for an example. In fact, By Eq.~(\mref{lef-mod2}), for any $x,y,z\in M$, we  have
\begin{eqnarray*}
&&x\rhd (z\lhd y)-(x\rhd z)\lhd y+(z\lhd x)\lhd y -z\lhd (x\lhd y+x\rhd y +x\odot y)  \\
&=&l(\alpha(x))(r(\alpha(y))z)- r(\alpha(y))(l(\alpha(x))y)+r(\alpha(y))(r(\alpha(x))z) -r\Big(\alpha\big(l(\alpha(x))y+r(\alpha(y))x +\lambda x\cdot y\big)\Big)z\\
&=&r\Big(\alpha(x)\circ\alpha(y) \Big)z-r\Big(\alpha\big(l(\alpha(x))y+r(\alpha(y))x +\lambda x\cdot y\big)\Big)z=0.
\end{eqnarray*}
(\romannumeral2) For any $x,y\in M$, we obtain that
\begin{eqnarray*}
\alpha(x\circledcirc y)=\alpha(x \rhd y +x\lhd y +x \odot y)
=\alpha\big( l(\aax)y +r(\aay)x +\lambda x\cdot y\big)
=\aax\circ \aay.
\end{eqnarray*}
(\romannumeral3) We start with a necessary check that above multiplications defined by Eq.~(\mref{impost}) are well-defined. In fact, it follows the same argument  as one in \cite[Theorem 5.4(\romannumeral3)]{BGNlaxpair} for Lie algebra. On the other hand, we shall prove $(\alpha(M),\circ_\alpha, \lhd_\alpha, \rhd_\alpha)$ is a \postn. However, it naturally holds since $(M,\odot,\lhd,\rhd)$ is a \postn. Furthermore, the definition of Eq.~(\mref{impost}) means that last assertion holds immediately.
\\
\nc{\aaxo}{\alpha(x_1)} \nc{\aaxtw}{\alpha(x_2)}
\nc{\aayo}{\alpha(y_1)} \nc{\aaytw}{\alpha(y_2)}
Let $x_1,x_2,y_1,y_2\in M$ such that $\aaxo=\aaxtw$ and $\aayo=\aaytw$.
Since $x_1-x_2,y_1-y_2\in Ker(\alpha)$, $Ker(\alpha)$ is an ideal of
$(M,\circ)$,
\begin{eqnarray*}
&&\aaxo\rhd_\alpha \aayo =\alpha\Big(l(\aaxo)y_1\Big)
=\alpha\Big(l\big(\alpha(x_2 +x_1-x_2)\big)(y_2+y_1-y_2)\Big)\\
&=&\alpha l(\alpha(x_2)y_2)+ \alpha(l(\alpha(x_2))(y_1-y_2))\\
&=&\alpha l(\alpha(x_2)y_2)+ \alpha(x_2)\circ\alpha(y_1-y_2)-\alpha(r(\alpha(y_1-y_2))(x_2)) -\lambda \alpha((x_2)\cdot(y_1-y_2))\\
&=&\alpha l(\alpha(x_2)y_2) =\aaxtw\rhd_\alpha \aaytw.\;\;\;\aaxo\lhd_\alpha \aayo, \\
&&\aaxo\circ_\alpha \aayo =\alpha(x_2 +x_1-x_2)\big)\circ_\alpha
\alpha(y_2+y_1-y_2)=\aaxtw\circ_\alpha \aaytw.
\end{eqnarray*}}
\end{proof}

\begin{cor}
Let $(A,\cdot)$ be a Novikov algebra. Then there is a compatible post-Novikov algebra structure on $A$ if and only if there exists an $A$-bimodule Novikov algebra $(M,\cdot,l_A,r_A ) $ and an invertible $\calo$-operator of weight $\lambda=1$ on $(A, \cdot)$ associated to $(M,\cdot,l_A,r_A)$.
\end{cor}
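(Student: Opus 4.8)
The plan is to derive both implications from the two structural results already in hand: Lemma~\ref{post-abimod}, which turns a post-Novikov algebra into a bimodule Novikov algebra over its own associated Novikov algebra, and Theorem~\ref{yibanpost}, which runs in the reverse direction by producing a post-Novikov algebra from an $\calo$-operator. Invertibility of the operator is exactly what makes these two constructions mutually inverse, once the weight is fixed at $\lambda=1$.

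For the forward implication, suppose $(A,\circ,\lhd,\rhd)$ is a compatible post-Novikov structure on $(A,\cdot)$, so that by Remark~\ref{post-2nov}(ii) the associated Novikov algebra $(A,\circledcirc)$ coincides with $(A,\cdot)$, i.e. $a\cdot b=a\rhd b+a\lhd b+a\circ b$. I would apply Lemma~\ref{post-abimod} to obtain the $(A,\circledcirc)=(A,\cdot)$-bimodule Novikov algebra $(A,\circ,L_\rhd,R_\lhd)$, take $M=A$ with this structure, and set $\alpha=\id_A$. It then remains to check the defining equation~\eqref{operation} of an $\calo$-operator of weight $1$ directly: with $\alpha=\id$ the right-hand side becomes $L_\rhd(u)v+R_\lhd(v)u+u\circ v=u\rhd v+u\lhd v+u\circ v=u\circledcirc v$, which equals $u\cdot v$ by compatibility. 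Hence $\id_A$ is an $\calo$-operator of weight $1$, and it is trivially invertible. Observe that the choice $\lambda=1$ is forced here precisely so that the term $\lambda\,u\circ v$ in~\eqref{operation} matches the $\circ$-summand of $\circledcirc$.

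For the converse, suppose $\alpha:M\to A$ is an invertible $\calo$-operator of weight $1$ on $(A,\cdot)$ associated to some $A$-bimodule Novikov algebra $(M,\cdot,l_A,r_A)$. By Theorem~\ref{yibanpost}(i) the space $M$ carries a post-Novikov structure $(M,\circ,\lhd,\rhd)$, and by Theorem~\ref{yibanpost}(ii) the map $\alpha$ is a Novikov algebra homomorphism from $(M,\circledcirc)$ to $(A,\cdot)$. Since $\alpha$ is bijective, I would transport the post-Novikov operations along $\alpha$, setting $a\rhd' b:=\alpha(\alpha^{-1}(a)\rhd\alpha^{-1}(b))$ and analogously defining $\lhd'$ and $\circ'$. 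Transport along a linear isomorphism preserves all the defining identities, so $(A,\circ',\lhd',\rhd')$ is again a post-Novikov algebra. Its associated operation is $a\circledcirc' b=\alpha(\alpha^{-1}(a)\circledcirc\alpha^{-1}(b))$, and Theorem~\ref{yibanpost}(ii) gives $\alpha(\alpha^{-1}(a)\circledcirc\alpha^{-1}(b))=\alpha(\alpha^{-1}(a))\cdot\alpha(\alpha^{-1}(b))=a\cdot b$. Thus the associated Novikov algebra of the transported structure is exactly $(A,\cdot)$, which is the desired compatible post-Novikov structure on $A$.

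Neither direction presents a serious obstacle, since the substantive content is packaged in Lemma~\ref{post-abimod} and Theorem~\ref{yibanpost}; the only points requiring real care are bookkeeping ones. In the forward direction one must match the weight $\lambda=1$ to the compatibility identity $a\cdot b=a\circledcirc b$, and in the converse one must confirm both that transport of structure along the isomorphism $\alpha$ preserves the post-Novikov axioms~\eqref{operation} is not needed here, only the axioms, and that it produces the correct associated Novikov product, which is where Theorem~\ref{yibanpost}(ii) does the essential work.
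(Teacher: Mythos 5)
Your proof is correct and follows essentially the same route as the paper: the forward direction (take $M=A$ with the structure from Lemma~\ref{post-abimod} and $\alpha=\id$) is identical, and your converse is the paper's converse in only slightly different clothing, since your transported operations $\rhd',\lhd',\circ'$ coincide exactly with the induced operations $\rhd_\alpha,\lhd_\alpha,\circ_\alpha$ of Eq.~(\ref{impost}), which is what the paper cites via Theorem~\ref{yibanpost}(iii) (applicable because $\mathrm{Ker}(\alpha)=0$ is trivially an ideal). The only substantive difference is that you re-derive that induced structure from parts (i)--(ii) plus explicit transport and verify the compatibility $\circledcirc'=\cdot$ directly, a detail the paper leaves implicit.
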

\begin{proof}
Suppose that $(A, \cdot)$ has a compatible post-Novikov algebra structure given by $(A,\circ,\lhd,\rhd)$. Then the associated Novikov algebra $(A,\circledcirc )$ of $(A,\circ,\lhd,\rhd)$ equals to $(A,\cdot)$.
Therefore, by Lemma \mref{post-abimod}, $(A,\circ,L_\rhd,R_\lhd)$ is an $(A,\cdot)$-bimodule Novikov algebra. Since
$a \cdot b = a \rhd b +a\lhd b +a\circ b$ for all $a$, $b\in A$, we have $\id(a) \cdot \id(b) = \id\Big(L_\rhd(\id(a)) b +R_\lhd(\id(b)) a +\id(a)\circ \id(b)\Big)$. Therefore, $\id:(A,\circ,L_\rhd,R_\lhd) \rightarrow (A,\cdot)$ is an $\calo$-operator of weight $\lambda=1$ on $(A,\cdot)$ associated to $(A,\circ,L_\rhd,R_\lhd)$.

On the other hand, suppose that $\alpha$ is an invertible $\calo$-operator of weight $\lambda=1$ on $(A, \cdot)$ associated to $(M,\cdot,l_A,r_A)$. Then by Theorem \mref{yibanpost}, there exists a post-Novikov algebra structure on $\alpha(M)=A$ given by Eq.~(\mref{impost}) for $\lambda=1$. Therefore, $(\alpha(M)=A,\circ_\alpha, \lhd_\alpha, \rhd_\alpha)$ is a compatible post-Novikov algebra structure on $(A,\cdot)$.
\end{proof}

\begin{cor}         \mlabel{post-Rota}
Let $(A,\circ)$ be a Novikov algebra and $T:A\rightarrow A$ be a Rota-Baxter operator of weight $\lambda$. Then there is a post-Novikov algebra structure on $A$ given by
\begin{align}
x\odot y:=\lambda x\circ y,~
x\rhd y:= T(x)\circ y,~
x\lhd y:= x\circ T(y),~x,y\in A.
\end{align}
If $T$ is invertible in addition,  then there is a compatible post-Novikov structure on $A$ defined by
\small{\begin{align}   \mlabel{}
x\circ_T y:=\lambda T (T^{-1}(x)\circ T^{-1}(y)),~
x\rhd_T y:=T( (x   \circ (T^{-1}(y)),~
x\lhd_T y:=T ( (T^{-1}(x) \circ y),~x,y\in A.
\end{align}}
\end{cor}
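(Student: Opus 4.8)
The plan is to obtain this corollary as a direct specialization of Theorem~\ref{yibanpost}. The first observation is that the defining identity~\eqref{T:Rota-Baxter} of a Rota-Baxter operator of weight $\lambda$ is exactly the condition~\eqref{operation} for $T$ to be an $\calo$-operator of weight $\lambda$ on $(A,\circ)$ associated to the $A$-bimodule Novikov algebra $(A,\circ,L_A,R_A)$ (which is indeed such a structure by Remark~\ref{rmk-bi-m}): taking $\alpha=T$, $l_A=L_A$, $r_A=R_A$ and $\cdot=\circ$ turns $l_A(\alpha(x))y$ into $T(x)\circ y$ and $r_A(\alpha(y))x$ into $x\circ T(y)$. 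With this identification, part (i) of Theorem~\ref{yibanpost} applied to $\alpha=T$ and $M=A$ immediately produces a post-Novikov algebra structure on $A$ whose operations are $x\odot y=\lambda\,x\circ y$, $x\rhd y=L_A(T(x))y=T(x)\circ y$ and $x\lhd y=R_A(T(y))x=x\circ T(y)$, establishing the first assertion.

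For the second assertion, when $T$ is invertible we have $\mathrm{Ker}(T)=\{0\}$, which is trivially an ideal of $(A,\circ)$, and $T(A)=A$; hence part (iii) of Theorem~\ref{yibanpost} yields an induced post-Novikov structure on $T(A)=A$ given by~\eqref{impost}. To put the operations in the stated form I would substitute $x=T^{-1}(x')$ and $y=T^{-1}(y')$ into~\eqref{impost}: using $L_A(T(T^{-1}(x')))T^{-1}(y')=x'\circ T^{-1}(y')$, the analogous simplification for $R_A$, and $\cdot=\circ$, the three operations become precisely $x'\circ_T y'=\lambda T(T^{-1}(x')\circ T^{-1}(y'))$, $x'\rhd_T y'=T(x'\circ T^{-1}(y'))$ and $x'\lhd_T y'=T(T^{-1}(x')\circ y')$.

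The only step that is not purely formal is checking that this structure is \emph{compatible}, that is, that its associated Novikov algebra under~\eqref{postqquan} recovers $(A,\circ)$. Writing $a=T^{-1}(x')$ and $b=T^{-1}(y')$, I would compute $x'\circledcirc_T y'=x'\rhd_T y'+x'\lhd_T y'+x'\circ_T y'=T\big(T(a)\circ b+a\circ T(b)+\lambda\,a\circ b\big)$ and then apply the Rota-Baxter identity~\eqref{T:Rota-Baxter} to collapse the right-hand side to $T(a)\circ T(b)=x'\circ y'$. This is the sole place where the Rota-Baxter relation itself—rather than merely its reinterpretation as an $\calo$-operator—is used, and so it is the main obstacle, though a routine one; everything else reduces to substitution into Theorem~\ref{yibanpost}.
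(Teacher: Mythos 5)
Your proposal is correct and follows exactly the route the paper intends: the paper's proof is simply ``It follows from Theorem \ref{yibanpost} directly,'' and your argument is the detailed specialization of that theorem with $M=A$, $(M,\cdot,l_A,r_A)=(A,\circ,L_A,R_A)$ and $\alpha=T$, including the (trivial) ideal condition on $\mathrm{Ker}(T)$ and the compatibility check via the Rota-Baxter identity that the paper leaves implicit.
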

\begin{proof}
It follows from Theorem \ref{yibanpost} directly.
\end{proof}
\subsection{Extended $\calo$-operators}
                                                        \mlabel{ss:operator}
\delete{For exploring the connections between extended $\calo$-operators and Novikov bimodule structures, we will first state the following concepts about $\calo$-operators on Novikov algebras.}

\begin{defi}     \label{def:beta and operator}
Let $(A,\circ)$ be a Novikov algebra and $(M,\cdot,l_A,r_A) $ be an $A$-bimodule Novikov algebra. Let $\kappa,\mu$ and $\lambda$ be constants in $\bf k$.
\begin{enumerate}
\item[(\romannumeral1)] A linear map $\beta:M \rightarrow A$ is called {\bf balanced } if
\begin{equation}\label{b:con1}
 l_A(\beta(u))v = r_A(\beta(v))u,\quad   u, v \in M.
\end{equation}
\item[(\romannumeral2)] A linear map $\beta:M \rightarrow A$ is called {\bf $A$-invariant of mass $\kappa$} if
\begin{equation}\label{b:con2}
\kappa \Big( x \circ \beta(u) \Big) = \kappa \beta\Big( l_A(x) u \Big), \quad \kappa \Big( \beta(u) \circ x \Big) = \kappa \beta\Big( (r_A(x) u ) \Big) ,           \quad   x \in A, u\in M.
\end{equation}
In particular, if $\beta$ is $A$-invariant of mass $\kappa\neq 0$, then $\beta$ is an $A$-bimodule homomorphism from $(M, l_A,r_A)$ to $(A, L_A, R_A)$. If $\beta$ is balanced and $\beta$ is an $A$-bimodule homomorphism from $(M, l_A,r_A)$ to $(A, L_A, R_A)$, we call that $\beta$ is a {\bf balanced $A$-bimodule homomorphism}.
\item[(\romannumeral3)] A linear map $\beta:M \rightarrow A$ is called {\bf equivalent  of mass  $\mu$} if
\begin{equation}\label{b:con3}
\mu l_A\Big( \beta(u \cdot v) \Big) w = \mu \Big( l_A(\beta(u)v) \Big)\cdot w ,
\quad
\mu r_A\Big( \beta(v \cdot w) \Big) u=  \mu u\cdot \Big(r_A(\beta(w)v) \Big) ,
\quad   u, v, w \in M.
\end{equation}
\item[(\romannumeral4)] Let $\alpha$, $\beta:M \rightarrow A$ be linear maps and $\beta$ be balanced, $A$-invariant of mass $\kappa$, and equivalent of mass $\mu$.  $\alpha$ is called an {\bf extended $\calo$-operator of weight $\lambda$ with extension $\beta$ of mass $(\kappa,\mu)$} on $(A, \circ)$ associated to $(M,\cdot,l_A,r_A)$ if
\begin{equation}  \label{a ext b}
\alpha(u)\circ \alpha(v)-\alpha \Big( l_A(\alpha(u))v+r_A(\alpha(v)) u+\lambda u\cdot v \Big)
=\kappa \beta(u) \circ \beta(v)+  \mu \beta(u \cdot v)
,\quad  u,v\in M.
\end{equation}
\delete{A {\bf Rota-Baxter Novikov algebra} of weight $\lambda$ is a Novikov algebra equipped with a Rota-Baxter operator of weight $\lambda$.}
\end{enumerate}
\begin{rmk}
The parameters $\kappa$, $\mu$ and $\lambda$ in the definition are included in order to
uniformly treat the different cases when the parameters vary.

If $\beta=0$, then an extended $\calo$-operator of weight $\lambda$ with extension $\beta$ of mass $(\kappa,\mu)$ is just the $\calo$-operator of weight $\lambda$ defined in Definition \mref{def-o}.

Note that if $(M, \cdot, l_A, r_A)=(A, \circ, L_A, R_A)$, then $\beta=\id$ is balanced, $A$-invariant of mass $\kappa$ and equivalent of mass  $\mu$.

It should be also pointed out that if $\beta$ is an $A$-bimodule homomorphism from $(M, l_A,r_A)$ to $(A, L_A, R_A)$, then Eq. (\ref{b:con2}) also holds for any $\kappa$. Therefore, in this case, $\beta$ is also $A$-invariant of mass $\kappa$.
\end{rmk}
\end{defi}


Let $(A,\circ)$ be a Novikov algebra and  $(M,\cdot,l_A,r_A) $ be an $A$-bimodule Novikov algebra. Let $\delta_\pm:M \rightarrow A$ be two linear maps and $\lambda \in \bf k$. Define a new binary operation on $M$ as follows.
\begin{equation}                           \mlabel{double Nov}
u\diamond v:=l_A(\delta_+(u))v+r_A(\delta_-(v))u+\lambda u\cdot v,
\quad   u, v \in M.
\end{equation}
Set
\begin{equation}                               \mlabel{a and b}
\alpha:=\frac{\delta_++\delta_-}{2}, \quad \beta=\frac{\delta_+-\delta_-}{2},
\end{equation}
which is called the {\bf symmetrizer} and {\bf antisymmetrizer} of $\delta_\pm$ respectively. Note that $\delta_\pm$ can be recovered from $\alpha$ and $\beta$ by $\delta_+=\alpha+\beta$ and $\delta_-=\alpha-\beta$.

\begin{pro}                                             \label{pro:*}
Let $(A,\circ)$ be a Novikov algebra and $(M,\cdot,l_A,r_A)  $ be an $A$-bimodule Novikov algebra. Let $\alpha : M \rightarrow A $ be a linear map and $\lambda \in \bf k$. Then the following binary operation
\begin{equation} \mlabel{*Nov}
u*v:=l_A(\alpha(u))v+r_A(\alpha(v))u+\lambda u\cdot v,\quad   u, v \in M,
\end{equation}
gives a new Novikov algebra structure on $M$ if and only if the following equalities hold:
\begin{eqnarray}                                        \mlabel{*con1}
&&l_A\Big(\alpha(u*v)-\alpha(u)\circ\alpha(v)\Big)w=l_A\Big(\alpha(u*w)-\alpha(u)\circ\alpha(w)\Big)v,\\            \mlabel{*con2}
&&l_A\Big( \alpha(u*v)- \alpha(u)\circ\alpha(v)\Big)w  -
l_A \Big( \alpha(v*u)- \alpha(v)\circ\alpha(u)\Big) w
\\&&\qquad= \notag
r_A\Big(   \alpha(v*w) - \alpha(v)\circ\alpha(w)  \Big) u-
r_A\Big(   \alpha(u*w) - \alpha(u)\circ\alpha(w)  \Big) v,
\qquad  u, v, w \in M.
\end{eqnarray}
\end{pro}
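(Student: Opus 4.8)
The plan is to verify the two defining identities of a Novikov algebra for $*$ — left-symmetry and right-commutativity — by realizing $*$ inside a genuine Novikov algebra and isolating the ``obstruction'' to $\alpha$ being an $\calo$-operator. Recall from the remark after Definition \ref{def-o} that $(M,\cdot_\lambda,l_A,r_A)$, where $u\cdot_\lambda v:=\lambda\,u\cdot v$, is again an $A$-bimodule Novikov algebra; hence by Proposition \ref{pro:semi} the space $A\oplus M$ equipped with
$$(a+u)\bullet(b+v):=a\circ b+l_A(a)v+r_A(b)u+\lambda\,u\cdot v,\qquad a,b\in A,\ u,v\in M,$$
is a Novikov algebra. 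Let $\pi_M\colon A\oplus M\to M$ denote the projection and let $\psi\colon M\to A\oplus M$, $\psi(u):=\alpha(u)+u$, be the graph of $\alpha$. A direct expansion gives $\psi(u)\bullet\psi(v)=\alpha(u)\circ\alpha(v)+u*v$, so that $u*v=\pi_M(\psi(u)\bullet\psi(v))$ and, writing $\theta(u,v):=\alpha(u*v)-\alpha(u)\circ\alpha(v)\in A$ for the obstruction appearing in \eqref{*con1}--\eqref{*con2}, we have the key identity $\psi(u*v)=\psi(u)\bullet\psi(v)+\theta(u,v)$.

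Next I would compute the two triple products of $*$ by pushing them through $\psi$, using $(u*v)*w=\pi_M(\psi(u*v)\bullet\psi(w))$ and $u*(v*w)=\pi_M(\psi(u)\bullet\psi(v*w))$. Inserting the key identity, together with $\theta(u,v)\in A$ and the explicit form of $\bullet$ (so that $\theta(u,v)\bullet\psi(w)=\theta(u,v)\circ\alpha(w)+l_A(\theta(u,v))w$ and $\psi(u)\bullet\theta(v,w)=\alpha(u)\circ\theta(v,w)+r_A(\theta(v,w))u$), one obtains
\begin{align*}
(u*v)*w&=\pi_M\bigl((\psi(u)\bullet\psi(v))\bullet\psi(w)\bigr)+l_A\bigl(\theta(u,v)\bigr)w,\\
u*(v*w)&=\pi_M\bigl(\psi(u)\bullet(\psi(v)\bullet\psi(w))\bigr)+r_A\bigl(\theta(v,w)\bigr)u.
\end{align*}
Thus every term of each triple product is accounted for by a ``bulk'' term living inside the Novikov algebra $(A\oplus M,\bullet)$ plus a single correction built from $\theta$.

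Finally, because $\bullet$ is Novikov, the bulk terms satisfy the two Novikov axioms exactly; subtracting the relevant instances and projecting, the bulk contributions cancel, leaving only the $\theta$-corrections. For right-commutativity one gets $(u*v)*w-(u*w)*v=l_A(\theta(u,v))w-l_A(\theta(u,w))v$, which vanishes for all $u,v,w$ precisely when \eqref{*con1} holds; for left-symmetry one gets $\bigl((u*v)*w-u*(v*w)\bigr)-\bigl((v*u)*w-v*(u*w)\bigr)=l_A(\theta(u,v))w-l_A(\theta(v,u))w-r_A(\theta(v,w))u+r_A(\theta(u,w))v$, which vanishes for all $u,v,w$ precisely when \eqref{*con2} holds. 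Since these are equalities, both directions of the ``if and only if'' follow at once. The only point requiring care — and the main obstacle if one instead attempts a direct component-by-component check — is the cancellation of the bulk terms; here it is free, resting only on the associativity bookkeeping of $\bullet$, provided one justifies that $(M,\cdot_\lambda,l_A,r_A)$ is an $A$-bimodule Novikov algebra so that Proposition \ref{pro:semi} applies. This last point is immediate, since \eqref{lef-mod3}--\eqref{Nov-mod8} are each homogeneous of degree one in $\cdot$ while the remaining bimodule axioms do not involve $\cdot$ at all.
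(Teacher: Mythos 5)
Your proof is correct, and it takes a genuinely different route from the paper's. The paper proves this proposition by direct computation: it expands $(u*v)*w$, $u*(v*w)$, $(v*u)*w$, $v*(u*w)$ from Eq.~(\ref{*Nov}) and applies the individual axioms (\ref{lef-mod1})--(\ref{Nov-mod8}) of the bimodule Novikov structure one by one, until everything cancels except residual terms which are exactly the differences of the two sides of Eqs.~(\ref{*con1}) and (\ref{*con2}). You instead package all of those axioms into the single statement that $(A\oplus M,\bullet)$ is a Novikov algebra (Proposition \ref{pro:semi}, applied to $(M,\cdot_\lambda,l_A,r_A)$), embed $M$ via the graph $\psi$ of $\alpha$, and read off the same residuals as corrections coming from the obstruction $\theta(u,v)=\alpha(u*v)-\alpha(u)\circ\alpha(v)$ to the graph being closed under $\bullet$. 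Your two displayed residual identities agree exactly with what the paper's expansion produces, and since both are identities (not merely implications), the two directions of the equivalence follow at once in either approach. What your route buys is brevity and a conceptual explanation of where Eqs.~(\ref{*con1})--(\ref{*con2}) come from: $\theta$ measures the failure of $\alpha$ to be an $\calo$-operator of weight $\lambda$, and the Novikov axioms of $\bullet$ annihilate the bulk terms for free. What it costs is the reliance on Proposition \ref{pro:semi}, which the paper cites from an external reference, so there is no circularity. One small point to tighten: your homogeneity argument for why $(M,\cdot_\lambda,l_A,r_A)$ is again an $A$-bimodule Novikov algebra covers Eqs.~(\ref{lef-mod3})--(\ref{Nov-mod8}) (degree one in $\cdot$) and the axioms not involving $\cdot$, but you should also note that the two Novikov identities for $(M,\cdot)$ itself are homogeneous of degree two in $\cdot$, so both sides scale by $\lambda^2$ and are likewise preserved; without this the quadruple is not yet known to satisfy Definition \ref{bimod}.
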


\begin{proof}
For all $u$, $v$, $w \in M$, we have
\begin{eqnarray*}                    
(u*v)*w&=&l_A\big(\alpha(u*v) \big)w+r_A(\alpha(w)) (u*v)+\lambda(u*v)\cdot w
\\
&=&l_A\big(\alpha(u*v) \big)w+
  r_A(\alpha(w))\big(l_A(\alpha(u))v\big)+  r_A(\alpha(w) ) \big( r( \alpha(v) ) u  \big)                           \\
   &&\quad+\lambda    r_A(\alpha(w)) \big( u\cdot v\big)
     +\lambda  \big(l_A(\alpha(u))v\big)\cdot w+
     \lambda  \big( r_A(\alpha(v))  u     \big)\cdot w+
     \lambda^2   \big( u\cdot v\big)\cdot w,\\               
u*(v*w)&=&l_A(\alpha(u))(v*w) + r_A\big(\alpha(v*w) \big)  u     +\lambda u\cdot(v*w)
                  \nonumber\\
 &=&l_A(\alpha(u))\big(l_A(\alpha(v))w\big)   +   l_A(\alpha(u))\big(  r_A(\alpha(w)) v  \big)  +   \lambda l_A(\alpha(u))(v\cdot w)       \nonumber\\
    &&\quad+     r_A\big(\alpha(v*w) \big)      u
   +\lambda u\cdot \big(l_A(\alpha(v))w\big)
   +\lambda u\cdot \big(  r_A(\alpha(w)) v    \big)
    +\lambda^2u\cdot\big( v\cdot w\big) .
\end{eqnarray*}

\delete{Similarly, we get
\begin{eqnarray*}                  
(v*u)*w&=&l_A\big(\alpha(v*u) \big)w+
r_A(\alpha(w)) \big(l_A(\alpha(v))u\big)  +  r_A(\alpha(w))  \big(  r_A(\alpha(u)) v \big)
                    \nonumber\\
&&\quad +\lambda  r_A(\alpha(w))    \big( v\cdot u\big)
+\lambda\big(l_A(\alpha(v))u\big)\cdot w+
\lambda\big(  r_A(\alpha(u)) v     \big)\cdot w+
\lambda^2\big( v\cdot u\big)\cdot w  ,\\                       
v*(u*w)&=&l_A(\alpha(v))\big(l_A(\alpha(u))w\big)+ l_A(\alpha(v))  \big( r_A(\alpha(w)) u  \big)+\lambda l_A(\alpha(v))(u\cdot w)  \nonumber\\
&&\quad+   r_A\big(\alpha(u*w) \big) v
+\lambda v\cdot \big(l_A(\alpha(u))w\big)
+\lambda v\cdot \big(  r_A(\alpha(w))u   \big)
+\lambda^2v\cdot \big( u\cdot w\big) .
\end{eqnarray*}}

Therefore, by Eqs. (\mref{lef-mod2}), (\mref{lef-mod3}),  (\mref{lef-mod4}), and (\mref{lef-mod1}), we obtain
\small{
\begin{eqnarray*}
&&(u*v)*w-u\ast(v\ast w)-(v*u)*w+v*(u*w)
\\
&&\quad=\Bigg( r_A(\alpha(w))\big(l_A(\alpha(u))v\big) - l_A(\alpha(u))\big(  r_A(\alpha(w)) v  \big)   -r_A(\alpha(w))  \big(  r_A(\alpha(u)) v \big)                           \Bigg)
  +\Bigg( r_A(\alpha(w)) \big(r_A(\alpha(v) u)\big)
\\
  &&\quad \quad\quad-r_A(\alpha(w))        \big(l_A(\alpha(v))u\big) + l_A(\alpha(v))\big(r_A(\alpha(w)) u\big)               \Bigg)
  +\lambda \Bigg(  \big(l_A(\alpha(u))v\big)\cdot w- l_A(\alpha(u))(v\cdot w)
\\
  &&\quad\quad\quad-\big(  r_A(\alpha(u)) v\big)\cdot w + v\cdot \big(l_A(\alpha(u))w\big)      \Bigg)
  +\lambda \Bigg( r_A(\alpha(w)) \big( u\cdot v\big) - u\cdot \big(  r_A(\alpha(w)) v    \big) -r_A(\alpha(w))    \big( v\cdot u\big)
\\
  &&\quad\quad\quad  +v\cdot\big(  r_A(\alpha(w))u   \big)                                                                        \Bigg)+\lambda \Bigg( \big( r_A(\alpha(v))  u\big)\cdot w -u\cdot \big(l_A(\alpha(v))w\big)  -\big(l_A(\alpha(v))u\big)\cdot w  +l_A(\alpha(v))(u\cdot w )
\Bigg)
\\
  &&\quad\quad\quad +\lambda^2 \Bigg( (u\cdot v)\cdot w - u\cdot (v\cdot w)-(v\cdot u)\cdot w+ v\cdot(u\cdot w)                                 \Bigg)
-\Bigg( l_A(\alpha(u))\big(l_A(\alpha(v))w\big)
\\
 &&\quad\quad\quad  -l_A(\alpha(v))\big(l_A(\alpha(u))w\big)                  \Bigg) + l_A\big(\alpha(u*v) \big)w - l_A\big(\alpha(v*u) \big)w
        -r_A\big(\alpha(v*w) \big)u   +r_A\big(\alpha(u*w) \big) v
\\
&&\quad=-r_A\big(\alpha(u)\circ \alpha(w) \big) v + r_A\big(\alpha(v)\circ \alpha(w) \big)  u
-l_A\Bigg( \alpha(u)\circ \alpha(v) -  \alpha(v)\circ \alpha(u)               \Bigg)w
+ l_A\big(\alpha(u*v) \big)w
\\
 &&\quad\quad\quad - l_A\big(\alpha(v*u) \big)w
        -r_A\big(\alpha(v*w) \big)u   +r_A\big(\alpha(u*w) \big) v
\\
&&\quad=l_A\Big( \alpha(u*v)- \alpha(u)\circ\alpha(v)\Big)w  -
l_A \Big( \alpha(v*u)- \alpha(v)\circ\alpha(u)\Big) w -
r_A\Big(   \alpha(v*w) - \alpha(v)\circ\alpha(w)  \Big) u
\\
&&\quad\quad\quad +r_A\Big(   \alpha(u*w) - \alpha(u)\circ\alpha(w)  \Big) v.
\end{eqnarray*}}

\delete{On the other hand, we can similarly have
\begin{align*}                    
(u*v)*w=&l_A\big(\alpha(u*v) \big)w+
r_A(\alpha(w))  \big(l_A(\alpha(u))v\big)+r_A(\alpha(w)) \big(r_A(\alpha(v))  u \big) \\
&+\lambda  r_A(\alpha(w)) \big( u\cdot v\big)
+\lambda\big(l_A(\alpha(u))v\big)\cdot w+
\lambda\big(r_A(\alpha(v))u\big)\cdot w+
\lambda^2\big( u\cdot v\big)\cdot w  ,
\end{align*}

Thus,
\begin{align*}                
(u*w)*v=&l_A\big(\alpha(u*w) \big)v+
 r_A(\alpha(v))  \big(l_A(\alpha(u))w\big) + r_A(\alpha(v)) \big( r_A(\alpha(w))  u \big) \\
&+\lambda   r_A(\alpha(v))  \big( u\cdot w\big)
+\lambda  \big(l_A(\alpha(u))w\big)\cdot v+
\lambda     \big(     r_A(\alpha(w)) u \big)\cdot v+
\lambda^2     \big( u\cdot w\big)\cdot v.
\end{align*}

\zhushi{
\begin{eqnarray*}                
(u*w)*v&&=l\big(\alpha(u*w) \big)v+
 r(\alpha(v))  \big(l(\alpha(u))w\big) + r(\alpha(v)) \big( r(\alpha(w))  u \big) \\
&&+\lambda   r(\alpha(v))  \big( u\cdot{?}w\big)
+\lambda  \big(l(\alpha(u))w\big)\cdot{?}v+
\lambda     \big(     r(\alpha(w)) u \big)\cdot{?}v+
\lambda^2     \big( u\cdot{?}w\big)\cdot{?}v.
\end{eqnarray*}
}}
Similarly, by Eqs. (\mref{Nov-mod6}), (\mref{Nov-mod8}), (\mref{Nov-mod7}), and (\mref{Nov-mod5}), we obtain
\small{
\begin{eqnarray*}
&&(u*v)*w-(u*w)*v=\Bigg( r_A(\alpha(w)) \big(r_A(\alpha(v))  u \big)-r_A(\alpha(v)) \big( r_A(\alpha(w))  u \big)  \Bigg)
+\lambda\Bigg( r_A(\alpha(w)) \big( u\cdot v\big) \\&&-  \big(r_A(\alpha(w)) u \big)\cdot v\Bigg)
+\lambda\Bigg( \big(l_A(\alpha(u))v\big)\cdot w -  \big(l_A(\alpha(u))w\big)\cdot v  \Bigg)
+\lambda\Bigg( \big(r_A(\alpha(v))u\big)\cdot w -r_A(\alpha(v))  \big( u\cdot w\big)          \Bigg)\\
&&+\lambda^2\Big( \big( u\cdot v\big)\cdot w -\big( u\cdot w\big)\cdot v          \Big)
+l_A\big(\alpha(u*v) \big)w +  r_A(\alpha(w))  \big(l_A(\alpha(u))v\big)-
l_A\big(\alpha(u*w) \big)v \\&&- r_A(\alpha(v))  \big(l_A(\alpha(u))w\big)
=l_A\big(\alpha(u*v) \big)w +  l_A\Big(\alpha(u) \circ \alpha(w)\Big)v
-l_A\big(\alpha(u*w) \big)v - l_A\Big(\alpha(u) \circ \alpha(v)\Big)w.
\end{eqnarray*}}
Then the proof is completed.
\end{proof}

\begin{rmk}
If $\alpha$ is an $\calo$-operator of weight $\lambda$ on $(A, \circ)$ associated to $(M,\cdot , l_A, r_A)$, then Eqs. (\mref{*con1}) and (\mref{*con2}) hold naturally, which means that $(M, \ast)$ defined by Eq.~(\mref{*Nov}) is a Novikov algebra.
\end{rmk}

\begin{cor}   
Let $\left(A,\circ \right)$ be a Novikov algebra and $(M,\cdot,l_A,r_A)  $ be an $A$-bimodule Novikov algebra. Let $\delta_\pm:M \rightarrow A$ be two linear maps   and  $\alpha$, $\beta$ be the linear maps defined by Eq.~(\mref{a and b}). If $\beta$ is balanced, then $(M,\diamond)$ defined by Eq. (\ref{double Nov}) is a Novikov algebra if and only if $\alpha$ satisfies Eqs. (\mref{*con1}) and (\mref{*con2}).
\end{cor}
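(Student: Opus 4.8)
The plan is to reduce this statement to Proposition~\ref{pro:*} by showing that, under the balanced hypothesis on $\beta$, the operation $\diamond$ defined by Eq.~(\ref{double Nov}) coincides \emph{identically} with the operation $*$ defined by Eq.~(\ref{*Nov}). Once this identification is in place, the claimed equivalence is immediate from the already-established proposition.

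First I would substitute $\delta_+=\alpha+\beta$ and $\delta_-=\alpha-\beta$ into Eq.~(\ref{double Nov}). Since $l_A$ and $r_A$ take values in linear endomorphisms and are themselves linear in their arguments, this expansion yields
\[
u\diamond v = l_A(\alpha(u))v + l_A(\beta(u))v + r_A(\alpha(v))u - r_A(\beta(v))u + \lambda\, u\cdot v,\quad u,v\in M.
\]
Now the balanced condition Eq.~(\ref{b:con1}) asserts precisely that $l_A(\beta(u))v = r_A(\beta(v))u$ for all $u,v\in M$, so the two $\beta$-terms cancel. What remains is exactly
\[
u\diamond v = l_A(\alpha(u))v + r_A(\alpha(v))u + \lambda\, u\cdot v = u*v.
\]

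Therefore $(M,\diamond)$ and $(M,*)$ are the same binary-operation algebra, so $(M,\diamond)$ is a Novikov algebra if and only if $(M,*)$ is. By Proposition~\ref{pro:*}, the latter holds if and only if $\alpha$ satisfies Eqs.~(\ref{*con1}) and (\ref{*con2}), which finishes the proof. There is no substantive obstacle here: the entire content lies in recognizing that the balanced condition on $\beta$ is exactly what is needed to eliminate the antisymmetric part $\beta$ of $\delta_\pm$ from the product, collapsing $\diamond$ onto $*$; the nontrivial Novikov-compatibility computation has already been carried out in the proof of Proposition~\ref{pro:*}, and we simply invoke it.
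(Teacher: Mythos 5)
Your proof is correct and follows exactly the paper's own argument: expand $\delta_\pm=\alpha\pm\beta$ in Eq.~(\ref{double Nov}), cancel the two $\beta$-terms using the balanced condition Eq.~(\ref{b:con1}) so that $\diamond$ collapses to $*$, and then invoke Proposition~\ref{pro:*}. No gaps, and no difference in approach.
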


\begin{proof}
Since $\beta$ is balanced, for all $u, v \in M$, we have
\begin{eqnarray*}
&&u\diamond v=l_A(\delta_+(u))v+r_A(\delta_-(v)) u+\lambda u\cdot v
\\
&=&l_A(\alpha(u))v+r_A(\alpha(v))  u+\lambda  u\cdot v +l_A(\beta(u))v - r_A(\beta(v))u
=l_A(\alpha(u))v+r_A(\alpha(v))  u+\lambda  u\cdot v.
\end{eqnarray*}
Then this conclusion follows directly from Proposition \mref{pro:*}.
\vspb
\end{proof}

\begin{thm}                            \mlabel     {delta +-}
Let $(A,\circ )$ be a Novikov algebra and $(M,\cdot,l_A,r_A)  $ be an $A$-bimodule Novikov algebra. Let $\delta_\pm:  M \rightarrow A$ be two linear maps, $\lambda$, $\kappa$, $\mu\in {\bf k}$, and  $\alpha$, $\beta$ be the linear maps defined by Eq. (\mref{a and b}). Then the following conclusions hold.
\begin{enumerate}
\item[(\romannumeral1)] Suppose that $\alpha$ is an extended $\calo$-operator of weight $\lambda$ with extension $\beta$ of mass $(\kappa,\mu)$ on $(A, \circ)$ associated to $(M,\cdot,l_A,r_A)$. Then $(M, \ast)$ defined by Eq. (\ref{*Nov}) is  a Novikov algebra.

\item[(\romannumeral2)] Suppose that $\beta$ is an $A$-bimodule homomorphism from $(M, l_A, r_A)$ to $(A, L_A, R_A)$. Then $\alpha$ satisfies Eq. (\mref{a ext b}) for $\kappa = -1$ and $\mu = \pm \lambda$ if and only if the following equalities hold:
\begin{equation}
\delta_\pm(u)\circ \delta_\pm(v)- \delta_\pm ( u*v)=0,\quad   u, v\in M.
\end{equation}
\end{enumerate}
\end{thm}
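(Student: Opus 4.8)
For part (\romannumeral1) the plan is to reduce to Proposition~\ref{pro:*}. The operation $\ast$ in \eqref{*Nov} is precisely the one treated there, so it suffices to verify that $\alpha$ satisfies \eqref{*con1} and \eqref{*con2}. The starting point is to rewrite the defining identity of an extended $\calo$-operator: since $u\ast v=l_A(\alpha(u))v+r_A(\alpha(v))u+\lambda u\cdot v$, Eq.~\eqref{a ext b} becomes
\begin{equation*}
\alpha(u\ast v)-\alpha(u)\circ\alpha(v)=-\kappa\,\beta(u)\circ\beta(v)-\mu\,\beta(u\cdot v),\qquad u,v\in M.
\end{equation*}
Substituting this for every occurrence of $\alpha(\,\cdot\ast\cdot\,)-\alpha(\cdot)\circ\alpha(\cdot)$ on both sides of \eqref{*con1} and \eqref{*con2} converts them into identities involving only $\beta$, and I would then verify each by splitting it into its $\kappa$-homogeneous and $\mu$-homogeneous parts.

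For the $\mu$-parts I would invoke that $\beta$ is equivalent of mass $\mu$, namely \eqref{b:con3}, to replace $l_A(\beta(u\cdot v))$ and $r_A(\beta(u\cdot v))$ by expressions with $\beta$ moved inside; the resulting terms then cancel using the compatibility axioms \eqref{Nov-mod7}, \eqref{Nov-mod8} and \eqref{lef-mod3}, \eqref{lef-mod4}. For the $\kappa$-parts I would use that $\beta$ is balanced, \eqref{b:con1}, to interchange $l_A(\beta(\cdot))$ and $r_A(\beta(\cdot))$, combined with the bimodule axioms \eqref{lef-mod1}, \eqref{lef-mod2}, \eqref{Nov-mod5} and \eqref{Nov-mod6}. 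For example, after applying \eqref{Nov-mod5} and balancedness, the $\kappa$-part of \eqref{*con1} collapses to the commutativity of right multiplications recorded in \eqref{Nov-mod6}. It is worth noting that only the balancedness and the equivalence of $\beta$ enter this argument; the $A$-invariance of mass $\kappa$ is not needed for this particular conclusion.

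Part (\romannumeral2) I would handle by a single direct expansion. Writing $\delta_\pm=\alpha\pm\beta$ and using bilinearity of $\circ$,
\begin{equation*}
\delta_\pm(u)\circ\delta_\pm(v)-\delta_\pm(u\ast v)=\big(\alpha(u)\circ\alpha(v)-\alpha(u\ast v)\big)+\beta(u)\circ\beta(v)\pm\big(\alpha(u)\circ\beta(v)+\beta(u)\circ\alpha(v)-\beta(u\ast v)\big).
\end{equation*}
Because $\beta$ is an $A$-bimodule homomorphism from $(M,l_A,r_A)$ to $(A,L_A,R_A)$, we have $\beta(l_A(x)u)=x\circ\beta(u)$ and $\beta(r_A(x)u)=\beta(u)\circ x$; applying these to $\beta(u\ast v)=\beta(l_A(\alpha(u))v)+\beta(r_A(\alpha(v))u)+\lambda\beta(u\cdot v)$ yields $\beta(u\ast v)=\alpha(u)\circ\beta(v)+\beta(u)\circ\alpha(v)+\lambda\beta(u\cdot v)$, so the bracketed cross term equals $-\lambda\beta(u\cdot v)$. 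Hence
\begin{equation*}
\delta_\pm(u)\circ\delta_\pm(v)-\delta_\pm(u\ast v)=\big(\alpha(u)\circ\alpha(v)-\alpha(u\ast v)\big)+\beta(u)\circ\beta(v)\mp\lambda\,\beta(u\cdot v),
\end{equation*}
which vanishes for all $u,v\in M$ exactly when $\alpha(u)\circ\alpha(v)-\alpha(u\ast v)=-\beta(u)\circ\beta(v)\pm\lambda\beta(u\cdot v)$, that is, precisely when $\alpha$ satisfies \eqref{a ext b} with $\kappa=-1$ and $\mu=\pm\lambda$. This gives the claimed equivalence.

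I expect the main obstacle to be the $\kappa$-homogeneous part of \eqref{*con2} in part (\romannumeral1): it is the lengthiest reduction, requiring one to expand $r_A(\beta(v)\circ\beta(w))u$ via \eqref{lef-mod2} and then cancel about a dozen terms using balancedness together with \eqref{Nov-mod5} and \eqref{Nov-mod6}. This is purely mechanical with no conceptual difficulty, but it is where a sign or index slip is most likely to occur.
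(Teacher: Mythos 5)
Your proposal is correct, and at the structural level it follows the paper's own proof: part (i) is reduced to Proposition~\ref{pro:*} by checking \eqref{*con1} and \eqref{*con2} after splitting them into $\kappa$- and $\mu$-homogeneous pieces, and your part (ii) matches the paper's computation essentially verbatim (expand $\delta_\pm(u)\circ\delta_\pm(v)-\delta_\pm(u\ast v)$, cancel the cross terms using $\beta(l_A(x)u)=x\circ\beta(u)$ and $\beta(r_A(x)u)=\beta(u)\circ x$, and identify the remainder with \eqref{a ext b} at $\kappa=-1$, $\mu=\pm\lambda$). The only genuine divergence is the $\kappa$-part of \eqref{*con2}: the paper proves it using \eqref{lef-mod1}, balancedness and the $A$-invariance \eqref{b:con2} (the latter to replace $\kappa\beta\big(l_A(\beta(v))w\big)$ by $\kappa\,\beta(v)\circ\beta(w)$), whereas you expand $r_A\big(\beta(v)\circ\beta(w)\big)u$ via \eqref{lef-mod2} and cancel using balancedness, \eqref{lef-mod1}, \eqref{Nov-mod5} and \eqref{Nov-mod6}. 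This route does close: writing $P:=l_A\big(\beta(u)\circ\beta(v)\big)w-l_A\big(\beta(v)\circ\beta(u)\big)w$, those identities give
\begin{equation*}
r_A\big(\beta(v)\circ\beta(w)\big)u-r_A\big(\beta(u)\circ\beta(w)\big)v
=-P+2\Big(l_A\big(\beta(u)\circ\beta(w)\big)v-l_A\big(\beta(v)\circ\beta(w)\big)u\Big),
\end{equation*}
and the $\kappa$-part of \eqref{*con1} (which, as you observe, needs only balancedness, \eqref{Nov-mod5} and \eqref{Nov-mod6}) identifies the bracket with $P$, so both sides equal $P$. Your side remark is therefore right, and it is a point the paper's own proof misses: $A$-invariance of mass $\kappa$ is never needed in part (i), so your argument proves the marginally stronger statement that $(M,\ast)$ is Novikov whenever \eqref{a ext b} holds with $\beta$ merely balanced and equivalent of mass $\mu$. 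One bookkeeping slip: balancedness \eqref{b:con1} is not confined to the $\kappa$-parts as your plan suggests — the $\mu$-part of \eqref{*con2} also needs it, to pass from $\big(l_A(\beta(v))u\big)\cdot w$ to $\big(r_A(\beta(u))v\big)\cdot w$ before \eqref{lef-mod3} can be applied — but since balancedness is among your hypotheses this is presentational, not a gap.
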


\begin{proof}
(\romannumeral1) By Proposition \mref{pro:*}, we only need to check that Eqs.~(\mref{*con1}) and (\mref{*con2}) hold. For Eq.~(\mref{*con1}), it is enough to prove that
$$l_A\Big( \kappa\beta(u)\circ\beta(v) + \mu \beta(u\cdot v)\Big)w
=l_A\Big( \kappa\beta(u)\circ\beta(w) + \mu \beta(u\cdot w)\Big)v.$$
In fact, we will prove that
$$\kappa l_A\Big( \beta(u)\circ\beta(v) \Big)w=\kappa l_A\Big( \beta(u)\circ\beta(w) \Big)v,\;\;\;\mu l_A\Big(  \beta(u\cdot v)\Big)w  =\mu l_A\Big( \beta(u\cdot w)\Big)v.$$
For all $u, v, w \in M$, by Eqs. (\mref{Nov-mod5}), (\mref{b:con1}) and (\mref{Nov-mod6}), we have
\begin{align*}
\kappa l_A\Big( \beta(u)\circ \beta(v) \Big)w
&=\kappa r_A(\beta(v) )   \Big( l_A(\beta(u))w \Big)
=\kappa r_A(\beta(v) )   \Big( r_A(\beta(w)) u\Big)
\\
&=\kappa r_A(\beta(w) ) \Big(   r_A(\beta (v) ) u \Big)
=\kappa r_A(\beta(w) )  \Big( l_A(\beta(u))v \Big)
=\kappa l_A\Big( \beta(u)\circ \beta(w) \Big)v    .
\end{align*}

Moreover, by Eqs.~(\mref{b:con3}) and (\ref{Nov-mod7}), we obtain
\begin{eqnarray*}
\mu l_A\Big(  \beta(u\cdot v)\Big)w
=\mu \Big(l_A  (\beta(u))   v \Big)\cdot w
=\mu \Big(l_A  (\beta(u))   w \Big)\cdot v
=\mu l_A\Big(  \beta(u\cdot w)\Big)v       .
\end{eqnarray*}
Therefore Eq.~(\mref{*con1}) holds.

Next we prove Eq.~(\mref{*con2}). In fact, it is enough to prove
\begin{eqnarray*}
l_A\bigg(   \Big( \kappa\beta(u)\circ\beta(v) + \mu \beta(u\cdot v)\Big)  -
 \Big( \kappa\beta(v)\circ\beta(u) + \mu \beta(v\cdot u)\Big)   \bigg)w         =\\ \nonumber
r_A\Big(   \kappa\beta(v)\circ\beta(w) + \mu \beta(v\cdot w)  \Big)  u-
r_A\Big(   \kappa\beta(u)\circ\beta(w) + \mu \beta(u\cdot w) \Big)  v     .
\end{eqnarray*}
Similarly, we will prove that
$$
\kappa l_A  \Big( \beta(u)\circ\beta(v) \Big)w  -
\kappa l_A\Big( \beta(v)\circ\beta(u) \Big)   w         = 
\kappa r_A\Big(   \beta(v)\circ\beta(w)   \Big)u-
\kappa r_A\Big(   \beta(u)\circ\beta(w)  \Big)v  ,
$$
and
$$\mu l_A\Big(    \beta(u\cdot v)\Big) w -
\mu l_A\Big(   \beta(v\cdot u)\Big)   \Big)w=         
\mu r_A\Big(      \beta(v\cdot w)  \Big)u-
 \mu r_A\Big(    \beta(u\cdot w) \Big)v     .
$$
For all $u, v, w \in M$, by Eqs. (\mref{lef-mod1}), (\mref{b:con1}) and (\mref{b:con2}), we have
\begin{eqnarray*}
&&\kappa l_A  \Big( \beta(u)\circ\beta(v) \Big)w  -
\kappa l_A \Big( \beta(v)\circ\beta(u) \Big)   w
=\kappa l_A(\beta(u)) ( l_A(\beta(v))w )-\kappa l_A(\beta(v))(l_A(\beta(u))w  )
\\
&&=\kappa  r_A \bigg(\beta  \Big(l_A(\beta(v))w  \Big)  \bigg)  u- \kappa r_A \bigg(\beta  \Big(l_A(\beta(u))w  \Big)  \bigg)  v    
=\kappa r_A\Big(   \beta(v)\circ\beta(w)   \Big)   u-
\kappa   r_A\Big(   \beta(u)\circ\beta(w)  \Big)  v         .
\end{eqnarray*}
Moreover, by Eqs.~(\mref{b:con3}), (\mref{b:con1}) and (\mref{lef-mod3}), we have
\begin{eqnarray*}
&& \mu l_A\Big(    \beta(u\cdot v)\Big) w -  \mu l_A\Big(   \beta(v\cdot u)\Big)   \Big)  w        
=\mu \Big(l_A( \beta(u)) v\Big)\cdot w- \mu \Big( l_A(\beta(v)) u\Big)\cdot w
\\&&=\mu \Big(l_A(\beta(u))  v\Big)\cdot w- \mu \Big(  r_A(\beta(u))  v \Big)\cdot w
=\mu l_A(\beta(u)) (v \cdot w)  - \mu v \cdot(   l_A(\beta(u)) w  )
\\&&= \mu r_A\Big(      \beta(v\cdot w)  \Big) u-\mu v\cdot\Big( r_A(\beta(w)) u  \Big)
= \mu r_A\Big(    \beta(v\cdot w)  \Big) u- \mu r_A\Big(    \beta(u\cdot w) \Big) v.
\end{eqnarray*}
Therefore, Eq.~(\mref{*con2}) holds and the proof of Item (\romannumeral1) is completed.

(\romannumeral2) By Eq.~(\mref{b:con2}), we obtain
\begin{eqnarray*}
&&(\alpha\pm\beta)(u)\circ (\alpha\pm\beta)(v)- (\alpha\pm\beta) ( u\ast v )
\\&=&\alpha(u)\circ \alpha(v) +     \beta(u)\circ \beta(v)
- \alpha \Big( l_A(\alpha(u))v+r_A(\alpha(v))u+\lambda u\cdot v \Big) \\
&& \mp \bigg(  \beta \Big( l_A(\alpha(u))v\Big)     - \alpha(u)\circ \beta(v) \Bigg)
\mp \bigg(  \beta \Big( r_A(\alpha(v)) u \Big)      - \beta(u)\circ \alpha(v) \Bigg)
\mp \beta \Big(\lambda u\cdot v \Big)
      \\
&=&\alpha(u)\circ \alpha(v)-          \alpha \Big( l_A(\alpha(u))v+r_A(\alpha(v))u+\lambda u\cdot v \Big)+       \beta(u)\circ \beta(v)
\mp\lambda \beta \Big( u\cdot v \Big).
\end{eqnarray*}
Thus Item (\romannumeral2) holds.
\end{proof}

\begin{thm}                         \mlabel{thm:r+-}
Let $\left(A,\circ \right)$ be a Novikov algebra and  $\left(M,\cdot,l_A,r_A\right)  $ be an $A$-bimodule Novikov algebra. Let $\delta_\pm:M \rightarrow A$ be two linear maps and $\alpha$, $\beta$ be the linear maps defined by Eq.~(\mref{a and b}). Suppose that $\beta$ is balanced, $A$-invariant of mass $\kappa \neq 0$ and equivalent of mass $\lambda$. Then the following conclusions hold.
\begin{enumerate}
\item[(\romannumeral1)] $(M_\pm,\cdot_\pm,l_A,r_A) $ are $A$-bimodule Novikov algebras, where $(M_\pm,\cdot_\pm ) $ are the new Novikov algebra structures on $M$ defined by
\begin{equation}                               \mlabel{+-def}
u \cdot_\pm v := \lambda u \cdot v \mp 2l_A \left( \beta(u) \right)v,
\qquad  u,v \in M.
\end{equation}  
\item[(\romannumeral2)] $\alpha$ is an extended $\calo$-operator of weight $\lambda$ with extension $\beta$ of mass $(-1,\pm \lambda)$  if and only if $\delta_\pm:M_\pm \rightarrow A$ is an $\calo$-operator of weight $1$ on $(A, \circ)$ associated to $(M_\pm,\cdot_\pm,l_A,r_A) $, where $\cdot_\pm$ are defined by Eq. (\mref{+-def}).
\end{enumerate}
\end{thm}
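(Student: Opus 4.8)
The plan is to reduce both items to a single auxiliary bilinear operation on $M$, namely $b(u,v):=l_A(\beta(u))v$; since $\beta$ is balanced, \eqref{b:con1} gives the alternative description $b(u,v)=r_A(\beta(v))u$, and by \eqref{+-def} the new multiplications are the linear combinations $u\cdot_\pm v=\lambda\,(u\cdot v)\mp 2\,b(u,v)$. For Item~(\romannumeral1) I would proceed in three stages: (a) show that $(M,b,l_A,r_A)$ is itself an $A$-bimodule Novikov algebra; (b) record the mixed relations tying $\cdot$ to $b$; and (c) assemble the axioms for $\cdot_\pm$ from pure-$\cdot$, pure-$b$ and cross contributions.

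For stage (a), the key point is that $\beta$, being $A$-invariant of mass $\kappa\neq 0$, is an $A$-bimodule homomorphism, so $\beta(l_A(a)v)=a\circ\beta(v)$ and $\beta(r_A(a)v)=\beta(v)\circ a$; in particular $\beta(b(u,v))=\beta(u)\circ\beta(v)$ and hence $b(b(u,v),w)=l_A(\beta(u)\circ\beta(v))w$. Feeding this into the module axioms then produces the Novikov axioms for $b$ essentially for free: \eqref{Nov-mod5} together with the balanced identity gives the right-commutativity $b(b(u,v),w)=b(b(u,w),v)$, and \eqref{lef-mod1} gives the left-symmetry. The same homomorphism-plus-axiom mechanism verifies the four compatibility conditions \eqref{lef-mod3}, \eqref{lef-mod4}, \eqref{Nov-mod7} and \eqref{Nov-mod8} for $b$ (for instance the $b$-version of \eqref{lef-mod3} collapses, after using \eqref{Nov-mod5}, to exactly \eqref{lef-mod1}). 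This yields the $A$-bimodule Novikov algebra $(M,b,l_A,r_A)$.

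For stages (b) and (c) I would first use that $\beta$ is equivalent of mass $\lambda$: rewriting \eqref{b:con3} through the two descriptions of $b$ gives the $\lambda$-weighted relations $\lambda\,b(u\cdot v,w)=\lambda\,(b(u,v))\cdot w$ and $\lambda\,b(u,v\cdot w)=\lambda\,(u\cdot b(v,w))$, while the specializations of \eqref{lef-mod3} and \eqref{Nov-mod7} at $a=\beta(u)$ supply the remaining mixed identities. Now each term of the four compatibility conditions is \emph{linear} in the $M$-multiplication, so substituting $\cdot_\pm=\lambda\,(\cdot)\mp2\,b$ splits them with no cross terms, into $\lambda$ times the condition for $\cdot$ (valid since $(M,\cdot,l_A,r_A)$ is an $A$-bimodule Novikov algebra) plus $\mp2$ times the condition for $b$ from stage~(a). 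The two Novikov axioms, being quadratic in the multiplication, instead expand into a pure-$\cdot$ part (killed by the Novikov axioms of $\cdot$), a pure-$b$ part (killed by those of $b$), and a cross part carrying a factor $\lambda$. The crux, and the main obstacle of the whole proof, is to check that this cross part vanishes; this is where the hypothesis that the mass of the equivalent condition equals the weight $\lambda$ is decisive, since the $\lambda$-weighted relations above together with the $a=\beta(u)$ instances of \eqref{lef-mod3} and \eqref{Nov-mod7} make the left-symmetry cross part symmetric in $u,v$ and the right-commutativity cross part identically zero. This establishes Item~(\romannumeral1), so $(M_\pm,\cdot_\pm,l_A,r_A)$ really is an $A$-bimodule Novikov algebra.

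For Item~(\romannumeral2), with this structure in hand I would write $\delta_\pm=\alpha\pm\beta$ and expand the weight-$1$ $\calo$-operator condition \eqref{operation} for $\delta_\pm:M_\pm\to A$. Its right-hand argument $l_A(\delta_\pm(u))v+r_A(\delta_\pm(v))u+u\cdot_\pm v$ contains the $\beta$-terms $\pm l_A(\beta(u))v\pm r_A(\beta(v))u\mp2\,l_A(\beta(u))v$, which collapse to $\pm\big(r_A(\beta(v))u-l_A(\beta(u))v\big)=0$ by the balanced identity \eqref{b:con1}; what survives is exactly $u\ast v$ of \eqref{*Nov}. Thus $\delta_\pm$ is an $\calo$-operator of weight $1$ associated to $(M_\pm,\cdot_\pm,l_A,r_A)$ if and only if $\delta_\pm(u)\circ\delta_\pm(v)-\delta_\pm(u\ast v)=0$. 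Since $\beta$ is an $A$-bimodule homomorphism, Theorem~\ref{delta +-}(\romannumeral2) identifies this last equation with the statement that $\alpha$ satisfies \eqref{a ext b} for $\kappa=-1$ and $\mu=\pm\lambda$, i.e.\ that $\alpha$ is an extended $\calo$-operator of weight $\lambda$ with extension $\beta$ of mass $(-1,\pm\lambda)$, which is precisely the claimed equivalence.
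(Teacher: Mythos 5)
Your proposal is correct, and for Item (\romannumeral2) it coincides with the paper's own proof: both arguments cancel the $\beta$-terms in $l_A(\delta_\pm(u))v+r_A(\delta_\pm(v))u+u\cdot_\pm v$ via the balanced identity \eqref{b:con1}, so that the weight-one condition for $\delta_\pm$ collapses to $\delta_\pm(u)\circ\delta_\pm(v)-\delta_\pm(u\ast v)=0$ with $\ast$ as in \eqref{*Nov}, and then invoke Theorem \ref{delta +-}(\romannumeral2) (legitimately, since $A$-invariance of mass $\kappa\neq 0$ makes $\beta$ an $A$-bimodule homomorphism). For Item (\romannumeral1), however, your organization genuinely differs from the paper's. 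The paper performs one monolithic expansion of $(u\cdot_\pm v)\cdot_\pm w$, $u\cdot_\pm(v\cdot_\pm w)$, etc., and of the compatibilities \eqref{lef-mod3}--\eqref{Nov-mod8}, invoking \eqref{b:con1}, \eqref{b:con2}, \eqref{b:con3}, \eqref{Nov-mod5}, \eqref{Nov-mod7}, \eqref{lef-mod1}, \eqref{lef-mod2} inline; you instead first establish the $\lambda=0$ core, namely that $b(u,v)=l_A(\beta(u))v$ alone furnishes an $A$-bimodule Novikov algebra --- which is precisely Corollary \ref{cor:r+-}(\romannumeral1), a statement the paper \emph{deduces from} the theorem rather than uses to prove it --- and then bootstrap to $\cdot_\pm=\lambda(\cdot)\mp 2b$ by splitting each axiom according to its degree in the multiplication: the four compatibility conditions are linear and split with no interaction, while the two Novikov axioms are quadratic and leave cross terms, which you eliminate with the $\lambda$-weighted form of \eqref{b:con3} together with \eqref{Nov-mod7} and \eqref{lef-mod3} specialized at $a=\beta(u)$ --- exactly the identities doing the work in the paper's inline computation. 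Your route buys modularity, avoids duplicating expansions, and makes transparent why the mass of the equivalence condition must equal the weight $\lambda$ (it enters only through the cross terms); the paper's direct verification is longer but self-contained and needs no bookkeeping of which axioms are linear versus quadratic in the multiplication. When writing your version up, two sketch-level points should be made explicit: right-commutativity of $b$ also uses $\beta(b(u,v))=\beta(u)\circ\beta(v)$ (from $\kappa\neq0$) along with \eqref{Nov-mod5} and \eqref{b:con1}, and the $b$-version of \eqref{lef-mod4} rests on \eqref{lef-mod2}; both are subsumed under your ``homomorphism-plus-axiom'' mechanism but deserve a line each.
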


\begin{proof}
(\romannumeral1) First we prove that $(M_\pm, \cdot_\pm)$ are  Novikov algebras. In fact, for all $u, v, w\in M$, by Eqs.~(\mref{b:con1}), (\mref{Nov-mod7}), (\mref{b:con3}) and (\mref{b:con2}), we have
\begin{align*}                    
(u \cdot_\pm v)\cdot_\pm w
&= \lambda^2(u \cdot v)\cdot w   \mp    2\lambda \Big( l_A(\beta(u) )v \Big)\cdot w
   \mp    2\lambda l_A \Big(  \beta(u \cdot v) \Big)w
   +4  l_A \bigg( \beta \Big( l_A(   \beta(u)  )v    \Big) \bigg)w
\\&=\lambda^2(u \cdot w)\cdot v   \mp   2\lambda \Big(l_A (\beta(u) )w \Big)\cdot v
   \mp    2\lambda  \Big(  l_A(\beta(u))v \Big)   \cdot w
   +4  l_A \Big( \beta (u) \circ \beta (v) \Big)w      
\\&=\lambda^2(u \cdot v)\cdot w   \mp   2\lambda \Big(l_A (\beta(u) )w \Big)\cdot v
   \mp    2\lambda  l_A \Big(  \beta(u \cdot w) \Big)v
   +4 l_A \Big( \beta (u) \circ \beta (w) \Big)v       \\ 
&=(u \cdot_\pm w)\cdot_\pm v.
\end{align*}

Similarly, by Eqs.~(\mref{b:con1}) and (\mref{b:con3}), we have
\begin{align*}                    
(v \cdot_\pm u)\cdot_\pm w
&= \lambda^2(v \cdot u)\cdot w  \mp    4\lambda \Big( l_A (\beta(v) )u \Big)\cdot w
   +4   l_A \Big( \beta(v)\circ \beta(u)     \Big)w\\
&=\lambda^2(v \cdot u)\cdot w  \mp  4\lambda \Big( r_A (\beta(u) )v \Big)\cdot w
   +4  l_A \Big( \beta(v)\circ \beta(u)     \Big)w.
\end{align*}

Moreover, by Eqs.~(\mref{b:con1}), (\mref{b:con3}) and (\mref{b:con2}), we get
\begin{align*}                     
u\cdot_\pm(v \cdot_\pm w)
&= \lambda^2 u\cdot(v \cdot w) \mp    2\lambda u \cdot \Big( l_A (\beta(v) )w \Big)
   \mp    2\lambda l_A(\beta(u)) (  v \cdot w)
   +4 l_A(\beta(u))  \Big( l_A(   \beta(v)  )w    \Big)       \\
&=\lambda^2 u\cdot(v \cdot w) \mp    2\lambda u \cdot \Big( r_A (\beta(w) )v \Big)
   \mp    2\lambda l_A(\beta(u)) (  v \cdot w)
   +4   l_A(\beta(u))  \Big( l_A(   \beta(v)  )w    \Big)      \\
&=\lambda^2 u\cdot(v \cdot w) \mp    2\lambda  r_A\Big( \beta(v\cdot w)\Big)u
   \mp    2\lambda l_A(\beta(u)) (  v \cdot w)
   +4  l_A(\beta(u))  \Big( l_A(   \beta(v)  )w    \Big)       \\
&=\lambda^2 u\cdot(v \cdot w) \mp    4\lambda l_A(\beta(u)) (v\cdot w)
   +4  l_A(\beta(u))  \Big( l_A(   \beta(v)  )w    \Big)  .
\end{align*}

Similarly, we have
\begin{align*}                    
v\cdot_\pm(u \cdot_\pm w)
&=\lambda^2 v\cdot(u \cdot w)   \mp  4\lambda l_A(\beta(v)) (  u \cdot w)
   +4  l_A(\beta(v))  \Big( l_A(   \beta(u)  )w    \Big) \\
&=\lambda^2 v\cdot(u \cdot w)   \mp  4\lambda v \cdot \Big( l_A (\beta(u) )w \Big)
   +4 l_A(\beta(v))  \Big( l_A(   \beta(u)  )w    \Big) .
\end{align*}

Therefore, since $\beta$ is balanced, $A$-invariant of mass $\kappa\neq 0$ and equivalent of mass $\lambda$, we obtain
{\small
\begin{eqnarray*}
&&(u \cdot_\pm v)\cdot_\pm w -u\cdot_\pm(v \cdot_\pm w) - (v\cdot_\pm u) \cdot_\pm w +v\cdot_\pm(u \cdot_\pm w)
\\
&&=\lambda^2 \Bigg((u \cdot v)\cdot w-u\cdot(v \cdot w)-  (v \cdot u)\cdot w
            +v\cdot(u \cdot w) \Bigg)
\mp  4\lambda   \Bigg( \Big(l_A (\beta(u) )v \Big)\cdot w- l_A(\beta(u)) (v\cdot w)
\\
&&\quad  -\Big( r_A (\beta(u) )v \Big)\cdot w+v \cdot \Big( l_A (\beta(u) )w \Big)     \Bigg)
+4\lambda \Bigg( l_A \Big( \beta (u) \circ \beta (v) \Big)w
           -l_A \Big( \beta(v)\circ \beta(u)     \Big)w
\\&&\quad           - l_A(\beta(u))  \Big( l_A(   \beta(v)  )w    \Big)
          + l_A(\beta(v))  \Big( l_A(\beta(u))w    \Big) \Bigg)=0,
\end{eqnarray*}}
where the last equality follows from Eqs.~(\mref{lef-mod3}) and (\mref{lef-mod1}). Therefore, $(M_\pm,\cdot_\pm)$ are Novikov algebras.

Next, we shall prove that $(M_\pm,\cdot_\pm,l_A,r_A) $ are $A$-bimodule Novikov algebras. Since $(M,\cdot,l_A,r_A) $ is an $A$-bimodule Novikov algebra, $(M_\pm, l_A,r_A)$ are naturally bimodules of $(A,\circ)$. Hence, we just need to prove that $(M_\pm,\cdot_\pm,l_A,r_A) $ satisfy Eqs.~(\mref{lef-mod3})-(\mref{Nov-mod8}).
In fact, for all $a \in A, v, w \in M$, by Eqs.~(\mref{b:con1}), (\mref{Nov-mod7}), (\mref{Nov-mod5}) and (\mref{b:con2}), we have
\begin{align*}
(l_A(a)v)\cdot_\pm w    &=\lambda (l_A(a)v)\cdot w  \mp 2l_A\Big( \beta(l_A(a)v)\Big)w\\
      &=\lambda (l_A(a)v)\cdot w  \mp 2r_A( \beta(w) ) (l_A(a)v)
              \\ 
      &=\lambda (l_A(a)w)\cdot v  \mp 2l_A\Big( a\circ \beta(w) \Big)v
             \\ 
      &=\lambda (l_A(a)w)\cdot v  \mp 2l_A\Big( \beta(l_A(a)w) \Big)v
              \\ 
&=(l_A(a)w)\cdot_\pm v.
\end{align*}
Moreover, by Eqs.~(\mref{Nov-mod7}), (\mref{Nov-mod5}) and (\mref{b:con2}), we have
\begin{align*}
r_A(a)(v\cdot_\pm w)&=\lambda r_A(a)(v\cdot w)  \mp  2r_A(a)\Big(  l_A(\beta(v))w \Big)\\
       &=\lambda (r_A(a)v)\cdot w  \mp  2l_A\Big( \beta(v)\circ a \Big)w
              \\
       &=\lambda (r_A(a)v)\cdot w  \mp  2l_A\Big(  \beta(r_A(a)v) \Big)w
               \\
&=(r_A(a)v)\cdot_\pm w.
\end{align*}

Furthermore, by Eqs.~(\mref{lef-mod3}), (\mref{lef-mod2}), (\mref{b:con1}) and  (\mref{b:con2}), we obtain
{\small
\begin{eqnarray*}
&&(l_A(a)v)\cdot_\pm w  - l_A(a)(v\cdot_\pm w) -(r_A(a)v)\cdot_\pm w +v\cdot_\pm (l_A(a)w)
\\
&=&\lambda \Big((l_A(a)v)\cdot w -l_A(a)(v\cdot w) -(r_A(a)v)\cdot w   +v\cdot (l_A(a)w)
\Big) \mp 2\Bigg( l_A\Big(\beta(l_A(a)v)\Big)w
      \\&&\quad -l_A(a)\Big(l_A(\beta(v))w\Big)
                -l_A\Big( \beta(r_A(a)v)\Big)w   + l_A(\beta(v))(l_A(a)w)         \Bigg)\\
&=&\mp 2\Bigg(  r_A(\beta(w)) (l_A(a)v) -l_A(a)\Big(r_A(\beta(w))v\Big)-r_A(\beta(w))(r_A(a)v)
+r\Big(\beta(l_A(a)w)  \Big)v\Bigg)     \\
&=&\mp 2\Bigg(  r_A(\beta(w)) (l_A(a)v) -l_A(a)\Big(r_A(\beta(w))v\Big)-r_A(\beta(w))(r_A(a)v)
+r_A\Big(a\circ \beta(w)  \Big)v\Bigg) =0,  
\end{eqnarray*}
and
\begin{eqnarray*}
&&r_A(a)(v\cdot_\pm w) -v\cdot_\pm(r_A(a)w)-r_A(a)(w\cdot_\pm v) +w\cdot_\pm (r_A(a)v)
\\
&=&\lambda \Big( r_A(a)(v\cdot w) -v\cdot(r_A(a)w)  -r_A(a)(w\cdot v) +w\cdot (r_A(a)v)
\Big)\mp 2\Bigg( r_A(a)\Big( l_A(\beta(v))w\Big)
\\
&& \quad  -l_A(\beta(v))(r_A(a)w)-r_A(a)\Big(l_A(\beta(w))v\Big) +l_A(\beta(w))(r_A(a)v)      \Bigg) \\
&=&\mp 2\Bigg( r_A(a)\Big( l_A(\beta(v))w\Big) - l_A(\beta(v))(r_A(a)w)
           -r_A(a)\Big(r_A(\beta(v))w\Big) + r_A\Big( \beta(r_A(a)v)   \Big)w      \Bigg)
           \\
&=&\mp 2\Bigg( r_A(a)\Big( l_A(\beta(v))w\Big) - l_A(\beta(v))(r_A(a)w)
           -r_A(a)\Big(r_A(\beta(v))w\Big) +   r_A(\beta(v)\circ a)w     \Bigg)=0.
\end{eqnarray*}}
Then the proof of Item (\romannumeral1) is completed.


\noindent
(\romannumeral2) The last conclusion follows from Theorem \mref{delta +-} (\romannumeral2) and the following equality:
{\small
\begin{eqnarray*}
&&\delta_\pm(u)\circ \delta_\pm(v)-
\delta_\pm \Big( l_A(\delta_\pm(u))v+r_A(\delta_\pm(v)) u + u\cdot_\pm v \Big)
\\&&=\delta_\pm(u)\circ \delta_\pm(v)- \delta_\pm \Big( l_A(\delta_\pm(u))v+r_A(\delta_\pm(v)) u + u\cdot v \mp 2l_A(\beta(u))v\Big) \\
&&=\delta_\pm(u)\circ \delta_\pm(v)- \delta_\pm \Big(
l_A(\alpha(u))v+r_A(\alpha(v)) u + u\cdot v \pm l_A(\beta(u))v \mp l_A(\beta(u))v
\pm r_A(\beta(v)u)  \mp l_A(\beta(u))v        \Big) \\
&&=\delta_\pm(u)\circ \delta_\pm(v)- \delta_\pm \Big( u*v \Big)
.
\end{eqnarray*}}
\end{proof}
\begin{cor}                             \mlabel{cor:r+-}
Let $(A,\circ )$ be a Novikov algebra and $(V,l_A,r_A)  $ be an $A$-bimodule Novikov algebra. Let $\beta : V \rightarrow A$ be a linear map such that $\beta$ is balanced and $A$-invariant of mass $\kappa \neq 0$. Then the following conclusions hold.
\begin{enumerate}
\item[(\romannumeral1)] $(V_\pm,\cdot_\pm,l_A,r_A)  $ are $A$-bimodule Novikov algebras, where $(V_\pm,\cdot_\pm ) $ are the new Novikov algebra structures on $V$ defined by
\begin{equation}                               \mlabel{V:+-def}
u \cdot_\pm v :=   \mp 2l_A\left( \beta(u) \right)v,
\qquad  u,v \in V.
\end{equation}
\item[(\romannumeral2)] Let $\alpha : V\rightarrow A$ be a linear map. Then $\alpha$ is an extended $\calo$-operator of weight $\lambda$ with extension $\beta$ of mass $(\kappa=-1, \pm \lambda) $ on $(A,\circ)$ associated to $(V,l_A,r_A)  $ if and only if $\alpha\pm\beta:V_\pm \rightarrow A$ is an $\calo$-operator of weight $1$ on $(A,\circ)$ associated to $(V_\pm,\cdot_\pm,l_A,r_A)$, where $\cdot_\pm$ are defined by Eq. (\mref{V:+-def}).
\end{enumerate}
\end{cor}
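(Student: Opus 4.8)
The plan is to obtain this corollary as the specialization of Theorem \ref{thm:r+-} to a bimodule carrying the \emph{trivial} Novikov multiplication. Recall from Remark \ref{rmk-bi-m} that describing $(V, l_A, r_A)$ as an $A$-bimodule Novikov algebra means precisely that $(V, l_A, r_A)$ is a bimodule of $(A,\circ)$ whose own multiplication $\cdot$ is identically zero. I would therefore apply Theorem \ref{thm:r+-} with $M = V$ and with the binary operation $\cdot$ on $M$ taken to be $0$.

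The first step is to confirm that all hypotheses of Theorem \ref{thm:r+-} are satisfied. By assumption $\beta$ is balanced and $A$-invariant of mass $\kappa \neq 0$, so the only thing left to check is that $\beta$ is equivalent of mass $\lambda$ in the sense of Eq.~(\ref{b:con3}), a condition \emph{not} assumed in the corollary. Since the multiplication on $V$ vanishes, every term of Eq.~(\ref{b:con3}) collapses: the left-hand sides contain $\beta(u \cdot v) = \beta(0) = 0$ and $\beta(v \cdot w) = 0$, while the right-hand sides are themselves $\cdot$-products and hence zero. Thus $\beta$ is automatically equivalent of mass $\lambda$ (indeed of any mass), and the full hypotheses of Theorem \ref{thm:r+-} hold.

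With this in place, Item~(\romannumeral1) is immediate: substituting $\cdot = 0$ into the formula $u \cdot_\pm v = \lambda u \cdot v \mp 2 l_A(\beta(u))v$ of Eq.~(\ref{+-def}) gives exactly $u \cdot_\pm v = \mp 2 l_A(\beta(u))v$, which is Eq.~(\ref{V:+-def}), and Theorem \ref{thm:r+-}(\romannumeral1) then yields that $(V_\pm, \cdot_\pm, l_A, r_A)$ are $A$-bimodule Novikov algebras. For Item~(\romannumeral2), given the linear map $\alpha$ I would set $\delta_\pm := \alpha \pm \beta$, so that $\alpha$ and $\beta$ are the symmetrizer and antisymmetrizer of $\delta_\pm$ exactly as in Eq.~(\ref{a and b}). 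Theorem \ref{thm:r+-}(\romannumeral2) then asserts that $\alpha$ is an extended $\calo$-operator of weight $\lambda$ with extension $\beta$ of mass $(-1, \pm\lambda)$ if and only if $\delta_\pm = \alpha \pm \beta : V_\pm \to A$ is an $\calo$-operator of weight $1$ on $(A,\circ)$ associated to $(V_\pm, \cdot_\pm, l_A, r_A)$, which is precisely the claimed equivalence.

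Because the whole argument reduces to invoking Theorem \ref{thm:r+-}, there is no genuinely difficult step. The only point that requires attention is the observation that the trivial multiplication on $V$ forces the equivalent-of-mass hypothesis to hold vacuously, so that the stronger theorem becomes applicable even though equivalence of mass is not among the stated assumptions of the corollary.
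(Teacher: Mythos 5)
Your proposal is correct and follows exactly the paper's own route: the paper proves this corollary by the single line ``It follows from Theorem \ref{thm:r+-} directly,'' i.e., by specializing that theorem to the bimodule $(V,l_A,r_A)$ with trivial multiplication. Your write-up supplies the detail the paper leaves implicit — in particular the observation that the equivalent-of-mass condition in Eq.~(\ref{b:con3}) holds vacuously when $\cdot=0$, which is precisely why the corollary can drop that hypothesis — so it is a faithful, slightly more careful version of the same argument.
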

\begin{proof}
It follows from  Theorem \mref{thm:r+-} directly.
\end{proof}

\mlabel{ss:Baxter}
Finally, we apply the above conclusions to the case when $(M, \cdot, l_A, r_A)=(A,\circ,L_A,R_A)$. By Proposition \mref{pro:*}, Theorems \mref{delta +-} and \mref{thm:r+-}, we immediately obtain the following conclusion.
\delete{i.e. the following equality holds:
\begin{eqnarray}    \mlabel{bimodule homo}
\beta(x)\circ y= x\circ\beta(y),\;\;\kappa \beta(x\circ y)=\kappa \beta(x)\circ y,\quad  x,y\in A.
\end{eqnarray}
\yy{Please check.}}
\begin{pro}\label{pro:cons}
Let $(A,\circ)$ be a Novikov algebra and $T,\beta :A\rightarrow A$ be two linear maps. Let $\beta$ be balanced and $A$-invariant of mass $\kappa$  from $(A, L_A, R_A)$ to $(A, L_A, R_A)$. Suppose that $T$ is an extended $\calo$-operator of weight $\lambda$ with extension $\beta$ of mass $(\kappa, 0)$, i.e., the following equality holds:
\begin{eqnarray}    \label{ex Bax}
T(x)\circ T(y)-T \Big( T(x)\circ y+x\circ T(y) +\lambda x\circ y \Big)
=\kappa \beta(x) \circ \beta(y)
, \quad x,y\in A.
\end{eqnarray}
Then the binary operation
$$x\circ_T y=T(x)\circ y+x\circ T(y)+\lambda x\circ y,\quad  x,y\in A,$$
defines a Novikov algebra structure on $A$.~In particular, if $\kappa \neq 0$, then $(A_\pm,\circ_\pm,L_A,R_A)$ are $A$-bimodule Novikov algebras, where $(A_\pm,\circ_\pm)$ are the new Novikov algebra structures defined by
\begin{equation}                               \mlabel{A-A:+-def}
x \circ_\pm y := \lambda x\circ y \mp 2  \beta(x) \circ y,
\qquad  x,y \in A.
\end{equation}
Moreover, $T$ is an extended $\calo$-operator of weight $\lambda$ with extension $\beta$ of mass $(\kappa=-1,0) $ on $(A, \circ)$ associated to $(A, \circ, L_A, R_A)$, i.e.,  Eq.~(\mref{ex Bax}) holds for $\kappa=-1$, if and only if $T\pm \beta : A_\pm \rightarrow A$ are $\calo$-operator of weight $1$ on $(A, \circ)$ associated to $(A_\pm,\circ_\pm,L_A,R_A)$, where $\circ_\pm$ are defined by Eq.~(\mref{A-A:+-def}).
\end{pro}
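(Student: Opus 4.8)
The plan is to obtain all three assertions by specializing Proposition~\ref{pro:*} and Theorems~\ref{delta +-} and~\ref{thm:r+-} to the $A$-bimodule Novikov algebra $(M,\cdot,l_A,r_A)=(A,\circ,L_A,R_A)$, with $\alpha=T$ playing the role of the symmetrizer and $\beta$ the antisymmetrizer, so that $\delta_\pm=T\pm\beta$ in the notation of Eq.~\eqref{a and b}. Under this identification $l_A(a)v=a\circ v$, $r_A(a)v=v\circ a$ and $u\cdot v=u\circ v$, so the operation of Eq.~\eqref{*Nov} becomes $x*y=T(x)\circ y+x\circ T(y)+\lambda x\circ y=x\circ_T y$, while the operation of Eq.~\eqref{+-def} becomes $\lambda x\circ y\mp2\beta(x)\circ y$, which is exactly $x\circ_\pm y$ of Eq.~\eqref{A-A:+-def}. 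First I would record that Eq.~\eqref{ex Bax} is precisely Eq.~\eqref{a ext b} in this setting with $\mu=0$; hence, once $\beta$ is balanced, $A$-invariant of mass $\kappa$ and equivalent of mass $0$, the map $T$ is an extended $\calo$-operator of weight $\lambda$ with extension $\beta$ of mass $(\kappa,0)$. Here equivalence of mass $0$ is automatic, being Eq.~\eqref{b:con3} multiplied by $\mu=0$.

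Granting this, the first assertion is immediate from Theorem~\ref{delta +-}(i): since $T$ is an extended $\calo$-operator of mass $(\kappa,0)$, the algebra $(A,*)=(A,\circ_T)$ is Novikov.

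For the two assertions requiring $\kappa\neq0$, the pivotal step is to upgrade the hypotheses on $\beta$. When $\kappa\neq0$ the factor may be cancelled in Eq.~\eqref{b:con2}, so $\beta$ becomes an $A$-bimodule homomorphism from $(A,L_A,R_A)$ to itself, namely $\beta(x\circ y)=x\circ\beta(y)=\beta(x)\circ y$. I would then verify that this bimodule-homomorphism property on its own forces the equivalence-of-mass conditions~\eqref{b:con3} to hold for every mass, in particular for mass $\lambda$; this is the only hypothesis of Theorem~\ref{thm:r+-} not assumed verbatim in the statement. With all hypotheses in force, Theorem~\ref{thm:r+-}(i) yields that $(A_\pm,\circ_\pm,L_A,R_A)$ are $A$-bimodule Novikov algebras with $\circ_\pm$ as in Eq.~\eqref{A-A:+-def}, and Theorem~\ref{thm:r+-}(ii)---which rests on Theorem~\ref{delta +-}(ii), whose sole extra hypothesis is precisely the bimodule-homomorphism property just secured---delivers the equivalence between the extended $\calo$-operator identity for $T$ and the statement that $\delta_\pm=T\pm\beta$ are $\calo$-operators of weight $1$ associated to $(A_\pm,\circ_\pm,L_A,R_A)$.

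The step I expect to be the main obstacle is the bookkeeping of the extension term rather than any new computation. Concretely, Theorems~\ref{delta +-}(ii) and~\ref{thm:r+-}(ii) convert the $\calo$-operator property of $\delta_\pm$ into an identity for $T$ of the shape of Eq.~\eqref{a ext b} whose right-hand side carries both $\kappa\beta(x)\circ\beta(y)$ and a term proportional to $\beta(x\circ y)$; the care lies in tracking how, in the specialization $M=A$ with $\beta$ a balanced bimodule homomorphism, this extension term interacts with the remaining terms so as to reproduce exactly Eq.~\eqref{ex Bax} for $\kappa=-1$. Once this matching is settled, no computation beyond the cited theorems remains, and the ``if and only if'' follows by reading Theorem~\ref{thm:r+-}(ii) in both directions.
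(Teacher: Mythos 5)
Your handling of the first two assertions is correct and is exactly the paper's route (the paper offers nothing more than a citation of Proposition \ref{pro:*} and Theorems \ref{delta +-} and \ref{thm:r+-} applied to $(M,\cdot,l_A,r_A)=(A,\circ,L_A,R_A)$). In particular, your observation that $\kappa\neq 0$ upgrades \eqref{b:con2} to the bimodule-homomorphism property $\beta(x\circ y)=x\circ\beta(y)=\beta(x)\circ y$, and that this forces the equivalence condition \eqref{b:con3} at every mass, is both correct and genuinely needed before Theorem \ref{thm:r+-} can be invoked; it is the one hypothesis not assumed verbatim in the proposition.

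The gap is in the final ``if and only if'', at exactly the step you defer. The matching you hope for does not occur. Specializing Theorem \ref{thm:r+-}(ii) (equivalently, Theorem \ref{delta +-}(ii)) to this setting gives: $T\pm\beta$ is an $\calo$-operator of weight $1$ on $(A,\circ)$ associated to $(A_\pm,\circ_\pm,L_A,R_A)$ if and only if
\begin{equation*}
T(x)\circ T(y)-T\big(T(x)\circ y+x\circ T(y)+\lambda x\circ y\big)=-\beta(x)\circ\beta(y)\pm\lambda\,\beta(x\circ y),\qquad x,y\in A,
\end{equation*}
i.e.\ Eq.~\eqref{a ext b} at mass $(-1,\pm\lambda)$, not Eq.~\eqref{ex Bax} at $\kappa=-1$, which is mass $(-1,0)$. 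Concretely, when one expands $(T\pm\beta)(x)\circ(T\pm\beta)(y)-(T\pm\beta)(x\circ_T y)$, the cross terms $\pm T(x)\circ\beta(y)\pm\beta(x)\circ T(y)$ do cancel (using that $\beta$ is balanced and, since $\kappa=-1\neq0$, a bimodule homomorphism), but the contribution $\mp\lambda\beta(x\circ y)=\mp\lambda\beta(x)\circ y$ coming from $\pm\beta(\lambda x\circ y)$ inside $\pm\beta(x\circ_T y)$ survives, and there is no reason for it to vanish. Thus your route (and the paper's one-line citation) proves the equivalence with the extra term $\pm\lambda\beta(x)\circ y$ on the right-hand side of \eqref{ex Bax}; this agrees with the statement as written only when $\lambda\beta(x\circ y)\equiv0$, e.g.\ when $\lambda=0$. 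This reading is confirmed by Corollary \ref{cor:Bax}: for $\beta=\id$ the threshold there is $\hat{\kappa}=-1\pm\lambda$, not $\hat{\kappa}=-1$. So the deferred bookkeeping cannot be ``settled'' so as to reproduce Eq.~\eqref{ex Bax} for $\kappa=-1$; what Theorems \ref{delta +-} and \ref{thm:r+-} actually deliver is the ``Moreover'' part with mass $(-1,\pm\lambda)$ in place of $(\kappa=-1,0)$, and the discrepancy lies in the proposition's own statement, which a complete write-up would have to flag and correct rather than inherit.
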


\begin{ex}                                             
Let $(A,\circ)$ be a two-dimensional Novikov algebra in \cite{BaiMeng} with a basis $\lbrace e_1,e_2\rbrace$ whose multiplication is given by
$$e_1\circ e_1=e_1,~e_1\circ e_2=e_2,~e_2\circ e_1=e_2,~e_2\circ e_2=0.$$
Let $T$ and $\beta : A \rightarrow A$ be two linear maps satisfying the following conditions:
$$ T(e_1)=-2e_1+4e_2,
~T(e_2)=e_2,
~\beta(e_1)=e_1+3e_2,
~ \beta(e_2)=e_2.$$
Then it is direct to verify that $\beta$ is balanced  and $A$-invariant of mass $\kappa$, and $T$ is an extended $\calo$-operator of weight  $\lambda=1$ with extension $\beta$ of mass $\kappa=-2$ on $(A, \circ)$ associated to $(A,\circ,L_A,R_A)$.\delete{ i.e. the following equality holds:
\begin{eqnarray*}
T(x)\circ T(y)-T \Big( T(x)\circ y+x\circ T(y) +x \circ y \Big)
=-2\beta(x) \circ \beta(y)
, \quad x,y\in A.
\end{eqnarray*}}
Moreover, the binary operation
$$e_1\circ_T e_1=-3e_1+8e_2,
~e_1\circ_T e_2=0,
~e_2\circ_T e_1=0,
~e_2\circ_T e_2=0,$$ defines a Novikov algebra structure on $A$.
On the other hand, $(A_\pm,\circ_\pm,L_A,R_A)$ are $A$-bimodule Novikov algebra, where $(A_\pm,\circ_\pm)$ are the new Novikov algebra structures defined by
$$e_1\circ_\pm e_1=e_1 \mp2(e_1+3e_2),
~e_1\circ_\pm e_2=e_2 \mp2e_2,
~e_2\circ_\pm e_1=e_2 \mp 2e_2,
~e_2\circ_\pm e_2=0.
$$
\end{ex}
\delete{\begin{proof}
$$e_1\circ_T e_1=T(e_1)\circ e_1 +e_1 \circ T(e_1) +e_1\circ e_1=(-2e_1+4e_2)\circ e_1+e_1\circ (-2e_1+4e_2)+e_1       =-3e_1+8e_2,$$
$$e_1\circ_T e_2=T(e_1)\circ e_2 +e_1 \circ T(e_2)+ e_1\circ e_2=(-2e_1+4e_2)\circ e_2 + e_1\circ e_2 +e_2          =0,$$
$$e_2\circ_T e_1=T(e_2)\circ e_1 +e_2 \circ T(e_1)+e_2\circ e_1=  e_2\circ e_1+e_2\circ (-2e_1+4e_2)   +e_2        =0,$$
$$e_2\circ_T e_2=T(e_2)\circ e_2 +e_2 \circ T(e_2)+e_2\circ e_2= e_2\circ e_2+e_2\circ e_2       =0$$
On the other hand,
$$e_1\circ_\pm e_1 =e_1\circ e_1 \mp2\beta(e_1)\circ e_1 =e_1 \mp 2(e_1+3e_2) \circ e_1=e_1 \mp2(e_1+3e_2),$$
$$e_1\circ_\pm e_2 =e_1\circ e_2 \mp2\beta(e_1)\circ e_2 =e_2\mp 2(e_1+3e_2)\circ e_2=e_2 \mp2e_2$$
$$e_1\circ_\pm e_2 =e_2\circ e_1 \mp2\beta(e_2)\circ e_1 =e_2\mp 2e_2\circ e_1=e_2 \mp 2e_2$$
$$e_2\circ_\pm e_2 =e_2\circ e_2 \mp2\beta(e_2)\circ e_2 =\mp 2e_2\circ e_2=0.$$
\end{proof}}

\delete{\begin{rmk}
Let $(A,\circ)$ be a Novikov algebra. If a linear endomorphism $\beta$ of $A$ is a balanced $A$-bimodule homomorphism of mass $\kappa \neq 0$, i.e. it satisfies Eq.~(\mref{bimodule homo}) for $\kappa \neq 0$, then it is called an {\bf averaging operator}, that is,
\begin{equation}                               \mlabel{averaging operator}
\beta(x)\circ\beta(y) = \beta(x\circ \beta(y))=\beta(\beta(x)\circ y),
\qquad  x,y \in A.
\end{equation}
and it is also a {\bf Nijenhuis operator}, namely,
\begin{equation}                               \mlabel{Nijenhuis operator}
\beta(x)\circ\beta(y) +\beta^2(x\circ y)= \beta(x\circ \beta(y) +\beta(x)\circ y),
\qquad  x,y \in A.
\end{equation}
\end{rmk}}

Let $(A,\circ)$ be a Novikov algebra and $(A,\circ,L_A,R_A)$ be the $A$-bimodule Novikov algebra. It is obvious that $\beta=\id: A\rightarrow A$ is balanced, $A$-invariant of mass of $\kappa \neq 0$ and equivalent of mass $\mu$. In this case, Eq.~(\mref{a ext b}) is of the following form
\begin{equation}                       \mlabel{T and hkappa}
T(x)\circ T(y)-T \Big( T(x)\circ y+ x\circ T(y)+\lambda x\circ y \Big)
=\kappa x\circ y +\mu x\circ y
, \quad x,y\in A.
\end{equation}
Let $\hat{\kappa}=\kappa+\mu$. Thus, by Theorem \mref{thm:r+-}, we obtain the following corollary.

\begin{cor}                             \mlabel{cor:Bax}
Let $\hat{\kappa}=-1\pm\lambda$. Then $T$ satisfies Eq.~(\mref{T and hkappa}) if and only if $T\pm \id$ is a Rota-Baxter operator of weight $\lambda \mp 2$.
\end{cor}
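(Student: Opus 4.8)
The plan is to obtain this corollary as the specialization of Theorem~\ref{thm:r+-} to the bimodule Novikov algebra $(M,\cdot,l_A,r_A)=(A,\circ,L_A,R_A)$, taking $\alpha=T$ and $\beta=\id$, so that $\delta_\pm=\alpha\pm\beta=T\pm\id$. First I would check that the hypotheses of Theorem~\ref{thm:r+-} are satisfied: as already noted just before the corollary, $\beta=\id$ is balanced, $A$-invariant of mass $\kappa\neq 0$, and equivalent of mass $\mu$ for every $\mu$; in particular it is equivalent of mass $\lambda$. Hence Theorem~\ref{thm:r+-} applies verbatim with weight $\lambda$ and extension $\id$.

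Next I would translate both sides of Theorem~\ref{thm:r+-}(ii) under this specialization. Since $\beta=\id$ and $l_A=L_A$, the new product of Eq.~\eqref{+-def} collapses to
\[
u\cdot_\pm v=\lambda\,u\circ v\mp 2\,L_A(u)v=(\lambda\mp 2)\,u\circ v,
\]
which is exactly Eq.~\eqref{A-A:+-def}; by Theorem~\ref{thm:r+-}(i) these are Novikov products making $(A_\pm,\circ_\pm,L_A,R_A)$ $A$-bimodule Novikov algebras, so an $\calo$-operator of weight $1$ into them is well defined. On the other side, with $\beta=\id$ and $u\cdot v=u\circ v$, the defining identity Eq.~\eqref{a ext b} of an extended $\calo$-operator of weight $\lambda$ and mass $(\kappa,\mu)=(-1,\pm\lambda)$ reads
\[
T(x)\circ T(y)-T\big(T(x)\circ y+x\circ T(y)+\lambda\,x\circ y\big)=(-1\pm\lambda)\,x\circ y,
\]
that is, it is precisely Eq.~\eqref{T and hkappa} with $\hat\kappa=\kappa+\mu=-1\pm\lambda$. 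Here I would stress the harmless but necessary observation that the right-hand side of Eq.~\eqref{T and hkappa} depends on $\kappa,\mu$ only through $\hat\kappa=\kappa+\mu$, so the specific splitting $\kappa=-1,\ \mu=\pm\lambda$ demanded by Theorem~\ref{thm:r+-} imposes no loss of generality. Consequently Theorem~\ref{thm:r+-}(ii) yields the equivalence between Eq.~\eqref{T and hkappa} with $\hat\kappa=-1\pm\lambda$ and the assertion that $T\pm\id\colon A_\pm\to A$ is an $\calo$-operator of weight $1$ on $(A,\circ)$ associated to $(A_\pm,\circ_\pm,L_A,R_A)$.

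It then remains to identify this last condition with the Rota-Baxter property of the stated weight. Writing $S=T\pm\id$ and unfolding the weight-$1$ $\calo$-operator identity Eq.~\eqref{operation} with $l_A=L_A$, $r_A=R_A$ and module product $\circ_\pm=(\lambda\mp 2)\circ$ gives
\[
S(x)\circ S(y)=S\big(S(x)\circ y+x\circ S(y)+(\lambda\mp 2)\,x\circ y\big),
\]
which is verbatim the Rota-Baxter operator identity Eq.~\eqref{T:Rota-Baxter} of weight $\lambda\mp 2$. Chaining this identification with the equivalence of the previous paragraph completes the proof.

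I do not expect any genuine obstacle: the corollary is a pure specialization of Theorem~\ref{thm:r+-}, and the only points demanding care are bookkeeping ones — confirming that $\id$ meets all three conditions on $\beta$ (already recorded in the text), that the mass datum $(-1,\pm\lambda)$ reproduces exactly $\hat\kappa=-1\pm\lambda$ in Eq.~\eqref{T and hkappa}, and that substituting $\circ_\pm=(\lambda\mp 2)\circ$ into the weight-$1$ condition converts it into a weight-$(\lambda\mp 2)$ Rota-Baxter identity.
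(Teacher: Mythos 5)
Your proposal is correct and is essentially the paper's own argument: the paper derives the corollary precisely by specializing Theorem \ref{thm:r+-} to $(M,\cdot,l_A,r_A)=(A,\circ,L_A,R_A)$ with $\beta=\id$ (noting $\id$ is balanced, $A$-invariant and equivalent of any mass), so that $\circ_\pm=(\lambda\mp 2)\circ$ and the weight-$1$ $\calo$-operator condition for $\delta_\pm=T\pm\id$ unfolds to the Rota-Baxter identity of weight $\lambda\mp 2$. Your extra remark that Eq.~(\ref{T and hkappa}) depends on $(\kappa,\mu)$ only through $\hat\kappa=\kappa+\mu$, so the splitting $(-1,\pm\lambda)$ loses no generality, is exactly the bookkeeping the paper leaves implicit.
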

Note that when $\lambda=0$, Eq.~(\mref{T and hkappa}) is of the following form:
\begin{equation}                       \mlabel{T and kappa}
T(x)\circ T(y)-T \Big( T(x)\circ y+ x\circ T(y) \Big)
=\hat{\kappa} x\circ y
, \quad x,y\in A.
\end{equation}
When  $\hat{\kappa}=-1$, Eq.~(\mref{T and kappa}) becomes
\begin{eqnarray}    \label{Bax bb=id k=-1}
T(x)\circ T(y)-T \Big( T(x)\circ y+x\circ T(y) \Big)
=- x \circ y
, \quad x,y\in A.
\end{eqnarray}
A Novikov algebra equipped with a linear endomorphism satisfying Eq.~(\mref{Bax bb=id k=-1}) is called a {\bf Baxter Novikov algebra}. Furthermore, $T$ satisfies Eq.~(\mref{Bax bb=id k=-1}) if and only if $T\pm \id$ is a Rota-Baxter operator of weight $\mp2$. 

\begin{pro} Let $(A,\circ, T)$ be a Baxter Novikov algebra. Then there is a post-Novikov algebra structure on $A$ given by
\begin{align}
x\odot y:= x\circ y,~
x\rhd y:= \dfrac{(T\pm \id)}{\mp2}(x)\circ y,~
x\lhd y:= x\circ  \dfrac{(T\pm \id)}{\mp2}(y),\;\;x,y\in A.
\end{align}
\end{pro}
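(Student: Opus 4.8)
The plan is to reduce this statement to an application of Corollary~\ref{post-Rota}, which already produces a post-Novikov algebra structure from any Rota-Baxter operator of weight $\lambda$. The key observation is that the combination $R:=\frac{T\pm\id}{\mp2}$ appearing in the three operations is itself a Rota-Baxter operator of weight $1$, so that feeding $R$ into Corollary~\ref{post-Rota} with $\lambda=1$ yields precisely the operations $x\odot y=x\circ y$, $x\rhd y=R(x)\circ y$ and $x\lhd y=x\circ R(y)$ displayed in the statement.

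First I would record that since $(A,\circ,T)$ is a Baxter Novikov algebra, $T$ satisfies Eq.~\eqref{Bax bb=id k=-1}, and hence, as noted just before the statement (the case $\lambda=0$, $\hat{\kappa}=-1$ of Corollary~\ref{cor:Bax}), the map $T\pm\id$ is a Rota-Baxter operator of weight $\mp2$ on $(A,\circ)$. Next I would use the elementary scaling property of Rota-Baxter operators: if $P$ satisfies $P(x)\circ P(y)=P\big(P(x)\circ y+x\circ P(y)+\nu\, x\circ y\big)$ for all $x,y\in A$, then for any $c\in{\bf k}$ the map $cP$ satisfies the same identity with weight $c\nu$. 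This follows at once by multiplying Eq.~\eqref{T:Rota-Baxter} through by $c^2$ and absorbing one factor of $c$ inside $P$ by linearity. Applying this with $P=T\pm\id$, $\nu=\mp2$ and $c=\frac{1}{\mp2}$ gives that $R=\frac{T\pm\id}{\mp2}$ is a Rota-Baxter operator of weight $c\nu=\frac{1}{\mp2}\cdot(\mp2)=1$.

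Finally I would invoke Corollary~\ref{post-Rota} with the Rota-Baxter operator $R$ and weight $\lambda=1$. The corollary then outputs the post-Novikov algebra $(A,\odot,\lhd,\rhd)$ with $x\odot y=1\cdot(x\circ y)=x\circ y$, $x\rhd y=R(x)\circ y$ and $x\lhd y=x\circ R(y)$, which upon substituting $R=\frac{T\pm\id}{\mp2}$ coincides exactly with the three operations in the statement. I do not expect any genuine obstacle here, since all the heavy lifting is already contained in Corollary~\ref{post-Rota} and in the equivalence preceding the statement. The only points requiring care are the sign and weight bookkeeping, namely confirming that both choices of sign produce weight $1$ after division by $\mp2$, so that the two Rota-Baxter identities of weights $-2$ and $+2$ rescale to a single weight-$1$ operator and the unified statement with $\pm$/$\mp$ is legitimate.
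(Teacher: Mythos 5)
Your proposal is correct and follows essentially the same route as the paper: the paper's proof likewise reduces the statement to Corollary~\ref{post-Rota}, observing that $\frac{T\pm \id}{\mp 2}$ is a Rota-Baxter operator of weight $1$ (via the remark before the proposition that $T\pm\id$ has weight $\mp 2$, rescaled). Your write-up merely makes explicit the scaling property of Rota-Baxter operators and the sign bookkeeping that the paper leaves implicit.
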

\begin{proof}
It follows immediately from Corollary \mref{post-Rota}, since $\dfrac{(T\pm \id)}{\mp2}$ is a Rota-Baxter operator of weight $1$.
\end{proof}
\delete{
\begin{ex}                                             
Let $(A,\circ)$ be a two-dimensional Novikov algebra in \cite{BaiMeng} with a basis $\lbrace e_1,e_2\rbrace$ whose multiplication is given by
$$e_1\circ e_1=e_1,~e_1\circ e_2=e_2,~e_2\circ e_1=e_2,~e_2\circ e_2=0.$$
Let $T$ and $\beta : A \rightarrow A$ be two linear maps satisfying the following conditions:
$$ T(e_1)=-2e_1+4e_2,
~T(e_2)=e_2,
~\beta(e_1)=e_1+3e_2,
~ \beta(e_2)=e_2.$$
Then it is direct to verify that $\beta$ is balanced  and $A$-invariant of mass $\kappa$, and $T$ is an extended $\calo$-operator of weight  $\lambda=1$ with extension $\beta$ of mass $\kappa=-2$ on $(A, \circ)$ associated to $(A,\circ,L_A,R_A)$.\delete{ i.e. the following equality holds:
\begin{eqnarray*}
T(x)\circ T(y)-T \Big( T(x)\circ y+x\circ T(y) +x \circ y \Big)
=-2\beta(x) \circ \beta(y)
, \quad x,y\in A.
\end{eqnarray*}}
Moreover, the binary operation
$$e_1\circ_T e_1=-3e_1+8e_2,
~e_1\circ_T e_2=0,
~e_2\circ_T e_1=0,
~e_2\circ_T e_2=0,$$ defines a Novikov algebra structure on $A$.
On the other hand, $(A_\pm,\circ_\pm,L_A,R_A)$ are $A$-bimodule Novikov algebra, where $(A_\pm,\circ_\pm)$ are the new Novikov algebra structures defined by
$$e_1\circ_\pm e_1=e_1 \mp2(e_1+3e_2),
~e_1\circ_\pm e_2=e_2 \mp2e_2,
~e_2\circ_\pm e_1=e_2 \mp 2e_2,
~e_2\circ_\pm e_2=0.
$$
\end{ex}
\delete{\begin{proof}
$$e_1\circ_T e_1=T(e_1)\circ e_1 +e_1 \circ T(e_1) +e_1\circ e_1=(-2e_1+4e_2)\circ e_1+e_1\circ (-2e_1+4e_2)+e_1       =-3e_1+8e_2,$$
$$e_1\circ_T e_2=T(e_1)\circ e_2 +e_1 \circ T(e_2)+ e_1\circ e_2=(-2e_1+4e_2)\circ e_2 + e_1\circ e_2 +e_2          =0,$$
$$e_2\circ_T e_1=T(e_2)\circ e_1 +e_2 \circ T(e_1)+e_2\circ e_1=  e_2\circ e_1+e_2\circ (-2e_1+4e_2)   +e_2        =0,$$
$$e_2\circ_T e_2=T(e_2)\circ e_2 +e_2 \circ T(e_2)+e_2\circ e_2= e_2\circ e_2+e_2\circ e_2       =0$$
On the other hand,
$$e_1\circ_\pm e_1 =e_1\circ e_1 \mp2\beta(e_1)\circ e_1 =e_1 \mp 2(e_1+3e_2) \circ e_1=e_1 \mp2(e_1+3e_2),$$
$$e_1\circ_\pm e_2 =e_1\circ e_2 \mp2\beta(e_1)\circ e_2 =e_2\mp 2(e_1+3e_2)\circ e_2=e_2 \mp2e_2$$
$$e_1\circ_\pm e_2 =e_2\circ e_1 \mp2\beta(e_2)\circ e_1 =e_2\mp 2e_2\circ e_1=e_2 \mp 2e_2$$
$$e_2\circ_\pm e_2 =e_2\circ e_2 \mp2\beta(e_2)\circ e_2 =\mp 2e_2\circ e_2=0.$$
\end{proof}}}

\section{Tensor forms of extended $\calo$-operators}
\mlabel{sec:nybe}

In this section, we introduce the definition of extended Novikov Yang-Baxter equations, which is a generalization of Novikov Yang-Baxter equations defined in \cite{HBG}.  The relationships between extended Novikov Yang-Baxter equations and extended $\calo$-operators in several cases are investigated. Moreover, we show that there are also close relationships between Novikov Yang-Baxter equations and extended $\calo$-operators.



\subsection{Extended Novikov Yang-Baxter equation}
\mlabel{ss:NYBE}               

We first recall the definition of Novikov Yang-Baxter equations given in \cite{HBG}.


\delete{
\begin{pro}          \mlabel{Nov:bialg}
Let $(A,\circ)$ be a Novikov algebra and $r\in A\otimes A$. In the
following, we consider a special class of Novikov bialgebras $(A,
\circ, \Delta_r)$ when $\Delta_r: A\rightarrow A\otimes A$ is
defined by
\begin{eqnarray}    \label{co1}     
\Delta_r(a)\coloneqq (L_A(a)\otimes \id+\id\otimes L_{A,\star}(a))r\quad \tforall  a\in A.
\end{eqnarray}
\end{pro}}

Let $(A, \circ)$ be a Novikov algebra.
Let $r=\sum_i x_i \otimes y_i \in A\otimes A$ and
$r'=\sum_j x_j '\otimes y_j '\in A\otimes A$. Set
$$r_{12}\circ r'_{13}=\sum_{i,j}x_i \circ x_j'\otimes y_i\otimes
y_j',\;\; r_{12}\circ r'_{23}=\sum_{i,j} x_i\otimes y_i\circ
x_j'\otimes y_j',\;\;r_{13}\circ r'_{23}=\sum_{i,j} x_i\otimes
x_j'\otimes y_i\circ y_j',$$
$$r_{13}\circ r'_{12}=\sum_{i,j}x_i\circ x_j'\otimes y_j'\otimes
y_i,\;\;r_{23}\circ r'_{13}=\sum_{i,j} x_j'\otimes x_i\otimes
y_i\circ y_j',$$
$$r_{12}\star r'_{23}=\sum_{i,j} x_i\otimes y_i\star
x_j'\otimes y_j',\;\;r_{13}\star r'_{23}=\sum_{i,j} x_i\otimes
x_j'\otimes y_i\star y_j'.$$

\delete{
\zhushi{\begin{lem}\mlabel{coblem1}\mlabel{lem:cob2}
Let $(A,\circ)$ be a Novikov algebra and $r\in A\otimes A$. Define $\Delta_r: A\rightarrow A\otimes A$ by Eq.~\meqref{co1}. Then $(A,\circ, \Delta_r)$ is a Novikov bialgebra if and only if the following equalities hold.

\begin{enumerate}
\item                 \mlabel{it:coba}
$(\id\otimes (L_A(b\circ a)+L_A(a)L_A(b))+L_{A,\star}(a)\otimes L_{A,\star}(b))(r+\tau r)=0\;\;\tforall  a,b\in A.$
\item               \mlabel{it:cobb}
$(L_{A,\star}(a)\otimes L_{A,\star}(b)-L_{A,\star}(b)\otimes L_{A,\star}(a))(r+\tau r)=0\;\;\tforall  a,b\in A.$
\item               \mlabel{it:cobc}
{\small

                 $
\Big(-L_{A,\star}(b)\otimes R_A(a)+L_{A,\star}(a)\otimes R_A(b)+R_A(a)\otimes L_A(b)-R_A(b)\otimes L_A(a)+\id\otimes \big(L_A(a)L_A(b)\\
 \ -L_A(b)L_A(a)\big)-\big(L_A(a)L_A(b)-L_A(b)L_A(a)\big)\otimes \id\Big)(r+\tau r)=0 \tforall  a,b\in A.$
}
\item \mlabel{it:cobd}
{\small
$
\Big(L_A(a)\otimes \id\otimes \id-\id\otimes L_A(a)\otimes \id\Big)\Big((\tau r)_{12}\circ r_{13}+r_{12}\circ r_{23}+r_{13}\star r_{23}\Big)\nonumber\\
  \hspace{0.3cm}+\Big((\id\otimes L_A(a)\otimes
\id)(r+\tau r)_{12}\Big)\circ r_{23}-\Big((L_A(a)\otimes \id\otimes \id)r_{13}\Big)\circ (r+\tau r)_{12}+\Big(\id\otimes \id\otimes
L_{A,\star}(a)\Big)\\
  \hspace{0.3cm}\Big(r_{23}\circ r_{13}-r_{13}\circ
r_{23}-(\id\otimes\id\otimes \id-\tau\otimes \id)(r_{13}\circ
r_{12}+r_{12}\star r_{23})\Big)=0~~\tforall  a\in A.
$
}
\item \mlabel{it:cobe}

          $  (\id\otimes \id\otimes \id-\id\otimes \tau)(\id\otimes \id\otimes
            L_{A,\star}(a))(r_{13}\circ (\tau r)_{23} -r_{12}\star
            r_{23}-r_{13}\circ r_{12})=0\;\;\tforall  a\in A.$

\end{enumerate}
\end{lem}
}

\smallskip

\noindent

\zhushi{
\begin{thm}\mlabel     {thm:bialg}
Let $(A,\circ)$ be a Novikov algebra and $r\in A\otimes A$. Define
$\Delta_r: A\rightarrow A\otimes A$ by Eq.~{\rm (\mref{co1})}. Then
 $(A,\circ, \Delta_r)$ is a Novikov bialgebra if and
only if Eqs.~(\mref{it:coba})-(\mref{it:cobe}) hold.
\vspb
\end{thm}
}

\begin{cor}\mlabel     {cor:bialg}
Let $(A,\circ)$ be a Novikov algebra and $r\in A\otimes A$ be
skewsymmetric. Define $\Delta_r: A\rightarrow A\otimes A$ by
Eq.~{\rm (\mref{co1})}. Then
 $(A,\circ, \Delta_r)$ is a Novikov bialgebra if and only if the following
equalities hold.
{\small
\begin{eqnarray}
&&(L_A(a)\otimes \id\otimes \id-\id\otimes L_A(a)\otimes
\id)(\id\otimes \tau) (r\diamond r) +(\id\otimes
\id\otimes L_{A,\star}(a))(r\diamond r-(\tau\otimes \id)(r\diamond r))=0,\label{cob8}\\
&&\label{cob9}(\id\otimes \id\otimes \id- \id\otimes \tau)(\id\otimes
\id\otimes L_{A,\star}(a))(r\diamond r)=0\;\;  a\in A,
\end{eqnarray}}
where
\vspd
\begin{eqnarray} \notag
r\diamond r\coloneqq r_{13}\circ r_{23} +r_{12}\star r_{23}+r_{13}\circ
r_{12}.
\vspb
\end{eqnarray}
In particular, if $r\diamond r=0$, then  $(A,\circ, \Delta_r)$ is
a Novikov bialgebra.
\end{cor}
}

\begin{defi}   \cite{HBG}
Let $(A,\circ)$ be a Novikov algebra and $r\in A\otimes A$. The
equation
\begin{eqnarray}  \label{NYBE}
r\diamond r\coloneqq r_{13}\circ r_{23} +r_{12}\star r_{23}+r_{13}\circ
r_{12}=0
\end{eqnarray}
is called the {\bf Novikov Yang-Baxter equation (NYBE)} in $A$.
\end{defi}

\zhushi{
\begin{defi}
Let $(A,\circ)$ be a Novikov algebra. Fix $\epsilon \in \bf k $. The
equation
\vspb        
\begin{eqnarray}  \label{ENYBE}
r_{13}\circ r_{23} +r_{12}\star r_{23}+r_{13}\circ r_{12}=
\epsilon(r_{13} + r_{31}) \circ (r_{23} + r_{32})
\vspb
\end{eqnarray}
is called the {\bf extended Novikov Yang-Baxter equation (ENYBE)} in $A$.
\end{defi}
Note that when $\epsilon = 0$ or $r$ is skew-symmetric in the sense that $\tau(r)=-r$, then the ENYBE of mass $\epsilon$ is the same as the NYBE in Eq.~(\mref{NYBE})
}


\begin{pro}   \cite[Proposition 2.3]{BGNGCYBE}   \mlabel{hat and dual}
Let $V$ and $W$ be two vector spaces.
\begin{enumerate}
\item
Define the  natural linear isomorphisms as follows:
\begin{eqnarray*}
\wedge=\wedge_{V,W} :  V\ot W \rightarrow \Hom_{\bf k}(V^*,W),\;\; \langle w^\ast, \hr(v^\ast)\rangle= \langle v^\ast\otimes w^\ast,r\rangle,  \\
\vee=\vee_{\Hom_{\bf k}(V,W)}:  \Hom_{\bf k}(V,W)\rightarrow V^*\ot W,\;\; \langle \caa, v\otimes w^\ast\rangle=\langle w^\ast, \alpha(v)\rangle,
\end{eqnarray*}
where $r\in V\ot W$, $ v^\ast\in V^\ast$, $w^\ast\in W^\ast$, $\alpha \in \Hom_{\bf k}(V,W)$ and $v\in V$.
 Then $\wedge_{V,W}$ and $\vee_{\Hom_{\bf k}(V^\ast,W)}$ are the inverse of each other.
 \item For $\alpha\in\Hom_{\bf k}(V^*,W)$, let $\alpha^* :W^\ast\rightarrow V^{\ast\ast}\cong V$ be the corresponding dual linear map of $\alpha$. Then for any $r\in V\ot W$, we have $\widehat{\tau(r)}=\hr^*$.
\end{enumerate}
\end{pro}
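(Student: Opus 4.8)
The plan is to prove both parts by unwinding the defining pairings and invoking the non-degeneracy of the natural pairing between a finite-dimensional space and its dual, the hypothesis of finite-dimensionality being used throughout. First I would record that the two maps are well defined: since $W$ is finite-dimensional, the vector $\hr(v^\ast)\in W$ is determined by the scalars $\langle w^\ast,\hr(v^\ast)\rangle$ for all $w^\ast\in W^\ast$, so $\wedge_{V,W}(r)=\hr$ is well defined; likewise $\caa\in V^\ast\ot W$ is determined by its pairings against all $v\ot w^\ast$, so $\vee_{\Hom_{\bf k}(V,W)}$ is well defined. Linearity in the respective arguments is immediate from bilinearity of the defining pairings.

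The heart of Part (1) is to check that the two composites are the identity, and here the only point needing care is that the relevant instance is $\vee_{\Hom_{\bf k}(V^\ast,W)}$, whose target is $(V^\ast)^\ast\ot W=V^{\ast\ast}\ot W$, identified with $V\ot W$ through the canonical isomorphism $V\cong V^{\ast\ast}$. Starting from $r\in V\ot W$, I would compute, for all $v^\ast\in V^\ast$ and $w^\ast\in W^\ast$,
\[
\langle \vee_{\Hom_{\bf k}(V^\ast,W)}(\hr),\,v^\ast\ot w^\ast\rangle=\langle w^\ast,\hr(v^\ast)\rangle=\langle v^\ast\ot w^\ast,r\rangle,
\]
which under $V^{\ast\ast}\cong V$ says exactly that $\vee_{\Hom_{\bf k}(V^\ast,W)}(\hr)=r$. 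Conversely, for $\alpha\in\Hom_{\bf k}(V^\ast,W)$ I would compute
\[
\langle w^\ast,\widehat{\vee(\alpha)}(v^\ast)\rangle=\langle v^\ast\ot w^\ast,\vee(\alpha)\rangle=\langle w^\ast,\alpha(v^\ast)\rangle,
\]
and conclude $\widehat{\vee(\alpha)}=\alpha$ by non-degeneracy in $W$. Thus $\wedge_{V,W}$ and $\vee_{\Hom_{\bf k}(V^\ast,W)}$ are mutually inverse, hence both are isomorphisms.

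For Part (2) I would write $r=\sum_i x_i\ot y_i$, so that $\tau(r)=\sum_i y_i\ot x_i$, and observe that both $\widehat{\tau(r)}=\wedge_{W,V}(\tau(r))$ and $\hr^\ast$ lie in $\Hom_{\bf k}(W^\ast,V)$; it therefore suffices to compare them after pairing their values against an arbitrary $v^\ast\in V^\ast$. The definition of $\wedge_{W,V}$ gives $\langle v^\ast,\widehat{\tau(r)}(w^\ast)\rangle=\langle w^\ast\ot v^\ast,\tau(r)\rangle=\langle v^\ast\ot w^\ast,r\rangle$, while the defining property of the transpose together with $V^{\ast\ast}\cong V$ gives $\langle v^\ast,\hr^\ast(w^\ast)\rangle=\langle w^\ast,\hr(v^\ast)\rangle=\langle v^\ast\ot w^\ast,r\rangle$. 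The right-hand sides coincide, so $\widehat{\tau(r)}(w^\ast)=\hr^\ast(w^\ast)$ for every $w^\ast\in W^\ast$ by non-degeneracy, whence $\widehat{\tau(r)}=\hr^\ast$.

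The computations are entirely routine, so I do not expect a genuine mathematical obstacle. The one place demanding care, and thus the main thing to get right, is the bookkeeping: keeping track of which space each bracket $\langle\,\cdot\,,\,\cdot\,\rangle$ is a pairing in, distinguishing the several instances of $\wedge$ and $\vee$ attached to different source spaces, and making the identification $V\cong V^{\ast\ast}$ explicit at each step so that assertions such as ``$\vee(\hr)=r$'' are literally meaningful rather than only meaningful up to canonical isomorphism.
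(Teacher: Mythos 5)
Your proof is correct. Note that the paper itself gives no proof of this proposition at all: it is quoted verbatim from \cite[Proposition 2.3]{BGNGCYBE}, so there is nothing to compare against except the standard argument, which is exactly what you give. Your verification — unwinding the defining pairings, using non-degeneracy of the pairing between a finite-dimensional space and its dual, and keeping the identification $V\cong V^{\ast\ast}$ explicit so that $\vee_{\Hom_{\bf k}(V^\ast,W)}(\hr)=r$ is a literal equality — is the routine and expected one, and all the steps check out.
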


Using the above notations, for a vector space $A$, we set
\begin{eqnarray}
r_\pm :=(r\pm \tau(r))/2,\;\; \alpha_\pm :=(\alpha\pm\alpha^*)/2,\;\; \;\;
r\in A\ot A,~\alpha\in \Hom_{\bf k}(A^*,A).
\end{eqnarray}
Moreover, for any $r\in A\ot A$, we have $(\hr)_\pm=\hat{r_\pm}$. 

By Proposition \mref{hat and dual}, we can identify $r\in A\ot A$ with the linear map $\hr: A^*\rightarrow A$ by
\begin{equation}\mlabel{rhat}
\langle \hr (a^*), b^*\rangle=\langle a^* \otimes b^*, r\rangle,
\quad  a^*,b^*\in A.
\end{equation}
Similarly define a linear map $\hr^t : A^*\rightarrow A$ by
\begin{equation}\mlabel{rt}
\langle a^*, \hr^t(b^*)\rangle=\langle a^* \otimes b^*, r\rangle,
\quad  a^*,b^*\in A,
\end{equation}
where $\hr^t$ is induced by $\tau(r)$.
Note that  $r$ is skew-symmetric or symmetric in $ A\ot A $ if and only if $\hr=-\hr^t$ or $\hr=\hr^t$. Define
\begin{equation}         \mlabel{r:a and b}
\alpha=\hr_-=(\hr-\hr^t)/2,\quad \beta=\hr_+=(\hr+\hr^t)/2,
\end{equation}
which are called the {\bf skew-symmetric part} and the {\bf symmetric part} of $\hr$ respectively. Then $\hr=\alpha + \beta=\hr_-+\hr_+$ and $\hr^t=-\alpha+\beta=-\hr_-+\hr_+$.


\begin{lem}\label{lem:r}
Let $(A,\circ)$ be a Novikov algebra and $s\in A\otimes A $ be symmetric. Then the following conditions are equivalent.

\begin{enumerate}
\item[(\romannumeral1)] s is {\bf invariant}, i.e.,
\begin{equation}\label{r:con1}
(L_A(x)\otimes \id+\id\otimes L_{A,\star}(x))s=0, \quad   x\in A.
\end{equation}
\item[(\romannumeral2)] $\hs:(A^*,L_{A,\star}^*, -R_A^*)\rightarrow A$ is balanced. \delete{i.e.
\begin{equation}\label{r:con2}
L_{A,\star}^*(\hs(a^*))b^*=-R_A^*(\hs(b^*)) a^*,\quad   a^*,b^*\in A^*.
\end{equation}}
\item[(\romannumeral3)] $\hs:(A^*,L_{A,\star}^*, -R_A^*)\rightarrow (A,L_A,R_A)$ is an $A$-bimodule homomorphism.
\delete{ i.e.
\begin{equation}\label{r:con3}
\hs(L_\star^*(x) a^*)=x\circ \hs(a^*),\quad \hs(-R^*(x) a^*)=\hs(a^*)\circ x,\quad   a^*\in A^*,x\in A.
\end{equation}}
\end{enumerate}
\vspb
\end{lem}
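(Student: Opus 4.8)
The plan is to translate each of the module-theoretic conditions (ii) and (iii) into a tensor identity in $A\otimes A$ by pairing against test functionals, and then to compare these identities with the invariance condition (i), i.e.\ Eq.~\eqref{r:con1}. Writing $s=\sum_i s_i^{(1)}\otimes s_i^{(2)}$, I will use the explicit form $\hs(a^\ast)=\sum_i\langle a^\ast,s_i^{(1)}\rangle\, s_i^{(2)}$ coming from Eq.~\eqref{rhat}, the dualization identities $\langle L_A^\ast(x)a^\ast,y\rangle=-\langle a^\ast,x\circ y\rangle$, $\langle R_A^\ast(x)a^\ast,y\rangle=-\langle a^\ast,y\circ x\rangle$ and $\langle L_{A,\star}^\ast(x)a^\ast,y\rangle=-\langle a^\ast,x\star y\rangle$, and the symmetry of $s$, which amounts to $\sum_i f(s_i^{(1)})g(s_i^{(2)})=\sum_i f(s_i^{(2)})g(s_i^{(1)})$ for all $f,g\in A^\ast$.

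First I would prove (i)$\Leftrightarrow$(ii). Pairing the balanced identity \eqref{b:con1} for $\hs$, namely $L_{A,\star}^\ast(\hs(a^\ast))b^\ast=-R_A^\ast(\hs(b^\ast))a^\ast$, against an arbitrary $c\in A$ and expanding $\hs$, the left-hand side becomes $-\langle a^\ast\otimes b^\ast,(\id\otimes L_{A,\star}(c))s\rangle$, while the right-hand side equals $\langle a^\ast,c\circ\hs(b^\ast)\rangle$, which after one application of the symmetry of $s$ becomes $\langle a^\ast\otimes b^\ast,(L_A(c)\otimes\id)s\rangle$. As $a^\ast,b^\ast,c$ are arbitrary, the resulting identity is precisely $(L_A(c)\otimes\id+\id\otimes L_{A,\star}(c))s=0$, that is, condition (i).

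Next I would prove (i)$\Leftrightarrow$(iii). Condition (iii) splits into the left- and right-module compatibilities $\hs(L_{A,\star}^\ast(x)a^\ast)=x\circ\hs(a^\ast)$ and $\hs(-R_A^\ast(x)a^\ast)=\hs(a^\ast)\circ x$ (these are the defining identities of an $A$-bimodule homomorphism from the dual bimodule of Proposition~\ref{pp:dualrep} to $(A,L_A,R_A)$, cf.\ Eq.~\eqref{b:con2}). Pairing each against $b^\ast$ and expanding as above, the first becomes the tensor identity $(L_A(x)\otimes\id)s+(R_A(x)\otimes\id)s+(\id\otimes L_A(x))s=0$, which I call (E1), and the second becomes $(R_A(x)\otimes\id)s=(\id\otimes R_A(x))s$, which I call (E2). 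The key observation is that applying the flip $\tau$ to condition (i) and using $\tau(s)=s$ together with $\tau\circ(f\otimes g)=(g\otimes f)\circ\tau$ turns (i) exactly into (E1); since $\tau$ is invertible this already gives (i)$\Leftrightarrow$(E1). Moreover (E2) is literally the difference of (i) and (E1), so it holds automatically whenever (i) does. Hence (i) is equivalent to (E1) and (E2) jointly, i.e.\ to (iii).

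The reductions of (ii) and (iii) to tensor identities are routine but sign-sensitive, since every dualization contributes a minus sign, so the bookkeeping must be done with care. The one genuinely conceptual point --- and the only place I expect real subtlety --- is the recognition that the right-module compatibility (E2) is not an independent requirement but is forced by invariance of the symmetric tensor $s$ through the flip $\tau$; this is exactly what collapses the two-sided intertwining condition (iii) to the single equation (i).
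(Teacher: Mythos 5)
Your proposal is correct: the tensor translations of the balanced and homomorphism conditions, the identification of (i) with your (E1) via the flip, and the observation that (E2) is then forced are all accurate, so both equivalences follow. For (i)$\Leftrightarrow$(ii) you are doing exactly what the paper does, just read in the opposite direction: the paper expands $\langle(L_A(x)\otimes\id+\id\otimes L_{A,\star}(x))s,\,a^*\otimes b^*\rangle$ into $\langle x,\,-R_A^*(\hs(b^*))a^*-L_{A,\star}^*(\hs(a^*))b^*\rangle$ using one application of the symmetry of $s$, which is your computation run backwards. Where you genuinely depart from the paper is in (i)$\Leftrightarrow$(iii). The paper evaluates the same invariance pairing a second time, now using the raw definition of $\hs$ (no symmetry), which produces the condition $\hs(L_A^*(x)a^*)=x\star\hs(a^*)$; that this, combined with the left-module condition from the first evaluation and the splitting $L_{A,\star}^*=L_A^*+R_A^*$, yields the right-module condition $\hs(-R_A^*(x)a^*)=\hs(a^*)\circ x$ is left implicit (``it is easy to see''). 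You instead decompose (iii) into the two tensor identities (E1), (E2) inside $A\otimes A$, prove (i)$\Leftrightarrow$(E1) by conjugating with $\tau$ and using $\tau(s)=s$, and exhibit (E2) as the difference of (i) and (E1). Both arguments rest on the same three ingredients --- dualization, one use of the symmetry of $s$, and $L_{A,\star}=L_A+R_A$ --- but your packaging makes the paper's suppressed combination step explicit, and the $\tau$-conjugation is a tidier encoding of ``apply symmetry of $s$''; the paper's version, working directly with pairings, avoids introducing intermediate tensor identities and reads off the module conditions immediately. One small point to flag: in your (ii) argument, rewriting $\langle L_{A,\star}^*(\hs(a^*))b^*,c\rangle=-\langle b^*,\hs(a^*)\star c\rangle$ as $-\langle a^*\otimes b^*,(\id\otimes L_{A,\star}(c))s\rangle$ silently uses the commutativity of $\star$ (immediate from $a\star b=a\circ b+b\circ a$); it is harmless, but since you advertise sign-level care, it deserves a word.
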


\begin{proof}
First, we show that Item (\romannumeral1) is equivalent to Item (\romannumeral2). Since $s\in A\ot A $ is symmetric, for all $x\in A$, $a^*,b^* \in A^*$, we get
\begin{eqnarray*}
\langle (L_A(x)\otimes \id+\id\otimes L_{A,\star}(x))s ,a^*\ot b^*  \rangle &&=
-\langle s,L_A^*(x) a^* \ot b^* \rangle-\langle s, a^* \ot L_{A,\star}^*(x)b^* \rangle
\\&&=\langle x\circ \hs(b^*),a^*\rangle + \langle x\circ \hs(a^*)+ \hs(a^*)\circ x, b^* \rangle
\\&&=\langle R_A(\hs(b^*))x,a^*\rangle + \langle L_{A,\star}\hs(a^*))x, b^* \rangle
\\&&=-\langle x,R_A^*(\hs(b^*))a^*\rangle -\langle x, L_{A,\star}^*(\hs(a^*))b^* \rangle
\\&&=\langle x,(-R_A^*)(\hs(b^*))a^* -L_{A,\star}^*(\hs(a^*))b^*\rangle .
\end{eqnarray*}
Thus $s$ is invariant if and only if $\hs:(A^*,L_{A,\star}^*, -R_A^*)\rightarrow A$ is balanced.

Next, we show that Item (\romannumeral1) is equivalent to Item (\romannumeral3). For all $x\in A$, $a^*,b^* \in A^*$, we obtain
\begin{eqnarray*}
\langle (L_A(x)\otimes \id+\id\otimes L_{A,\star}(x))s ,a^*\ot b^*  \rangle &&=
-\langle s,L_A^*(x) a^* \ot b^* \rangle-\langle s, a^* \ot L_{A,\star}^*(x)b^* \rangle
\\&&=-\langle \hs(b^*),L_A^*(x)a^*\rangle - \langle \hs(L_{A,\star}^*(x)b^*), a^* \rangle
\\&&=\langle x\circ \hs(b^*)- \hs(L_{A,\star}^*(x)b^*), a^* \rangle,
\end{eqnarray*}
and
\begin{eqnarray*}
\langle (L_A(x)\otimes \id+\id\otimes L_{A,\star}(x))s ,a^*\ot b^*  \rangle &&=
-\langle s,L_A^*(x) a^* \ot b^* \rangle-\langle s, a^* \ot L_{A,\star}^*(x)b^* \rangle
\\&&=-\langle \hs(L_A^*(x)a^*),b^*\rangle - \langle \hs(a^*), L_{A,\star}^*(x)b^* \rangle
\\&&=\langle -\hs(L_A^*(x)a^* )+ x\circ \hs(a^*)+\hs(a^*)\circ x, b^* \rangle.
\end{eqnarray*}
\delete{by the symmetry of $s$. Moreover, the third assertion $``\hs(-R^*(x) a^*)=\hs(a^*)\circ x"$ follows immediately according to the definition of $L_{\star}^*=L_A^*+R_A^*$.} Then it is easy to see that $s$ is invariant if and only if $\hs:(A^*,L_{A,\star}^*,$ $ -R_A^*)\rightarrow (A,L_A,R_A)$ is an $A$-bimodule homomorphism.
\end{proof}



\begin{lem}                         \mlabel     {lem:a^*}
Let $(A,\circ)$ be a Novikov algebra and $r\in A\otimes A$. Let $\alpha$, $\beta$ be the linear maps defined by Eq.~(\mref{r:a and b}) and $\cbb$ be invariant.\delete{ which also implies $\beta$ is a balanced $A$-bimodule homomorphism.} Then the following two statements are equivalent.
\begin{enumerate}
\item[(\romannumeral1)] $\alpha$ is an extended $\calo$-operator of weight $0$ with extension $\beta$ of mass $(-1,0)$   on $(A, \circ)$ associated to $(A^*,L_{A,\star}^*,-R_A^*)$.\delete{ i.e.,
\begin{equation}        
\alpha(a^*)\circ \alpha(b^*)-\alpha \Big( L_\star^*(\alpha(a^*))b^*-R^*(\alpha(b^*)) a^*\Big)
=- \beta(a^*) \circ \beta(b^*)
, \;\;\; a^*,b^*\in A^*.
\end{equation}}
\item[(\romannumeral2)] $\hr$ (resp. $-\hr^t$) is an $\calo$-operator of weight $1$ on $(A, \circ)$ associated to a new $A$-bimodule Novikov algebra $(A^*,\circ_+,L_{A,\star}^*,-R_A^*)$~(resp. $(A^*,\circ_-,L_{A,\star}^*,-R_A^*)$),\delete{ i.e.,
\begin{equation}                               \mlabel{cor:r:+}
\hr(a^*)\circ \hr(b^*)=  \hr \Big( L_\star^*(\hr(a^*))b^*-R^*(\hr(b^*)) a^* +a^*\circ_+b^* \Big)
,\;\;\;  a^*,b^*\in A^*,
\end{equation}
(resp.
\begin{equation}                                \mlabel{cor:r:-}
(-\hr^t)(a^*)\circ (-\hr^t)(b^*)=  (-\hr^t) \Big( L_\star^*((-\hr^t)(a^*))b^*-R^*((-\hr^t)(b^*)) a^* +a^*\circ_-b^* \Big)
, \;\;\; a^*,b^*\in A^*.
\end{equation}
), } where the Novikov algebra multiplications $\circ_\pm$ on $A^*$ are defined by
\begin{equation}                       \mlabel{cor:r:yunsuan}
a^* \circ_\pm b^* = \mp 2 L_{A,\star}^*(  \beta(a^*)  )b^*
, \;\;\; a^*,b^*\in A^*.
\end{equation}
\end{enumerate}
\end{lem}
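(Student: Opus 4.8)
The plan is to derive this statement as the specialization of Corollary \ref{cor:r+-} to the dual bimodule at weight $\lambda = 0$, so that no genuinely new computation is required. First I would fix the data entering that corollary. By Remark \ref{ex:dualrep}, the triple $(A^*, L_{A,\star}^*, -R_A^*)$ is an $A$-bimodule Novikov algebra with trivial multiplication on $A^*$, so it plays the role of the bimodule $(V, l_A, r_A)$ with $l_A = L_{A,\star}^*$ and $r_A = -R_A^*$. From Eq.~\eqref{r:a and b} we have $\hr = \alpha + \beta$ and $\hr^t = -\alpha + \beta$, hence $\alpha + \beta = \hr$ and $\alpha - \beta = -\hr^t$; these are precisely the two maps $\delta_\pm$ whose symmetrizer and antisymmetrizer are $\alpha$ and $\beta$.

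Next I would verify the hypotheses on $\beta$. The symmetric part $\cbb$ of $r$ satisfies $\widehat{\cbb} = \beta$, and by assumption $\cbb$ is invariant. Lemma \ref{lem:r} then shows that $\beta$ is balanced and is an $A$-bimodule homomorphism from $(A^*, L_{A,\star}^*, -R_A^*)$ to $(A, L_A, R_A)$. By the remark in Definition \ref{def:beta and operator}, an $A$-bimodule homomorphism is automatically $A$-invariant of mass $\kappa$ for every $\kappa$, in particular for a nonzero one. Thus $\beta$ is balanced and $A$-invariant of mass $\kappa \neq 0$, which are exactly the standing hypotheses of Corollary \ref{cor:r+-} (the equivalence condition is vacuous here, since the multiplication on $A^*$ is trivial).

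Now I would invoke Corollary \ref{cor:r+-} with $V = A^*$ and $\lambda = 0$. Its part (i) produces the new Novikov algebra structures $u \cdot_\pm v = \mp 2 L_{A,\star}^*(\beta(u))v$ on $A^*$, which is exactly $\circ_\pm$ of Eq.~\eqref{cor:r:yunsuan}, together with the assertion that $(A^*, \circ_\pm, L_{A,\star}^*, -R_A^*)$ are $A$-bimodule Novikov algebras. Its part (ii) then states that $\alpha$ is an extended $\calo$-operator of weight $0$ with extension $\beta$ of mass $(-1, \pm\lambda)$ if and only if $\alpha \pm \beta$ is an $\calo$-operator of weight $1$ on $(A, \circ)$ associated to $(A^*, \circ_\pm, L_{A,\star}^*, -R_A^*)$.

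The one point requiring attention — the crux rather than an obstacle — is that with $\lambda = 0$ the two mass values $\pm\lambda$ both collapse to $0$, so the extended $\calo$-operator condition on $\alpha$ in the ``$+$'' and ``$-$'' branches of Corollary \ref{cor:r+-}(ii) becomes literally the same statement, namely statement (i). Applying the corollary once with the ``$+$'' sign and once with the ``$-$'' sign, and recalling $\alpha + \beta = \hr$ and $\alpha - \beta = -\hr^t$, therefore shows that (i) is equivalent to ``$\hr$ is an $\calo$-operator of weight $1$ associated to $(A^*, \circ_+, L_{A,\star}^*, -R_A^*)$'' and, simultaneously, to ``$-\hr^t$ is an $\calo$-operator of weight $1$ associated to $(A^*, \circ_-, L_{A,\star}^*, -R_A^*)$''. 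This is exactly the equivalence (i) $\Leftrightarrow$ (ii), and nothing beyond Lemma \ref{lem:r} and Corollary \ref{cor:r+-} is needed.
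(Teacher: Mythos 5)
Your proposal is correct and is essentially the paper's own proof: the paper obtains Lemma \ref{lem:a^*} directly from Corollary \ref{cor:r+-} and Lemma \ref{lem:r}, exactly the route you take. Your write-up merely makes explicit the verifications the paper leaves implicit — the identification of $(A^*,L_{A,\star}^*,-R_A^*)$ as a bimodule Novikov algebra with trivial product, the use of Lemma \ref{lem:r} plus the remark after Definition \ref{def:beta and operator} to get that $\beta$ is balanced and $A$-invariant of nonzero mass, the identities $\hr=\alpha+\beta$ and $-\hr^t=\alpha-\beta$, and the collapse of the masses $(-1,\pm\lambda)$ to $(-1,0)$ at $\lambda=0$.
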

\begin{proof}
The conclusion follows directly from Corollary \mref{cor:r+-} and Lemma \ref{lem:r}. 
\end{proof}
The following theorem yields a close relationship between extended $\calo$-operators on a Novikov algebra $(A,\circ)$ and solutions of the NYBE in $A$.

\begin{thm}               \label     {thm:r-ENYBE}
Let $(A,\circ)$ be a Novikov algebra and $r\in A\otimes A$. 
\begin{enumerate}
\item[(\romannumeral1)] Then $r$ is a solution of the NYBE in $A$ if and only if $\hr$ satisfies
\begin{equation}\label{o-NYBE}
\hr(a^*)\circ \hr(b^*)=\hr\Big( L_{A,\star}^*(\hr(a^*))b^*-(-R_A)^*(\hr^t(b^*))a^* \Big),\quad   a^*,b^*\in A.
\end{equation}
\item[(\romannumeral2)] Let $\hr=\alpha +\beta$, where $\alpha$ and $\beta$ are defined by Eq.~(\mref{r:a and b}). Assume that the symmetric part $\cbb$ of $r$ is invariant. Then $r$ is a solution of the following equation: 
\begin{equation}     \label{o-ENYBE}
r_{13}\circ r_{23} +r_{12}\star r_{23}+r_{13}\circ r_{12}=\frac{\kappa +1}{4}
(r_{13} + (\tau r)_{13}) \circ (r_{23} + (\tau r)_{23})
\vspb
\end{equation}
if and only if $\alpha$ is an extended $\calo$-operator with extension $\beta$
of mass $(\kappa, 0)$ on $(A, \circ)$ associated to $(A^\ast, L_{A,\star}^\ast, -R_A^\ast)$.
\delete{\begin{equation}\notag
\alpha(a^*)\circ \alpha(b^*)-\alpha\Big( L_{A,\star}^*(\alpha(a^*))b^*+(-R_A^*)(\alpha(b^*))a^* \Big)=\kappa \beta(a^*)\circ\beta(b^*),\quad \;\; a^*,b^*\in A^\ast.
\end{equation}}
\end{enumerate}
\vspb
\end{thm}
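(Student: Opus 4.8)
The plan is to pass from the tensor equations to operator equations on $A^*$ via the identification $r\leftrightarrow\hr$ of Proposition~\ref{hat and dual}, and to read off both statements from one contraction computation. Throughout, write $r=\sum_i x_i\otimes y_i$, so that by Eqs.~\eqref{rhat} and \eqref{rt} we have $\hr(a^*)=\sum_i\langle a^*,x_i\rangle y_i$ and $\hr^t(b^*)=\sum_i\langle b^*,y_i\rangle x_i$. Since $A$ is finite-dimensional, the pairing of $A^{\otimes 3}$ with $(A^*)^{\otimes 3}$ is nondegenerate, so a tensor in $A^{\otimes 3}$ vanishes if and only if its contraction against $a^*\otimes b^*$ in the first two legs vanishes in $A$ for all $a^*,b^*\in A^*$.

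First I would prove (i) by contracting $r\diamond r$ against $a^*\otimes b^*$ in the first two legs. Using $y_i\star x_j=L_{A,\star}(y_i)x_j$, $x_i\circ x_j=R_A(x_j)x_i$, and the definition of the dual maps, the three summands $r_{13}\circ r_{23}$, $r_{12}\star r_{23}$, $r_{13}\circ r_{12}$ contract respectively to $\hr(a^*)\circ\hr(b^*)$, $-\hr\bigl(L_{A,\star}^*(\hr(a^*))b^*\bigr)$, and $-\hr\bigl(R_A^*(\hr^t(b^*))a^*\bigr)$ (for the last, sum over $j$ first to produce $\hr^t(b^*)$). Collecting these and using $(-R_A)^*=-R_A^*$ to fix the sign yields exactly Eq.~\eqref{o-NYBE}, so $r$ solves the NYBE iff Eq.~\eqref{o-NYBE} holds.

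For (ii) I would first note that the right-hand side of Eq.~\eqref{o-ENYBE} is the tensor $r+\tau r$ placed in legs $1,3$ and $2,3$. Since $\widehat{\tau r}=\hr^t$, we have $\widehat{r+\tau r}=\hr+\hr^t=2\beta$ by Eq.~\eqref{r:a and b}, and $r+\tau r$ is symmetric; hence the same contraction sends the right-hand side to $\frac{\kappa+1}{4}\,(2\beta(a^*))\circ(2\beta(b^*))=(\kappa+1)\,\beta(a^*)\circ\beta(b^*)$. Combined with the computation of (i), $r$ solves Eq.~\eqref{o-ENYBE} iff
$$\hr(a^*)\circ\hr(b^*)-\hr\bigl(L_{A,\star}^*(\hr(a^*))b^*\bigr)-\hr\bigl(R_A^*(\hr^t(b^*))a^*\bigr)=(\kappa+1)\,\beta(a^*)\circ\beta(b^*).$$
It remains to substitute $\hr=\alpha+\beta$ and $\hr^t=-\alpha+\beta$ and simplify the left side. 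Here I invoke the hypothesis that $\cbb$ is invariant: by Lemma~\ref{lem:r} this makes $\beta$ simultaneously balanced and an $A$-bimodule homomorphism from $(A^*,L_{A,\star}^*,-R_A^*)$ to $(A,L_A,R_A)$. The homomorphism property rewrites $\beta\bigl(L_{A,\star}^*(x)c^*\bigr)=x\circ\beta(c^*)$ and $\beta\bigl(R_A^*(x)c^*\bigr)=-\beta(c^*)\circ x$, which cancels the mixed cross terms, while the balanced identity $L_{A,\star}^*(\beta(a^*))b^*=(-R_A^*)(\beta(b^*))a^*$ annihilates the residual $\alpha$-images.

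The identity I am aiming for, valid whenever $\beta$ is balanced and an $A$-bimodule homomorphism, is
$$\hr(a^*)\circ\hr(b^*)-\hr\bigl(L_{A,\star}^*(\hr(a^*))b^*\bigr)-\hr\bigl(R_A^*(\hr^t(b^*))a^*\bigr)=\Bigl[\alpha(a^*)\circ\alpha(b^*)-\alpha\bigl(L_{A,\star}^*(\alpha(a^*))b^*-R_A^*(\alpha(b^*))a^*\bigr)\Bigr]+\beta(a^*)\circ\beta(b^*).$$
Granting this, Eq.~\eqref{o-ENYBE} becomes equivalent to the bracketed defect equalling $\kappa\,\beta(a^*)\circ\beta(b^*)$, which is precisely the defining Eq.~\eqref{a ext b} for an extended $\calo$-operator $\alpha$ of weight $0$ with extension $\beta$ of mass $(\kappa,0)$ associated to $(A^*,L_{A,\star}^*,-R_A^*)$, the weight and $\mu$ terms dropping out because the multiplication on $A^*$ is trivial. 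The main obstacle is the bookkeeping in establishing the displayed identity: one must track six cross terms and verify that the homomorphism property disposes of four of them against the $\alpha$-images while the balanced property kills the last two. Alternatively, this identity is the specialization of the symmetrizer/antisymmetrizer analysis underlying Theorem~\ref{delta +-} with $\delta_+=\hr$ and $\delta_-=-\hr^t$ (so that $\alpha$ and $\beta$ of Eq.~\eqref{r:a and b} are the symmetrizer and antisymmetrizer of $\delta_\pm$), and so can be imported rather than recomputed.
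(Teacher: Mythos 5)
Your proposal is correct and takes essentially the same route as the paper's own proof: part (i) is the identical contraction computation, and part (ii) is exactly the paper's argument — contract both sides of Eq.~\eqref{o-ENYBE} (the right-hand side giving $(\kappa+1)\beta(a^*)\circ\beta(b^*)$ since $\widehat{r+\tau r}=2\beta$), then split $\hr=\alpha+\beta$, $\hr^t=-\alpha+\beta$ and cancel the cross terms using the balanced and bimodule-homomorphism properties of $\beta$ supplied by Lemma~\ref{lem:r}, which is precisely the paper's displayed computation. Two cosmetic remarks: the expansion actually produces eight cross terms cancelling four-by-four (the homomorphism property kills $\alpha(a^*)\circ\beta(b^*)$ against $\beta\bigl(L_{A,\star}^*(\alpha(a^*))b^*\bigr)$ and $\beta(a^*)\circ\alpha(b^*)$ against $\beta\bigl(R_A^*(\alpha(b^*))a^*\bigr)$, while the balanced property kills the four terms with $\beta$ inside $L_{A,\star}^*$ or $R_A^*$), not six splitting four-and-two; and your alternative of importing the identity from the proof of Theorem~\ref{delta +-} with $\delta_+=\hr$, $\delta_-=-\hr^t$ does work, but note it still requires the balanced identity to rewrite $L_{A,\star}^*(\alpha(a^*))b^*-R_A^*(\alpha(b^*))a^*$ as $L_{A,\star}^*(\hr(a^*))b^*+R_A^*(\hr^t(b^*))a^*$ inside the argument of $\hr$ — neither point affects correctness.
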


\begin{proof}
(\romannumeral1)~Let $r=\sum_i u_i \otimes v_i \in A\otimes A$. For all $a^*$, $b^*$, $c^* \in A^*$, we have
\begin{eqnarray*}
\langle r_{13}\circ r_{23}, a^*\ot b^* \ot c^*\rangle
&&=\sum_{i,j}\langle u_i,a^*\rangle \langle u_j,b^*\rangle \langle v_i \circ v_j,c^*\rangle=\sum_{j}\langle u_j,b^*\rangle\langle \hr(a^*) \circ v_j, c^*\rangle
\\&&=\langle \hr(a^*)\circ \hr(b^*),c^* \rangle,\\
\langle r_{12}\star r_{23}, a^*\ot b^* \ot c^*\rangle
&&=\sum_{i,j} \langle u_i,a^*\rangle      \langle v_i \star u_j,b^*\rangle
\langle v_j, c^*\rangle
=\sum_{j} \langle  L_{A,\star}(u_j) \hr(a^*),b^*\rangle       \langle v_i, c^*\rangle
\\&&=\sum_{j} -\langle  u_j, L_{A,\star}^* (\hr(a^*))b^*\rangle       \langle v_j, c^*\rangle
=\Big\langle - \hr\Big(L_{A,\star}^* (\hr(a^*))b^* \Big),c^*\Big\rangle,\\
\langle r_{13}\circ r_{12}, a^*\ot b^* \ot c^*\rangle
&&=\sum_{i,j} \langle u_j\circ u_i,a^*\rangle    \langle v_i,b^*\rangle
\langle v_j,c^*\rangle
\\&&=\sum_{j} \langle u_j\circ \hr^t(b^*), a^*\rangle \langle v_j,c^*\rangle
=\Big\langle  \hr\Big(-R_A^* (\hr^t(b^*))a^* \Big),c^*\Big\rangle.
\end{eqnarray*}
Thus $r$ is a solution of the NYBE in $A$ if and only if $\hr$ satisfies Eq.~(\mref{o-NYBE}).
\smallskip

\noindent
(\romannumeral2) By Lemma \mref{lem:r}, the proof of Item (\romannumeral1) and Eq. (\mref{r:a and b}), for all $a^*$, $b^*$, $c^* \in A^*$, we have
\begin{eqnarray*}
&&\Big\langle \alpha(a^*)\circ \alpha(b^*)-\alpha\Big( L_{A,\star}^*(\alpha(a^*))b^*+(-R_A^*)(\alpha(b^*))a^* \Big)-\kappa \beta(a^*)\circ\beta(b^*) ,c^*\Big\rangle \\
&&=\Big\langle
\alpha(a^*)\circ\alpha(b^*)
+\Big( \alpha(a^*)\circ\beta(b^*)-\beta(L_{A,\star}^*(  \alpha(a^*)) b^* \Big)
+\Big( \beta(a^*)\circ \alpha(b^*) +\beta( R_A^*(  \alpha(b^*)) a^* ) )  \Big)\\
&&\quad + \Big( \alpha( -R_A^*(\beta(b^*)) a^*) -\alpha( L_{A,\star}^*(\beta(a^*)) b^*)  \Big)
+\Big(  -\beta( R_A^*(  \beta(b^*) )a^*) - \beta( L_{A,\star}^*(  \beta(a^*)) b^*  \Big)
\\&&\quad +\Big( \alpha( R_A^*(\alpha(b^*)) a^*) -\alpha( L_{A,\star}^*(\alpha(a^*)) b^*) \Big)
+\beta(a^*)\circ\beta(b^*) -(\kappa +1)\beta(a^*)\circ\beta(b^*)
,c^*\Big\rangle \\
&&=\Big\langle  \Big( \alpha(a^*)\circ\alpha(b^*)+            \alpha(a^*)\circ\beta(b^*) +\beta(a^*)\circ \alpha(b^*)       +        \beta(a^*)\circ \beta(b^*)   \Big)
        -(\kappa +1) \beta(a^*)\circ\beta(b^*)\\
&&\quad +  \Big( -\alpha( R_A^*(\beta(b^*)) a^*)  -\beta( R_A^*(  \beta(b^*) )a^*  )
 +\alpha( R_A^*(\alpha(b^*)) a^*)  + \beta( R^*(  \alpha(b^*)) a^* )  \Big) \\
&&\quad +\Big(-\alpha( L_{A,\star}^*(\alpha(a^*)) b^*)- \beta( L_{A,\star}^*(\alpha(a^*) )b^*  )
-\alpha( L_{A,\star}^*(\beta(a^*)) b^*)  - \beta( L_{A,\star}^*(  \beta(a^*)) b^* ) \Big)
,c^* \Big\rangle
\\&&=\Big\langle \hr(a^*)\circ \hr(b^*)-\hr\Big( L_{A,\star}^*(\hr(a^*))b^*\Big)+
\hr\Big( (-R_A^*)(\hr^t(b^*))a^* \Big)-(\kappa +1) \beta(a^*)\circ\beta(b^*) ,c^*\Big\rangle \\
&&= \Big\langle  r_{13}\circ r_{23} +r_{12}\star r_{23}+r_{13}\circ r_{12}
 -(\kappa +1) \beta_{13}\circ \beta_{23}
,a^*\ot b^*\ot c^*\Big\rangle
\\&&= \Big\langle r_{13}\circ r_{23} +r_{12}\star r_{23}+r_{13}\circ r_{12}
 -\frac{\kappa +1}{4} (r_{13} + (\tau r)_{13}) \circ (r_{23} + (\tau r)_{23})
,a^*\ot b^*\ot c^*\Big\rangle.
\end{eqnarray*}
 Therefore, $r$ satisfies Eq.~(\mref{o-ENYBE}) if and only if $\alpha$ is an extended $\calo$-operator with extension $\beta$ of mass  $( \kappa, 0)$ on $(A, \circ)$ associated to $(A^\ast, L_{A,\star}^\ast, -R_A^\ast)$.
\vspb
\end{proof}

Now, we introduce the definition of extended Novikov Yang-Baxter equations.
\begin{defi}
Let $(A,\circ)$ be a Novikov algebra and $\epsilon \in \bf k $. The
equation
\vspb        
\begin{eqnarray}  \label{ENYBE}
r_{13}\circ r_{23} +r_{12}\star r_{23}+r_{13}\circ r_{12}=
\epsilon(r_{13} + (\tau r)_{13}) \circ (r_{23} + (\tau r)_{23})
\vspb
\end{eqnarray}
is called the {\bf extended Novikov Yang-Baxter equation (ENYBE) of mass $\epsilon $} in $A$.
\end{defi}
\begin{rmk}
Note that when $\epsilon = 0$ or $r$ is skew-symmetric \delete{in the sense that $\tau(r)=-r$}, the ENYBE of mass $\epsilon$ is the same as the NYBE.
\end{rmk}

\begin{cor}      \label{cor:o-ENYBE}
Let $(A,\circ)$ be a Novikov algebra and $r\in A\otimes A$. Let $\hr=\alpha +\beta$, where $\alpha$ and $\beta$ are defined by Eq.~(\mref{r:a and b}).
If $\cbb$ is invariant, then the following statements are equivalent.
	\begin{enumerate}
	\item\label{item1} $r$ is a solution of the NYBE in $A$.
	\item \label{item2} $\hr$ (resp. $-\hr^t$) is an $\calo$-operator of weight $1$ on $(A,\circ)$ associated to the $A$-bimodule Novikov algebra $(A^*,\circ_+,L_{A,\star}^*,-R_A^*)$ (resp. $ (A^*,\circ_-,L_{A,\star}^*,-R_A^*)$), where the Novikov algebra multiplication $\circ_+$ (resp. $\circ_-$) on $A^*$ are defined by Eq. (\mref{cor:r:yunsuan}).
	\item \label{item3} $\alpha$ is an extended $\calo$-operator of weight $0$ with extension $\beta$ of mass $(-1,0)$ on $(A,\circ)$ associated to $ (A^*,L_{A,\star}^*,-R_A^*)$.
	\item \label{item4} For all $a^*$, $b^* \in A$,
	\begin{equation}                    \mlabel{* and quan}
	( \alpha\pm\beta) (a^* \ast b^*)=( \alpha\pm\beta)(a^*)\circ
	( \alpha\pm\beta)(b^*),
	\end{equation}
	where
	\begin{equation*}
	a^* \ast b^* = L_{A,\star}^*(\hr(a^*))b^*-(-R_A)^*(\hr^t(b^*))a^*,\;\;  a^*,b^*\in A^\ast.
	\end{equation*}
	\end{enumerate}
\end{cor}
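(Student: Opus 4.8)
The plan is to treat statement (\ref{item3}) as a hub and deduce each of (\ref{item1}), (\ref{item2}), (\ref{item4}) from it, so that all four become equivalent; every step will be a specialization of a result already proved. The standing hypothesis that the symmetric part $\cbb$ is invariant is used throughout via Lemma \ref{lem:r}, which tells us that invariance of $\cbb$ is equivalent to $\beta$ being balanced and to $\beta$ being an $A$-bimodule homomorphism from $(A^*,L_{A,\star}^*,-R_A^*)$ to $(A,L_A,R_A)$. With this in hand, the equivalence of (\ref{item2}) and (\ref{item3}) is exactly the content of Lemma \ref{lem:a^*}, which states that $\alpha$ is an extended $\calo$-operator of weight $0$ with extension $\beta$ of mass $(-1,0)$ associated to $(A^*,L_{A,\star}^*,-R_A^*)$ if and only if $\hr$ (resp. $-\hr^t$) is an $\calo$-operator of weight $1$ associated to $(A^*,\circ_+,L_{A,\star}^*,-R_A^*)$ (resp. $(A^*,\circ_-,L_{A,\star}^*,-R_A^*)$), with $\circ_\pm$ given by Eq.~\eqref{cor:r:yunsuan}.

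Next, for (\ref{item1}) $\Leftrightarrow$ (\ref{item3}) I would specialize Theorem \ref{thm:r-ENYBE}(ii) to $\kappa=-1$. With this choice the coefficient $\frac{\kappa+1}{4}$ in front of the right-hand side of Eq.~\eqref{o-ENYBE} vanishes, so Eq.~\eqref{o-ENYBE} collapses precisely to the NYBE Eq.~\eqref{NYBE}. Since the symmetric part $\cbb$ is invariant by hypothesis, Theorem \ref{thm:r-ENYBE}(ii) then reads: $r$ solves the NYBE if and only if $\alpha$ is an extended $\calo$-operator with extension $\beta$ of mass $(-1,0)$, which is statement (\ref{item3}). (The weight is $0$ because $A^*$ carries the trivial multiplication, so the $\lambda\, u\cdot v$ and $\mu\,\beta(u\cdot v)$ terms drop out.)

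It remains to prove (\ref{item3}) $\Leftrightarrow$ (\ref{item4}). Since $\beta$ is an $A$-bimodule homomorphism, I apply Theorem \ref{delta +-}(ii) with $\lambda=0$; then $\mu=\pm\lambda=0$ for both signs, and that theorem gives, for each sign separately, that the condition in (\ref{item3}) (the $\kappa=-1,\mu=0$ case of Eq.~\eqref{a ext b}) is equivalent to $\delta_\pm(a^*)\circ\delta_\pm(b^*)=\delta_\pm(a^*\ast_\alpha b^*)$, where $a^*\ast_\alpha b^* = L_{A,\star}^*(\alpha(a^*))b^* - R_A^*(\alpha(b^*))a^*$ is the operation of Eq.~\eqref{*Nov} for the present data; thus (\ref{item3}) is equivalent to both identities holding. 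To match them with Eq.~\eqref{* and quan} I substitute $\hr=\alpha+\beta$ and $\hr^t=-\alpha+\beta$ into the operation $\ast$ of statement (\ref{item4}) and obtain $a^*\ast b^* = a^*\ast_\alpha b^* + \big(L_{A,\star}^*(\beta(a^*))b^* + R_A^*(\beta(b^*))a^*\big)$; the bracketed term vanishes because balancedness of $\beta$ gives $L_{A,\star}^*(\beta(a^*))b^* = -R_A^*(\beta(b^*))a^*$. Hence $\ast=\ast_\alpha$, and since $\delta_\pm=\alpha\pm\beta$, the two identities become exactly Eq.~\eqref{* and quan}. As a consistency check, the $+$ case of (\ref{item4}) is literally Eq.~\eqref{o-NYBE}, which by Theorem \ref{thm:r-ENYBE}(i) is again equivalent to (\ref{item1}).

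All the routine computation is absorbed into the cited statements, so the one genuinely new point — and the step I expect to be the main obstacle — is the coincidence $a^*\ast b^* = a^*\ast_\alpha b^*$ in the last paragraph: one must notice that the extra symmetric-part contributions cancel precisely because invariance of $\cbb$ forces $\beta$ to be balanced, and one must track the signs coming from $(-R_A)^*=-R_A^*$ and from $\hr^t=-\alpha+\beta$ to see this cancellation correctly. The specialization $\kappa=-1$ in the second paragraph, which kills the right-hand side of the ENYBE, is the other place where care is needed, though it is essentially bookkeeping.
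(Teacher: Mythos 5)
Your proof is correct. It draws on the same core results as the paper's own proof --- Lemma \ref{lem:r} to turn invariance of $\cbb$ into balancedness and the bimodule-homomorphism property, Lemma \ref{lem:a^*} for the equivalence of Items (\ref{item2}) and (\ref{item3}), and Theorem \ref{thm:r-ENYBE} to bring in the NYBE --- but the equivalences are wired differently. The paper proves Item (\ref{item1}) $\Leftrightarrow$ Item (\ref{item2}) by starting from the operator form of the NYBE in Theorem \ref{thm:r-ENYBE}(i) and explicitly rewriting $R_A^*(\hr^t(b^*))a^*$ using $\hr^t=2\beta-\hr$ and balancedness until the weight-$1$ $\calo$-operator condition for $\circ_+$ appears; it then obtains Item (\ref{item3}) from Lemma \ref{lem:a^*} and treats Item (\ref{item2}) $\Leftrightarrow$ Item (\ref{item4}) as immediate. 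You instead make Item (\ref{item3}) the hub: Item (\ref{item1}) $\Leftrightarrow$ Item (\ref{item3}) is the specialization $\kappa=-1$ of Theorem \ref{thm:r-ENYBE}(ii), under which the right-hand side of Eq.~(\ref{o-ENYBE}) vanishes and the ENYBE collapses to the NYBE --- this cleanly absorbs the rewriting the paper does by hand --- and Item (\ref{item3}) $\Leftrightarrow$ Item (\ref{item4}) follows from Theorem \ref{delta +-}(ii) at $\lambda=0$ once you verify $\ast=\ast_\alpha$, a cancellation that is in substance the very computation the paper displays, just relocated to the Item (\ref{item4}) step. The net effect is the same mathematics organized around a different pivot; your version has the advantage of replacing the paper's ``it is easy to see'' for Item (\ref{item4}) with an actual citation of Theorem \ref{delta +-}(ii), while the paper's version keeps the operator-form manipulation explicit, so the reader sees concretely how $\circ_+$ emerges from $\hr^t=2\beta-\hr$.
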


\begin{proof}
Note that $\hr^t=2\beta-\hr$. By Lemma \mref{lem:r}, we have
\begin{eqnarray*}
&&\hr(a^*)\circ \hr(b^*)- \hr \Big( L_{A,\star}^*(\hr(a^*))b^*+R_A^*(\hr^t(b^*)) a^* \Big)\\
&&=\hr(a^*)\circ \hr(b^*)- \hr \Big( L_{A,\star}^*(\hr(a^*))b^*-R_A^*(\hr(b^*)) a^*-2L_{A,\star}^*(\beta(b^*)a^* \Big) \\
&&=\hr(a^*)\circ \hr(b^*)- \hr \Big( L_{A,\star}^*(\hr(a^*))b^*-R_A^*(\hr(b^*)) a^* +a^*\circ_+b^* \Big) ,
\end{eqnarray*}
where the multiplication $\circ_+$ on $A^*$ is defined by Eq. (\mref{cor:r:yunsuan}). Then by Lemma \mref{lem:a^*} and Theorem \ref{thm:r-ENYBE}, $r$ is a solution of the NYBE in $A$ if and only if Item (\ref{item2}) holds if and only if 
Item (\ref{item3}) holds. Moreover, it is easy to see that Item (\ref{item2}) holds if and only if Item (\ref{item4}) holds. Then this conclusion holds.
\end{proof}

\begin{rmk}
In particular, if $r$ is skew-symmetric, then $r$ is a solution of the NYBE in $A$ if and only if $\hr :~A^* \rightarrow A$ is an $\calo$-operator of weight $0$ on $(A,\circ)$ associated to $ (A^*,L_{A,\star}^*,-R_A^*)$, which follows from $\cbb=0$. This is \cite[Theorem 3.27]{HBG}.
\end{rmk}

\zhushi{

In this part, we want to give the connection between the extended $\calo$-operator and the extended ENYBE. For this, we want to use Corollary \mref{cor:r+-} with new notations on triple $(A,L_\star^*,-R^*)$.

\begin{lem}                         \mlabel     {lem:a^*}
Let $(A,\circ)$ be a Novikov algebra and $r\in A\otimes A$. Let $\caa$, $\beta$ be defined by Eq.~(\mref{r:a and b}) and $\beta$ be invariant, which also implies $\beta$ is a balanced $A$-bimodule homomorphism as a map in $\Hom_{\bf k}(A^*,A)$. Then the following two assertions are equivalent.
\begin{enumerate}
\item[(\romannumeral1)] The map $\alpha$ is an extended $\calo$-operator with extension $\beta$ of mass -1, i.e.,
\begin{equation}        
\alpha(a^*)\circ \alpha(b^*)-\alpha \Big( L_\star^*(\alpha(a^*))b^*+-R^*(\alpha(b^*)) a^*\Big)
=- \beta(a^*) \circ \beta(b^*)
, \tforall a^*,b^*\in A^*.
\end{equation}
\item[(\romannumeral2)] The map r (resp. $-r^t$) is an $\calo$-operator of weight 1 associated to a new $A$-bimodule Novikov algebra $(A^*,\circ_+,L_\star^*,-R^*)$~(resp. $(A^*,\circ_-,L_\star^*,-R^*)$), i.e.,
\begin{equation}                               \mlabel{cor:r:+}
r(a^*)\circ r(b^*)=  r \Big( L_\star^*(r(a^*))b^*-R^*(r(b^*)) a^* +a^*\circ_+b^* \Big)
, \tforall a^*,b^*\in A^*,
\end{equation}
(resp.
\begin{equation}                                \mlabel{cor:r:-}
(-r^t)(a^*)\circ (-r^t)(b^*)=  (-r^t) \Big( L_\star^*((-r^t)(a^*))b^*-R^*((-r^t)(b^*)) a^* +a^*\circ_-b^* \Big)
, \tforall a^*,b^*\in A^*.
\end{equation}
), where the Novikov algebra multiplication $\circ_\pm$ on $A^*$ are defined by
\begin{equation}                       \mlabel{cor:r:yunsuan}
a^* \circ_\pm b^* = \mp 2 L_\star^*(  \beta(a^*)  )b^*
, \tforall a^*,b^*\in A^*.
\end{equation}
\end{enumerate}
\end{lem}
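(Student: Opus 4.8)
The plan is to obtain the equivalence as a direct specialization of Corollary~\mref{cor:r+-}, once the invariance hypothesis on $\beta$ has been repackaged through Lemma~\ref{lem:r}. All of the genuine algebraic content is already contained in those two results, so the proof amounts to matching notation and verifying that the hypotheses of Corollary~\mref{cor:r+-} hold with the correct parameters.

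First I would unpack what ``$\cbb$ invariant'' gives us. By Eq.~(\mref{r:a and b}) the map $\beta=\hr_+$ is associated to the symmetric part $r_+=(r+\tau(r))/2$ of $r$, so Lemma~\ref{lem:r} applies: invariance of $r_+$ is equivalent to $\beta:(A^*,L_{A,\star}^*,-R_A^*)\to A$ being balanced and an $A$-bimodule homomorphism from $(A^*,L_{A,\star}^*,-R_A^*)$ to $(A,L_A,R_A)$. By the remark following Definition~\mref{def:beta and operator}, an $A$-bimodule homomorphism is automatically $A$-invariant of mass $\kappa$ for every $\kappa$, in particular for some $\kappa\neq 0$. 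Hence $\beta$ meets exactly the standing hypotheses of Corollary~\mref{cor:r+-} on the $A$-bimodule Novikov algebra $(A^*,L_{A,\star}^*,-R_A^*)$, whose multiplication is trivial; note that the ``equivalent of mass'' condition is then vacuous, so nothing further is needed.

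Next I would invoke Corollary~\mref{cor:r+-} with $V=A^*$, $l_A=L_{A,\star}^*$, $r_A=-R_A^*$ and weight $\lambda=0$. Its part (i) produces the new Novikov structures $a^*\cdot_\pm b^*=\mp 2L_{A,\star}^*(\beta(a^*))b^*$, which are exactly the operations $\circ_\pm$ of Eq.~(\mref{cor:r:yunsuan}). Its part (ii) then asserts that $\alpha$ is an extended $\calo$-operator of weight $0$ with extension $\beta$ of mass $(-1,\pm\lambda)$ if and only if $\alpha\pm\beta$ is an $\calo$-operator of weight $1$ on $(A,\circ)$ associated to $(A^*,\circ_\pm,L_{A,\star}^*,-R_A^*)$. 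Here I would use the identities $\hr=\alpha+\beta$ and $-\hr^t=\alpha-\beta$, which read off immediately from $\hr=\alpha+\beta$ and $\hr^t=-\alpha+\beta$ in Eq.~(\mref{r:a and b}).

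The single point deserving care, and the only place I expect any subtlety, is the bookkeeping of the weight. Because $\lambda=0$, the mass $(\kappa,\pm\lambda)=(-1,0)$ coincides for both signs, so statement (i) is simultaneously the hypothesis of the $+$ case and of the $-$ case of Corollary~\mref{cor:r+-}(ii). Reading off the two cases then yields: (i) holds if and only if $\hr:(A^*,\circ_+,L_{A,\star}^*,-R_A^*)\to A$ is an $\calo$-operator of weight $1$, and if and only if $-\hr^t:(A^*,\circ_-,L_{A,\star}^*,-R_A^*)\to A$ is an $\calo$-operator of weight $1$. These are precisely the two assertions of statement (ii), which establishes the equivalence.
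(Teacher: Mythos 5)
Your proposal is correct and follows essentially the same route as the paper, whose proof is exactly the observation that the lemma ``follows directly from Corollary~\ref{cor:r+-} and Lemma~\ref{lem:r}.'' Your write-up simply makes explicit the details the paper leaves implicit: translating invariance of $\cbb$ into the balanced/$A$-bimodule-homomorphism hypotheses via Lemma~\ref{lem:r}, specializing Corollary~\ref{cor:r+-} to $(A^*,L_{A,\star}^*,-R_A^*)$ with $\lambda=0$, and identifying $\alpha+\beta=\hr$, $\alpha-\beta=-\hr^t$ and $\cdot_\pm=\circ_\pm$.
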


The following theorem yields a close relationship between extended $\calo$-operators on a Novikov algebra $(A,\circ)$ and solutions of the NYBE in $A$.

\begin{thm}               \label     {thm:r-ENYBE}
Let $(A,\circ)$ be a Novikov algebra and $r\in A\otimes A$, which can also be identified as a linear map in $\Hom_{\bf k}(A^*,A)$.
\begin{enumerate}
\item[(\romannumeral1)] Then $r$ is a solution of the NYBE in $A$ if and only if $r$ satisfies
\begin{equation}\label{o-NYBE}
r(a^*)\circ r(b^*)=r\Big( L_\star^*(r(a^*))b^*-(-R)^*(r^t(b^*))a^* \Big),\quad \forall  a^*,b^*\in A.
\end{equation}
\item[(\romannumeral2)] Define $r=\alpha +\beta$ by Eq.~(\mref{r:a and b}). Assume that the symmetric part $\beta$ of $r$ is invariant. Then $r$ is a solution of ENYBE of mass $\frac{\kappa +1}{4}$:
\begin{equation}     \label{o-ENYBE}
r_{13}\circ r_{23} +r_{12}\star r_{23}+r_{13}\circ r_{12}=\frac{\kappa +1}{4}
(r_{13} + r_{31}) \circ (r_{23} + r_{32})
\vspb
\end{equation}
if and only if $\alpha$ is an extended $\calo$-operator with extension $\beta$
of mass $(\varrho\neq 0,\kappa,0)$:
\begin{equation}\notag
\alpha(a^*)\circ \alpha(b^*)-r\Big( L_\star^*(\alpha(a^*))b^*+(-R^*)(\alpha(b^*))a^* \Big)=\kappa \beta(a^*)\circ\beta(b^*)\quad \forall  a^*,b^*\in A.
\end{equation}
\end{enumerate}
\vspb
\end{thm}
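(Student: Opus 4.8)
The plan is to prove both parts by pairing the relevant tensor identities against an arbitrary element $a^*\otimes b^*\otimes c^*\in A^*\otimes A^*\otimes A^*$ and translating each leg into the operators $\hr$ and $\hr^t$ attached to $r$ via \eqref{rhat} and \eqref{rt}. Writing $r=\sum_i u_i\otimes v_i$, I first compute the three summands of $r\diamond r$ one at a time. Using $\langle\hr(a^*),b^*\rangle=\langle a^*\otimes b^*,r\rangle$ and $\langle a^*,\hr^t(b^*)\rangle=\langle a^*\otimes b^*,r\rangle$, the leg pattern of $r_{13}\circ r_{23}$ gives $\langle r_{13}\circ r_{23},a^*\otimes b^*\otimes c^*\rangle=\langle\hr(a^*)\circ\hr(b^*),c^*\rangle$; the middle-leg $\star$-product in $r_{12}\star r_{23}$ produces, after moving $L_{A,\star}$ to the dual side, $-\langle\hr(L_{A,\star}^*(\hr(a^*))b^*),c^*\rangle$; and because $r_{13}\circ r_{12}$ reverses the roles of the two copies of $r$, its pairing is $\langle\hr(-R_A^*(\hr^t(b^*))a^*),c^*\rangle$, the appearance of $\hr^t$ being the one subtle point. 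Summing the three and letting $c^*$ range over $A^*$ shows that $r\diamond r=0$ is equivalent to \eqref{o-NYBE}, which is Item~(i). This computation is also the engine for Item~(ii).

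For Item~(ii) I would start from the extended-$\calo$-operator defect
\[
D(a^*,b^*):=\alpha(a^*)\circ\alpha(b^*)-\alpha\big(L_{A,\star}^*(\alpha(a^*))b^*+(-R_A^*)(\alpha(b^*))a^*\big)-\kappa\,\beta(a^*)\circ\beta(b^*),
\]
substitute $\hr=\alpha+\beta$ and $\hr^t=-\alpha+\beta$ from \eqref{r:a and b}, and regroup so that the three operator terms of Item~(i) reassemble. The hypothesis that the symmetric part $\beta=\hr_+$ is invariant enters through Lemma~\ref{lem:r}: it makes $\beta$ a balanced $A$-bimodule homomorphism from $(A^*,L_{A,\star}^*,-R_A^*)$ to $(A,L_A,R_A)$, so the mixed terms rewrite as $\beta(L_{A,\star}^*(\alpha(a^*))b^*)=\alpha(a^*)\circ\beta(b^*)$ and $\beta(-R_A^*(\alpha(b^*))a^*)=\beta(a^*)\circ\alpha(b^*)$, and symmetrically with the roles of $\alpha$ and $\beta$ exchanged. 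After cancellation, $\langle D(a^*,b^*),c^*\rangle$ equals $\langle\hr(a^*)\circ\hr(b^*)-\hr(L_{A,\star}^*(\hr(a^*))b^*)+\hr(-R_A^*(\hr^t(b^*))a^*),c^*\rangle-(\kappa+1)\langle\beta(a^*)\circ\beta(b^*),c^*\rangle$, whose first block is exactly the pairing of $r\diamond r$ by Item~(i).

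It remains to identify the correction term. Since $\beta=\widehat{r_+}$ with $r_+=(r+\tau r)/2$ symmetric, applying the first computation of Item~(i) to $r_+$ gives $\langle\beta(a^*)\circ\beta(b^*),c^*\rangle=\langle (r_+)_{13}\circ(r_+)_{23},a^*\otimes b^*\otimes c^*\rangle=\tfrac14\langle(r+\tau r)_{13}\circ(r+\tau r)_{23},a^*\otimes b^*\otimes c^*\rangle$. Hence $(\kappa+1)\beta(a^*)\circ\beta(b^*)$ pairs to $\tfrac{\kappa+1}{4}(r_{13}+(\tau r)_{13})\circ(r_{23}+(\tau r)_{23})$, and the vanishing of $\langle D(a^*,b^*),c^*\rangle$ for all $a^*,b^*,c^*$ is precisely \eqref{o-ENYBE}. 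I expect the main obstacle to be the bookkeeping in the middle paragraph: expanding $\hr$ and $\hr^t$ into the three pairings generates several mixed $\alpha$–$\beta$ terms, and the invariance of the symmetric part (via Lemma~\ref{lem:r}) must be invoked at exactly the right places for them to collapse to a single scalar multiple of $\beta(a^*)\circ\beta(b^*)$; Item~(i) and the identification of the symmetric-part quadratic term are then routine.
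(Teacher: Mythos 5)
Your proposal is correct and takes essentially the same route as the paper's own proof: the identical pairing computation for Item (\romannumeral1), and for Item (\romannumeral2) the same strategy of expanding the extended $\calo$-operator defect, invoking Lemma \ref{lem:r} to collapse the mixed $\alpha$--$\beta$ terms, and reassembling the result as the pairing of $r\diamond r$ minus $(\kappa+1)\beta(a^*)\circ\beta(b^*)$. Your identification of the correction term, by applying the Item (\romannumeral1) pairing to the symmetric tensor $r_+=(r+\tau r)/2$ so that $(\kappa+1)\beta(a^*)\circ\beta(b^*)$ pairs to $\frac{\kappa+1}{4}(r_{13}+(\tau r)_{13})\circ(r_{23}+(\tau r)_{23})$, is exactly the final step of the paper's argument.
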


\begin{proof}
(\romannumeral1)~Let $r=\sum_i u_i \otimes v_i \in A\otimes A$. For any $a^*$, $b^*$, $c^* \in A^*$, we have
\begin{eqnarray*}
\langle r_{13}\circ r_{23}, a^*\ot b^* \ot c^*\rangle
&&=\sum_{i,j}\langle u_i,a^*\rangle \langle u_j,b^*\rangle \langle v_i \circ v_j,c^*\rangle=\sum_{j}\langle u_j,b^*\rangle\langle r(a^*) \circ v_j, c^*\rangle
\\&&=\langle r(a^*)\circ r(b^*),c^* \rangle,
\end{eqnarray*}
\begin{eqnarray*}
\langle r_{12}\star r_{23}, a^*\ot b^* \ot c^*\rangle
&&=\sum_{i,j} \langle u_i,a^*\rangle      \langle v_i \star u_j,b^*\rangle
\langle v_j, c^*\rangle
=\sum_{j} \langle  L_\star(u_j) r(a^*),b^*\rangle       \langle v_i, c^*\rangle
\\&&=\sum_{j} -\langle  u_j, L_\star^* (r(a^*))b^*\rangle       \langle v_j, c^*\rangle
=\Big\langle - r\Big(L_\star^* (r(a^*))b^* \Big),c^*\Big\rangle,
\end{eqnarray*}
\begin{eqnarray*}
\langle r_{13}\circ r_{12}, a^*\ot b^* \ot c^*\rangle
&&=\sum_{i,j} \langle u_j\circ u_i,a^*\rangle    \langle v_i,b^*\rangle
\langle v_j,c^*\rangle
=\sum_{j}- \langle r,L^*(u_j) a^*\ot b^*\rangle \langle v_j,c^*\rangle
\\&&=\sum_{j} \langle u_j\circ r^t(b^*), a^*\rangle \langle v_j,c^*\rangle
=\Big\langle  r\Big(-R^* (r^t(a^*))b^* \Big),c^*\Big\rangle,
\end{eqnarray*}
So $r$ is a solution of the NYBE in $A$ if and only if $r$ satisfies Eq.~(\mref{o-NYBE}).
\smallskip

\noindent
(\romannumeral2) Thanks to the proof of Item (\romannumeral1), for any $a^*$, $b^*$, $c^* \in A^*$, we have
\begin{eqnarray*}
&&\Big\langle \alpha(a^*)\circ \alpha(b^*)-\alpha\Big( L_\star^*(\alpha(a^*))b^*+(-R^*)(\alpha(b^*))a^* \Big)-\kappa \beta(a^*)\circ\beta(b^*) ,c^*\Big\rangle \\
&&=\Big\langle
\alpha(a^*)\circ\alpha(b^*)
+\Big( \alpha(a^*)\circ\beta(b^*)-\beta(L_\star^*(  \alpha(a^*)) b^* \Big)
+\Big( \beta(a^*)\circ \alpha(b^*) +\beta( R^*(  \alpha(b^*)) a^* ) )  \Big)\\
&&\quad + \Big( \alpha( -R^*(\beta(b^*)) a^*) -\alpha( L_\star^*(\beta(a^*)) b^*)  \Big)
+\Big(  -\beta( R^*(  \beta(b^*) )a^*) - \beta( L_\star^*(  \beta(a^*)) b^*  \Big)
\\&&\quad +\Big( \alpha( R^*(\alpha(b^*)) a^*) -\alpha( L_\star^*(\alpha(a^*)) b^*) \Big)
+\beta(a^*)\circ\beta(b^*) -(\kappa +1)\beta(a^*)\circ\beta(b^*)
,c^*\Big\rangle \\
&&=\Big\langle  \Big( \alpha(a^*)\circ\alpha(b^*)+            \alpha(a^*)\circ\beta(b^*) +\beta(a^*)\circ \alpha(b^*)       +        \beta(a^*)\circ \beta(b^*)   \Big)
        -(\kappa +1) \beta(a^*)\circ\beta(b^*)\\
&&\quad +  \Big( -\alpha( R^*(\beta(b^*)) a^*)  -\beta( R^*(  \beta(b^*) )a^*  )
 +\alpha( R^*(\alpha(b^*)) a^*)  + \beta( R^*(  \alpha(b^*)) a^* )  \Big) \\
&&\quad +\Big(-\alpha( L_\star^*(\alpha(a^*)) b^*)- \beta( L_\star^*(\alpha(a^*) )b^*  )
-\alpha( L_\star^*(\beta(a^*)) b^*)  - \beta( L_\star^*(  \beta(a^*)) b^* ) \Big)
,c^* \Big\rangle
\\&&=\Big\langle r(a^*)\circ r(b^*)-r\Big( L_\star^*(r(a^*))b^*\Big)+
r\Big( (-R^*)(r^t(b^*))a^* \Big)-(\kappa +1) \beta(a^*)\circ\beta(b^*) ,c^*\Big\rangle \\
&&= \Big\langle  r_{13}\circ r_{23} +r_{12}\star r_{23}+r_{13}\circ r_{12}
 -(\kappa +1) \beta_{13}\circ \beta_{23}
,a^*\ot b^*\ot c^*\Big\rangle
\\&&= \Big\langle r_{13}\circ r_{23} +r_{12}\star r_{23}+r_{13}\circ r_{12}
 -\frac{\kappa +1}{4} (r_{13} + r_{31}) \circ (r_{23} + r_{32})
,a^*\ot b^*\ot c^*\Big\rangle ,
\end{eqnarray*}
where the first equality follows $Lemma$~\mref{lem:r} and the last equality follows $Eq$.~(\mref{r:a and b}). Therefore, $r$ is a solution of the ENYBE of mass $(\kappa+1)/4$ if and only if $\alpha$ is an extended $\calo$-operator with extension $\beta$ of mass $\kappa$.
\vspb
\end{proof}

If $\kappa=-1$, we have

\begin{cor}      \label{cor:o-ENYBE}
Let $(A,\circ)$ be a Novikov algebra and $r\in A\otimes A$. Define $r=\alpha +\beta$ by Eq.~(\mref{r:a and b}).
If $\beta$ is invariant. Then the following statements are equivalent:
	\begin{enumerate}
	\item $r$ is a solution of NYBE in $A$.
	\item $r$ (resp. $-r^t$) satisfies Eq.~(\mref{cor:r:+}) (resp.~Eq. (\mref{cor:r:-})), i.e., $r$ (resp. $-r^t$) is an extended $\calo$-operator of~weight 1 associated to the $A$-bimodule Novikov algebra $(A^*,\circ_+,L_\star^*,-R^*)$\\(resp. $ (A^*,\circ_-,L_\star^*,-R^*)$), where the Novikov algebra multiplication $			\circ_+$ (resp. $\circ_-$) on $A^*$ are defined by Eq. (\mref{cor:r:yunsuan}).
	\item $\alpha$ is an $\calo$-operator with extension $\beta$ of mass -1.
	\item For any $a^*,~b^* \in A$,
	\begin{equation}                    \mlabel{* and quan}
	( \alpha\pm\beta) (a^* \ast b^*)=( \alpha\pm\beta)(a^*)\circ
	( \alpha\pm\beta)(b^*),
	\end{equation}
	where
	\begin{equation*}
	a^* \ast b^* = L_\star^*(r(a^*))b^*-(-R)^*(r^t(b^*))a^* \quad \forall  a^*,b^*\in A.
	\end{equation*}
	\end{enumerate}
\end{cor}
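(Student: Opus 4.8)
The plan is to obtain all four equivalences cheaply from the two structural results already proved, Theorem~\ref{thm:r-ENYBE} and Lemma~\ref{lem:a^*}, by specializing the mass parameter to $\kappa=-1$; essentially no new computation is required, only careful bookkeeping of the decomposition $\hr=\alpha+\beta$ and of the sign conventions hidden in $\circ_\pm$.

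I would first record the two identities that organize the argument. From Eq.~(\ref{r:a and b}) one has $\alpha+\beta=\hr$ and $\alpha-\beta=-\hr^t$, equivalently $\hr^t=2\beta-\hr$. Since the symmetric part $\cbb=\beta$ is assumed invariant, Lemma~\ref{lem:r} tells us that $\beta$ is simultaneously balanced and an $A$-bimodule homomorphism from $(A^*,L_{A,\star}^*,-R_A^*)$ to $(A,L_A,R_A)$; these are exactly the hypotheses under which Lemma~\ref{lem:a^*} applies, and the balancedness is the fact I will use repeatedly to trade $L_{A,\star}^*(\beta(a^*))b^*$ against $-R_A^*(\beta(b^*))a^*$.

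Next I would establish the equivalences in the order (\ref{item1})$\Leftrightarrow$(\ref{item3}), then (\ref{item3})$\Leftrightarrow$(\ref{item2}), and finally (\ref{item2})$\Leftrightarrow$(\ref{item4}). For (\ref{item1})$\Leftrightarrow$(\ref{item3}) I invoke Theorem~\ref{thm:r-ENYBE}(ii) with $\kappa=-1$: the coefficient $\frac{\kappa+1}{4}$ then vanishes, so Eq.~(\ref{o-ENYBE}) degenerates to the NYBE (an ENYBE of mass $0$, as noted in the remark following the definition of the ENYBE), and the theorem asserts precisely that $r$ solves the NYBE iff $\alpha$ is an extended $\calo$-operator of weight $0$ with extension $\beta$ of mass $(-1,0)$. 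The step (\ref{item3})$\Leftrightarrow$(\ref{item2}) is then Lemma~\ref{lem:a^*} applied verbatim, its hypotheses being guaranteed by the previous paragraph; there the relation $\hr^t=2\beta-\hr$ together with the balancedness of $\beta$ turns the extended $\calo$-operator identity for $\alpha$ into the weight-$1$ $\calo$-operator identities for $\hr$ against $(A^*,\circ_+,L_{A,\star}^*,-R_A^*)$ and for $-\hr^t$ against $(A^*,\circ_-,L_{A,\star}^*,-R_A^*)$, with $\circ_\pm$ as in Eq.~(\ref{cor:r:yunsuan}).

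Finally, for (\ref{item2})$\Leftrightarrow$(\ref{item4}) I would simply unwind definitions. Writing out the weight-$1$ $\calo$-operator condition for $\hr$ against $\circ_+$, substituting $a^*\circ_+b^*=-2L_{A,\star}^*(\beta(a^*))b^*$ and $\hr^t=2\beta-\hr$, and applying balancedness, the argument of $\hr$ collapses to $a^*\ast b^*=L_{A,\star}^*(\hr(a^*))b^*-(-R_A)^*(\hr^t(b^*))a^*$; hence the $+$ case of (\ref{item4}) reads $\hr(a^*\ast b^*)=\hr(a^*)\circ\hr(b^*)$, which is Eq.~(\ref{o-NYBE}) and therefore the NYBE by Theorem~\ref{thm:r-ENYBE}(i). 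The $-$ case is handled identically, with $-\hr^t$ in place of $\hr$ and $\circ_-$ in place of $\circ_+$, where the analogous cancellation goes through in the same manner. The one place I expect friction is exactly this sign-chasing across the $\pm$ branches --- keeping $\alpha\pm\beta=\hr,-\hr^t$, the dual actions $L_{A,\star}^*,-R_A^*$, and the factor $\mp2$ in $\circ_\pm$ mutually consistent --- whereas the genuine analytic content has already been absorbed into Theorem~\ref{thm:r-ENYBE} and Lemma~\ref{lem:a^*}.
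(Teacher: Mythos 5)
Your proof is correct and takes essentially the same approach as the paper's: both rest on Theorem~\ref{thm:r-ENYBE}, Lemma~\ref{lem:a^*}, and the same short computation combining $\hr^t=2\beta-\hr$ with the balancedness of $\beta$ supplied by Lemma~\ref{lem:r}. The only (cosmetic) difference is the order in which the cycle of equivalences is traversed: the paper spends that computation on identifying the operator form \eqref{o-NYBE} of the NYBE with the weight-one condition in (b) and then applies Lemma~\ref{lem:a^*} to reach (c), whereas you obtain (a)$\Leftrightarrow$(c) directly from Theorem~\ref{thm:r-ENYBE}(ii) at $\kappa=-1$ and use the same computation for (b)$\Leftrightarrow$(d), which the paper dismisses as ``easy to see.''
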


\begin{proof}
Since the symmetric part of $\beta$ of $r$ is invariant, then by lemma \mref{lem:r} and $r^t=2\beta-r$, for any $a^*,b^*\in A$, we have
\begin{eqnarray*}
&&r(a^*)\circ r(b^*)- r \Big( L_\star^*(r(a^*))b^*+R^*(r^t(b^*)) a^* \Big)\\
&&=r(a^*)\circ r(b^*)- r \Big( L_\star^*(r(a^*))b^*-R^*(r(b^*)) a^*-2L_\star^*(\beta(b^*)a^* \Big) \\
&&=r(a^*)\circ r(b^*)- r \Big( L_\star^*(r(a^*))b^*-R^*(r(b^*)) a^* +a^*\circ_+b^* \Big) ,
\end{eqnarray*}
where the multiplication $\circ_+$ on $A^*$ are defined by Eq. (\mref{cor:r:yunsuan}). Then Corollary \mref{cor:a^*} yields $r$ is a solution of NYBE in $A$ $\Leftrightarrow$ Item (b) $\Leftrightarrow$ 
Item(c) holds. Moreover, Eq. (\mref{* and quan}) is equivalent to Eq. (\mref{cor:r:+}) and Eq. (\mref{cor:r:-}). Thus Item(a) $\Leftrightarrow$ Item (d) holds.
\vspb
\end{proof}
Note that $r$ is a skew-symmetric solution of NYBE in $A$ if and only if $\hr :~A^* \rightarrow A$ is an $\calo$-operator of weight zero, which follows from $\beta=0$.\mcite{HBG}
}

\zhushi{
r\Big(L_\star^*(r(a^*))b^* + -R^*(r(b^*)a^*)\Big)
+\Big(-r(a^*)\circ\beta(b^*)+ \beta(L_\star^*(r(a^*))b^*  \Big)
\\&&+\Big(-\beta(a^*)\circ r(b^*)+ \beta(-R^*(r(b^*))a^*  \Big)
+ r \Big(-L_\star^*(\beta(a^*))b^* +R^*(\beta(b^*)a^*   ) \Big)
=
\begin{equation}        
\alpha(a^*)\circ \alpha(b^*)-\alpha \Big( L_\star^*(\alpha(a^*))b^*-R^*(\alpha(b^*)) a^*\Big)
=- \beta(a^*) \circ \beta(b^*)
, \tforall a^*,b^*\in A^*.
\end{equation}

\begin{equation}
r(a^*)\circ r(b^*)=  r \Big( L_\star^*(r(a^*))b^*-R^*(r(b^*)) a^* +a^*\circ_+b^* \Big)
, \tforall a^*,b^*\in A^*,
\end{equation}
(resp.
\begin{equation}
(-r^t)(a^*)\circ (-r^t)(b^*)=  (-r^t) \Big( L_\star^*((-r^t)(a^*))b^*-R^*((-r^t)(b^*)) a^* +a^*\circ_-b^* \Big)
, \tforall a^*,b^*\in A^*.
\end{equation}
), where the Novikov algebra multiplication $\circ_\pm$ on $A*$ are defined by
\begin{equation}
a^* \circ_\pm b^* = \mp 2 L_\star^*(  \beta(a^*)  )b^*
, \tforall a^*,b^*\in A^*.
\end{equation}
}

\begin{cor} Let $(A,\circ)$ be a Novikov algebra, $r\in A\ot A$ is a solution of the NYBE in $A$ and the symmetric part $\check{\beta}$ of $r$ is invariant. Then
\begin{align}  \notag
a^*\odot b^*:=-2 L_{A,\star}^*(  \beta(a^*)  )b^*,~
a^*\rhd b^*:=L_{A,\star}^*(\hr(a^*))b^* ,~
a^*\lhd b^*:=-R_A^*(\hr(b^*)) a^* ,~a^*,b^*\in A^*,
\end{align}
defines a post-Novikov algebra structure on $A^*$. In particular, if $\hr:A^*\rightarrow A$ is invertible, then there exists a compatible post-Novikov algebra structure on $A$ defined by
{\small
\begin{align}  \notag
x\odot y:=-2\hr\Bigg( L_{A,\star}^*\Big(  \beta(  \hr^{-1}(x)   ) \Big )\hr^{-1}(y)\Bigg),~
x\rhd y:=\hr\Big(  L_{A,\star}^*(x)\hr^{-1}(y)  \Big) ,~
x\lhd y:=-\hr\Big( R_A^*(y) \hr^{-1}(x)  \Big),~x,y\in A,
\end{align}}
\end{cor}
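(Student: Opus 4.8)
The plan is to read this corollary off from two results already in hand: Corollary~\mref{cor:o-ENYBE}, which converts the NYBE hypothesis into the statement that $\hr$ is an $\calo$-operator of weight $1$, and Theorem~\mref{yibanpost}, which manufactures post-Novikov structures from $\calo$-operators. First I would invoke the equivalence of items (\mref{item1}) and (\mref{item2}) in Corollary~\mref{cor:o-ENYBE}: since $r$ is a solution of the NYBE in $A$ and the symmetric part $\cbb$ of $r$ is invariant, $\hr:A^*\rightarrow A$ is an $\calo$-operator of weight $1$ on $(A,\circ)$ associated to the $A$-bimodule Novikov algebra $(A^*,\circ_+,L_{A,\star}^*,-R_A^*)$, where the multiplication $\circ_+$ on $A^*$ is the one defined in Eq.~\meqref{cor:r:yunsuan}, namely $a^*\circ_+ b^*=-2L_{A,\star}^*(\beta(a^*))b^*$.

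For the first assertion I would then apply Theorem~\mref{yibanpost}(\romannumeral1) to the $\calo$-operator $\alpha=\hr$ of weight $\lambda=1$ associated to $M=(A^*,\circ_+,L_{A,\star}^*,-R_A^*)$. That theorem equips $A^*$ with a post-Novikov algebra structure whose underlying Novikov operation (the $\circ$ of Theorem~\mref{yibanpost}(\romannumeral1), written $\odot$ here) is $a^*\odot b^*=\lambda\,(a^*\circ_+ b^*)$, whose $\rhd$ is $L_{A,\star}^*(\hr(a^*))b^*$, and whose $\lhd$ is $(-R_A^*)(\hr(b^*))a^*$. Since $\lambda=1$, the first operation is exactly $-2L_{A,\star}^*(\beta(a^*))b^*$, so all three coincide term by term with the operations in the statement, proving the first claim.

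For the ``in particular'' part I would use the invertibility of $\hr$. When $\hr$ is bijective its kernel is $\{0\}$, which is trivially an ideal of $(A^*,\circ_+)$, so Theorem~\mref{yibanpost}(\romannumeral3) applies and produces an induced post-Novikov algebra structure on $\hr(A^*)=A$ given by Eq.~\meqref{impost} with $\lambda=1$. Writing $x=\hr(a^*)$ and $y=\hr(b^*)$, i.e. $a^*=\hr^{-1}(x)$ and $b^*=\hr^{-1}(y)$, the three formulas of Eq.~\meqref{impost} become precisely $x\odot y=-2\hr\big(L_{A,\star}^*(\beta(\hr^{-1}(x)))\hr^{-1}(y)\big)$, $x\rhd y=\hr\big(L_{A,\star}^*(x)\hr^{-1}(y)\big)$ and $x\lhd y=-\hr\big(R_A^*(y)\hr^{-1}(x)\big)$. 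Moreover, since $\hr$ is a Novikov algebra homomorphism from $(A^*,\circledcirc)$ to $(A,\circ)$ by Theorem~\mref{yibanpost}(\romannumeral2), the associated Novikov algebra of this induced post-Novikov structure is $(A,\circ)$ itself, so it is a \emph{compatible} post-Novikov structure on $(A,\circ)$, as asserted.

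I expect no genuine obstacle here: the argument is bookkeeping that matches the output of the cited results against the stated formulas. The only points demanding care are keeping track of the weight $\lambda=1$ so that $\odot$ is read off correctly from $\circ_+$ (the factor $-2$ comes from the definition of $\circ_+$), and carrying out the substitution $a^*\mapsto\hr^{-1}(x)$, $b^*\mapsto\hr^{-1}(y)$ in Eq.~\meqref{impost} without sign or argument-order slips (the minus sign in $\lhd$ being inherited from $-R_A^*$).
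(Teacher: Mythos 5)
Your proposal is correct and follows exactly the paper's own route: the paper likewise invokes Corollary \mref{cor:o-ENYBE} to recast the NYBE solution $r$ as the statement that $\hr$ is an $\calo$-operator of weight $1$ associated to $(A^*,\circ_+,L_{A,\star}^*,-R_A^*)$, and then cites Theorem \mref{yibanpost} to produce both post-Novikov structures. Your write-up is just a more explicit version of the same argument, spelling out which items of Theorem \mref{yibanpost} are used, the trivial-kernel ideal condition for item (\romannumeral3), and the compatibility via the homomorphism property in item (\romannumeral2).
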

\begin{proof}
By Corollary \mref{cor:o-ENYBE}, we have $\hr(a^*)\circ \hr(b^*)= \hr \Big( L_{A,\star}^*(\hr(a^*))b^*-R_A^*(\hr(b^*)) a^* -2 L_{A,\star}^*(  \beta(a^*)  )b^*\Big)$ for all $a^\ast$, $b^\ast\in A^\ast$. Then this conclusion follows immediately from Theorem \mref{yibanpost}.
\end{proof}

\subsection{ENYBE on quadratic Novikov algebras}
\mlabel{ss:self-dual}               
\delete{In this part we focus on the relationship between $A$-bimodule Novikov algebras $(A,L,R)$ and $(A^*,L_\star^*,-R^*)$ about the extended $\calo$-operator and ENYBE. } We recall some concepts about quadratic Novikov algebras in \mcite{HBG}.

Let $(A,\circ)$ be a Novikov algebra and $\calb(\cdot,\cdot):A\times A \rightarrow \bfk$ be a nondegenerate bilinear form. Let $\varphi:A\rightarrow A^*$ be the induced invertible linear map by $\calb(\cdot,\cdot)$ given as
\begin{eqnarray}  \label{bilinear map}
\calb(a,b)=\langle  \varphi(a),b   \rangle,\quad  a,b\in A.
\end{eqnarray}

\begin{defi} \cite{HBG}  
\mlabel{Novbilinear}
Let $(A,\circ)$ be a Novikov algebra. A bilinear form
$\mathcal{B}(\cdot,\cdot)$  on $A$ is called {\bf invariant} if it satisfies
\vspa
\begin{eqnarray}\label{bilinear1}
\mathcal{B}(a\circ b,c)=-\mathcal{B}(b, a\star c)\;\;\
a,b,c\in A.
\vspa
\end{eqnarray}
A {\bf quadratic Novikov algebra}, denoted by $(A,
\circ,\mathcal{B}(\cdot,\cdot))$, is a Novikov algebra $(A,\circ)$
together with a nondegenerate symmetric invariant bilinear form
$\mathcal{B}(\cdot,\cdot)$.
\vspa
\end{defi}

Let $(A,\circ)$ be a Novikov algebra and $\calb(\cdot,\cdot):A\times A \rightarrow \bfk$ be a nondegenerate bilinear form. Suppose that $T:A\rightarrow A$ is a linear map. Then $T$ is called {\bf self-adjoint} (resp. {\bf skew-adjoint} ) with respect to $\calb(\cdot,\cdot)$ if
\begin{equation}    \label{self-adjoint}
\calb (T(a),b)=\calb (a,T(b))\quad (\text{resp.}~\calb (T(a),b)=-\calb (a,T(b)),
\quad
a,b\in A.
\end{equation}

\begin{lem}       \mlabel{operator dual}
Let $(A,\circ,\mathcal{B}(\cdot,\cdot))$ be a quadratic Novikov algebra and $\varphi$ be the invertible linear map $:A\rightarrow A^*$ defined by Eq. (\mref{bilinear map}). Suppose that $\beta:A\rightarrow A$ is a linear map that is self-adjoint with respect to $\calb(\cdot,\cdot)$. Then 
$\beta:A\rightarrow A$ is a balanced $A$-bimodule homomorphism \delete{of mass $\kappa$} from $(A, L_A, R_A)$ to $(A, L_A, R_A)$ \delete{i.e. it satisfies Eq.~(\mref{bimodule homo}) with $\kappa\neq 0$,} if and only if $\bbvar=\beta\varphi^{-1}:A^*\rightarrow A$ is a balanced $A$-bimodule homomorphism from  $(A, L_{A, \star}^\ast, -R_A^\ast)$ to $(A, L_A, R_A)$.\delete{ i.e.
\begin{eqnarray}\mlabel{tilde1}
 &&L_{A,\star}^*(\bbvar(a^*))b^*=
-R_A^*(\bbvar(b^*))a^*,\\
\mlabel{tilde2}
&&\bbvar(L_{A,\star}^*(x) a^*)= x\circ \bbvar(a^*), \quad \bbvar(-R_A^*(x) a^*)=\bbvar(a^*)\circ x,\quad   a^*, b^\ast\in A^*,\;x\in A.
\end{eqnarray} In particular, if $\beta = \id: A\rightarrow A$, then $\varphi^{-1}:A^*\rightarrow A$ is a balanced $A$-bimodule homomorphism.}
\end{lem}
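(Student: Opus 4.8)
The plan is to reduce the statement to the single structural fact that $\varphi$ identifies the bimodule $(A,L_A,R_A)$ with its dual bimodule $(A^\ast,L_{A,\star}^\ast,-R_A^\ast)$ produced in Proposition \mref{pp:dualrep} and Remark \mref{ex:dualrep}. First I would establish the two intertwining identities
$$\varphi(x\circ a)=L_{A,\star}^\ast(x)\varphi(a),\qquad \varphi(a\circ x)=-R_A^\ast(x)\varphi(a),\quad x,a\in A,$$
which say precisely that $\varphi\colon(A,L_A,R_A)\to(A^\ast,L_{A,\star}^\ast,-R_A^\ast)$ is an isomorphism of $A$-bimodules. Each identity is checked by pairing against an arbitrary $b\in A$. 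For the first, $\langle\varphi(x\circ a),b\rangle=\calb(x\circ a,b)=-\calb(a,x\star b)=-\langle\varphi(a),x\star b\rangle=\langle L_{A,\star}^\ast(x)\varphi(a),b\rangle$, using only the invariance \mref{bilinear1}. For the second, pairing gives $\langle\varphi(a\circ x),b\rangle=-\calb(x,a\star b)$ and $\langle -R_A^\ast(x)\varphi(a),b\rangle=\calb(a,b\circ x)$, and here one invokes the \emph{symmetry} of $\calb$ together with invariance to see that $\calb(a,b\circ x)=\calb(b\circ x,a)=-\calb(x,b\star a)=-\calb(x,a\star b)$. Producing this dual bimodule structure from the quadratic datum is the real content of the lemma; the rest is formal transport along $\varphi$, and I expect the bookkeeping of this second identity (where symmetry of $\calb$ must be used) to be the only delicate point.

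Given the isomorphism, I would transport the bimodule-homomorphism condition. Since $\bbvar=\beta\varphi^{-1}$ and $\varphi^{-1}$ is again a bimodule isomorphism $(A^\ast,L_{A,\star}^\ast,-R_A^\ast)\to(A,L_A,R_A)$, the composite $\bbvar$ is an $A$-bimodule homomorphism exactly when $\beta$ is. Explicitly, writing $a^\ast=\varphi(u)$ and applying the intertwining identities, $\bbvar\big(L_{A,\star}^\ast(x)\varphi(u)\big)=\beta(x\circ u)$ while $x\circ\bbvar(\varphi(u))=x\circ\beta(u)$, and likewise $\bbvar\big(-R_A^\ast(x)\varphi(u)\big)=\beta(u\circ x)$ while $\bbvar(\varphi(u))\circ x=\beta(u)\circ x$. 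By invertibility of $\varphi$, the homomorphism identities for $\bbvar$ are thus equivalent to $\beta(x\circ u)=x\circ\beta(u)$ and $\beta(u\circ x)=\beta(u)\circ x$, i.e. to $\beta$ being a bimodule homomorphism of $(A,L_A,R_A)$.

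Next I would transport the balanced condition \mref{b:con1}. Setting $a^\ast=\varphi(u)$, $b^\ast=\varphi(v)$ and using the two identities, one finds $L_{A,\star}^\ast(\bbvar(a^\ast))b^\ast=\varphi(\beta(u)\circ v)$ and $-R_A^\ast(\bbvar(b^\ast))a^\ast=\varphi(u\circ\beta(v))$. Since $\varphi$ is injective, $\bbvar$ is balanced if and only if $\beta(u)\circ v=u\circ\beta(v)$ for all $u,v\in A$, which is exactly the assertion that $\beta$ is balanced (i.e. $L_A(\beta(u))v=R_A(\beta(v))u$). Combining this with the previous paragraph yields the claimed equivalence of ``balanced $A$-bimodule homomorphism'' on the two sides; one may also streamline the argument by simply invoking Corollary \mref{cor:r+-} and Lemma \mref{lem:r} with the bimodule isomorphism $\varphi$ in place.

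Finally, a word on the hypotheses, to confirm the logical flow. The symmetry and invariance of $\calb$ are exactly what powers the two intertwining identities, and these carry the whole equivalence. The self-adjointness of $\beta$ is equivalent, via $\varphi$, to $\bbvar=\beta\varphi^{-1}$ being symmetric as a map $A^\ast\to A$: indeed $\langle\bbvar(\varphi(v)),\varphi(u)\rangle=\calb(\beta(v),u)$ while $\langle\bbvar(\varphi(u)),\varphi(v)\rangle=\calb(u,\beta(v))$, so the two agree for all $u,v$ precisely when $\beta$ is self-adjoint. This is what guarantees that $\bbvar$ represents a symmetric tensor in $A\ot A$ and hence legitimately plays the role of the symmetric extension $\beta$ in the Yang--Baxter discussion that follows; the balanced $A$-bimodule homomorphism equivalence itself, however, already follows from the transport along $\varphi$ described above.
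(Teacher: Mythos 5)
Your proof is correct, but it takes a genuinely different route from the paper's. Your key step is to prove that $\varphi$ itself is an isomorphism of $A$-bimodules, via the two intertwining identities $\varphi(x\circ a)=L_{A,\star}^*(x)\varphi(a)$ and $\varphi(a\circ x)=-R_A^*(x)\varphi(a)$ (these are precisely the identities of \cite[Proposition 2.8]{DH}, which the paper only invokes later, in the proof of Proposition \mref{dual exo}); once this is in place, the balanced condition and the bimodule-homomorphism condition each transport separately and formally along $\varphi$, exactly as you describe. The paper argues differently: it uses the self-adjointness of $\beta$ to show that $\check{\bbvar}\in A\ot A$ is symmetric, invokes Lemma \mref{lem:r} to collapse ``balanced $A$-bimodule homomorphism'' for $\bbvar$ to the single condition ``$\bbvar$ is balanced,'' and then unpacks that one identity by two different pairing computations against $\calb$, recovering first $\beta(z\circ y)=z\circ\beta(y)$ and then $\beta(x\circ z)=\beta(x)\circ z$, i.e.\ both halves of the bimodule-homomorphism property of $\beta$. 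The trade-off is this: your transport argument never uses self-adjointness of $\beta$, so it establishes the equivalence for an arbitrary linear map $\beta$ (self-adjointness only enters to make $\check{\bbvar}$ symmetric, which is what the subsequent Yang--Baxter applications need), whereas the paper's computation is self-contained (no appeal to \cite{DH}), makes the stated hypothesis do real work, and yields the slightly sharper fact that, under symmetry of $\check{\bbvar}$, the balanced condition alone already forces the full homomorphism property. One small slip in your final paragraph: the second pairing should read $\langle\bbvar(\varphi(u)),\varphi(v)\rangle=\calb(v,\beta(u))$, not $\calb(u,\beta(v))$ --- as written, your two displayed quantities agree by symmetry of $\calb$ alone; with the correction, their equality for all $u,v$ is indeed equivalent to self-adjointness of $\beta$. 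This slip occurs only in a side remark about the role of the hypotheses and does not affect the main argument, which is sound.
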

\begin{proof}
 Let $a^*$, $b^*\in A^*$. Set $x=\varphi^{-1}(a^*)$ and $y=\varphi^{-1}(b^*) \in A$.
 Since $\calb(\cdot,\cdot)$ is symmetric and $\beta$ is self-adjoint with respect to $\calb(\cdot,\cdot)$, we have $\langle \beta(x),\varphi(y)\rangle  =\calb(\beta(x),y)=\calb(x,\beta(y))=
\langle \varphi(x),\beta(y)\rangle$. Therefore, we get $\langle \bbvar(a^*),b^*\rangle=\langle a^*,\bbvar(b^*)\rangle$. Thus $\check{\bbvar} \in A\ot A$ is symmetric.
By Lemma \mref{lem:r}, $\bbvar$ is balanced if and only if $\bbvar$ is an $A$-bimodule homomorphism. Therefore, we only need to prove that $\beta$ is a balanced $A$-bimodule homomorphism if and only if $\bbvar$ is balanced. 
Since $\calb(\cdot,\cdot)$ is symmetric and invariant and $\beta$ is self-adjoint with respect to $\calb(\cdot,\cdot)$, for all $z\in A$, we have
\begin{eqnarray*}
\langle L_{A,\star}^*(\bbvar(a^*))b^*,z\rangle
&&=\langle L_{A,\star}^*(\beta(x))\varphi(y),z\rangle
=-\langle \varphi(y),\beta(x)\star z\rangle
\\&&=-\calb(y,\beta(x)\star z )
=\calb(z\circ y,\beta(x))=\calb(\beta(z\circ y),x)  ,\\
\langle -R_A^*(\bbvar(b^*))a^*,z\rangle
&&=\langle -R_A^*((\beta(y))\varphi(x),z\rangle
=\langle \varphi(x),z\circ \beta(y)\rangle
\\&&=\calb(x,z\circ \beta(y)).
\end{eqnarray*}
Since $\calb(\cdot,\cdot)$ is nondegenerate, $ L_{A,\star}^*(\bbvar(a^*))b^*=
-R_A^*(\bbvar(b^*))a^*$ for all $a^\ast$, $b^\ast\in A^\ast$ if and only if $\beta(z\circ y)=z\circ \beta(y)$ for all $y$, $z\in A$.
\delete{Now we attack to different cases about $\kappa$. If $\kappa=0$, there is nothing to prove. If $\kappa\neq 0$, thanks to Lemma \mref{lem:r}, Eq. (\mref{tilde1}) and Eq. (\mref{tilde2}) are equivalent.} On the other hand,  we have
\begin{eqnarray*}
\langle L_{A,\star}^*(\bbvar(a^*))b^*,z\rangle
=-\calb(y, \beta(x)\star z ),\\
\langle -R_A^*(\bbvar(b^*))a^*,z\rangle
=\calb(x,z\circ \beta(y))=-\calb(\beta(y),z\star x)
=-\calb(y,\beta(z\star x)).
\end{eqnarray*}
Since $\calb(\cdot,\cdot)$ is nondegenerate, we get that $ L_{A,\star}^*(\bbvar(a^*))b^*=
-R_A^*(\bbvar(b^*))a^*$ for all $a^\ast$, $b^\ast\in A^\ast$ if and only if $\beta(x)\star z=\beta(z\star x)$ for all $x$, $z\in A$. Thus, by the definition of $\star$ and $\beta(z\circ y)=z\circ \beta(y)$ for all $y$, $z\in A$, we get that $ L_{A,\star}^*(\bbvar(a^*))b^*=
-R_A^*(\bbvar(b^*))a^*$ for all $a^\ast$, $b^\ast\in A^\ast$ if and only if $\beta(x)\circ z=\beta(x\circ z)$ for all $x$, $z\in A$. Then the proof is completed.
\end{proof}

\begin{pro}         \mlabel{dual exo}
Let $(A,\circ,\mathcal{B}(\cdot,\cdot))$ be a quadratic Novikov algebra
and $\varphi:A\rightarrow A^*$ be the invertible linear map  defined as Eq. (\mref{bilinear map}). Suppose that $T$ is a linear endomorphism of $A$, $\beta$ is a balanced $A$-bimodule homomorphism from $(A, L_A, R_A)$ to $(A, L_A, R_A)$ and $\beta$ is self-adjoint with respect to $\calb(\cdot,\cdot)$. Then the following conclusions hold.
\begin{enumerate}
\item[(\romannumeral1)] $T$ is an extended $\calo$-operator of weight $0$ with extension $\beta$ of mass $(\kappa,0)$ on $(A, \circ)$ associated to $(A,\circ, L_A, R_A)$ if and only if $\Tvar=T\varphi^{-1}:A^*\rightarrow A$ is an extended $\calo$-operator of weight $0$ with extension $\bbvar=\beta \varphi^{-1}:A^*\rightarrow A$ of mass $(\kappa,0)$.\delete{, i.e. $\bbvar$ satisfies Eq. (\mref{tilde1}) and Eq. (\mref{tilde2}) and $\Tvar$ and $\bbvar$ satisfy
\begin{equation*}        
\Tvar(a^*)\circ \Tvar(b^*)-\Tvar \Big( L_\star^*(\Tvar(a^*))b^*-R^*(\Tvar(b^*)) a^*\Big)
=\kappa \bbvar(a^*) \circ \bbvar(b^*)
, \quad a^*,b^*\in A^*.
\end{equation*}}
\item[(\romannumeral2)] Suppose that in addition, $T$ is skew-adjoint with respect to $\calb(\cdot,\cdot)$. Then $\check{\delta_{\pm}}=\check{\Tvar} \pm \check{\bbvar}$ regarded as an element of $A\ot A$ is a solution of the ENYBE of mass $\dfrac{\kappa +1}{4}$ if and only if $T$ is an extended $\calo$-operator of weight $0$ with extension $\beta$ of mass $(\kappa,0)$ on $(A, \circ)$ associated to $(A,\circ, L_A, R_A)$. In particular, we have
    \begin{itemize}
\item If $\kappa=-1$, then $\check{\delta_\pm}=\check{\Tvar} \pm \check{\bbvar}$ is a solution of the NYBE in $A$ if and only if $T$ is an extended $\calo$-operator of weight $0$ with extension $\beta$ of mass $(-1,0)$ on $(A, \circ)$ associated to $(A,\circ, L_A, R_A)$.
\item If $\kappa=0$, then $\check{\delta_\pm}=\check{\Tvar} \pm \check{\bbvar}$ is a solution of the NYBE in $A$ if and only if $T$ is a Rota-Baxter operator of weight $0$.
\end{itemize}
\end{enumerate}
\end{pro}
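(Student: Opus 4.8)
The plan is to transport the entire statement through the isomorphism $\varphi$ between the bimodule $(A,L_A,R_A)$ and its dual $(A^*,L_{A,\star}^*,-R_A^*)$. First I would record the structural fact that drives both items: pairing against an arbitrary element and using the symmetry and invariance (Eq.~\eqref{bilinear1}) of $\calb$ together with the commutativity of $\star$, one obtains $\varphi L_A(a)=L_{A,\star}^*(a)\varphi$ and $\varphi R_A(a)=-R_A^*(a)\varphi$ for all $a\in A$, so that $\varphi$ is an isomorphism of $A$-bimodules. Equivalently $\varphi^{-1}L_{A,\star}^*(a)=L_A(a)\varphi^{-1}$ and $\varphi^{-1}R_A^*(a)=-R_A(a)\varphi^{-1}$. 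Lemma~\ref{operator dual} already supplies the companion fact that $\bbvar=\beta\varphi^{-1}$ is again a balanced $A$-bimodule homomorphism, hence a legitimate extension (balanced, $A$-invariant of every mass, and equivalent of mass $0$ trivially).

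For Item~(i) I would substitute $a^*=\varphi(u)$ and $b^*=\varphi(v)$ into the defining identity \eqref{a ext b} for $\Tvar=T\varphi^{-1}$. Since $\Tvar(\varphi(u))=T(u)$ and $\bbvar(\varphi(u))=\beta(u)$, the outer terms become $T(u)\circ T(v)$ and $\kappa\,\beta(u)\circ\beta(v)$. The inner argument is where the intertwining relations act: applying $\varphi^{-1}$ sends $L_{A,\star}^*(\Tvar(a^*))b^*$ to $T(u)\circ v$ and $-R_A^*(\Tvar(b^*))a^*$ to $u\circ T(v)$, so the argument collapses to $T(u)\circ v+u\circ T(v)$. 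Thus the $A^*$-equation for $\Tvar$ is literally the equation \eqref{a ext b} expressing that $T$ is an extended $\calo$-operator of weight $0$ with extension $\beta$ of mass $(\kappa,0)$ on $(A,\circ,L_A,R_A)$; as $\varphi^{-1}$ is a bijection, the two hold simultaneously.

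For Item~(ii) the first step is to identify the skew-symmetric and symmetric parts of $r_\pm:=\check{\Tvar}\pm\check{\bbvar}$. Pairing through $\varphi$ and using that $T$ is skew-adjoint and $\beta$ is self-adjoint, I would verify $\Tvar^*=-\Tvar$ and $\bbvar^*=\bbvar$; that is, $\check{\Tvar}$ is skew-symmetric and $\check{\bbvar}$ is symmetric. Hence $\widehat{r_\pm}=\Tvar\pm\bbvar$ and, by $\widehat{\tau(r)}=(\widehat{r})^*$ from Proposition~\ref{hat and dual}, $\widehat{\tau(r_\pm)}=-\Tvar\pm\bbvar$; so in the sense of Eq.~\eqref{r:a and b} the skew-symmetric part of $r_\pm$ is $\Tvar$ and the symmetric part is $\pm\bbvar$, which is invariant by Lemma~\ref{lem:r} since $\bbvar$ is balanced. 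Theorem~\ref{thm:r-ENYBE}(ii) then applies verbatim: $r_\pm$ solves the ENYBE of mass $(\kappa+1)/4$ if and only if $\Tvar$ is an extended $\calo$-operator with extension $\pm\bbvar$ of mass $(\kappa,0)$; the sign of the extension is immaterial because the defining equation depends on it only through the quadratic term $\kappa(\pm\bbvar)\circ(\pm\bbvar)=\kappa\,\bbvar\circ\bbvar$. Combined with Item~(i) this yields the equivalence with $T$ itself. The two displayed consequences are the specializations $\kappa=-1$, where the mass $(\kappa+1)/4=0$ so the ENYBE degenerates to the NYBE by the Remark following Eq.~\eqref{ENYBE}, and $\kappa=0$, where the mass-$(0,0)$ condition on $T$ is exactly the weight-$0$ Rota-Baxter identity \eqref{T:Rota-Baxter}.

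The only delicate point, and the main place to be careful, is the sign bookkeeping: the dual bimodule carries $-R_A^*$ rather than $R_A^*$, and under transposition $\Tvar$ picks up a sign ($\Tvar^*=-\Tvar$, reflecting skew-adjointness of $T$) while $\bbvar$ does not ($\bbvar^*=\bbvar$, reflecting self-adjointness of $\beta$). Once these are pinned down, Items~(i) and~(ii) reduce to a mechanical application of Lemmas~\ref{lem:r} and~\ref{operator dual} and Theorem~\ref{thm:r-ENYBE}.
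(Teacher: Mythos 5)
Your proposal is correct on Item (i) and on the general equivalence in Item (ii), and it follows essentially the same route as the paper: the paper also uses the intertwining relations $\varphi L_A(x)=L_{A,\star}^*(x)\varphi$ and $\varphi R_A(x)=-R_A^*(x)\varphi$ (quoted from the literature rather than re-derived from the invariance of $\calb$), invokes Lemma~\ref{operator dual} to transfer $\beta$ to $\bbvar$, proves Item (i) by the substitution $a^*=\varphi(x)$, $b^*=\varphi(y)$, and proves Item (ii) by observing that skew-adjointness of $T$ makes $\check{\Tvar}$ skew-symmetric and then citing Item (i) and Theorem~\ref{thm:r-ENYBE}. You are in fact more explicit than the paper on the points it leaves tacit: that the symmetric part of $\check{\delta_\pm}$ is $\pm\bbvar$, that this sign is harmless because with $\mu=0$ the extension enters Eq.~\eqref{a ext b} only through the quadratic term $\kappa\beta(u)\circ\beta(v)$, and that invariance of the symmetric part follows from Lemma~\ref{lem:r}.

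There is, however, a genuine gap in your treatment of the second bullet, a gap which is present in the statement itself and is not resolved by the paper's own proof (which simply cites Item (i) and Theorem~\ref{thm:r-ENYBE}). Specializing your general equivalence to $\kappa=0$ yields: $\check{\delta_\pm}$ solves the ENYBE of mass $\tfrac{0+1}{4}=\tfrac{1}{4}$ if and only if $T$ is a Rota-Baxter operator of weight $0$. That is not the displayed bullet, which has the NYBE (the ENYBE of mass $0$) on the left-hand side; the two equations differ by the term $\check{\bbvar}_{13}\circ\check{\bbvar}_{23}$, since $\check{\delta_\pm}+\tau(\check{\delta_\pm})=\pm 2\check{\bbvar}$. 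Indeed, by your own $\kappa=-1$ case, the NYBE for $\check{\delta_\pm}$ is equivalent to $T(x)\circ T(y)-T\big(T(x)\circ y+x\circ T(y)\big)=-\beta(x)\circ\beta(y)$, which coincides with the weight-$0$ Rota-Baxter identity \eqref{T:Rota-Baxter} only when $\beta(x)\circ\beta(y)=0$ for all $x,y\in A$ (for instance $\beta=0$, in which case $\check{\delta_\pm}=\check{\Tvar}$ is skew-symmetric and every ENYBE reduces to the NYBE). So your sentence asserting that both bullets are direct specializations of the general equivalence cannot be right for $\kappa=0$: to make that bullet true one must either replace ``NYBE'' by ``ENYBE of mass $\tfrac{1}{4}$'' or impose a hypothesis forcing $\beta(x)\circ\beta(y)=0$.
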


\begin{proof}
(\romannumeral1) By Lemma \mref{operator dual}, $\beta:A\rightarrow A$ is a balanced $A$-bimodule homomorphism from $(A, L_A, R_A)$ to $(A, L_A, R_A)$ 
if and only if $\bbvar=\beta\varphi^{-1}:A^*\rightarrow A$ is a balanced $A$-bimodule homomorphism from  $(A, L_{A, \star}^\ast, -R_A^\ast)$ to $(A, L_A, R_A)$. Moreover, by \cite[Proposition 2.8]{DH}, we obtain that $\varphi(L_A(x)y)=L_{A,\star}^*(x) \varphi(y)$ and $\varphi(R_A(y)x)=-R_A^*(y) \varphi(x)$ for all $ x,y\in A$.~Let $x$, $y \in A$.~Set $a^*=\varphi(x)$ and $b^*=\varphi(y)$. Then we have
\begin{eqnarray*}    
&&\Tvar\varphi(x)\circ \Tvar\varphi(y)-\Tvar\varphi \Big( L_A(\Tvar \varphi(x)) y+ R_A(\Tvar\varphi(y) )x\Big)
=\kappa \bbvar\varphi(x) \circ \bbvar\varphi(y)
\\&&\Leftrightarrow
\Tvar(a^*)\circ \Tvar(b^*)-\Tvar \varphi \Big( L_A(\Tvar(a^*)) \varphi^{-1}(b^*)+ R_A(\Tvar(b^*) )\varphi^{-1}(a^*)\Big)
=\kappa \bbvar(a^*) \circ \bbvar(b^*)
\\&&\Leftrightarrow
\Tvar(a^*)\circ \Tvar(b^*)-\Tvar\Big( L_{A,\star}^*(\Tvar(a^*))b^*-R_A^*(\Tvar(b^*)) a^*\Big)
=\kappa \bbvar(a^*) \circ \bbvar(b^*).
\end{eqnarray*}
Therefore, this conclusion holds. \delete{$T$ is an extended $\calo$-operator with extension $\beta$ of mass $(0,\kappa,0)$ if and only if $\Tvar=T\varphi^{-1}:A^*\rightarrow A$ is an extended $\calo$-operator with extension $\bbvar$ of mass $(0,\kappa,0)$.}

(\romannumeral2) In particular, if $T$ is skew-adjoint with respect to $\calb(\cdot,\cdot)$, then we have $\langle T(x),\varphi(y)\rangle+  \langle \varphi(x),T(y) \rangle=0$. for all $x$, $y\in A$. Set $a^*=\varphi(x)$ and $b^*=\varphi(y)$. Then we have $\langle \Tvar(a^*),b^*\rangle  +  \langle a^*,\Tvar(b^*) \rangle=0$ for all $a^\ast$, $b^\ast\in A^\ast$. Therefore, $\check{\Tvar} \in A \ot A$ is skew-symmetric. Then Item (\romannumeral2) follows from Item (\romannumeral1) and Theorem \mref{thm:r-ENYBE}.
\vspb
\end{proof}
\delete{In fact, since $\calb$ is symmetric and invariant, for any $x,y,z \in A$, we have
\begin{eqnarray*}
&&\mathcal{B}(x\circ y,z)=-\mathcal{B}(y, x\star z)
\Leftrightarrow \langle \varphi(x\circ y),z \rangle =-\langle \varphi(y),x\star z \rangle
\\&&\Leftrightarrow \langle \varphi(L(x)y),z \rangle =\langle L_\star^*(x) \varphi(y), z \rangle
\Leftrightarrow \varphi(L(x)y)=L_\star^*(x) \varphi(y).
\end{eqnarray*}
Similarly, for any $x,y,z \in A$, we obtain
\begin{eqnarray*}
&&\mathcal{B}(z,x\circ y)=-\mathcal{B}(x\star z,y)
\Leftrightarrow \langle \varphi(z),x\circ y \rangle =-\langle \varphi(x\star z) ,y \rangle
\\&&\Leftrightarrow -\langle L^*(x)\varphi(z),y \rangle =-\langle\varphi(x\star z),y \rangle
\Leftrightarrow L^*(x)\varphi(z)=\varphi(x\star z).
\end{eqnarray*}
By the definition of $\star$, we also get $\varphi(R(y)x)=-R^*(y) \varphi(x)$, for any $x,y \in A$.}
\delete{Note that if we take specific value of $\kappa$ in Proposition \mref{dual exo}.(\romannumeral2) and  $\check{\delta_\pm}=\check{\Tvar} \pm \check{\bbvar} \in A\ot A$, then we have:

\begin{cor}
\begin{enumerate}
\item[(\romannumeral1)] If $\kappa=-1$, $\check{\delta_\pm}=\check{\Tvar} \pm \check{\bbvar}$ a solution of NYBE if and only if $T$ is an extended $\calo$-operator with extension $\beta$ of mass $(0,-1,0)$.
\item[(\romannumeral2)] If $\kappa=0$, $\check{\delta_\pm}=\check{\Tvar} \pm \check{\bbvar}$ is a solution of NYBE if and only if $T$ is a Rota-Baxter operator of weight zero.
\end{enumerate}
\end{cor}}

\begin{cor}\label{cor-qN}
Let $(A,\circ,\mathcal{B}(\cdot,\cdot))$ be a quadratic Novikov algebra, $\varphi:A\rightarrow A^*$ be the linear map defined by Eq.~(\mref{bilinear map}) and $r\in A\otimes A$. Define $\hr=\alpha+\beta$  where $\alpha$ and $\beta$ are defined by Eq.~(\mref{r:a and b}) and suppose that $\cbb \in A\otimes A$ is invariant. Then we get

\begin{enumerate}
\item[(\romannumeral1)] 
$r$ is a solution of the ENYBE of mass $\dfrac{\kappa +1}{4}$ in $A$ if and only if $\bar{\alpha} = \alpha\varphi :A\rightarrow A$ is an extended $\calo$-operator of weight $0$ with extension $\bar{\beta}=\beta\varphi :A\rightarrow A$ of mass $(\kappa,0)$ on $(A,\circ)$ associated to $(A,\circ, L_A, R_A)$.

\item[(\romannumeral2)] $r$ is a solution of the NYBE in $A$ if and only if $\bar{\alpha} = \alpha\varphi :A\rightarrow A$ is an extended $\calo$-operator of weight $0$ with extension $\bar{\beta}=\beta\varphi :A\rightarrow A$ of mass $(-1,0)$ on $(A,\circ)$ associated to $(A,\circ, L_A, R_A)$. In particular, if $r$ is skew-symmetric, then $r$ is a solution of the NYBE in $A$ if and only if $\bar{\alpha} : A\rightarrow A$ is a Rota-Baxter operator of weight $0$.
\end{enumerate}
\end{cor}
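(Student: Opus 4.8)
The plan is to prove both statements by transporting the results of Theorem~\ref{thm:r-ENYBE}, which live over the dual $A$-bimodule Novikov algebra $(A^\ast, L_{A,\star}^\ast, -R_A^\ast)$, over to the adjoint one $(A,\circ,L_A,R_A)$ by means of the isomorphism $\varphi$, exactly in the manner of Proposition~\ref{dual exo}. Writing $\hr=\alpha+\beta$ with $\alpha,\beta$ as in Eq.~(\ref{r:a and b}), the key observation is that $\bar\alpha=\alpha\varphi$ and $\bar\beta=\beta\varphi$ satisfy $\bar\alpha\varphi^{-1}=\alpha$ and $\bar\beta\varphi^{-1}=\beta$, so that $\alpha=P_{\bar\alpha}$ and $\beta=P_{\bar\beta}$ in the notation of Proposition~\ref{dual exo}. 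Thus, once the hypotheses of that proposition are verified for $\bar\beta$, chaining Theorem~\ref{thm:r-ENYBE}(ii) with Proposition~\ref{dual exo}(i) (with $T=\bar\alpha$ and extension $\bar\beta$) will immediately give (i), since the bimodule structure $(A^\ast,L_{A,\star}^\ast,-R_A^\ast)$ appearing on the dual side is the same in both statements.

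The main technical step is therefore to check that $\bar\beta=\beta\varphi$ meets the requirements of Proposition~\ref{dual exo}, i.e. that it is self-adjoint with respect to $\calb(\cdot,\cdot)$ and is a balanced $A$-bimodule homomorphism from $(A,L_A,R_A)$ to $(A,L_A,R_A)$. For self-adjointness I would use that $\cbb$, being the symmetric part of $r$, is a symmetric tensor in $A\ot A$, which by the definition of $\vee$ means $\langle b^\ast,\beta(a^\ast)\rangle=\langle a^\ast,\beta(b^\ast)\rangle$ for all $a^\ast,b^\ast\in A^\ast$. Substituting $a^\ast=\varphi(x)$, $b^\ast=\varphi(y)$ and using $\calb(a,b)=\langle\varphi(a),b\rangle$ together with the symmetry of $\calb$ yields $\calb(\bar\beta(x),y)=\calb(x,\bar\beta(y))$, so $\bar\beta$ is self-adjoint. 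Since $\cbb$ is assumed invariant, Lemma~\ref{lem:r} shows that $\beta=P_{\bar\beta}$ is a balanced $A$-bimodule homomorphism from $(A^\ast,L_{A,\star}^\ast,-R_A^\ast)$ to $(A,L_A,R_A)$; feeding this together with the self-adjointness just obtained into Lemma~\ref{operator dual} shows that $\bar\beta$ itself is a balanced $A$-bimodule homomorphism from $(A,L_A,R_A)$ to $(A,L_A,R_A)$, which is exactly what Proposition~\ref{dual exo} requires.

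With the hypotheses secured, (i) follows: by Theorem~\ref{thm:r-ENYBE}(ii) (whose standing assumption that the symmetric part of $r$ be invariant is precisely ours), $r$ solves the ENYBE of mass $\tfrac{\kappa+1}{4}$ if and only if $\alpha$ is an extended $\calo$-operator of weight $0$ with extension $\beta$ of mass $(\kappa,0)$ on $(A,\circ)$ associated to $(A^\ast,L_{A,\star}^\ast,-R_A^\ast)$, and by Proposition~\ref{dual exo}(i) this in turn holds if and only if $\bar\alpha$ is an extended $\calo$-operator of weight $0$ with extension $\bar\beta$ of mass $(\kappa,0)$ on $(A,\circ)$ associated to $(A,\circ,L_A,R_A)$. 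For (ii) I would specialize to $\kappa=-1$, so that $\tfrac{\kappa+1}{4}=0$ and the ENYBE of mass $0$ coincides with the NYBE; statement (i) then gives the first assertion. Finally, if $r$ is skew-symmetric then $\hr^t=-\hr$, whence $\beta=\hr_+=0$ and so $\bar\beta=0$; an extended $\calo$-operator with zero extension is an ordinary $\calo$-operator, and one of weight $0$ associated to $(A,\circ,L_A,R_A)$ is by definition a Rota-Baxter operator of weight $0$. The only genuine computation here is the two-line self-adjointness check; the rest is assembling previously established equivalences, so I expect no serious obstacle beyond carefully tracking which bimodule structure each operator is associated to.
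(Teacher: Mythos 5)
Your proposal is correct and takes essentially the same route as the paper: both verify the adjointness hypotheses for $\bar{\beta}=\beta\varphi$ and then transport the equivalence of Theorem~\ref{thm:r-ENYBE} across $\varphi$ by means of Proposition~\ref{dual exo}. The only cosmetic difference is that the paper additionally checks that $\bar{\alpha}$ is skew-adjoint so as to invoke Proposition~\ref{dual exo}(ii) wholesale (identifying $\check{\delta}_+$ with $r$), whereas you chain Theorem~\ref{thm:r-ENYBE}(ii) with Proposition~\ref{dual exo}(i) directly, which renders that check unnecessary and is, if anything, more explicit than the paper about why $\bar{\beta}$ is a balanced $A$-bimodule homomorphism (via Lemma~\ref{lem:r} and Lemma~\ref{operator dual}).
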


\begin{proof}
\delete{We claim that $\bar{\alpha}=\alpha\varphi$ is skew-adjoint with respect to $\mathcal{B}(~,~)$ and $\bar{\beta}=\beta\varphi$ is self-adjoint with respect to $\mathcal{B}(~,~)$, whereupon the proof is apparent thanks to the Proposition \mref{dual exo}. In fact,} Since $\caa \in A\otimes A$ is skew-symmetric,  we have
\begin{eqnarray*}
&&\langle \caa,\varphi(x)\ot \varphi(y)  \rangle = \langle -\caa,\varphi(y)\ot \varphi(x) \rangle
\\&&\Leftrightarrow  \langle \alpha(\varphi(x)),\varphi(y)  \rangle
=-\langle \alpha(\varphi(y)),\varphi(x) \rangle
\\&&\Leftrightarrow \mathcal{B}(\bar{\alpha}(x), y)=-\mathcal{B}(\bar{\alpha}(y), x), \quad\text{ $x,y\in A$}.
\end{eqnarray*}
Thus $\bar{\alpha}$ is skew-adjoint with respect to $\mathcal{B}(\cdot,\cdot)$. Similarly, we can obtain that $\bar{\beta}=\beta\varphi$ is self-adjoint with respect to $\mathcal{B}(\cdot,\cdot)$. Then this conclusion follows directly from Proposition \mref{dual exo}.
\end{proof}

\subsection{Extended $\calo$-operators in general and ENYBE}
\mlabel{ss:general o-operators}               
We now investigate the relationship between an extended $\calo$-operator $\alpha:V\rightarrow A$ in general and ENYBE.

\delete{We would like to end this section with a proof that an extend $\calo$-operators $\alpha:V\rightarrow A$ give rise to an extended $\calo$-operators on $\tA=A\ltimes_{l_\star^*,-r^*} V^*$ associated to the dual bimodule  $\tA^*$
both with finite $\bfk$-dimension.}

Let $(A,\circ)$ be a Novikov algebra and $(V,l_A,r_A)$ be an $A$-bimodule. Set $l_{A,\star}^\ast=l_A^\ast+r_A^\ast$. Then $(V^*,l_{A,\star}^*,-r_A^*)$ is an $A$-bimodule. Set $\tA=A\ltimes_{l_{A,\star}^*,-r_A^*} V^*$ by Proposition \ref{pp:dualrep}. It is easy to verify that the following diagram is commutative: for any $\gamma$ in $\Hom_{\bf k}(V,A)$,

\delete{\begin{eqnarray} \label{injective}
        \xymatrix{
 \txt{$\Hom_{\bf k}(V,A)$}\atop\text{${\gamma}$}\ar@{->}[rr]^_{\wedge}  \ar[d]_{\rm }&&
\txt{$V^\ast \ot A$} \atop {  \text{$\check{\gamma}=\sum_i v_i^\ast\otimes a_i$}} \ar[d]^(.6){}\\
 \txt{$\Hom_{\bf k}(V\oplus A^*,V^*\oplus A)$}\atop\text{$\mathcal{P}:={\gamma}^{\mathcal{A}}=\iota_2\circ \gamma \circ p_1$} \ar@{->}[rr]^_{$\wedge$}&& \txt{$(V^*\oplus A)\otimes  (V^*\oplus A)$}\atop\text{$\check{\mathcal{P}}:=\check{\tga}=\sum_i(v_i^\ast,0)\otimes (a_i,0) $}  },
\end{eqnarray}}
\begin{eqnarray} \label{injective}
        \xymatrix{
 \txt{$\Hom_{\bf k}(V,A)$}\atop\text{${\gamma}$}\ar@{->}[rr]^-{\vee}  \ar[d]_{\rm }&&
\txt{$V^\ast \ot A$} \atop {  \text{$\check{\gamma}=\sum_i v_i^\ast\ot a_i$}  } \ar[d]^(.6){}\\
 \txt{$\Hom_{\bf k}(V\oplus A^*,V^*\oplus A)$}\atop\text{$\mathcal{P}:={\gamma}^{\mathcal{A}}=\iota_2\circ \gamma \circ p_1$} \ar@{->}[rr]^-{\vee}&& \txt{$(V^*\oplus A)\otimes (V^*\oplus A) $}\atop\text{$\check{\mathcal{P}}:=\check{\tga}=\sum_i v_i^\ast\ot a_i= \check{\gamma}$}  },
\end{eqnarray}
where $\iota_2:A\rightarrow V^*\oplus A$ is the natural inclusion and $p_1: V\oplus A^*\rightarrow V$ is the usual projection.
Denote
\begin{eqnarray}  \label{aa+-bb}
\check{\mathcal{P}}_\pm:=\check{\tga_\pm}=\check{\gamma}\pm\tau(\check{\gamma}).
\vspa
\end{eqnarray}

\begin{lem}       \mlabel{bb and tbb}
Let $(A,\circ)$ be a Novikov algebra, $(V,l_A,r_A)$ be an $A$-bimodule, $\tA=A\ltimes_{l_{A,\star}^*,-r_A^*} V^*$ and $\beta \in \Hom_{\bf k}(V,A)$. Define $\mathcal{Q}=\beta^\tA$ in $\tA\ot\tA$ by Eq. (\mref{injective}) and $\check{\tbb}_+$ by Eq.~(\mref{aa+-bb}). Then $\tbb_+:\tA^*\rightarrow\tA$ is a balanced $\tA$-bimodule homomorphism  from $(\tA^*,L_{\tA,\star}^*,-R_{\tA}^*)$ to $(\tA,L_{\tA},R_{\tA})$ if and only if $\beta :V\rightarrow A$ is a balanced $A$-bimodule homomorphism  from $(V,l_A,r_A)$ to $(A,L_A,R_A)$.
\end{lem}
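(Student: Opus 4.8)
The plan is to reduce the assertion to the invariance of the symmetric tensor $\check{\tbb}_+$ and then invoke Lemma~\ref{lem:r} for the Novikov algebra $\tA$. By Eq.~(\mref{aa+-bb}) the tensor $\check{\tbb}_+=\check{\tbb}+\tau(\check{\tbb})$ is symmetric in $\tA\ot\tA$, and $\tbb_+=\widehat{\check{\tbb}_+}$ is the associated linear map $\tA^*\to\tA$. Applying Lemma~\ref{lem:r} with $A$ replaced by $\tA$ and $s=\check{\tbb}_+$, the three statements ``$\check{\tbb}_+$ is invariant'', ``$\tbb_+$ is balanced'' and ``$\tbb_+$ is an $\tA$-bimodule homomorphism from $(\tA^*,L_{\tA,\star}^*,-R_{\tA}^*)$ to $(\tA,L_{\tA},R_{\tA})$'' are mutually equivalent. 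In particular $\tbb_+$ is a balanced $\tA$-bimodule homomorphism if and only if $\check{\tbb}_+$ is invariant, so it suffices to prove that $\check{\tbb}_+$ is invariant if and only if $\beta$ is a balanced $A$-bimodule homomorphism from $(V,l_A,r_A)$ to $(A,L_A,R_A)$.

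First I would make the data explicit. Writing $\beta=\sum_i\langle v_i^*,\cdot\rangle a_i$ with $v_i^*\in V^*$ and $a_i\in A$, the diagram~(\mref{injective}) gives $\check{\tbb}=\sum_i v_i^*\ot a_i$, whence $\check{\tbb}_+=\sum_i(v_i^*\ot a_i+a_i\ot v_i^*)$ inside $\tA\ot\tA$ with $\tA=A\oplus V^*$. By Proposition~\ref{pro:semi} the product on $\tA=A\ltimes_{l_{A,\star}^*,-r_A^*}V^*$ is $(a+\xi)\bullet(b+\eta)=a\circ b+l_{A,\star}^*(a)\eta-r_A^*(b)\xi$. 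From this I read off $L_{\tA}$ and, using $l_{A,\star}^*=l_A^*+r_A^*$, the simplification $L_{\tA,\star}(a+\xi)(b+\eta)=a\star b+l_A^*(a)\eta+l_A^*(b)\xi$, which is what keeps the final computation manageable.

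Then I would expand $(L_{\tA}(X)\ot\id+\id\ot L_{\tA,\star}(X))\check{\tbb}_+=0$ for $X=a+\xi\in\tA$ and sort the outcome along $\tA\ot\tA=(A\ot A)\oplus(A\ot V^*)\oplus(V^*\ot A)\oplus(V^*\ot V^*)$. No $A\ot A$ term arises, and pairing the other three components with elements of $\tA^*=A^*\oplus V$ (using $\langle l_A^*(a)\xi,u\rangle=-\langle\xi,l_A(a)u\rangle$, the analogous identity for $r_A^*$, and $\sum_i\langle v_i^*,u\rangle a_i=\beta(u)$) turns them into
\begin{align*}
V^*\ot V^*:&\quad l_A(\beta(u))w=r_A(\beta(w))u,\\
A\ot V^*:&\quad \beta(l_A(a)u)=a\circ\beta(u),\\
V^*\ot A:&\quad \beta(l_A(a)u)+\beta(r_A(a)u)=a\star\beta(u),
\end{align*}
for all $a\in A$ and $u,w\in V$. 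The first is precisely the balanced condition~(\mref{b:con1}) for $\beta$; the second is the left half of the $A$-bimodule homomorphism property; and, combined with the second, the third reduces to $\beta(r_A(a)u)=\beta(u)\circ a$, the right half. Hence all three components vanish exactly when $\beta$ is simultaneously balanced and an $A$-bimodule homomorphism, which completes the equivalence. The work here is bookkeeping rather than ideas: the only genuinely delicate point is that the $V^*\ot A$ component by itself yields only the $\star$-combination of the two homomorphism identities, so both it and the $A\ot V^*$ component are needed to recover the full bimodule-homomorphism condition.
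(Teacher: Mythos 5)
Your proof is correct, but it takes a noticeably different route from the paper's, so a comparison is worth recording. The paper first computes $\tbb_+$ explicitly ($\tbb_+(a^*)=\beta^*(a^*)$ and $\tbb_+(u)=\beta(u)$), then in the forward direction verifies the balanced identity $L_{\tA,\star}^*\big(\tbb_+(a^*+u)\big)(b^*+v)=-R_{\tA}^*\big(\tbb_+(b^*+v)\big)(a^*+u)$ by pairing against elements of $A$ and of $V^*$ (eight separate pairings), and only at that point invokes Lemma~\ref{lem:r} to upgrade ``balanced'' to ``balanced $\tA$-bimodule homomorphism''; the converse is handled by a separate restriction argument. You instead deploy Lemma~\ref{lem:r} at the outset, applied to the symmetric tensor $\check{\tbb}_+\in\tA\ot\tA$, to collapse the conjunction ``balanced and bimodule homomorphism'' into the single condition that $\check{\tbb}_+$ be invariant, and then compute that invariance componentwise along $(A\ot A)\oplus(A\ot V^*)\oplus(V^*\ot A)\oplus(V^*\ot V^*)$. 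This is essentially the dual computation to the paper's (you pair the tensor identity against $\tA^*\ot\tA^*$, the paper pairs the operator identity against $\tA$), and it buys you both directions of the equivalence in a single pass; your three component identities (the balanced condition, $\beta(l_A(a)u)=a\circ\beta(u)$, and the $\star$-combination $\beta(l_A(a)u)+\beta(r_A(a)u)=a\star\beta(u)$), together with the observation that the last two jointly recover both halves of the homomorphism property, are exactly right, and your simplification $L_{\tA,\star}(a+\xi)(b+\eta)=a\star b+l_A^*(a)\eta+l_A^*(b)\xi$ checks out. What the paper's longer route buys in exchange is the explicit description of $\tbb_\pm$ on $A^*$ and on $V$, which is reused later (for instance in the proof of Theorem~\ref{thm:taa exo}); if one adopted your argument, those formulas would still need to be recorded separately.
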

\begin{proof}
\delete{Before the proof, we claim that: for any $a^*\in A^*$ and $u\in V$, we have $\tbb_+(a^*)=\beta^*(a^*)$ and $\tbb_+(u)=\beta(u)$ where $\beta^*:A^*\rightarrow V^*$ is the dual linear map associated to $\beta$. In fact,} Since $\tbb_+=\tbb + \tbb^*$, for all $a^*\in A$ and $u\in V$, we have
\begin{eqnarray*}
\tbb(a^*)=\iota_2\circ \beta \circ p_1(a^*)=0,\;\;\tbb(u)=\iota_2\circ \beta \circ p_1(u)=\beta(u).
\end{eqnarray*}
Thus for all $b^*\in A^*$, $v\in V$, we obtain
\begin{eqnarray*}
&&\langle \tbb ^*(a^*),v\rangle=\langle a^*,\tbb(v)\rangle=\langle a^*,\beta(v)\rangle=\langle\beta^*(a^*),v\rangle,
\\
&&\langle \tbb ^*(a^*),b^*\rangle=\langle a^*,\tbb(b^*)\rangle=0,
\\
&&\langle \tbb ^*(u),a^*\rangle=\langle u,\tbb(a^*)\rangle=0,
\\
&&\langle \tbb ^*(u),v\rangle=\langle u,\tbb(v)\rangle=\langle u,\beta(v)\rangle=0.
\end{eqnarray*}
Therefore, one gets $\tbb_+(a^*)=(\tbb + \tbb^*)(a^*)=\beta^*(a^*)$ and $\tbb_+(u)=(\tbb + \tbb^*)(u)=\beta(u)$.

\delete{Now we can attack to the lemma. If $\kappa =0$, there is nothing to prove. Otherwise we just need to prove the case $\kappa =1$ thanks to the fact that $\bfk$ is a field.} Assume that $\beta :(V,l_A,r_A)\rightarrow (A, L_A, R_A)$ is a balanced $A$-bimodule homomorphism. Let $b^*\in A^*$ and $v\in V$. Then we obtain
\begin{align*}
L_{\tA,\star}^*\Big(\tbb_+(a^*+u)\Big)(b^*+v)
&=L_{\tA,\star}^*\Big(\tbb_+(a^*)\Big)b^*+L_{\tA,\star}^*\Big(\tbb_+(a^*)\Big)v
+L_{\tA,\star}^*\Big(\tbb_+(u)\Big)b^* +L_{\tA,\star}^*\Big(\tbb_+(u)\Big)v
\\
&=L_{\tA,\star}^*\Big(\beta^*(a^*)\Big)b^*+L_{\tA,\star}^*\Big(\beta^*(a^*)\Big)v
+L_{\tA,\star}^*\Big(\beta(u)\Big)b^* +L_{\tA,\star}^*\Big(\beta(u)\Big)v,
\end{align*}
and
\begin{equation*}
-R_{\tA}^*\Big(\tbb_+(b^*+v)\Big)(a^*+u)
=-R_{\tA}^*\Big(\beta^*(b^*)\Big)a^*-R_{\tA}^*\Big(\beta^*(b^*)\Big)u
-R_{\tA}^*\Big(\beta(v)\Big)a^* -R_{\tA}^*\Big(\beta(v)\Big)u.
\end{equation*}
On the other hand, for all $x\in A$ and $w^*\in V^*$, we obtain
{\small
\begin{eqnarray*}
&&\langle
L_{\tA,\star}^*\Big(\beta^*(a^*)\Big)b^*+R_{\tA}^*\Big(\beta^*(b^*)\Big)a^*   ,x
\rangle
=-\langle b^*,\beta^*(a^*)\bullet x + x\bullet\beta^*(a^*) \rangle -
 \langle a^*, x\bullet \beta^*(b^*)  \rangle=0,                       \\
&&\langle
L_{\tA,\star}^*\Big(\beta^*(a^*)\Big)b^*+R_{\tA}^*\Big(\beta^*(b^*)\Big)a^*  ,w^*
\rangle
=-\langle b^*,\beta^*(a^*)\bullet w^* + w^*\bullet\beta^*(a^*) \rangle -
 \langle a^*, w^*\bullet\beta^*(b^*)  \rangle=0,                  \\
&&\langle
L_{\tA,\star}^*\Big(\beta^*(a^*)\Big)v+R_{\tA}^*\Big(\beta(v)\Big)a^*   ,x
\rangle
  =-\langle v,\beta^*(a^*)\bullet x + x\bullet\beta^*(a^*) \rangle -
  \langle a^*, x\bullet\beta(v)  \rangle\\
 &&\quad=\langle -r_A(x)v,\beta^*(a^*)\rangle +\langle l_{A,\star}(x)v,\beta^*(a^*)\rangle
  -\langle a^*, x\circ \beta(v)  \rangle
  =\langle \beta(l_A(x)v)-x\circ\beta(v)            ,a^* \rangle  =0,     \\
&&\langle
L_{\tA, \star}^*\Big(\beta^*(a^*)\Big)v+R_{\tA}^*\Big(\beta(v)\Big)a^*   ,w^*
\rangle
  =-\langle v,\beta^*(a^*)\bullet w^* + w^*\bullet\beta^*(a^*) \rangle -
  \langle a^*, w^*\cdot\beta(v)  \rangle =0,\\
&&\langle
L_{\tA,\star}^*\Big(\beta(u)\Big)b^*+R_{\tA}^*\Big(\beta^*(b^*)\Big)u ,x\rangle
=-\langle b^*,\beta(u)\star x  \rangle -
 \langle u, x\bullet\beta^*(b^*)  \rangle
\\&&\quad= \langle b^*, -\beta(u)\star x +\beta(l_{A,\star}(x)u)\rangle =0,     \\
&&\langle
L_{\tA, \star}^*\Big(\beta(u)\Big)b^*+R_{\tA}^*\Big(\beta^*(b^*)\Big)u ,w^*
\rangle
=-\langle b^*,\beta(u)\bullet w^* + w^*\bullet\beta(u) \rangle -
 \langle a^*, w^*\bullet\beta^*(b^*)  \rangle=0,         \\
&&\langle
L_{\tA,\star}^*\Big(\beta(u)\Big)v+R_{\tA}^*\Big(\beta(v)\Big)u   ,x
\rangle
=-\langle v,\beta(u)\bullet x + x\bullet\beta(u) \rangle -
 \langle u, x\bullet\beta(v)  \rangle=0,         \\
&&\langle
L_{\tA,\star}^*\Big(\beta(u)\Big)v+R_{\tA}^*\Big(\beta(v)\Big)u  ,w^*
\rangle
=-\langle v,\beta(u)\bullet w^* + w^*\bullet\beta(u) \rangle -
 \langle u, w^*\bullet\beta(v)  \rangle
\\&&\quad =\langle   l_{A,\star} (\beta(u))v-r_A(\beta(u))v-r_A(\beta(v))u , w^*\rangle
    =\langle  l_A(\beta(u))v-r_A(\beta(v))u   \rangle=0.
\end{eqnarray*}}
Therefore, $L_{\tA,\star}^*\Big(\tbb_+(a^*+u)\Big)(b^*+v)
=-R_{\tA}^*\Big(\tbb_+(b^*+v)\Big)(a^*+u)$, which means that $\tbb$ is balanced. Furthermore, since $\check{\tbb}_+ \in \tA\otimes\tA $ is symmetric, by Lemma \mref{lem:r}, $\tbb_+$ is a balanced $\tA$-bimodule homomorphism.

Conversely, if $\tbb_+$ is a balanced $\tA$-bimodule homomorphism, then for all $u,v\in V$ and $x\in A$, we obtain
\begin{eqnarray*}
L_{\tA,\star}^*\Big(\tbb_+(u)\Big)(v)=-R_{\tA}^*\Big(\tbb_+(v)\Big)(u)
~\Leftrightarrow~
l_A(\beta(u))v=r_A(\beta(v))u,
\\
\tbb_+ \Big(  L_{\tA,\star}^*(x)v  \Big)= x\cdot \tbb_+(v)
~\Leftrightarrow~
\beta(l_A(x)v)=x\circ \beta(v),
\\
\tbb_+ \Big(  -R_{\tA}^*(x)u  \Big)=  \tbb_+(u)\cdot x
~\Leftrightarrow~
\beta(r_A(x)u)= \beta(u)\circ x.
\end{eqnarray*}
Therefore, $\beta :(V,l_A,r_A)\rightarrow (A, L_A, R_A)$ is a balanced $A$-bimodule homomorphism.
\end{proof}

\begin{thm}   \label{thm:taa exo}
Let $(A,\circ)$ be a Novikov algebra, $(V,l_A,r_A)$ be an $A$-bimodule  and $\tA=A\ltimes_{l_{A,\star}^*,-r_A^*} V^*$. Let $\alpha$, $\beta \in \Hom_{\bf k}(V,A)$ and  $\taa=\alpha^\tA$ and $\tbb=\beta^\tA$ by Eq. (\ref{injective})\delete{, which are identified as linear maps from $\tA^*\rightarrow\tA$}. Suppose that $\tbb_+:\tA^*\rightarrow\tA$ is a balanced $\tA$-bimodule homomorphism  from $(\tA^*,L_{\tA,\star}^*,-R_{\tA}^*)$ to $(\tA,L_{\tA},R_{\tA})$. Then $\alpha$ is an extended $\calo$-operator of weight $0$ with extension $\beta$ of mass $(\kappa,0)$ on $(A, \circ)$ associated to $(V, l_A, r_A)$ if and only if $\taa_-$ is an extended $\calo$-operator of weight $0$ with extension $\tbb_+$ of mass $(\kappa,0)$ on $(\tA, \bullet)$ associated to $(\tA^\ast, L_{\tA,\star}^\ast, -R_\tA^\ast)$.
\end{thm}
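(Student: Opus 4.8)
The plan is to reduce everything to the direct-sum decomposition $\tA=A\oplus V^*$, $\tA^*=A^*\oplus V$ and to read off the defect of the extended $\calo$-operator condition for $\taa_-$ componentwise. First I would record the explicit forms of the two maps. As in the proof of Lemma \mref{bb and tbb} one has $\taa(a^*)=0$, $\taa(u)=\alpha(u)$, $\taa^*(a^*)=\alpha^*(a^*)$, $\taa^*(u)=0$ (and similarly for $\tbb$), so that on $\tA^*=A^*\oplus V$
\[
\taa_-(a^*+u)=\alpha(u)-\alpha^*(a^*),\qquad \tbb_+(a^*+u)=\beta(u)+\beta^*(a^*),
\]
with $\alpha(u),\beta(u)\in A$ and $\alpha^*(a^*),\beta^*(a^*)\in V^*$. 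By Lemma \mref{bb and tbb} the hypothesis that $\tbb_+$ is a balanced $\tA$-bimodule homomorphism is exactly the statement that $\beta:V\to A$ is a balanced $A$-bimodule homomorphism, i.e. $l_A(\beta(u))v=r_A(\beta(v))u$, $\beta(l_A(x)v)=x\circ\beta(v)$ and $\beta(r_A(x)v)=\beta(v)\circ x$; these identities will be fed into the computation.

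Next I would compute the dual actions $L_{\tA,\star}^*$ and $-R_{\tA}^*$ of $\tA$ on $\tA^*$ restricted to the summands, using the semidirect product $\bullet$ on $\tA=A\ltimes_{l_{A,\star}^*,-r_A^*}V^*$. The key facts are that for $a\in A$ one recovers the original bimodule on $V\subseteq\tA^*$, namely $L_{\tA,\star}^*(a)v=l_A(a)v$ and $-R_{\tA}^*(a)u=r_A(a)u$, while $L_{\tA,\star}^*(a)b^*=L_{A,\star}^*(a)b^*$ and $-R_{\tA}^*(a)b^*=-R_A^*(a)b^*$ lie in $A^*$; moreover any $\phi\in V^*\subseteq\tA$ acts trivially on $A^*\subseteq\tA^*$ through $L_{\tA,\star}^*$ and $R_{\tA}^*$. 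Writing the defect $E(\xi,\eta):=\taa_-(\xi)\bullet\taa_-(\eta)-\taa_-\big(L_{\tA,\star}^*(\taa_-(\xi))\eta-R_{\tA}^*(\taa_-(\eta))\xi\big)-\kappa\,\tbb_+(\xi)\bullet\tbb_+(\eta)$ (the module product on $\tA^*$ and the weight being $0$, and the mass being $(\kappa,0)$), bilinearity reduces the claim to evaluating $E$ on the four pieces $V\times V$, $V\times A^*$, $A^*\times V$ and $A^*\times A^*$. On $V\times V$ the formulas above give $L_{\tA,\star}^*(\alpha(u))v-R_{\tA}^*(\alpha(v))u=l_A(\alpha(u))v+r_A(\alpha(v))u\in V$, whence $E(u,v)=\alpha(u)\circ\alpha(v)-\alpha\big(l_A(\alpha(u))v+r_A(\alpha(v))u\big)-\kappa\,\beta(u)\circ\beta(v)$, which is precisely the extended $\calo$-operator defect of $\alpha$ with extension $\beta$ of mass $(\kappa,0)$. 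So the $V\times V$ piece encodes exactly the condition on $A$.

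Finally I would dispose of the three remaining pieces. On $A^*\times A^*$ both products $\taa_-(a^*)\bullet\taa_-(b^*)$ and $\tbb_+(a^*)\bullet\tbb_+(b^*)$ are products of two elements of $V^*\subseteq\tA$ and hence vanish, while $\taa_-(a^*),\taa_-(b^*)\in V^*$ act trivially on $A^*\subseteq\tA^*$, so the middle term is $\taa_-(0)=0$; thus $E(a^*,b^*)=0$ identically. The genuinely substantive step, and the one I expect to be the main obstacle, is the vanishing of the mixed pieces $E(u,b^*)$ and $E(a^*,v)$, both of which lie in $V^*$. Pairing $E(u,b^*)$ against an arbitrary $v\in V$ and expanding, the $\alpha$-terms coming from the product and from the middle term cancel once the extended $\calo$-operator relation for $\alpha$ is substituted, leaving $\kappa\langle b^*,\beta(l_A(\beta(u))v)+\beta(r_A(\beta(u))v)-\beta(u)\star\beta(v)\rangle$, which vanishes by the homomorphism identities $\beta(l_A(\beta(u))v)=\beta(u)\circ\beta(v)$ and $\beta(r_A(\beta(u))v)=\beta(v)\circ\beta(u)$; the piece $E(a^*,v)$ is handled symmetrically using $\beta(r_A(\beta(v))u)=\beta(u)\circ\beta(v)$. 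Consequently, if $\alpha$ is an extended $\calo$-operator then all four pieces of $E$ vanish and $\taa_-$ is an extended $\calo$-operator with extension $\tbb_+$ of mass $(\kappa,0)$; conversely, if $\taa_-$ is such an operator, restricting the identity $E\equiv 0$ to $\xi,\eta\in V$ returns $E(u,v)=0$, i.e. $\alpha$ is an extended $\calo$-operator. The care needed in tracking the four types of dual pairings through the semidirect-product multiplication $\bullet$ is where the bulk of the work lies.
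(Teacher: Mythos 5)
Your proposal is correct and follows essentially the same route as the paper's proof: both compute $\taa_-$ and $\tbb_+$ componentwise on $\tA^*=A^*\oplus V$, invoke Lemma \mref{bb and tbb} to translate the hypothesis into the balanced $A$-bimodule homomorphism identities for $\beta$, and then evaluate the defect on the four pieces $V\times V$, $V\times A^*$, $A^*\times V$, $A^*\times A^*$, using the extended $\calo$-operator relation on $A$ together with $\beta(l_A(\beta(u))v)=\beta(u)\circ\beta(v)$ and $\beta(r_A(x)v)=\beta(v)\circ x$ to settle the mixed terms, with the converse obtained by restricting to $V\times V$. The only (immaterial) difference is bookkeeping: the paper shows the $\alpha$-defect equals $\kappa\,\tbb_+(\cdot)\bullet\tbb_+(\cdot)$ outright, whereas you absorb that term into a single expression $E$ and show it vanishes.
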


\begin{proof}
Similar to the proof of Lemma \mref{bb and tbb}, we can obtain that for all $a^*\in A$ and $u\in V$, $\taa_-(a^*)=-\alpha^*(a^*)$ and $\taa_-(u)=\alpha(u)$, where $\alpha^*:A^*\rightarrow V^*$ is the dual linear map of $\alpha$. \delete{In fact, we have $\taa_-=\taa - {\taa}^*$. Moreover, for any $a^*\in A$ and $u\in V$,
\begin{eqnarray*}
\taa(a^*)=\iota_2\circ \alpha \circ p_1(a^*)=0,\\
\taa(u)=\iota_2\circ \alpha \circ p_1(u)=\alpha(u).
\end{eqnarray*}
Where, for vector spaces $V_i,~i=1,2$, $\iota_i:V_i\rightarrow V_1\oplus V_2$ is the usual inclusion and  $p_i:V_1\oplus V_2\rightarrow V $ is the usual projection.
Hence for any $b^*\in A^*$, $v\in V$,
\begin{eqnarray*}
&&\langle {\taa} ^*(a^*),v\rangle=\langle a^*,\taa(v)\rangle=\langle a^*,\alpha(v)\rangle=\langle\alpha^*(a^*),v\rangle;
\\
&&\langle {\taa} ^*(a^*),b^*\rangle=\langle a^*,\taa(b^*)\rangle=0;
\\
&&\langle {\taa} ^*(u),a^*\rangle=\langle u,\taa(a^*)\rangle=0;
\\
&&\langle {\taa} ^*(u),v\rangle=\langle u,\taa(v)\rangle=\langle u,\alpha(v)\rangle=0.
\end{eqnarray*}
Thus our first assertion holds. }Suppose that $\alpha$ is an extended $\calo$-operator of weight $0$ with extension $\beta$ of mass $(\kappa,0)$ on $(A, \circ)$ associated to $(V, l_A, r_A)$ . Then for all $a^*$, $b^*\in A^*$ and $u$, $v\in V$, we have

\begin{eqnarray*}
&&\quad  \taa_-(u+a^*) \bullet \taa_-(v+b^*)  -
\taa_-\bigg(      L_{\tA,\star}^*\Big(\taa_-(u+a^*)\Big)(v+b^*)
            -R_{\tA}^*\Big(\taa_-(v+b^*)\Big)(u+a^*)        \bigg)
\\&&
=\alpha(u)\bullet\alpha(v)-\taa_-\bigg( L_{\tA,\star}^*\Big(\alpha(u)\Big)v -R_{\tA}^*\Big(\alpha(v)\Big)u \bigg)
-\alpha(u)\bullet\alpha^*(b^*)-\alpha^*(a^*)\bullet\alpha(v)
\\&&+\alpha^*(a^*)\bullet\alpha^*(b^*)
-\taa_-\bigg(
L_{\tA,\star}^*\Big(\alpha(u)\Big)b^* -L_{\tA,\star}^*\Big(\alpha^*(a^*)\Big)v -L_{\tA,\star}^*\Big(\alpha^*(a^*)\Big)b^*
     \\&&  -R_{\tA}^*\Big(\alpha(v)\Big)a^*
+R_{\tA}^*\Big(\alpha^*(b^*)\Big)u +R_{\tA}^*\Big(\alpha^*(b^*)\Big)a^*
 \bigg)
\\&&
=\alpha(u)\circ\alpha(v)-\alpha \Big( l_A(\alpha(u))v+r_A(\alpha(v)) u \Big)
+\alpha^*\bigg( L_{\tA,\star}^*\Big(\alpha(u)\Big)b^*\bigg)
-\alpha(u)\bullet\alpha^*(b^*)  \\&&-\alpha^*\bigg( -R_{\tA}^*\Big(\alpha^*(b^*)\Big)u\bigg)
-\alpha^*(a^*)\bullet\alpha(v) -\alpha^*\bigg(L_{\tA,\star}^*\Big(\alpha^*(a^*)\Big)v\bigg)+\alpha^*\bigg(-R_{\tA}^*\Big(\alpha(v)\Big)a^* \bigg).
\end{eqnarray*}
On the other hand, for all $w\in V$, we have
\begin{eqnarray*}
&&\Big\langle
\alpha^*\bigg( L_{\tA,\star}^*\Big(\alpha(u)\Big)b^*\bigg)
-\alpha(u)\bullet\alpha^*(b^*)  -\alpha^*\bigg( -R_{\tA}^*\Big(\alpha^*(b^*)\Big)u\bigg)                               ,w\Big\rangle
\\&&=\Big\langle
\alpha^*\Big( L_{\tA,\star}^*\Big(\alpha(u)\Big)b^*\Big)
-l_{A,\star}^*(\alpha(u))\alpha^*(b^*) -\alpha^*\Big( -R_{\tA}^*\Big(\alpha^*(b^*)\Big)u \Big)                 ,w\Big\rangle
\\&&=\Big\langle b^*,-\alpha(u)\star\alpha(w)+\alpha\Big( l_{A,\star}(\alpha(w))u \Big) +\alpha\Big( l_{A,\star}(\alpha(u))w \Big)
\Big\rangle
=-\Big\langle b^*, \kappa\beta(u)\star\beta(w)\Big\rangle
\\&&=-\Big\langle b^*, \kappa\beta\Big(  l_\star(   \beta(u)   )w         \Big)
\Big\rangle
=\Big\langle \kappa l_\star^*(   \beta(u)   )(\beta^*(b^*)), w
\Big\rangle .
\end{eqnarray*}
Therefore, one gets
$$\alpha^*\bigg( L_{\tA,\star}^*\Big(\alpha(u)\Big)b^*\bigg)
-\alpha(u)\bullet\alpha^*(b^*)  -\alpha^*\bigg( -R_{\tA}^*\Big(\alpha^*(b^*)\Big)u\bigg)=
\kappa l_{A,\star}^*(   \beta(u)   )(\beta^*(b^*)).$$
Similarly, we obtain
\begin{eqnarray*}
&&\Big\langle
-\alpha^*(a^*)\bullet \alpha(v) -\alpha^*\bigg(L_{\tA,\star}^*\Big(\alpha^*(a^*)\Big)v\bigg)+\alpha^*\bigg(-R_{\tA}^*\Big(\alpha(v)\Big)a^* \bigg),   w
\Big\rangle
\\&&=\Big\langle  a^*,-\alpha\Big(  r_A(\alpha(v)) w  \Big)-\alpha\Big(  l_A(\alpha(w)) v  \Big) +\alpha(w)\circ\alpha(v)
\Big\rangle
=\Big\langle a^*, \kappa\beta(w)\circ\beta(v)\Big\rangle
\\&&=\Big\langle a^*, \kappa\beta\Big(  r_A(   \beta(v)   )w         \Big)
\Big\rangle
=-\Big\langle \kappa r_A^*(   \beta(v)   )(\beta^*(a^*)), w
\Big\rangle .
\end{eqnarray*}
Hence, we obtain
$$-\alpha^*(a^*)\bullet\alpha(v) -\alpha^*\bigg(L_{\tA,\star}^*\Big(\alpha^*(a^*)\Big)v\bigg)+\alpha^*\bigg(-R_{\tA}^*\Big(\alpha(v)\Big)a^* \bigg)=
-\kappa r^*(   \beta(v)   )(\beta^*(a^*)).$$
So
\begin{eqnarray*}
&&\quad\taa_-(u+a^*) \bullet\taa_-(v+b^*)  -
\taa_-\bigg(      L_{\tA,\star}^*\Big(\taa_-(u+a^*)\Big)(v+b^*)
            -R_{\tA}^*\Big(\taa_-(v+b^*)\Big)(u+a^*)        \bigg)
\\&&=\kappa\beta(u)\circ\beta(v) + \kappa l_{A,\star}^*(   \beta(u)   )(\beta^*(b^*))
-\kappa r_A^*(   \beta(v)   )(\beta^*(a^*))
\\&&=\kappa\tbb_+(u) \bullet \tbb_+(v) +\kappa\tbb_+(u)\bullet \tbb_+^*(b^*) + \kappa\tbb_+^*(a^*)\bullet \tbb_+(v)  =\kappa \tbb_+(u+a^*) \bullet  \tbb_+(v+b^*).
\end{eqnarray*}


\delete{If $\kappa =0$, the above equation implies that $\taa_-$ is an extended $\calo$-operator of weight zero. Otherwise use} By Lemma \mref{bb and tbb}, we obtain that $\tbb_+$ from $(\tA^*,L_{\tA,\star}^*,-R_{\tA}^*)$ to $(\tA, L_{\tA}, R_{\tA})$
is a balanced $\tA$-bimodule homomorphism. Therefore, $\taa_-$ is an extended $\calo$-operator of weight $0$ with extension $\tbb_+$ of mass $(\kappa, 0)$  on $(\tA, \bullet)$ associated to $(\tA^\ast, L_{\tA,\star}^\ast, -R_\tA^\ast)$.

Conversely, if $\taa_-$ is an extended $\calo$-operator of weight $0$ with extension $\tbb_+$ of mass $(\kappa, 0)$  on $(\tA, \bullet)$ associated to $(\tA^\ast, L_{\tA,\star}^\ast, -R_\tA^\ast)$, then for all $u$, $v\in V$, we have $$\taa_-(u) \bullet \taa_-(v)  -
\taa_-\bigg(      L_{\tA,\star}^*\Big(\taa_-(u)\Big)(v)
            -R_{\tA}^*\Big(\taa_-(v)\Big)(u)        \bigg)
=\kappa \tbb_+(u) \bullet\tbb_+(v).$$
Hence
$$\alpha(u)\circ \alpha(v)-\alpha \Big( l_A(\alpha(u))v+r_A(\alpha(v)) u \Big)
=\kappa \beta(u) \circ \beta(v).$$
\delete{Similarly, if $\kappa =0$, the above equation implies that $\alpha$ is an $\calo$-operator. If $\kappa\neq 0$, considering Lemma \mref{bb and tbb}, $\beta$ is a balanced $A$-bimodule homomorphism of mass $\kappa$ and thus $\alpha$ is an extended $\calo$-operator with extension $\beta$ of mass $\kappa$  on $(\tA, \bullet)$ associated to $(\tA^\ast, L_{\tA,\star}^\ast, -R_\tA^\ast)$.}
Therefore, this conclusion holds.

\end{proof}


\begin{cor}     \label{cor-gN}
Let $(A,\circ)$ be a Novikov algebra, $(V,l_A,r_A)$ be an $A$-bimodule  Novikov algebra and $\tA=A\ltimes_{l_{A,\star}^*,-r_A^*} V^*$. Let $\alpha$, $\beta \in \Hom_{\bf k}(V,A)$ and  $\taa=\alpha^\tA$ and $\tbb=\beta^\tA$ defined by Eq. (\ref{injective}). Suppose that $\beta$  is a balanced $A$-bimodule homomorphism  from $(V^*,l_{A,\star}^*,-r_{A}^*)$ to $(A,L_{A},R_{A})$. Then the following conclusions hold.
\begin{enumerate}
\item[(\romannumeral1)]
\begin{enumerate}
  \item$\alpha$ is an  $\calo$-operator of weight $0$ with extension $\beta$ of mass $(\kappa,0)$ on $(A,\circ)$ associated to $(V, l_A, r_A)$ if and only if $\check{\taa}_-\pm \check{\tbb}_+ $is a solution of the ENYBE of mass $\dfrac{\kappa+1}{4}$ in $A\ltimes_{l_{A,\star}^*,-r_A^*} V^*$.
  \item  \cite[Theorem 3.29]{HBG} $\alpha$ is an $\calo$-operator of weight $0$ on $(A,\circ)$ associated to $(V, l_A, r_A)$ if and only if $\check{\taa}_-$ is a skew-symmetric solution of the NYBE in $A\ltimes_{l_{A,\star}^*,-r_A^*} V^*$. In particular, a linear endomorphism $T:A\rightarrow A$ is a Rota-Baxter operator of weight zero if and only if $\check{\calt}_-=\check{T}-\tau(\check{T})$ is a skew-symmetric solution of the NYBE in $A\ltimes_{L_{A,\star}^*,-R_A^*} A^*$.
  \item$\alpha$ is an extended $\calo$-operator of weight $0$ with extension $\beta$ of mass $(-1,0)$ on $(A,\circ)$ associated to $(V, l_A, r_A)$ if and only if $\check{\taa}_-\pm \check{\tbb}_+$ is a solution of the NYBE in $A\ltimes_{l_{A,\star}^*,-r_A^*} V^*$.
  \end{enumerate}

\item[(\romannumeral2)] Let $(V,l_A,r_A)=(A, L_A, R_A)$ and $\beta=\id$. Then we have
\begin{enumerate}
\item $\alpha$ satisfies Eq. (\mref{Bax bb=id k=-1}) if and only if $\check{\taa}_- \pm \check{\id}_+$ is a solution of the NYBE in $A\ltimes_{L_{A,\star}^*,-R_A^*} A^*$.
\item Let $T: A\rightarrow A$ be a linear map and $\calt=T^{\tA}$ defined by Eq. (\ref{injective}). Then $T$ is a Rota-Baxter operator of weight $\lambda \neq 0$ if and only if both $\dfrac{2}{\lambda}(\check{\calt}-\tau(\check{\calt}))+2\check{\id}$ and $\dfrac{2}{\lambda}(\check{\calt}-\tau(\check{\calt}))-2\check{\id}$ are solutions of the NYBE in $A\ltimes_{L_{A,\star}^*,-R_A^*} A^*$.
\end{enumerate}


\end{enumerate}
\end{cor}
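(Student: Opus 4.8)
The plan is to deduce everything from the three transfer results already in hand, namely Lemma~\ref{bb and tbb}, Theorem~\ref{thm:taa exo} and Theorem~\ref{thm:r-ENYBE}, by feeding them the right element of $\tA\otimes\tA$ and then specializing the parameters. The key object is $r:=\check{\taa}_-\pm\check{\tbb}_+\in\tA\otimes\tA$. Using $\widehat{\tau(\cdot)}=(\cdot)^\ast$ from Proposition~\ref{hat and dual}, the associated map is $\hr=\taa_-\pm\tbb_+$; since $\taa_-=\taa-\taa^\ast$ is skew-adjoint and $\tbb_+=\tbb+\tbb^\ast$ is self-adjoint, the skew-symmetric part of $\hr$ is $\taa_-$ and the symmetric part is $\pm\tbb_+$. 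Everything below is a matter of verifying the hypotheses of the three theorems for this $r$ and then turning the knobs $\kappa$, $\beta$, $(V,l_A,r_A)$.

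First I would prove item (i)(a), which is the crux. Since $\beta$ is a balanced $A$-bimodule homomorphism, Lemma~\ref{bb and tbb} gives that $\tbb_+$ is a balanced $\tA$-bimodule homomorphism from $(\tA^\ast,L_{\tA,\star}^\ast,-R_\tA^\ast)$ to $(\tA,L_{\tA},R_{\tA})$, and then Lemma~\ref{lem:r} shows that the symmetric part $\check{\tbb}_+$ is invariant. Applying Theorem~\ref{thm:r-ENYBE}(ii) to the Novikov algebra $(\tA,\bullet)$ and to $r=\check{\taa}_-\pm\check{\tbb}_+$, we get that $r$ solves the ENYBE of mass $\tfrac{\kappa+1}{4}$ in $\tA$ if and only if $\taa_-$ is an extended $\calo$-operator of weight $0$ with extension $\pm\tbb_+$ of mass $(\kappa,0)$ on $(\tA,\bullet)$ associated to $(\tA^\ast,L_{\tA,\star}^\ast,-R_\tA^\ast)$. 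Because the mass has $\mu=0$, the defining identity only involves $\kappa\,\tbb_+(u)\bullet\tbb_+(v)$, which is unchanged under $\tbb_+\mapsto-\tbb_+$, so the two sign choices give the same condition; by Theorem~\ref{thm:taa exo} this is in turn equivalent to $\alpha$ being an extended $\calo$-operator of weight $0$ with extension $\beta$ of mass $(\kappa,0)$ on $(A,\circ)$ associated to $(V,l_A,r_A)$. Chaining the two equivalences proves (i)(a).

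Items (i)(b), (i)(c) and (ii)(a) are then specializations. For (i)(c) take $\kappa=-1$, so the mass $\tfrac{\kappa+1}{4}$ is $0$ and the ENYBE collapses to the NYBE. For (i)(b) take $\beta=0$: then $\tbb_+=0$, so $r=\check{\taa}_-$ is skew-symmetric, an extended $\calo$-operator with zero extension is an ordinary $\calo$-operator, and for skew-symmetric $r$ the ENYBE of any mass is the NYBE, which recovers \cite[Theorem 3.29]{HBG}; the Rota-Baxter consequence is the case $(V,l_A,r_A)=(A,L_A,R_A)$, $\alpha=T$, $\beta=0$. For (ii)(a) take $(V,l_A,r_A)=(A,L_A,R_A)$ and $\beta=\id$ (which is balanced and an $A$-bimodule homomorphism, since $L_A(u)v=u\circ v=R_A(v)u$); one then only checks that the extended $\calo$-operator condition of weight $0$ with extension $\id$ of mass $(-1,0)$ is literally Eq.~\eqref{Bax bb=id k=-1}, so (ii)(a) follows from (i)(c).

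For item (ii)(b) the idea is to absorb the nonzero weight $\lambda$ into a shift by $\id$ and reduce to (ii)(a). Rescaling shows $T$ is a Rota-Baxter operator of weight $\lambda$ if and only if $\tfrac{2}{\lambda}T$ is a Rota-Baxter operator of weight $2$, and by Corollary~\ref{cor:Bax} (read with $\lambda=0$) this holds if and only if $U:=\tfrac{2}{\lambda}T+\id$ satisfies Eq.~\eqref{Bax bb=id k=-1}. Writing $\mathcal{U}=U^{\tA}=\tfrac{2}{\lambda}\calt+\id^{\tA}$, one has $\check{\mathcal{U}}_-=\tfrac{2}{\lambda}\check{\calt}_-+\check{\id}_-$, and then applying (ii)(a) to $U$ converts the condition into the statement that $\check{\mathcal{U}}_-\pm\check{\id}_+$ are solutions of the NYBE in $A\ltimes_{L_{A,\star}^\ast,-R_A^\ast}A^\ast$; unwinding $\check{\id}_\pm=\check{\id}\pm\tau(\check{\id})$ identifies these with the two displayed elements $\tfrac{2}{\lambda}(\check{\calt}-\tau(\check{\calt}))\pm 2\check{\id}$. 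The main obstacle is purely bookkeeping: keeping the signs and the $\tau$-reflections straight when passing between the maps $\taa_-,\tbb_+$ on $\tA$ and the tensors $\check{\taa}_-,\check{\tbb}_+$, and tracking how the shift by $\id$ interacts with the skew/symmetric decomposition; all the genuine algebraic content is already carried by the three transfer results invoked above.
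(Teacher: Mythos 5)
Your proofs of (i)(a)--(c) and (ii)(a) are correct and take the same route as the paper: (i)(a) by chaining Theorem \ref{thm:r-ENYBE}(ii) with Theorem \ref{thm:taa exo} (the paper cites exactly these two results), (i)(b) and (i)(c) by the specializations $\beta=0$ and $\kappa=-1$ together with Corollary \ref{cor:o-ENYBE}, and (ii)(a) from (i)(c) with $(V,l_A,r_A)=(A,L_A,R_A)$, $\beta=\id$. The details you supply --- that $\check{\tbb}_+$ is invariant by Lemma \ref{bb and tbb} combined with Lemma \ref{lem:r}, and that the sign ambiguity in the symmetric part $\pm\tbb_+$ is harmless because $\mu=0$ makes the extension enter only through $\kappa\,\tbb_+(\cdot)\bullet\tbb_+(\cdot)$ --- are precisely the points the paper leaves implicit, and they are right.

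The last step of your (ii)(b), however, is false as written. With $U=\tfrac{2}{\lambda}T+\id$ and $\mathcal{U}=U^{\tA}$ you correctly get from (ii)(a) that $\check{\mathcal{U}}_-\pm\check{\id}_+$ solve the NYBE, and $\check{\mathcal{U}}_-=\tfrac{2}{\lambda}\check{\calt}_-+\check{\id}_-$. For the plus sign, $\check{\id}_-+\check{\id}_+=2\check{\id}$, as you claim. But for the minus sign, $\check{\id}_--\check{\id}_+=-2\tau(\check{\id})$, so $\check{\mathcal{U}}_--\check{\id}_+=\tfrac{2}{\lambda}(\check{\calt}-\tau(\check{\calt}))-2\tau(\check{\id})$, \emph{not} $\tfrac{2}{\lambda}(\check{\calt}-\tau(\check{\calt}))-2\check{\id}$: inside $\tA\ot\tA$ the tensors $\check{\id}\in A^\ast\ot A$ and $\tau(\check{\id})\in A\ot A^\ast$ are different elements. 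The discrepancy is not cosmetic. Indeed, $\tfrac{2}{\lambda}\check{\calt}_--2\check{\id}=\check{\mathcal{W}}_--\check{\id}_+$ where $\mathcal{W}=W^{\tA}$ for $W=\tfrac{2}{\lambda}T-\id$, so by (ii)(a) this element solves the NYBE if and only if $W$ satisfies Eq.~(\ref{Bax bb=id k=-1}), i.e.\ if and only if $W+\id=\tfrac{2}{\lambda}T$ is a Rota-Baxter operator of weight $-2$, i.e.\ if and only if $T$ is a Rota-Baxter operator of weight $-\lambda$. That condition is not implied by $T$ being Rota-Baxter of weight $\lambda$; requiring both forces $T(x\circ y)=0$ for all $x,y\in A$. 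What your argument actually establishes is: $T$ is Rota-Baxter of weight $\lambda\neq 0$ if and only if $\tfrac{2}{\lambda}(\check{\calt}-\tau(\check{\calt}))+2\check{\id}$ (equivalently, $\tfrac{2}{\lambda}(\check{\calt}-\tau(\check{\calt}))-2\tau(\check{\id})$) is a solution of the NYBE in $A\ltimes_{L_{A,\star}^*,-R_A^*}A^*$. In fairness, the paper's own proof of (ii)(b) is the single sentence that the conclusion ``follows from Item (ii)(a)'' and commits the same silent substitution of $\check{\id}$ for $\tau(\check{\id})$, so the defect appears to lie in the stated corollary rather than in your strategy; but the identification asserted in your unwinding step is a genuine gap, not bookkeeping, and a corrected statement must carry $-2\tau(\check{\id})$ in place of $-2\check{\id}$.
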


\begin{proof}
(\romannumeral1)( a)
This follows from Theorems \mref{thm:r-ENYBE} and \mref{thm:taa exo}.
\smallskip

\noindent
(\romannumeral1)(b) This follows from Theorem \mref{thm:r-ENYBE} for $\kappa =0$ or $\beta=0$ and Corollary \mref{cor:o-ENYBE}.
\smallskip

\noindent
(\romannumeral1)(c) This follows from Theorem \mref{thm:r-ENYBE} for $\kappa =-1$ and Corollary \mref{cor:o-ENYBE}.
\smallskip

\noindent
(\romannumeral2)(a) This follows from  Item (\romannumeral1) directly.
\smallskip

\noindent
(\romannumeral2)(b) By the discussion after Corollary \mref{cor:Bax}, $T$ is a Rota-Baxter operator of weight $\lambda \neq 0$ if and only if $\dfrac{2T}{\lambda} +\id$ is an extended $\calo$-operator of weight $0$ with extension $\beta=\id$ of mass $(\kappa=-1,0)$ on $(A,\circ)$ associated to $(A, L_A, R_A)$, i.e., $\dfrac{2T}{\lambda} +\id$ satisfies Eq.~(\mref{Bax bb=id k=-1}). Then the conclusion follows from Item (\romannumeral2)(a).
\vspb
\end{proof}

\delete{\begin{defi}
Let $(A,\circ)$ be a Novikov algebra. If $r\in A\ot A$ is a solution of the NYBE in $A$ and the symmetric part $\check{\beta}$ of $r$ is invariant, then the induced Novikov bialgebra $(A,\circ, \Delta_r)$ is called {\bf quasitriangular Novikov bialgebra}.
\end{defi}
\yy{The definition of Novikov bialgebras? $\Delta_r$?}}


\section{Extended  $\calo$-operators and generalized Novikov Yang-Baxter equations}
\mlabel{sec:gnybe and operator}
In this section, we will introduce the notion of generalized Novikov Yang-Baxter equations and establish the relationship between extended $\calo$-operators and  generalized Novikov Yang-Baxter equations.

We first introduce the definition of generalized Novikov Yang-Baxter equations.

\begin{defi}
Let $(A,\circ)$ be a Novikov algebra. An element $r\in A\otimes A$ is called a solution of the
{\bf generalized Novikov Yang-Baxter equations (GNYBES)} in  $A$, if it satisfies
\vspa
{\small
\begin{eqnarray}
&&\Big(L_A(a)\otimes \id\otimes \id-\id\otimes L_A(a)\otimes \id\Big)\Big((\tau r)_{12}\circ r_{13}+r_{12}\circ r_{23}+r_{13}\star r_{23}\Big)\nonumber\\
&&\hspace{0.3cm}+\Big((\id\otimes L_A(a)\otimes
\id)(r+\tau r)_{12}\Big)\circ r_{23}-\Big((L_A(a)\otimes \id\otimes \id)r_{13}\Big)\circ (r+\tau r)_{12}+\Big(\id\otimes \id\otimes
L_{A,\star}(a)\Big)\label{cob6}\\
&&\hspace{0.3cm}\Big(r_{23}\circ r_{13}-r_{13}\circ
r_{23}-(\id\otimes\id\otimes \id-\tau\otimes \id)(r_{13}\circ
r_{12}+r_{12}\star r_{23})\Big)=0,
    \nonumber
\end{eqnarray}
\begin{equation}        \label{cob7}
     (\id\otimes \id\otimes \id-\id\otimes \tau)(\id\otimes \id\otimes
     L_{A,\star}(a))(r_{13}\circ (\tau r)_{23} -r_{12}\star
     r_{23}-r_{13}\circ r_{12})=0,\;\;  a\in A.
\end{equation}
}
\vspa
\end{defi}

\begin{rmk}\label{def-gnybe}
Let $(A, \circ)$ be a Novikov algebra and $r\in A\otimes A$. Define $\Delta_r: A\rightarrow A\otimes A$ by
\begin{eqnarray*}
\Delta_r(a):=(L_A(a)\otimes \id+\id\otimes L_{A, \star}(a))r,\;\;a\in A.
\end{eqnarray*}
By \cite[Theorem 3.22]{HBG}, $(A, \circ, \Delta_r)$ is a Novikov bialgebra defined in \cite{HBG} if and only if $r$ is a solution of the GNYBES in $A$ and the following equalities hold for all $a$, $b\in A$:
\begin{eqnarray*}
&&(\id\otimes (L_A(b\circ a)+L_A(a)L_A(b))+L_{A,\star}(a)\otimes L_{A,\star}(b))(r+\tau r)=0,\\
&&(-L_{A,\star}(b)\otimes R_A(a)+L_{A,\star}(a)\otimes R_A(b)+R_A(a)\otimes L_A(b)-R_A(b)\otimes L_A(a)\\
&&\qquad+\id\otimes (L_A(a)L_A(b)-L_A(b)L_A(a))-(L_A(a)L_A(b)-L_A(b)L_A(a))\otimes \id)(r+tr)=0.
\end{eqnarray*}
Let $\circ_\Delta$ be the binary operation on $A^*$ induced by $\Delta_r$, i.e.,
\begin{equation} \label{duct on A^*}
\langle a^*\qsan b^* ,x \rangle=\langle  a^*\otimes b^*,\Delta_r(x)\rangle, \quad   x\in A,a^*,b^*\in A^*.
\end{equation}
Then by \cite[Lemma 3.21]{HBG}, $r$ is a solution of the  GNYBES in $A$ if and only if
$(A^\ast, \qsan)$ is a Novikov algebra.
\end{rmk}

\begin{lem}\mlabel{product on A^*}
Let $(A,\circ)$ be a Novikov algebra and $r\in  A\ot A $. \delete{Let $\circ_\Delta$ be the multiplication on $A^*$ induced by Eq.(\mref{co1}), defined by
\begin{equation} \label{duct on A^*}
\langle a^*\qsan b^* ,x \rangle=\langle  a^*\otimes b^*,\Delta_r(x)\rangle, \quad   x\in A,\;\; a^*, b^*\in A^*.
\end{equation}
Regarding $r$ as a linear map $:A^*\rightarrow A$,} Then the binary operation $\qsan$ on $A^\ast$ defined by Eq. (\ref{duct on A^*}) is also given by
\begin{equation} 
 a^*\qsan b^* =-\Big( L_{A,\star}^*(\hr(a^*))b^*+R_A^*(\hr^t(b^*))a^*  \Big),\quad   a^*,\;b^*\in A^*.
\end{equation}
Let $\alpha=\hr_-$ and $\beta=\hr_+$. Suppose that $r_+$ is invariant. Then we have
\small{
\begin{equation} \mlabel{sym:duct on A^*}
 a^*\qsan b^* = -\Big(L_{A,\star}^*(\alpha(a^*))b^*-R_A^*(\alpha(b^*))a^*\Big)
 =-\Big(L_{A,\star}^*(\hr_-(a^*))b^*-R_A^*(\hr_-(b^*))a^*\Big),  \;a^*,b^*\in A^*.
\end{equation}
}
\end{lem}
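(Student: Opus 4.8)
The plan is to compute $\langle a^*\qsan b^*, x\rangle$ directly from its defining formula \eqref{duct on A^*} and then rewrite the answer in operator form, reading off the claimed identity by nondegeneracy of the pairing. First I would fix a presentation $r=\sum_i u_i\ot v_i$ and record the two contractions $\hr(a^*)=\sum_i\langle a^*,u_i\rangle v_i$ and $\hr^t(b^*)=\sum_i\langle b^*,v_i\rangle u_i$, which are immediate from \eqref{rhat} and \eqref{rt}. Expanding $\Delta_r(x)=\sum_i(x\circ u_i)\ot v_i+\sum_i u_i\ot(x\circ v_i+v_i\circ x)$ and pairing against $a^*\ot b^*$ splits $\langle a^*\qsan b^*,x\rangle$ into two sums indexed by $i$.

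The key manipulation is to push the left-multiplication by $x$ across the pairing so that the whole expression is paired against $x$. For this I would use the two elementary identities $x\circ u=R_A(u)x$ and $x\circ v+v\circ x=L_{A,\star}(x)v=L_{A,\star}(v)x$, where the last equality holds because $\star$ is commutative, together with the defining relation $\langle\varphi^*(a)f,w\rangle=-\langle f,\varphi(a)w\rangle$ for the dual maps. After this rewriting the first sum becomes $-\langle R_A^*(\hr^t(b^*))a^*,x\rangle$ and the second becomes $-\langle L_{A,\star}^*(\hr(a^*))b^*,x\rangle$, since the $i$-contractions reassemble precisely into $\hr^t(b^*)$ and $\hr(a^*)$ respectively. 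As $x$ is arbitrary and the pairing is nondegenerate, this yields the first asserted formula $a^*\qsan b^*=-\big(L_{A,\star}^*(\hr(a^*))b^*+R_A^*(\hr^t(b^*))a^*\big)$.

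For the second formula I would substitute $\hr=\alpha+\beta$ and $\hr^t=-\alpha+\beta$ (recall $\alpha=\hr_-$, $\beta=\hr_+$) into the expression just obtained and collect the four resulting terms. The hypothesis that $r_+$ is invariant enters exactly here: since $\beta=\hr_+=\widehat{r_+}$, Lemma~\ref{lem:r} applied to the symmetric element $s=r_+$ shows that $\beta$ is balanced as a map $(A^*,L_{A,\star}^*,-R_A^*)\to A$, that is, $L_{A,\star}^*(\beta(a^*))b^*=-R_A^*(\beta(b^*))a^*$ by \eqref{b:con1}. This is precisely the identity that cancels the two $\beta$-contributions $L_{A,\star}^*(\beta(a^*))b^*$ and $R_A^*(\beta(b^*))a^*$, leaving $a^*\qsan b^*=-\big(L_{A,\star}^*(\alpha(a^*))b^*-R_A^*(\alpha(b^*))a^*\big)$, which is the claim with $\alpha=\hr_-$.

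The computation is essentially bookkeeping, so the only genuine obstacle is sign discipline: the dual-representation convention carries a minus sign at every application, and one must use $L_{A,\star}(x)v=L_{A,\star}(v)x$ (not a naive transpose) to transfer the $x$-dependence across the pairing. Getting these right is exactly what makes the two index-contractions collapse cleanly into $\hr$ and $\hr^t$, and then into $\alpha$ after the invariance-driven cancellation.
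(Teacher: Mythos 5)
Your proposal is correct and follows essentially the same route as the paper: both compute $\langle a^*\qsan b^*,x\rangle$ by expanding $\Delta_r$, transferring the $x$-dependence across the pairing to reassemble $\hr(a^*)$ and $\hr^t(b^*)$, and then derive the second formula from the decomposition $\hr^t=-\alpha+\beta$ together with Lemma~\ref{lem:r} (invariance of $r_+$ makes $\beta$ balanced, cancelling the $\beta$-terms). The only difference is cosmetic: the paper carries out the first computation in a fixed basis with structure constants, whereas you work with a general presentation $r=\sum_i u_i\ot v_i$, which is the same argument with cleaner bookkeeping.
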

\begin{proof} 
Let
$\{e_1, \ldots, e_n\}$ be a basis of $A$ and $\{e_1^\ast, \ldots,
e_n^\ast\}$ be its dual basis.
Set $r=\sum_{i,j}a_{ij}e_i\ot e_j$ and $e_i\circ e_j=\sum_{k=1}^nc_{i,j}^k e_k$. Then we have
\vspb
\begin{eqnarray*}
e_k^*\qsan e_l^*&&=\sum_{s=1}^n\langle e_k^*\ot e_l^*,\Delta_r(e_s)\rangle e_s^*=\sum_{s=1}^n\langle e_k^*\ot e_l^*,(L_A(e_s)\ot \id+\id\ot L_{A,\star}(e_s))r\rangle e_s^*
\\&&=\sum_{s=1}^n\langle e_k^*\ot e_l^*,\sum_{i,j}a_{ij} \sum_{t=1}^nc_{s,i}^t e_t \ot e_j   + \sum_{i,j}a_{ij} e_i\ot  \sum_{r=1}^n(c_{s,j}^r + c_{j,s}^r) e_r \rangle e_s^*
\\&&=\sum_{s,i} (a_{il}c_{si}^k)e_s^* +\sum_{s,j} a_{kj}(c_{sj}^l+c_{js}^l)e_s^* =\sum_{s,r} (a_{rl}c_{sr}^k  +  a_{kr}(c_{sr}^l+c_{rs}^l)    )e_s^*
\\&&=(-R_A^*)(\hr^t(e_l^*))e_k^* -L_{A,\star}^*(\hr(e_k^*))e_l^*
=-\Big(L_{A,\star}^*(\hr(e_k^*))e_l^*+R_A^*(\hr^t(e_l^*))e_k^*   \Big)   .
\end{eqnarray*}
Therefore the first conclusion holds. Note that $\widehat{r}^t=-\alpha+\beta $. Then the second conclusion follows immediately  by Lemma \mref{lem:r}.
\end{proof}

\begin{cor}       \mlabel{solution of GNYBE}
Let $(A,\circ)$ be a Novikov algebra. Suppose that $r\in A\otimes A$ is skew-symmetric. Then $r$ is a solution of the GNYBES in $A$ if and only if $(A, \qsan)$ is a Novikov algebra where $\qsan$ is defined by Eq.~(\mref{sym:duct on A^*}).
\end{cor}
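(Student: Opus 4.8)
The plan is to deduce this directly from Lemma \mref{product on A^*} together with the characterization recalled in Remark \mref{def-gnybe}; no new computation is required, since both ingredients are already available.

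First I would observe that skew-symmetry of $r$ is exactly the statement $\hr=-\hr^t$, which by Eq.~(\mref{r:a and b}) means that the symmetric part $\beta=\hr_+$ vanishes, so that $\alpha=\hr_-=\hr$. In particular $r_+=0$, and hence $r_+$ is (trivially) invariant in the sense of Eq.~(\mref{r:con1}). Thus the hypothesis of the second part of Lemma \mref{product on A^*} is satisfied, and the operation $\qsan$ on $A^*$ induced by $\Delta_r$ through Eq.~(\mref{duct on A^*}) agrees with the explicit formula in Eq.~(\mref{sym:duct on A^*}), with $\alpha=\hr_-$ replaced by $\hr$.

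Second, I would invoke Remark \mref{def-gnybe}, which via \cite[Lemma 3.21]{HBG} asserts that $r$ is a solution of the GNYBES in $A$ if and only if $(A^*,\qsan)$ is a Novikov algebra. Combining this with the identification of $\qsan$ from the previous step gives the claimed equivalence: $r$ is a solution of the GNYBES precisely when $(A^*,\qsan)$, with $\qsan$ given by Eq.~(\mref{sym:duct on A^*}), is a Novikov algebra.

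Since the whole argument is a specialization of two already-established results, there is no genuine obstacle. The only point meriting attention is the observation that skew-symmetry forces $r_+=0$, which is what allows one to replace the general expression for $\qsan$ from Lemma \mref{product on A^*} by the simplified one in Eq.~(\mref{sym:duct on A^*}); this is precisely the reduction that makes the corollary a clean one-line consequence.
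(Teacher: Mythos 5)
Your proposal is correct and is exactly the paper's argument: the paper also proves this corollary by combining Remark \mref{def-gnybe} (the GNYBES--Novikov-algebra equivalence from \cite{HBG}) with Lemma \mref{product on A^*}, the only point being that skew-symmetry forces $r_+=0$, which is trivially invariant, so $\qsan$ takes the form in Eq.~(\mref{sym:duct on A^*}) with $\hr_-=\hr$. Your write-up merely makes this reduction explicit (and correctly reads the structure as living on $A^*$, fixing the paper's typo ``$(A,\qsan)$'').
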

\begin{proof}
It follows directly from Remark \ref{def-gnybe} and Lemma \mref{product on A^*}.
\end{proof}

\begin{pro}     \mlabel{pro:gnybe}
Let $(A,\circ)$ be a Novikov algebra and $r\in A\ot A $. Define $\hr=\alpha+\beta$ by Eq.~(\mref{r:a and b}) and suppose that $\check{\beta} \in A\otimes A$ is invariant. \delete{that is, $\beta$ is a balanced $A$-bimodule homomorphism as a liner map.} If $\alpha$ is an extended $\calo$-operator of weight $0$ with extension $\beta$ of mass $(\kappa,0)$ on $(A, \circ)$ associated to $(A^\ast, L_{A,\star}^\ast, -R_A^\ast)$, then the binary operation defined by Eq.~(\mref{sym:duct on A^*}) defines a Novikov algebra structure on $A^*$ and $r$ is a solution of the GNYBES in $A$.
\end{pro}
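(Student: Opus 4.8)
The plan is to deduce the statement by chaining Theorem~\ref{delta +-}(i), Lemma~\ref{product on A^*}, and Remark~\ref{def-gnybe}, the only genuinely new bookkeeping being a comparison of signs. First I would fix the standing interpretations: by Remark~\ref{ex:dualrep} the triple $(A^\ast, L_{A,\star}^\ast, -R_A^\ast)$ is an $A$-bimodule Novikov algebra with trivial multiplication, and since $\beta=\hr_+$ is the symmetric part of $\hr$, the element $\check\beta\in A\ot A$ is precisely $r_+$, which is assumed invariant. By Lemma~\ref{lem:r} this invariance is equivalent to $\beta$ being balanced and an $A$-bimodule homomorphism from $(A^\ast, L_{A,\star}^\ast, -R_A^\ast)$ to $(A, L_A, R_A)$; in particular $\beta$ is balanced, $A$-invariant of mass $\kappa$ (the homomorphism condition forces Eq.~\eqref{b:con2} for every $\kappa$) and, vacuously, equivalent of mass $0$. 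Hence the hypothesis that $\alpha$ be an extended $\calo$-operator of weight $0$ with extension $\beta$ of mass $(\kappa,0)$ on $(A,\circ)$ associated to $(A^\ast, L_{A,\star}^\ast, -R_A^\ast)$ is consistent; taking $\delta_\pm=\alpha\pm\beta$ so that $\alpha,\beta$ arise as the symmetrizer and antisymmetrizer via Eq.~\eqref{a and b}, Theorem~\ref{delta +-}(i) applies verbatim.

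Applying Theorem~\ref{delta +-}(i) then shows that the operation $a^\ast\ast b^\ast:=L_{A,\star}^\ast(\alpha(a^\ast))b^\ast-R_A^\ast(\alpha(b^\ast))a^\ast$, which is Eq.~\eqref{*Nov} with $l_A=L_{A,\star}^\ast$, $r_A=-R_A^\ast$ and $\lambda=0$, is a Novikov algebra structure on $A^\ast$. Comparing with Eq.~\eqref{sym:duct on A^*} shows that the operation $\qsan$ of the proposition is simply $a^\ast\qsan b^\ast=-(a^\ast\ast b^\ast)$. I would then record the elementary observation that negating a Novikov multiplication again produces a Novikov multiplication: each defining identity of a Novikov algebra is homogeneous of degree two in the product, so replacing $\ast$ by $-\ast$ multiplies every monomial by $(-1)^2=1$ and leaves the two axioms unchanged. (Equivalently, one may note that $-\alpha$ is again an extended $\calo$-operator with extension $\beta$ of mass $(\kappa,0)$, since Eq.~\eqref{a ext b} is even in $\alpha$ when $\lambda=\mu=0$ and the multiplication on $A^\ast$ is trivial, and that $\qsan$ is exactly the $\ast$-operation attached to $-\alpha$.) Either way, $(A^\ast,\qsan)$ is a Novikov algebra, which is the first assertion.

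For the second assertion I would invoke Lemma~\ref{product on A^*}: since $r_+=\check\beta$ is invariant, the operation of Eq.~\eqref{sym:duct on A^*} coincides with the operation $\qsan$ on $A^\ast$ induced by $\Delta_r$ through the pairing in Eq.~\eqref{duct on A^*}. Having just shown this operation is a Novikov multiplication, Remark~\ref{def-gnybe} (equivalently Lemma~3.21 of \cite{HBG}) immediately gives that $r$ is a solution of the GNYBES in $A$, completing the argument.

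I do not expect a serious obstacle here: the content is entirely a matter of correctly matching the bimodule data $(A^\ast,L_{A,\star}^\ast,-R_A^\ast)$, the weight $\lambda=0$, and the mass $(\kappa,0)$ to the hypotheses of Theorem~\ref{delta +-}(i), and of tracking the overall sign that distinguishes $\qsan$ from the operation $\ast$ produced by that theorem. The one point deserving a line of justification rather than a citation is the invariance of the Novikov axioms under negation of the product (or, in the alternative phrasing, the stability of the extended $\calo$-operator condition under $\alpha\mapsto-\alpha$); everything else reduces to a direct appeal to the results already established.
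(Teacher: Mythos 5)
Your proposal is correct and follows essentially the same route as the paper, whose entire proof is the one-line citation of Theorem~\ref{delta +-}, Lemma~\ref{product on A^*} and Remark~\ref{def-gnybe}. The only difference is that you make explicit two points the paper leaves implicit — the verification via Lemma~\ref{lem:r} that invariance of $\check{\beta}$ supplies exactly the balanced/$A$-invariant/equivalent hypotheses needed for Theorem~\ref{delta +-}(i), and the sign bookkeeping that $\qsan=-\ast$ together with the observation that negating a Novikov product preserves the Novikov axioms — both of which are handled correctly.
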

\begin{proof}
It follows directly from Theorem \mref{delta +-}, Lemma \mref{product on A^*} and Remark \ref{def-gnybe}.
\delete{Use Theorem \mref{delta +-} to the $A$-bimodule Novikov algebra $(A^*,L_\star^* ,-R^*)$ with zero multiplication and the first assertion follows immediately. Moreover, thanks to the Lemma \mref{product on A^*}, $r$ is a solution of GNYBES.}
\end{proof}

\begin{cor}
With the assumption as in Proposition \mref{pro:gnybe}, a solution of the ENYBE of mass $\kappa \in \bfk$ in $A$ is also a solution of the GNYBES in $A$.
\end{cor}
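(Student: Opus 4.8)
The plan is to obtain this statement as an immediate consequence of Theorem~\ref{thm:r-ENYBE}~(\romannumeral2) together with Proposition~\ref{pro:gnybe}; essentially no new computation is needed, and the only genuine task is to match the scalar coefficient of the extended Novikov Yang--Baxter equation to the mass of an extended $\calo$-operator. I would start from a solution $r\in A\ot A$ of the ENYBE of mass $\kappa$,
\begin{equation*}
r_{13}\circ r_{23}+r_{12}\star r_{23}+r_{13}\circ r_{12}=\kappa\,(r_{13}+(\tau r)_{13})\circ(r_{23}+(\tau r)_{23}),
\end{equation*}
and compare its right-hand side with that of Eq.~\eqref{o-ENYBE}. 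Since $\bfk$ has characteristic zero, there is a unique $\kappa'\in\bfk$ with $\frac{\kappa'+1}{4}=\kappa$, namely $\kappa'=4\kappa-1$, and for this value the displayed equation is literally Eq.~\eqref{o-ENYBE}.

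Next I would use the standing hypothesis carried over from Proposition~\ref{pro:gnybe}, namely that the symmetric part $\cbb$ of $r$ is invariant. Under precisely this invariance assumption, Theorem~\ref{thm:r-ENYBE}~(\romannumeral2) asserts that $r$ is a solution of Eq.~\eqref{o-ENYBE} if and only if $\alpha=\hr_-$ is an extended $\calo$-operator of weight $0$ with extension $\beta=\hr_+$ of mass $(\kappa',0)$ on $(A,\circ)$ associated to $(A^\ast,L_{A,\star}^\ast,-R_A^\ast)$. Applying the forward direction of this equivalence, the given solution $r$ yields exactly such an extended $\calo$-operator.

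Finally I would invoke Proposition~\ref{pro:gnybe} with this operator mass $\kappa'$: its hypotheses---invariance of $\cbb$, and $\alpha$ being an extended $\calo$-operator of weight $0$ with extension $\beta$ of mass $(\kappa',0)$---are now all in force, so its conclusion gives at once that $r$ is a solution of the GNYBES in $A$, which is what we want. I do not anticipate a real obstacle anywhere in this argument, since all of the analytic content is already packaged in the two earlier results; the one point that demands care is the parameter bookkeeping, i.e.\ remembering that the ENYBE mass $\kappa$ corresponds to the operator mass $\kappa'=4\kappa-1$ rather than to $\kappa$ itself.
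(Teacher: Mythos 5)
Your proposal is correct and follows exactly the paper's own argument: convert the ENYBE mass $\kappa$ to the operator mass $4\kappa-1$ via Theorem \ref{thm:r-ENYBE}~(ii) (using the standing invariance of $\cbb$), then apply Proposition \ref{pro:gnybe} to conclude $r$ solves the GNYBES. The parameter bookkeeping $\kappa'=4\kappa-1$ that you flag is precisely the one nontrivial point, and you handle it the same way the paper does.
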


\begin{proof}
Let $r$ be a solution of the ENYBE of mass $\kappa$ in $A$ and define $\hr=\alpha+\beta$ by Eq.~(\mref{r:a and b}). By Theorem \mref{thm:r-ENYBE}, $\alpha$ is an extended $\calo$-operator of weight $0$ with extension $\beta$ of mass $( 4\kappa-1,0)$ on $(A, \circ)$ associated to $(A^\ast, L_{A,\star}^\ast, -R_A^\ast)$. Then by Proposition \mref{pro:gnybe}, $r$ is a solution of the GNYBES in $A$.
\end{proof}

\begin{pro}                                             \mlabel{pro:*v}
Let $(A,\circ)$ be a Novikov algebra and $(V,l_A,r_A) $ be an $A$-bimodule Novikov algebra. Let $\alpha : V \rightarrow A $ be a linear map and define a binary operation on $V$ as follows:
\begin{equation} 
u*v:=l_A(\alpha(u))v+r_A(\alpha(v))u,\quad  u, v \in V.
\end{equation}
Then $(V,\ast)$ is a Novikov algebra if and only if the following equalities hold:
\begin{eqnarray}                                        \mlabel{*vcon1}
&&l_A\Big(\alpha(u)\circ\alpha(v)- \alpha(u*v)\Big)w=l_A\Big(\alpha(u)\circ\alpha(w)-\alpha(u*w)\Big)v,\\
             \mlabel{*vcon2}
&&l_A\Big( \alpha(u)\circ\alpha(v) -  \alpha(u*v)\Big)w  -
l_A \Big(  \alpha(v)\circ\alpha(u)-  \alpha(v*u)\Big) w
\\&&= \notag
r_A\Big(    \alpha(v)\circ\alpha(w)  -  \alpha(v*w)   \Big) u-
r_A\Big(    \alpha(u)\circ\alpha(w)   -\alpha(u*w)  \Big) v  ,
\qquad  u, v, w \in V.
\end{eqnarray}
\end{pro}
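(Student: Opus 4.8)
The plan is to deduce this directly from Proposition \ref{pro:*}, of which it is essentially the special case in which the Novikov multiplication on $V$ is trivial. Recall from Remark \ref{rmk-bi-m} that writing an $A$-bimodule Novikov algebra as a triple $(V,l_A,r_A)$, rather than a quadruple $(M,\cdot,l_A,r_A)$, signals precisely that the Novikov product on $V$ is the zero product $u\cdot v=0$. Thus I would apply Proposition \ref{pro:*} with $M=V$, with $\cdot$ the trivial product, and with any fixed $\lambda\in{\bf k}$ (say $\lambda=0$).

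Under this specialization the term $\lambda u\cdot v$ appearing in Eq.~(\ref{*Nov}) vanishes identically, so the operation $u*v=l_A(\alpha(u))v+r_A(\alpha(v))u$ defined here coincides verbatim with the operation $*$ of Proposition \ref{pro:*}. Consequently $(V,*)$ is a Novikov algebra if and only if Eqs.~(\ref{*con1}) and (\ref{*con2}) hold, and it then remains only to match these two conditions with Eqs.~(\ref{*vcon1}) and (\ref{*vcon2}).

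This matching is immediate from the ${\bf k}$-linearity of $l_A$ and $r_A$: for all $u,v,w\in V$ one has $l_A\big(\alpha(u)\circ\alpha(v)-\alpha(u*v)\big)=-\,l_A\big(\alpha(u*v)-\alpha(u)\circ\alpha(v)\big)$, and likewise for every other term, so negating both sides of Eq.~(\ref{*vcon1}) (resp. Eq.~(\ref{*vcon2})) reproduces Eq.~(\ref{*con1}) (resp. Eq.~(\ref{*con2})) exactly. Hence Eqs.~(\ref{*vcon1})--(\ref{*vcon2}) are equivalent to Eqs.~(\ref{*con1})--(\ref{*con2}), and the claim follows from Proposition \ref{pro:*}.

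There is no genuine obstacle here; the only points requiring care are the bookkeeping conventions, namely recognizing that the triple notation $(V,l_A,r_A)$ forces $u\cdot v=0$ and tracking the overall sign so that the two pairs of compatibility conditions are seen to be identical. If a self-contained argument were preferred, one could instead rerun the computation in the proof of Proposition \ref{pro:*} with $\lambda=0$ throughout, but invoking Proposition \ref{pro:*} avoids repeating that calculation.
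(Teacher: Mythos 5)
Your proof is correct and is exactly the paper's argument: the paper also proves Proposition \ref{pro:*v} by invoking Proposition \ref{pro:*} with $(M,\cdot,l_A,r_A)=(V,l_A,r_A)$, i.e.\ with the trivial product on $V$, so that the $\lambda u\cdot v$ term vanishes and Eqs.~(\ref{*con1})--(\ref{*con2}) reduce (up to an overall sign, as you note) to Eqs.~(\ref{*vcon1})--(\ref{*vcon2}). Your write-up merely makes explicit the bookkeeping that the paper leaves implicit.
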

\begin{proof}
It follows from Proposition \mref{pro:*} directly when $(M, \cdot, l_A, r_A)=(V, l_A, r_A)$.
\end{proof}

\begin{thm}  \mlabel{Goper con}
Let $(A,\circ)$ be a Novikov algebra, $(V,l_A,r_A)$ be an $A$-bimodule and $\tA=A\ltimes_{l_{A,\star},r_A}V^\ast$. Let $\alpha \in \Hom_{\bf k}(V,A)$ and $\taa=\alpha^\tA$ by Eq. (\mref{injective}). Then $\check{\taa}_-\in \tA\ot\tA$ is a skew-symmetric solution of the GNYBES in $\tA$ if and only if Eqs.~(\mref{*vcon1}), (\mref{*vcon2}) and the following equalities hold:
\begin{eqnarray}                      \mlabel{Goper con3}
&&x\star B_\alpha(u,w)= B_\alpha(l_A(x)w,u)+ B_\alpha(u,l_A(x)w),\\
&&                  \mlabel{Goper con4}
B_\alpha(l_A(x)w,v) = B_\alpha(l_A(x)v,w),\\
&&                   \mlabel{Goper con5}
B_\alpha(l_A(x)w,u)+B_\alpha(u,l_A(x)w)=
x\circ B_\alpha(u,w)  +B_\alpha(r_A(x)u,w),\\
&&                  \mlabel{Goper con6}
B_\alpha(r_A(x)u,v) +B_\alpha(v,r_A(x)u)
=B_\alpha(r_A(x)v,u)+B_\alpha(u,r_A(x)v),
\end{eqnarray}
for all $u,v,w\in V$ and $x\in A$, where
\begin{equation}
B_\alpha(u,w)=\alpha(u)\circ\alpha(v) -  \alpha\Big( l_A(\alpha(u))v+r_A(\alpha(v)) u \Big), \quad\quad u,v\in V.
\end{equation}
\end{thm}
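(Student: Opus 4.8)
The plan is to reduce the statement to a single Novikov algebra structure on $\tA^\ast$ and then recognize that structure as a semidirect product. Since $\check{\taa}_-=\check{\taa}-\tau(\check{\taa})$ is skew-symmetric, its symmetric part vanishes and is in particular invariant; hence Corollary \ref{solution of GNYBE}, applied to $\tA$ in place of $A$, says that $\check{\taa}_-$ is a solution of the GNYBES in $\tA$ if and only if $(\tA^\ast,\qsan)$ is a Novikov algebra, where by Eq.~\eqref{sym:duct on A^*} (with associated map $\widehat{\check{\taa}_-}=\taa_-$)
\[
\Phi\qsan\Psi=-\Big(L_{\tA,\star}^\ast(\taa_-(\Phi))\Psi-R_{\tA}^\ast(\taa_-(\Psi))\Phi\Big),\qquad \Phi,\Psi\in\tA^\ast.
\]

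First I would compute $\qsan$ explicitly on $\tA^\ast=V\oplus A^\ast$, using $\taa_-(u)=\alpha(u)$ for $u\in V$ and $\taa_-(a^\ast)=-\alpha^\ast(a^\ast)$ for $a^\ast\in A^\ast$ (from the proof of Theorem \ref{thm:taa exo}) together with the semidirect multiplication $\bullet$ of $\tA=A\ltimes_{l_{A,\star}^\ast,-r_A^\ast}V^\ast$. After dualizing the left and right multiplications of $\bullet$ and tracking signs, one finds for $u,v\in V$ and $a^\ast,b^\ast\in A^\ast$ that $u\qsan v=-\big(l_A(\alpha(u))v+r_A(\alpha(v))u\big)=-(u\ast v)\in V$, that $a^\ast\qsan b^\ast=0$, and that $u\qsan b^\ast,\ a^\ast\qsan v\in A^\ast$. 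In other words, $(\tA^\ast,\qsan)$ is exactly the semidirect product, in the sense of Proposition \ref{pro:semi}, of the algebra $(V,\qsan|_V)=(V,-\ast)$ by the trivial-multiplication module $A^\ast$ with structure maps $l'(u)b^\ast:=u\qsan b^\ast$ and $r'(v)a^\ast:=a^\ast\qsan v$.

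Since $V$ is a subalgebra of $(\tA^\ast,\qsan)$ and $A^\ast$ is a trivial-multiplication ideal, Proposition \ref{pro:semi} and the remark following it show that $(\tA^\ast,\qsan)$ is a Novikov algebra if and only if (a) $(V,-\ast)$ --- equivalently $(V,\ast)$, as the Novikov axioms are invariant under negating the product --- is a Novikov algebra, and (b) $(A^\ast,l',r')$ is a bimodule of $(V,-\ast)$. Item (a) is, by Proposition \ref{pro:*v}, precisely Eqs.~\eqref{*vcon1} and \eqref{*vcon2}. For item (b) I would write out the four bimodule axioms \eqref{lef-mod1}--\eqref{Nov-mod6} for $(A^\ast,l',r')$ and pair each identity against a generic $x\in A$; expressing $\langle l'(u)b^\ast,x\rangle$ and $\langle r'(v)a^\ast,x\rangle$ through $\alpha$ and the maps $l_A,r_A$, and repeatedly substituting $\alpha\big(l_A(\alpha(u))v+r_A(\alpha(v))u\big)=\alpha(u)\circ\alpha(v)-B_\alpha(u,v)$, the nested compositions collapse: the $\alpha(\cdot)\circ\alpha(\cdot)$ cross-terms cancel and, after simplification with the $A$-bimodule axioms for $(V,l_A,r_A)$ and the Novikov identities of $A$, the four axioms become exactly the four equalities \eqref{Goper con3}--\eqref{Goper con6}, with $x$ playing the role of the pairing variable.

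The main obstacle is this last step: carrying out the dualization-and-composition bookkeeping for the four bimodule axioms and verifying that each reduces precisely to one of \eqref{Goper con3}--\eqref{Goper con6}. The two delicate points are computing the duals $L_{\tA,\star}^\ast$ and $R_{\tA}^\ast$ of the semidirect-product multiplications with the correct signs, and recognizing that the surviving defect after the cross-terms cancel is exactly $B_\alpha(u,v)=\alpha(u)\circ\alpha(v)-\alpha(u\ast v)$. Everything else is a routine, if lengthy, linear-algebra verification.
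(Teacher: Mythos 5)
Your proposal is correct, and it takes a genuinely different route through the middle of the argument. Both proofs open identically: since $\check{\taa}_-$ is skew-symmetric, Corollary \ref{solution of GNYBE} and Lemma \ref{product on A^*} reduce the GNYBES in $\tA$ to the single condition that $(\tA^\ast,\qsan)$ be a Novikov algebra. The paper then applies Proposition \ref{pro:*v} to the map $\taa_-$ at the level of $\tA$, producing two global operator identities (the displays $(\ast)$ and $(\ast\ast)$ in its proof), and extracts the six conditions by expanding those identities multilinearly over $\tA^\ast=V\oplus A^\ast$ and pairing against $A$ and $V^\ast$. You instead compute $\qsan$ componentwise first --- and your formulas are right: $u\qsan v=-(u\ast v)\in V$, $a^\ast\qsan b^\ast=0$, and the mixed products land in $A^\ast$, with $\langle u\qsan b^\ast,x\rangle=\langle b^\ast,\alpha(u)\star x-\alpha(l_{A,\star}(x)u)\rangle$ and $\langle a^\ast\qsan v,x\rangle=\langle a^\ast,\alpha(l_A(x)v)-x\circ\alpha(v)\rangle$ --- and then invoke Proposition \ref{pro:semi} to split the Novikov condition into the subalgebra condition on $(V,-\ast)$ (equivalently $(V,\ast)$, hence Eqs.~(\ref{*vcon1})--(\ref{*vcon2}) by Proposition \ref{pro:*v}) plus the four bimodule axioms (\ref{lef-mod1})--(\ref{Nov-mod6}) for $(A^\ast,l',r')$. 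Your organization buys a structural explanation for the shape of the answer (two conditions from the subalgebra, four from the bimodule axioms) that the paper's expansion obscures; its cost is the extra, easy, verification of the semidirect shape, and the computational endgame is comparable in both proofs. I checked one instance of your final claim: the axiom $r'(u)r'(v)a^\ast=r'(v)r'(u)a^\ast$ does collapse exactly to Eq.~(\ref{Goper con4}), once Eq.~(\ref{Nov-mod5}) rewrites $l_A(x\circ\alpha(u))v$ as $r_A(\alpha(u))(l_A(x)v)$ and right-commutativity in $A$ cancels $(x\circ\alpha(u))\circ\alpha(v)$ against $(x\circ\alpha(v))\circ\alpha(u)$. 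One caution: the claim that the four axioms become ``exactly'' the four equalities one-for-one is slightly optimistic for the two axioms coming from left-symmetry; expect those to reduce to Eqs.~(\ref{Goper con5})--(\ref{Goper con6}) only modulo Eqs.~(\ref{Goper con3})--(\ref{Goper con4}), just as the paper derives (\ref{Goper con5})--(\ref{Goper con6}) from its condition $(\ast\ast)$ together with (\ref{Goper con3})--(\ref{Goper con4}). Since in the end you assert the equivalence of conjunctions, this does not affect correctness.
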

If a linear operator $\alpha: V\rightarrow A$ satisfies Eqs. (\ref{Goper con3})-(\ref{Goper con6}), then $\alpha$ is called a {\bf generalized $\calo$-operator} on $(A, \circ)$ associated to $(V, l_A, r_A)$.
\begin{proof}
By Lemma \mref{product on A^*}, Corollary \mref{solution of GNYBE} and Proposition \mref{pro:*v}, $\check{\taa}_- \in \tA\ot\tA$ is a skew-symmetric solution of the GNYBES in $\tA$ if and only if for all $u,v,w\in V$ and $a^*,b^*,c^*\in A^*$, the following equalities hold:
\small{
\begin{eqnarray*}
(\ast)\;\;L_{\tA,\star}^*\Bigg(\taa_-(u+a^*) \bullet \taa_-(v+b^*)  -
\taa_-\bigg(      L_{\star,\tA}^*\Big(\taa_-(u+a^*)\Big)(v+b^*)
    -R_{\tA}^*\Big(\taa_-(v+b^*)\Big)(u+a^*)     \bigg)\Bigg)(w+c^*)
\\=L_{\tA,\star}^*\Bigg(\taa_-(u+a^*) \bullet \taa_-(w+c^*)  -
\taa_-\bigg(      L_{\tA,\star}^*\Big(\taa_-(u+a^*)\Big)(w+c^*)
     -R_{\tA}^*\Big(\taa_-(w+c^*)\Big)(u+a^*)        \bigg)\Bigg)(v+b^*),\nonumber\\
\label{g-eq-2}
(\ast\ast)\;\;L_{\tA,\star}^*\Bigg(\taa_-(u+a^*) \bullet \taa_-(v+b^*)  -
\taa_-\bigg(      L_{\tA,\star}^*\Big(\taa_-(u+a^*)\Big)(v+b^*)
    -R_{\tA}^*\Big(\taa_-(v+b^*)\Big)(u+a^*)     \bigg)\Bigg)(w+c^*)
\\\qquad-L_{\tA,\star}^*\Bigg(\taa_-(v+b^*) \bullet \taa_-(u+a^*)  -
\taa_-\bigg(      L_{\tA,\star}^*\Big(\taa_-(v+b^*)\Big)(u+a^*)
    -R_{\tA}^*\Big(\taa_-(u+a^*)\Big)(v+b^*)     \bigg)\Bigg)(w+c^*)
\nonumber\\=
-R_{\tA}^*\Bigg(\taa_-(v+b^*) \bullet\taa_-(w+c^*)  -
\taa_-\bigg(      L_{\tA,\star}^*\Big(\taa_-(v+b^*)\Big)(w+c^*)
     -R_{\tA}^*\Big(\taa_-(w+c^*)\Big)(v+b^*)        \bigg)\Bigg)(u+a^*)
\nonumber\\\qquad+
R_{\tA}^*\Bigg(\taa_-(u+a^*) \bullet \taa_-(w+c^*)  -
\taa_-\bigg(      L_{\tA,\star}^*\Big(\taa_-(u+a^*)\Big)(w+c^*)
     -R_{\tA}^*\Big(\taa_-(w+c^*)\Big)(u+a^*)        \bigg)\Bigg)(v+b^*).\nonumber
\end{eqnarray*}
}

On the one hand, by the proof of Theorem \mref{thm:taa exo}, Eq. ($\ast$) is equivalent to
\small{
\begin{eqnarray*}
&& L_{\star,\tA}^*\Bigg[
\alpha(u)\circ\alpha(v)-\alpha \Big( l_A(\alpha(u))v+r_A(\alpha(v)) u \Big)
+\alpha^*\bigg( L_{\tA,\star}^*\Big(\alpha(u)\Big)b^*\bigg)
-\alpha(u)\bullet \alpha^*(b^*)  \\&&-\alpha^*\bigg( -R_{\tA}^*\Big(\alpha^*(b^*)\Big)u\bigg)
-\alpha^*(a^*)\bullet\alpha(v) -\alpha^*\bigg(L_{\tA,\star}^*\Big(\alpha^*(a^*)\Big)v\bigg)+\alpha^*\bigg(-R_{\tA}^*\Big(\alpha(v)\Big)a^* \bigg)
                          \Bigg]w
\\&&+ L_{\star,\tA}^*\Bigg[
\alpha(u)\circ\alpha(v)-\alpha \Big( l_A(\alpha(u))v+r_A(\alpha(v)) u \Big)
+\alpha^*\bigg( L_{\star,\tA}^*\Big(\alpha(u)\Big)b^*\bigg)
-\alpha(u)\bullet\alpha^*(b^*)  \\&&-\alpha^*\bigg( -R_{\tA}^*\Big(\alpha^*(b^*)\Big)u\bigg)
-\alpha^*(a^*)\cdot\alpha(v) -\alpha^*\bigg(L_{\tA,\star}^*\Big(\alpha^*(a^*)\Big)v\bigg)+\alpha^*\bigg(-R_{\tA}^*\Big(\alpha(v)\Big)a^* \bigg)
                          \Bigg]c^*
\\&&\hspace{-.3cm}=L_{\tA,\star}^*\Bigg[
\alpha(u)\circ\alpha(w)-\alpha \Big( l_A(\alpha(u))w+r(\alpha(w)) u \Big)
+\alpha^*\bigg( L_{\tA,\star}^*\Big(\alpha(u)\Big)c^*\bigg)
-\alpha(u)\bullet\alpha^*(c^*)  \\&&-\alpha^*\bigg( -R_{\tA}^*\Big(\alpha^*(c^*)\Big)u\bigg)
-\alpha^*(a^*)\cdot\alpha(w) -\alpha^*\bigg(L_{\tA,\star}^*\Big(\alpha^*(a^*)\Big)w\bigg)+\alpha^*\bigg(-R_{\tA}^*\Big(\alpha(w)\Big)a^* \bigg)
                          \Bigg]v
\\&&+L_{\tA,\star}^*\Bigg[
\alpha(u)\circ\alpha(w)-\alpha \Big( l_A(\alpha(u))w+r_A(\alpha(w)) u \Big)
+\alpha^*\bigg( L_{\tA,\star}^*\Big(\alpha(u)\Big)c^*\bigg)
-\alpha(u)\bullet\alpha^*(c^*)  \\&&-\alpha^*\bigg( -R_{\tA}^*\Big(\alpha^*(c^*)\Big)u\bigg)
-\alpha^*(a^*)\bullet\alpha(w) -\alpha^*\bigg(L_{\tA,\star}^*\Big(\alpha^*(a^*)\Big)w\bigg)+\alpha^*\bigg(-R_{\tA}^*\Big(\alpha(w)\Big)a^* \bigg)
                          \Bigg]b^*.
\end{eqnarray*}
}
It is easy to see that the equality above holds if and only if the following equalities hold:
\begin{eqnarray}          \mlabel{abc*=0}
&& L_{\tA,\star}^*\Bigg(
\alpha(u)\circ\alpha(v)-\alpha \Big( l_A(\alpha(u))v+r_A(\alpha(v)) u \Big)
                          \Bigg)w
\\&& \notag =L_{\tA,\star}^*\Bigg(
\alpha(u)\circ\alpha(w)-\alpha \Big( l_A(\alpha(u))w+r_A(\alpha(w)) u \Big)
                          \Bigg)v,\\
       \mlabel{v ac*=0}
&& L_{\tA,\star}^*\Bigg(
\alpha^*\bigg( L_{\tA,\star}^*\Big(\alpha(u)\Big)b^*\bigg)
-\alpha(u)\bullet\alpha^*(b^*)  -\alpha^*\bigg( -R_{\tA}^*\Big(\alpha^*(b^*)\Big)u\bigg)   \Bigg)w
\\&&=L_{\tA,\star}^*\Bigg(   \alpha(u)\circ\alpha(w)-\alpha \Big( l_A(\alpha(u))w+r_A(\alpha(w)) u \Big)           \Bigg)b^*, \notag\\
         \mlabel{u bc*=0}
&& L_{\tA,\star}^*\Bigg(
-\alpha^*(a^*)\bullet\alpha(v) -\alpha^*\bigg(L_{\tA,\star}^*\Big(\alpha^*(a^*)\Big)v\bigg)+\alpha^*\bigg(-R_{\tA}^*\Big(\alpha(v)\Big)a^* \bigg)
\Bigg)w
\\&&=L_{\tA,\star}^*\Bigg(  -\alpha^*(a^*)\bullet\alpha(w) -\alpha^*\bigg(L_{\tA,\star}^*\Big(\alpha^*(a^*)\Big)w\bigg)+\alpha^*\bigg(-R_{\tA}^*\Big(\alpha(w)\Big)a^* \bigg)  \Bigg)v,
\notag
\\               \mlabel{vw a*=0}
&& L_{\tA,\star}^*\Bigg( \alpha^*\bigg( L_{\tA,\star}^*\Big(\alpha(u)\Big)b^*\bigg)
-\alpha(u)\bullet\alpha^*(b^*)  -\alpha^*\bigg( -R_{\tA}^*\Big(\alpha^*(b^*)\Big)u\bigg)  \Bigg)c^*
\\&&=
L_{\tA,\star}^*\Bigg( \alpha^*\bigg( L_{\tA,\star}^*\Big(\alpha(u)\Big)c^*\bigg)
-\alpha(u)\bullet\alpha^*(c^*)  -\alpha^*\bigg( -R_{\tA}^*\Big(\alpha^*(c^*)\Big)u\bigg)  \Bigg)b^*,
\notag\\        \mlabel{uw b*=0}   
&& L_{\tA,\star}^*\Bigg( -\alpha^*(a^*)\bullet\alpha(v) -\alpha^*\bigg(L_{\tA,\star}^*\Big(\alpha^*(a^*)\Big)v\bigg)+\alpha^*\bigg(-R_{\tA}^*\Big(\alpha(v)\Big)a^* \bigg) \Bigg)c^*
=0.
\end{eqnarray}
Since for all $s^*\in V^\ast$ and $a^*\in A^*$, we have $L_{\tA,\star}^*(s^*)a^*=0$. Hence, Eqs.~(\mref{vw a*=0}) and (\mref{uw b*=0}) naturally hold. Then we shall prove that
$Eq.~(\mref{abc*=0})\Leftrightarrow Eq.~(\mref{*vcon1})$,
$Eq.~(\mref{v ac*=0})\Leftrightarrow Eq.~(\mref{Goper con3})$,
$Eq.~(\mref{u bc*=0})\Leftrightarrow Eq.~(\mref{Goper con4})$. The proofs of these statements are similar. Next, we prove that Eq.~(\mref{u bc*=0}) holds for all $a^\ast\in A^\ast$ and $v$, $w\in V$ if and only if Eq.~(\mref{Goper con4}) holds for all $x\in A$, $v$, $w\in V$. Let $LHS$ and $RHS$ denote the left-hand side and right-hand of Eq.~(\mref{u bc*=0}) respectively. Then for all $x\in A$ and $s^*\in V^\ast$, we have $$\langle LHS,s^*\rangle =\langle RHS,s^*\rangle=0.$$ Furthermore, we obtain
\begin{align*}
\langle LHS,x\rangle
&=-\Big\langle w,L_{\tA,\star}\Bigg(
-(-r_A^*)(\alpha(v))\alpha^*(a^*) -\alpha^*\bigg(L_{\tA,\star}^*\Big(\alpha^*(a^*)\Big)v\bigg)+\alpha^*\bigg(-R_{\tA}^*\Big(\alpha(v)\Big)a^* \bigg)
\Bigg)x\Big\rangle.
\\&=-\Big\langle w,(-r_A^*(x)+l_{A,\star}^*(x))
\Bigg(-(-r_A^*)(\alpha(v))\alpha^*(a^*) -\alpha^*\bigg(L_{\tA,\star}^*\Big(\alpha^*(a^*)\Big)v\bigg)+\alpha^*\bigg(-R_{\tA}^*\Big(\alpha(v)\Big)a^* \bigg)
\Bigg)\Big\rangle
\\&=\Big\langle l_A(x)w, r_A^*(\alpha(v))\alpha^*(a^*) -\alpha^*\bigg(L_{\tA,\star}^*\Big(\alpha^*(a^*)\Big)v\bigg)+\alpha^*\bigg(-R_\tA^*\Big(\alpha(v)\Big)a^* \bigg)\Big\rangle
\\&=-\Big\langle \alpha(r_A(\alpha(v))(l_A(x)w)) -\alpha(l_A(x)w)\circ\alpha(v),a^*\Big\rangle +
\langle L_{\tA,\star,}(\alpha^*(a^*))\alpha(l_A(x)w),v\rangle
\\&=-\Big\langle \alpha(r_A(\alpha(v))(l_A(x)w))
-\alpha(l_A(x)w)\circ\alpha(v),a^*\Big\rangle +
\langle l_A^*(\alpha(l_A(x)w))(\alpha^*(a^*)),v\rangle
\\&=\Big\langle -\alpha(r_A(\alpha(v))(l_A(x)w)) +\alpha(l_A(x)w)\circ\alpha(v)-\alpha(l_A(\alpha(l_A(x)w))v),a^*\Big\rangle .
\end{align*}
Similarly, one gets
\begin{eqnarray*}
\langle RHS,x\rangle
=\Big\langle -\alpha(r_A(\alpha(w))(l_A(x)v)) +\alpha(l_A(x)v)\circ\alpha(w)-\alpha(l_A(\alpha(l_A(x)v))w),a^*\Big\rangle .
\end{eqnarray*}
Therefore, Eq.~(\mref{u bc*=0}) holds for all $a^\ast\in A^\ast$ and $v$, $w\in V$ if and only if Eq.~(\mref{Goper con4}) holds for all $x\in A$, $v$, $w\in V$. Similarly, one can check that Eq.~(\mref{abc*=0}) holds if and only if Eq.~(\mref{*vcon1}) holds, Eq.~(\mref{v ac*=0}) holds for all $u$, $w\in V$ and $b^\ast\in A^\ast$ if and only if Eq.~(\mref{Goper con3}) holds for all $u$, $w\in V$ and $x\in A$.

On the other hand, by the same way, with Eqs. (\mref{Goper con3}) and (\mref{Goper con4}), one can check that Eq. ($\ast\ast$) holds if and only if Eqs. (\mref{Goper con5}) and (\mref{Goper con6}) hold. Then the proof is completed.
\end{proof}

\begin{cor}
Let $(A,\circ)$ be a Novikov algebra.
\begin{enumerate}
\item[(\romannumeral1)]Let $(M,\cdot,l_A,r_A)$ be an $A$-bimodule Novikov algebra and $\tA=A\ltimes_{l_{A,\star}^*,-r_A^*} M^*$. Let $\alpha,\beta :M\rightarrow A$ be two linear maps and define $\taa=\alpha^\tA$ by Eq.~(\mref{injective}). 
Suppose that $\alpha$ is an extended $\calo$-operator of weight $\lambda$ with extension $\beta$ of mass $(\kappa,\mu)$ on $(A,\circ)$ associated to $(M,\cdot,l_A,r_A)$. Then $\check{\taa}_-$ is a skew-symmetric solution of the GNYBES in $\tA$ if and only if the following equalities hold for all $u,v,w \in M$ and $x\in A$:
\begin{eqnarray}             \mlabel{Goper cor-a1}
&&\lambda l_A(\alpha(u \cdot v))w= \lambda l_A(\alpha(u \cdot w))v,\\
\mlabel{Goper cor-a2}
&&\lambda l_A(\alpha(u \cdot v))w-\lambda l_A(\alpha(v \cdot u))w
=\lambda r_A(\alpha(v \cdot w))u-\lambda r_A(\alpha(u \cdot w))v,\\
 &&           \mlabel{Goper cor-a3}
\lambda x\star \alpha(u \cdot w)=\lambda \alpha( (l_A(x)w)\cdot u)
+\lambda \alpha(u \cdot (l_A(x)w)),\\
            \mlabel{Goper cor-a4}
&&\lambda \alpha( (l_A(x)w)\cdot v) = \lambda \alpha((l(x)v)\cdot w),\\
&&             \mlabel{Goper cor-a5}
\lambda \alpha( (l_A(x)w)\cdot v)  +\lambda \alpha( v\cdot(l_A(x)w))=\lambda x\circ \alpha(v\cdot w)
+\lambda \alpha((r_A(x)v)\cdot w) ,\\
&& \mlabel{Goper cor-a6}
 \lambda \alpha((r_A(x)u)\cdot v)+ \lambda \alpha(v\cdot (r_A(x)u))=\lambda \alpha((r_A(x)v)\cdot u)+\lambda \alpha(u\cdot (r_A(x)v))\Big).
\end{eqnarray}
\begin{enumerate}
\item If $\alpha$ is an extended $\calo$-operator of weight $0$ with extension $\beta$ of mass $(\kappa,\mu)$, then $\check{\taa}_-$ is a skew-symmetric solution of the GNYBES in $\tA$.
\delete{ Further, if $\alpha$ is an extended $\calo$-operator with extension $\beta$ of mass $(\kappa=0,\mu)$, $\beta$ is naturally an $A$-bimodule Novikov homomorphism. Then $\check{\taa}_-$ is a skew-symmetric solution of GNYBES.}
\item When $\beta=0$ or $(\kappa,\mu)=(0,0)$, i.e., $\alpha$ is just an extended $\calo$-operator of weight $\lambda$ on $(A, \circ)$ associated to $(M,\cdot,l_A,r_A)$, $\check{\alpha}-\tau(\check{\alpha})$
is a skew-symmetric solution of the GNYBES in $\tA$ if and only if Eqs. (\ref{Goper cor-a1})-(\ref{Goper cor-a6}) hold.
\end{enumerate}
\item[(\romannumeral2)]Let $(V,l_A,r_A)$ be a $A$-bimodule Novikov algebra. Let $\alpha,\beta :V\rightarrow A$ be two linear maps such that $\alpha$ is an extended $\calo$-operator of weight $\lambda$ with extension $\beta$ of mass $(\kappa, \mu)$ on $(A,\circ)$ associated to $(V,l_A,r_A)$. Then $\check{\taa}_-\in (A\ltimes_{l_{A,\star}^*,-r_A^*} V^*)\ot (A\ltimes_{l_{A,\star}^*,-r_A^*} V^*)$ is a skew-symmetric solution of the GNYBES in $A\ltimes_{l_{A,\star}^*,-r_A^*} V^*$.
\item[(\romannumeral3)]
Let $\alpha:A\rightarrow A$ be an extended $\calo$-operator of weight $0$ with extension $\beta=\id$ of mass $(\kappa, \mu)$ on $(A,\circ)$ associated to $(A, L_A, R_A)$, i.e., Eq.~(\mref{T and kappa}) holds. Then $\check{\alpha}-\tau(\check{\alpha}) \in (A\ltimes_{L_{A,\star}^*,-R_A^*} A^*)\ot (A\ltimes_{L_{A,\star}^*,-R_A^*} A^*)$ is a skew-symmetric solution of the GNYBES in $A\ltimes_{L_{A,\star}^*,-R_A^*} A^*$.
\end{enumerate}
\end{cor}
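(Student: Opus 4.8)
The plan is to deduce the whole statement from Theorem~\ref{Goper con} by substituting the explicit form of the obstruction $B_\alpha$ that an extended $\calo$-operator produces. Since the semidirect product $\tA=A\ltimes_{l_{A,\star}^*,-r_A^*}M^*$ depends only on the $A$-bimodule $(M,l_A,r_A)$ and not on the multiplication $\cdot$ on $M$, I would first apply Theorem~\ref{Goper con} with the underlying bimodule taken to be $M$: thus $\check{\taa}_-$ is a skew-symmetric solution of the GNYBES in $\tA$ if and only if Eqs.~\eqref{*vcon1}, \eqref{*vcon2} and \eqref{Goper con3}--\eqref{Goper con6} hold, all phrased through $B_\alpha(u,v)=\alpha(u)\circ\alpha(v)-\alpha\big(l_A(\alpha(u))v+r_A(\alpha(v))u\big)$. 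Using the defining identity~\eqref{a ext b} and the linearity of $\alpha$, I would then rewrite the obstruction as $B_\alpha(u,v)=\kappa\,\beta(u)\circ\beta(v)+\mu\,\beta(u\cdot v)+\lambda\,\alpha(u\cdot v)$; this is the only point at which the product $\cdot$ on $M$ enters.

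Substituting this into each of the eight conditions and using bilinearity, every condition splits into a $\kappa$-contribution (built from $\beta(\cdot)\circ\beta(\cdot)$), a $\mu$-contribution (built from $\beta(\cdot\cdot)$) and a $\lambda$-contribution (built from $\alpha(\cdot\cdot)$). The key step is to show that the $\kappa$- and $\mu$-contributions are satisfied identically, purely from the hypotheses that $\beta$ is balanced, $A$-invariant of mass $\kappa$ and equivalent of mass $\mu$ (Eqs.~\eqref{b:con1}--\eqref{b:con3}). For the two $M$-valued conditions \eqref{*vcon1} and \eqref{*vcon2} this is exactly the computation already carried out in the proof of Theorem~\ref{delta +-}(\romannumeral1): the $\kappa$-parts collapse using \eqref{b:con1}, \eqref{b:con2} and the bimodule axioms \eqref{Nov-mod5}, \eqref{Nov-mod6}, while the $\mu$-parts collapse using \eqref{b:con3} and \eqref{Nov-mod7}; what survives is precisely Eqs.~\eqref{Goper cor-a1} and \eqref{Goper cor-a2}. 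For the six $A$-valued conditions \eqref{Goper con3}--\eqref{Goper con6} one performs the analogous but longer reduction inside $A$: the $\kappa$-parts should become tautologies after converting $\beta(l_A(x)w)$ and $\beta(r_A(x)w)$ via $A$-invariance and then invoking the left-symmetry and right-commutativity axioms of $(A,\circ)$, and the $\mu$-parts should collapse using the equivalence property of $\beta$ together with the bimodule-Novikov axioms, leaving exactly Eqs.~\eqref{Goper cor-a3}--\eqref{Goper cor-a6}. I expect this bookkeeping to be the main obstacle: checking that the $\kappa$- and $\mu$-contributions to the $A$-valued identities vanish is routine in character but the most laborious and delicate step, and it is here that all three properties of $\beta$ must be used at once.

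Granting the reduction, Theorem~\ref{Goper con} gives the equivalence asserted in part~(\romannumeral1), and the remaining assertions follow by specialization with no new computation. For part~(\romannumeral1)(a), setting $\lambda=0$ kills every $\lambda$-contribution, so Eqs.~\eqref{Goper cor-a1}--\eqref{Goper cor-a6} hold trivially and $\check{\taa}_-$ is a solution. For part~(\romannumeral1)(b), the hypothesis $\beta=0$ or $(\kappa,\mu)=(0,0)$ forces $B_\alpha(u,v)=\lambda\,\alpha(u\cdot v)$ outright, so the $\kappa$/$\mu$ reduction is unnecessary and the eight conditions become directly \eqref{Goper cor-a1}--\eqref{Goper cor-a6}. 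For part~(\romannumeral2), since $(V,l_A,r_A)$ is an $A$-bimodule Novikov algebra its multiplication is trivial by Remark~\ref{rmk-bi-m}, so $u\cdot v=0$, each of \eqref{Goper cor-a1}--\eqref{Goper cor-a6} holds vacuously, and part~(\romannumeral1) shows $\check{\taa}_-$ is a solution. Finally, for part~(\romannumeral3) one takes $(V,l_A,r_A)=(A,L_A,R_A)$, $\beta=\id$ and weight $0$; as noted after Definition~\ref{def:beta and operator}, $\id$ is balanced, $A$-invariant and equivalent of every mass, so Eq.~\eqref{T and kappa} is the instance of~\eqref{a ext b} with $\lambda=0$, and part~(\romannumeral1)(a) applies, giving that $\check{\alpha}-\tau(\check{\alpha})=\check{\taa}_-$ is a skew-symmetric solution of the GNYBES in $A\ltimes_{L_{A,\star}^*,-R_A^*}A^*$.
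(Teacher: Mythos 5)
Your proposal is correct and takes essentially the same route as the paper's own proof: apply Theorem~\ref{Goper con} to the underlying bimodule of $M$, substitute $B_\alpha(u,v)=\lambda\,\alpha(u\cdot v)+\kappa\,\beta(u)\circ\beta(v)+\mu\,\beta(u\cdot v)$ coming from Eq.~\eqref{a ext b}, check that the $\kappa$- and $\mu$-contributions satisfy the resulting conditions identically so that only the $\lambda$-parts \eqref{Goper cor-a1}--\eqref{Goper cor-a6} remain (the paper likewise leaves this as a ``direct check,'' and your accounting of which properties of $\beta$ and which bimodule-Novikov axioms enter is, if anything, more explicit), and then obtain (i)(a), (i)(b), (ii) and (iii) by exactly the specializations the paper compresses into ``the other conclusions follow directly from Item (i).'' Two trivial discrepancies: Theorem~\ref{Goper con} yields six conditions, not eight, and your formula for $B_\alpha$ is the correct one, since the paper's proof misprints the $\mu$-term as $\mu\beta(u)\cdot\beta(v)$ instead of $\mu\beta(u\cdot v)$.
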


\begin{proof}
(\romannumeral1) Since $\alpha$ is an extended $\calo$-operator of weight $\lambda$ with extension $\beta$ of mass $(\kappa,\mu)$ on $(A,\circ)$ associated to $(M,\cdot,l_A,r_A)$, for all $u,v\in M$, we have $$B_\alpha(u,w)=\alpha(u)\circ\alpha(v) -  \alpha\Big( l_A(\alpha(u))v+r_A(\alpha(v)) u \Big)=\lambda \alpha(u\cdot v)+\kappa \beta(u)\circ\beta(v)+\mu\beta(u)\cdot \beta(v).$$ Since $\beta$ is balanced, $A$-invariant of mass $\kappa$ and equivalent of mass $\mu$, by  Theorem \mref{Goper con}, one can directly check that Eqs. (\ref{*vcon1})-(\ref{Goper con6}) are equivalent to Eqs. (\ref{Goper cor-a1})-(\ref{Goper cor-a6}).

The other conclusions follow directly from Item (\romannumeral1).
\end{proof}

\noindent {\bf Acknowledgments.} This research is supported by
NSFC (No. 12171129) and  the Zhejiang
Provincial Natural Science Foundation of China (No. Z25A010006).

\smallskip

\noindent
{\bf Declaration of interests. } The authors have no conflicts of interest to disclose.

\smallskip

\noindent
{\bf Data availability. } Data sharing is not applicable to this article as no new data were created or analyzed in this study.

\vspace{-.5cm}

\end{document}